\newcommand{\Z}{\mathbb{Z}}
\newcommand{\N}{\mathbb{N}}
\newcommand{\R}{\mathbb{R}}
\newcommand{\C}{\mathbb{C}}
\newcommand{\T}{\mathbb{T}}
\newcommand{\A}{\mathcal{A}}
\newcommand{\leb}{\mathrm{Leb}}
\newcommand{\ld}{\mathrm{L}}
\newcommand{\esp}{(X,\mu)}
\newcommand{\espalg}{(X,\A,\mu)}
\newcommand{\esplebalg}{([0,1],\mathcal{B}([0,1]),\leb)}
\newcommand{\aut}{\mathrm{Aut}(X,\mu)}
\newcommand{\autalg}{\mathrm{Aut}(X,\A,\mu)}
\newcommand{\auty}{\mathrm{Aut}(Y,\nu)}
\newcommand{\orb}{\mathrm{Orb}}
\newcommand{\hmu}{\mathrm{h}_{\mu}}
\newcommand{\Hmu}{\mathrm{H}_{\mu}}
\newcommand{\htop}{\mathrm{h}_{\mathrm{top}}}
\newcommand{\p}{\mathcal{P}}
\newcommand{\ptilde}{\tilde{\mathcal{P}}}
\newcommand{\xmax}{x^{+}}
\newcommand{\xmin}{x^{-}}
\newcommand{\xmaxn}{X^{+}_{n}}
\newcommand{\xminn}{X^{-}_{n}}
\newcommand{\xmaxinfty}{X^{+}_{\infty}}
\newcommand{\xmininfty}{X^{-}_{\infty}}
\newcommand{\nmax}{N^{+}}
\newcommand{\nmin}{N^{-}}
\newcommand{\spec}{\mathrm{Sp}}
\newcommand{\sn}{\zeta}
\newcommand{\Xmax}{X_{B,\mathrm{max}}}
\newcommand{\Xmin}{X_{B,\mathrm{min}}}
\newcommand{\xb}{X_{B}}
\newcommand{\rk}{\mathrm{rk}}
\newcommand{\finer}{\succcurlyeq}
\newtheorem{theorem}{Theorem}[section]
\newtheorem*{theorem*}{Theorem}
\newtheorem{corollary}[theorem]{Corollary}
\newtheorem*{corollary*}{Corollary}
\newtheorem{proposition}[theorem]{Proposition}
\newtheorem{lemma}[theorem]{Lemma}
\newtheorem*{lemma*}{Lemma}
\newtheorem{claim}{Claim}
\newtheorem*{claim*}{Claim}
\newenvironment{cproof}{\begin{proof}[Proof of the
		claim]}{\end{proof}}
\newtheorem{theoremletter}{Theorem}
\theoremstyle{definition}
\newtheorem{definition}[theorem]{Definition}
\newtheorem{remark}[theorem]{Remark}
\newtheorem{question}[theorem]{Question}
\newtheorem{example}[theorem]{Example}
\title{Odomutants and flexibility results for quantitative orbit equivalence}
\author{Corentin Correia}
\date{April 2, 2025}
\begin{document}
	
	\maketitle
	
	\begin{abstract}
		We introduce new systems that we call odomutants, built by distorting the orbits of an odometer. We use these transformations for flexibility results in quantitative orbit equivalence.\par
		It follows from the work of Kerr and Li that if the cocycles of an orbit equivalence are $\log$-integrable, the entropy is preserved. Although entropy is also an invariant of even Kakutani equivalence, we prove that this relation and $\ld^{<1/2}$ orbit equivalence are not the same, using a non-loosely Bernoulli system of Feldman which is an odomutant.\par
		We also show that Kerr and Li's result on preservation of entropy is optimal, namely we find odomutants of all positive entropies orbit equivalent to an odometer, with almost $\log$-integrable cocycles. We actually build a strong orbit equivalence between uniquely ergodic Cantor minimal homeomorphisms, so our result is a refinement of a famous theorem of Boyle and Handelman.\par
		We finally prove that Belinskaya's theorem is optimal for all the odometers, namely for every odometer, we find a odomutant which is almost-integrably orbit equivalent to it but not flip-conjugate. This yields an extension of a theorem by Carderi, Joseph, Le Maître and Tessera.
	\end{abstract}
	
	{
		\small	
		\noindent\textbf{{Keywords:}} Quantitative orbit equivalence, odometers, entropy, Kakutani equivalence. 
	}
	
	\smallskip
	
	{
		\small	
		\noindent\textbf{{MSC-classification:}}	
		Primary 37A35; Secondary 37A20.
	}
	
	\setcounter{tocdepth}{2}
	\tableofcontents
	
	\section{Introduction}
	
	Two ergodic probability measure-preserving bijections $S$ and $T$ on a standard atomless probability space $\espalg$, are \textit{orbit equivalent} if $S$ and some system $\Psi^{-1}T\Psi$ conjugate to $T$ have the same orbits up to measure zero. The isomorphism $\Psi$ is called an \textit{orbit equivalence} between $T$ and $S$.\par
	A stunning theorem of Dye~\cite{dyeGroupsMeasurePreserving1959} states that all ergodic measure-preserving bijections of a standard probability space are orbit equivalent. To get a more interesting theory, \textit{quantitative orbit equivalence} proposes to add quantitative restrictions on the \textit{cocycles} associated to orbit equivalence $\Psi$. These are integer-valued functions $c_S$ and $c_T$ defined by
	$$Sx=\Psi^{-1}T^{c_S(x)}\Psi(x)\text{ and }Tx=\Psi S^{c_T(x)}\Psi^{-1}(x),$$
	they are well-defined in the ergodic case. In this paper, we consider two quantitative forms of orbit equivalence: \textit{Shannon orbit equivalence} and $\varphi$\textit{-integrably orbit equivalence}, for maps $\varphi\colon\R_+\to\R_+$. Shannon orbit equivalence requires that there exists an orbit equivalence whose cocycles are Shannon, meaning that the partitions associated to $c_S$ and $c_T$ are both of finite entropy. For $\varphi$-integrable orbit equivalence, we ask that both integrals
	$$\int_X{\varphi(|c_S(x)|)\mathrm{d}\mu(x)}\text{ and }\int_X{\varphi(|c_T(x)|)\mathrm{d}\mu(x)}$$
	are finite.\par
	In this paper, when $\varphi(x)=x^p$, we are asking that both cocycles $c_S$ and $c_T$ are in $\ld^p$, and thus call it an $\ld^p$ orbit equivalence. Also when $c_S$ and $c_T$ are in $\ld^q$ for every $q<p$, we say that we have an $\ld^{<p}$ orbit equivalence. The notion of $\ld^p$ orbit equivalence can be traced back to the work of Bader, Furman and Sauer~\cite{baderIntegrableMeasureEquivalence2013} in the more general context of measure equivalence, while Shannon orbit equivalence was defined by Kerr and Li. Finally, $\varphi$-integrable orbit equivalence was first defined and studied by~\cite{delabieQuantitativeMeasureEquivalence2022}.\par
	The main goal is to understand which probability measure-preserving bijections are $\varphi$-integrably orbit equivalent or Shannon orbit equivalent. However the construction in the proof of Dye's theorem is not explicit and does not give any quantitative information on the cocycles. Then a more tractable question is the preservation of dynamical properties under these forms of quantitative orbit equivalence. In order to get flexibility results and then partially answer these questions, we introduce in this paper an explicit construction of orbit equivalence between odometers and systems with completely different properties, that we call \textit{odomutants}.\newline
	
	In recent years, odometers have been a central class of systems for explicit constructions, thanks to their combinatorial structure. For example, Kerr and Li \cite{kerrEntropyVirtualAbelianness2024} prove that every odometer is Shannon orbit equivalent to the universal odometer, providing concrete examples of Shannon orbit equivalent systems which are non conjugate. This result was generalized: we show in~\cite{correiaRankoneSystemsFlexible2024} that many rank-one systems (including the odometers and many irrational rotations) with various spectral and mixing properties are $\varphi$-integrably orbit equivalent to the universal odometer, with $\varphi\colon\R_+\to\R_+$ satisfying $\varphi(x)\underset{x\to +\infty}{=}o(x^{1/3})$. Finally, in order to show that the main result of~\cite[Theorem~1.1]{delabieQuantitativeMeasureEquivalence2022} is optimal in many examples, Delabie, Koivisto, Le Maître and Tessera provide concrete orbit equivalences between group actions\footnote{We do not give any definition in this setting, as the paper is only about probability measure-preserving bijections $S$, which can be seen as $\mathbb{Z}$-actions via $(n,x)\in \Z\times X\mapsto S^nx$.} built with F\o lner tilings (see~\cite[Section~6]{delabieQuantitativeMeasureEquivalence2022}). It turns out that we get a $\Z^k$-odometer in the case of the group $\Z^k$, thus highlighting how useful the combinatorial structures of such systems are.\par
	In our paper, the construction is also based on odometers, it is motivated by a construction by Feldman~\cite{feldmanNewKautomorphismsProblem1976}. The odomutants associated to the same odometer are explicitely built from successive distortions of its orbits, have the same point spectrum (Theorem~\ref{thspectrum}) but they can be completely different. They provide flexibility and optimality results: Theorems~\ref{thA},~\ref{corthB},~\ref{thB} and~\ref{thC} that we explain with more details in the following paragraphs.
	
	\paragraph{A theorem of preservation of entropy proved by Kerr and Li.}
	
	We may wonder whether Shannon or $\varphi$-integrable orbit equivalence are trivial or not. Kerr and Li proved that a well-known invariant of conjugacy, the measure-theoretic entropy, is an invariant of Shannon orbit equivalence.
	
	\begin{theorem*}[{\cite[Theorem~A]{kerrEntropyVirtualAbelianness2024}}]
		Entropy is preserved under Shannon orbit equivalence.
	\end{theorem*}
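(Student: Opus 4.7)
The statement is symmetric in $S$ and $T$, so the plan is to prove $\hmu(S) \le \hmu(T)$ (the reverse inequality follows by exchanging the roles of the two systems, using that $c_T$ is Shannon as well). First I would pull $T$ back through the orbit equivalence $\Psi$ and work in the setting where $S$ and $T$ live on the same space $\esp$ and share orbits a.e., so that $Sx = T^{c_S(x)}x$ and the Shannon hypothesis reads $\Hmu(\alpha) < \infty$ with $\alpha := \{c_S = k\}_{k\in\Z}$. Iterating the cocycle relation gives $S^i x = T^{c_S^{(i)}(x)}x$ where $c_S^{(i)} := \sum_{j=0}^{i-1} c_S \circ S^j$, and the crucial observation is that the sequence $(c_S^{(i)}(x))_{0 \le i \le n}$ is constant on every atom of $\alpha_n := \bigvee_{i=0}^{n-1} S^{-i}\alpha$.

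The core of the proof would then be a coding inequality. Fix a finite partition $P$ of $X$. On each atom $A \in \alpha_n$, the $P$-$S$-name of $x \in A$ over $[0,n-1]$ is the $P$-$T$-name of $x$ sampled at the deterministic positions $\{c_S^{(i)}(x):0\le i < n\}$; writing $M_n^\pm(A)$ for the min and max of these positions, this yields the atomwise refinement $\bigvee_{i=0}^{n-1}S^{-i}P\big|_A \preceq \bigvee_{k=M_n^-(A)}^{M_n^+(A)} T^{-k}P\big|_A$. Subadditivity and the Shannon bound give $\Hmu(\alpha_n) \le n\Hmu(\alpha) < \infty$, and integrating the conditional entropies would produce an inequality of the form
\[
\tfrac{1}{n}\Hmu\Big(\bigvee_{i=0}^{n-1}S^{-i}P\Big) \;\le\; \Hmu(\alpha) \;+\; \tfrac{1}{n}\int_X \Hmu\Big(\bigvee_{k=M_n^-(x)}^{M_n^+(x)} T^{-k}P \,\Big|\, \alpha_n(x)\Big)\,d\mu(x).
\]

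The main obstacle is controlling the right-hand side, since finite Shannon entropy of $\alpha$ does not force $c_S \in \ld^1$, so Birkhoff's theorem is not directly available to bound the random window length $M_n^+ - M_n^-$ linearly in $n$. The plan is to handle this by truncation: split $c_S = c_S \mathbf{1}_{\{|c_S|\le N\}} + c_S \mathbf{1}_{\{|c_S|>N\}}$, apply Birkhoff to the bounded piece to obtain linear window growth with a slope depending on $N$, and use the Shannon tail estimate $\sum_{|k|>N} -\mu\{c_S = k\}\log \mu\{c_S = k\} \to 0$ to show that the contribution of the large jumps is entropy-negligible. A Kac-type balance between the bounded parts of $c_S$ and $c_T$ (both Shannon by hypothesis) would then be used to replace the bounded-Birkhoff slope by the correct factor in front of $\hmu(T, P)$. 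Taking the supremum over finite partitions $P$ yields $\hmu(S) \le \hmu(T)$. The delicate point, where the full Shannon hypothesis is genuinely used rather than mere boundedness or $\ld^1$-integrability, is the tail control coupled with this balancing between the two cocycles.
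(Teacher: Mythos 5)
A preliminary remark: the paper does not prove this statement at all --- it is quoted as Theorem~A of Kerr and Li and used as a black box --- so there is no internal proof to compare yours with, and I can only assess the proposal on its own terms. Your reduction to a common space and the coding identity $S^ix=T^{c_S^{(i)}(x)}x$ are fine, but the argument has a gap that is fatal as written: the additive term $\Hmu(\alpha)$ in your displayed inequality survives the division by $n$ and the limit $n\to\infty$. Even if the second term on the right were controlled perfectly, you would only obtain $\hmu(S,P)\le \Hmu(\alpha)+C\,\hmu(T,P)$ for some constant $C$, which is strictly weaker than the theorem; in particular it cannot show that a system Shannon orbit equivalent to a zero-entropy system has zero entropy, since $\Hmu(\alpha)>0$ whenever the cocycle is not a.e.\ constant (precisely the situation of the odometer/odomutant pairs in this paper). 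Nothing in your sketch addresses removing this term, and the usual rescaling $S\mapsto S^m$ does not obviously help, since the entropy of the cocycle partition of $S^m$ is only bounded by $m\Hmu(\alpha)$. Eliminating this additive contribution is one of the genuinely hard points of Kerr and Li's proof, not a routine step.

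The second gap is the window control, which you correctly flag as the main obstacle but do not overcome. A Shannon cocycle need not be integrable, and by the classical zero--one law for stationary ergodic sequences ($\limsup_n\frac{1}{n}|f(S^nx)|$ is a.e.\ $0$ or $+\infty$ according to whether $f\in\ld^1$ or not), a non-integrable $c_S$ satisfies $\limsup_n\frac{1}{n}\max_{0\le i<n}|c_S(S^ix)|=+\infty$ a.e.; since the window length $M_n^+-M_n^-$ dominates the largest single jump along the orbit segment, $\limsup_n(M_n^+-M_n^-)/n=+\infty$ a.e. Truncation plus Birkhoff therefore cannot yield a linear bound on the window, and the claim that ``the contribution of the large jumps is entropy-negligible'' is the content of the theorem rather than a consequence of a tail estimate: a single jump of size $L$ spreads the $n$ sampling positions over a stretch of length $L$ of the $T$-orbit, and bounding the entropy of a $T$-name read at $n$ scattered positions by $n\,\hmu(T,P)+o(n)$ requires those positions to essentially fill intervals, which is again the integrable situation. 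Likewise, the ``Kac-type balance'' forcing the slope to equal $1$ (i.e.\ $\int c_S\,\mathrm{d}\mu=\pm1$) is a theorem about integrable cocycles and is not available here. Your outline identifies the right difficulties, but the proposed mechanisms do not resolve either of them.
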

	
	A connection between $\varphi$-integrable orbit equivalence and Shannon orbit equivalence is given by the following statement which is a consequence of~\cite[Lemma~3.15]{carderiBelinskayaTheoremOptimal2023}.
	
	\begin{lemma*}
		Let $f\colon X\to\Z$ be a measurable map. If it is $\log$-integrable, then it is Shannon.
	\end{lemma*}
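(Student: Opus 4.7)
The plan is to bound the Shannon entropy of the countable partition $\p_f := \{f^{-1}(n) : n\in\Z\}$ directly from the hypothesis. Setting $p_n := \mu(f^{-1}(n))$, the $\log$-integrability assumption reads $\sum_{n\in\Z} p_n \log(1+|n|) < \infty$, while the conclusion that $f$ be Shannon amounts to $\Hmu(\p_f) = -\sum_n p_n \log p_n < \infty$.

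The key tool I would use is Gibbs' inequality (non-negativity of the Kullback--Leibler divergence): for any probability distribution $(q_n)_{n\in\Z}$ with $q_n>0$,
$$-\sum_n p_n \log p_n \;\leq\; -\sum_n p_n \log q_n.$$
The idea is then to choose $(q_n)$ decaying just fast enough to be summable while keeping $-\log q_n$ dominated by $\log(1+|n|)$ up to lower-order terms. A convenient explicit choice is $q_n := Z^{-1}\bigl((1+|n|)(\log(2+|n|))^{2}\bigr)^{-1}$, where $Z<\infty$ normalizes the distribution. Then $-\log q_n = \log(1+|n|) + 2\log\log(2+|n|) + \log Z$, and Gibbs' inequality yields
$$\Hmu(\p_f) \;\leq\; \sum_n p_n \log(1+|n|) \;+\; 2\sum_n p_n \log\log(2+|n|) \;+\; \log Z.$$
The first sum is finite by the $\log$-integrability hypothesis, the second is dominated by the first (since $\log\log(2+|n|)\leq \log(1+|n|)$ for $|n|\geq 1$, while the term $n=0$ contributes a bounded amount), and the third is a constant. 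Hence $\Hmu(\p_f)<\infty$.

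I do not expect any real obstacle: the only genuine choice is the auxiliary distribution $(q_n)$, and any sequence of order $1/\bigl(|n|(\log|n|)^{1+\varepsilon}\bigr)$ for some $\varepsilon>0$ would do the job. Since the statement is advertised as a consequence of~\cite[Lemma~3.15]{carderiBelinskayaTheoremOptimal2023}, one may alternatively invoke that lemma directly; the Gibbs computation above is simply a self-contained rendering of the same underlying estimate.
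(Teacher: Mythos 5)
Your argument is correct, and there is nothing in the paper to compare it against: the paper does not prove this lemma at all, it simply records it as a consequence of Lemma~3.15 of~\cite{carderiBelinskayaTheoremOptimal2023}. Your Gibbs-inequality computation, with the reference distribution $q_n$ proportional to $\bigl((1+|n|)(\log(2+|n|))^{2}\bigr)^{-1}$, is the standard self-contained proof of exactly this statement, and the estimate $-\sum_n p_n\log q_n=\log Z+\sum_n p_n\log(1+|n|)+2\sum_n p_n\log\log(2+|n|)<\infty$ is carried out correctly. The only point worth making explicit is that you invoke Gibbs' inequality before knowing that $\Hmu(\p_f)$ is finite; this is harmless, but the clean way to phrase it is termwise: $-p_n\log p_n\leq -p_n\log q_n+(q_n-p_n)$ by $\log t\leq t-1$ applied to $t=q_n/p_n$, so summing over any finite set of indices gives $\Hmu(\p_f)\leq -\sum_n p_n\log q_n+1$, which is the finiteness you need without ever manipulating a possibly infinite quantity.
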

	
	As a consequence, $\varphi$-integrable orbit equivalence implies Shannon orbit equivalence when $\varphi$ is greater than $\log$ and, combined with Kerr and Li's theorem, we get the following result.
	
	\begin{theorem*}
		Let $\varphi\colon\R_+\to\R_+$ be a map satisfying $\log{t}\underset{t\to +\infty}{=}O(\varphi(t))$. Then entropy is preserved under $\varphi$-integrable orbit equivalence.
	\end{theorem*}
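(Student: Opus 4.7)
The plan is to show that, under the stated growth hypothesis on $\varphi$, any $\varphi$-integrable orbit equivalence is automatically a Shannon orbit equivalence, and then invoke Kerr and Li's theorem verbatim. The two cited auxiliary results do almost all of the work; what remains is a routine quantitative comparison between the integrability of $\varphi(|c|)$ and that of $\log(1+|c|)$ for a $\Z$-valued cocycle $c$.

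First I would unpack the hypothesis $\log t = O(\varphi(t))$ as $t \to +\infty$: there exist constants $C > 0$ and $t_0 \geq 1$ such that $\log t \leq C \varphi(t)$ for every $t \geq t_0$. Given an orbit equivalence $\Psi$ between $T$ and $S$ whose cocycles $c_S$ and $c_T$ satisfy $\int_X \varphi(|c_S|)\,\mathrm{d}\mu < \infty$ and $\int_X \varphi(|c_T|)\,\mathrm{d}\mu < \infty$, I would estimate $\int_X \log(1+|c_S(x)|)\,\mathrm{d}\mu(x)$ by splitting the integration domain according to whether $|c_S(x)| < t_0$ or $|c_S(x)| \geq t_0$. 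On the first piece the integrand is bounded by $\log(1+t_0)$, and on the second piece the bound $\log(1+|c_S(x)|) \leq \log 2 + C\varphi(|c_S(x)|)$ combined with $\varphi$-integrability yields finiteness. The same computation applies to $c_T$, so both cocycles are $\log$-integrable.

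The cited lemma (a consequence of \cite[Lemma~3.15]{carderiBelinskayaTheoremOptimal2023}) then promotes $\log$-integrability of each cocycle to the Shannon property, which is precisely the definition of $\Psi$ being a Shannon orbit equivalence. Applying Kerr and Li's theorem concludes that $\hmu(T) = \hmu(S)$. There is no genuine obstacle here; the only thing to watch is the benign additive constant coming from the bounded part of the integrals, which is handled uniformly.
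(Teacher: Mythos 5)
Your proposal is correct and follows exactly the route the paper takes: deduce $\log$-integrability of both cocycles from $\varphi$-integrability via the comparison $\log t = O(\varphi(t))$, upgrade to the Shannon property using the lemma derived from \cite[Lemma~3.15]{carderiBelinskayaTheoremOptimal2023}, and conclude with Kerr and Li's theorem. The paper states this only as an immediate consequence of those two results, and your write-up just supplies the routine splitting of the integral that makes the comparison precise.
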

	
	\paragraph{On non-preservation of even Kakutani equivalence.}
	
	Entropy is also preserved under \textit{even Kakutani equivalence} (see Section~\ref{PrelKak}). We may wonder whether there is a connection between this equivalence relation and Shannon orbit equivalence or $\varphi$-integrable orbit equivalence for a map $\varphi\colon\R_+\to\R_+$ satisfying $\log{t}\underset{t\to +\infty}{=}O(\varphi(t))$. Note that these quantitative forms of orbit equivalence are not equivalence relations \textit{a priori}. In the result below, $\ld^{<1/2}$ orbit equivalence means that the cocycles are in $\ld^p$ for every $p<\frac{1}{2}$.
	
	\begin{theoremletter}[See Theorem~\ref{thAbis}]\label{thA}
		There exists an ergodic probability measure-preserving bijection $T$ which is $\ld^{<1/2}$ orbit equivalent (in particular Shannon orbit equivalent) to the dyadic odometer but not evenly Kakutani equivalent to it.
	\end{theoremletter}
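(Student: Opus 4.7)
The plan is to realize Feldman's transformation from~\cite{feldmanNewKautomorphismsProblem1976}, originally designed as a zero-entropy system outside the loosely Bernoulli class, as an odomutant of the dyadic odometer $S$. Feldman's construction is essentially a cutting-and-stacking scheme whose $n$-th stage applies a carefully chosen permutation to the levels of a Rokhlin tower. Such permutations fit naturally into the odomutant framework of this paper: I would encode them as a coherent sequence of finite permutations on the levels of the Rokhlin towers of $S$ and take the limit to define $T$. By construction, $T$ and $S$ share orbits up to a null set, so the identity map on the underlying Cantor space is already an orbit equivalence $\Psi$ between them, and the cocycles $c_S$ and $c_T$ can be read off directly from the permutations stage by stage.

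Next I would estimate these cocycles. At the $n$-th step, a permutation within a tower of height $h_n$ can displace a point by at most $h_n$ positions, and it acts non-trivially only on a fraction of levels controlled by Feldman's cutting parameters. Summing the resulting tail contributions and choosing those parameters appropriately, one arranges that $\int |c_S|^p\,\mathrm{d}\mu$ and $\int |c_T|^p\,\mathrm{d}\mu$ are both finite for every $p<1/2$. In particular, both cocycles are $\log$-integrable, so by the lemma cited above from~\cite{carderiBelinskayaTheoremOptimal2023} the orbit equivalence is also Shannon.

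Kerr and Li's theorem then gives that $T$ has zero entropy. By Feldman's main theorem in~\cite{feldmanNewKautomorphismsProblem1976}, $T$ is not loosely Bernoulli. Since zero-entropy loosely Bernoulli transformations form a single even Kakutani equivalence class which contains all odometers, we conclude that $T$ is not evenly Kakutani equivalent to the dyadic odometer, as required.

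The main obstacle is the tension between two competing requirements on Feldman's parameters: they must grow fast enough that the $\bar{f}$-non-matching estimate guaranteeing failure of loose Bernoullicity survives the reinterpretation as an odomutant, and slowly enough that the cocycles remain in $\ld^p$ for every $p<1/2$. The exponent $1/2$ is precisely the threshold at which these two constraints can be simultaneously met, and a careful bookkeeping of permutation displacements as the odometer towers are refined is the core technical step.
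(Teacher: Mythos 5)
Your proposal follows essentially the same route as the paper: realize Feldman's non-loosely-Bernoulli system as an odomutant of the dyadic odometer, take the identity as the orbit equivalence, bound the cocycles by the tower heights $h_n$ (Feldman's parameters force $h_{n+1}\approx h_n^{2+o(1)}$, whence summability of $h_{n+1}^p/h_n$ exactly for $p<1/2$), and invoke Ornstein--Rudolph--Weiss to rule out even Kakutani equivalence since odometers are loosely Bernoulli. The only cosmetic difference is that you get zero entropy from Kerr--Li rather than from Feldman's direct argument, which is harmless since the non-preservation of loose Bernoullicity alone already yields the conclusion.
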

	
	We actually prove that $\ld^{<1/2}$ orbit equivalence does not preserve loose Bernoullicity\footnote{Loosely Bernoulli systems form a class of ergodic systems, which is closed under Kakutani equivalence (see Section~\ref{PrelKak}).}, so it does not imply Kakutani equivalence (weaker than even Kakutani equivalence). In~\cite{feldmanNewKautomorphismsProblem1976}, Feldman builds a zero-entropy ergodic system which is not loosely Bernoulli. This system, denoted by $T$, is actually an odomutant built from the dyadic odometer $S$ (this is the first example of odomutant and the starting point of our work). We prove that $S$ and $T$ are $\ld^{<1/2}$ orbit equivalent and Theorem~\ref{thA} follows from the fact that every odometer is loosely Bernoulli.
	
	\begin{remark}
		In~\cite{feldmanNewKautomorphismsProblem1976}, Feldman did not consider the question of the point spectrum of his non loosely Bernoulli system. As a corollary of Theorem~\ref{thspectrum}, we get that it has the same point spectrum as the dyadic odometer.
	\end{remark}
	
	\begin{question}
		Does there exists a sublinear map $\varphi\colon\R_+\to\R_+$ such that $\varphi$-integrable orbit equivalence implies Kakutani equivalence or even Kakutani equivalence? Such a map would be at least $x\mapsto x^{1/2}$. We may also wonder whether loose Bernoullicity is preserved under $\varphi$-integrable orbit equivalence for some sublinear map $\varphi$. Note that the case of a linear map $\varphi$ is straightforward, as a consequence of Belinskaya's theorem.
	\end{question}
	
	\paragraph{Optimality result for the preservation of entropy.}
	
	As stated above, $\varphi$-integrable orbit equivalence preserves entropy when the map $\varphi$ satisfies $\log{t}\underset{t\to +\infty}{=}O(\varphi(t))$. Theorem~\ref{corthB} shows that this result is almost sharp.
	
	\begin{theoremletter}\label{corthB}
		Let $(X,\mu)$ be a standard atomless probability space, let $\alpha$ be either a positive real number or $+\infty$, and let $S\in\aut$ be an odometer whose associated supernatural number $\prod_{p\in\Pi}{p^{k_p}}$ satisfies the following property: there exists a prime number $p_{\star}$ such that $k_{p_{\star}}=+\infty$. Then there exists a probability measure-preserving transformation $T\in\aut$ such that
		\begin{enumerate}
			\item $\hmu(T)=\alpha$;
			\item there exists an orbit equivalence between $S$ and $T$, which is $\varphi_m$-integrable for all integers $m\geq 0$,
		\end{enumerate}
		where $\varphi_m$ denotes the map $t\to \frac{\log{t}}{\log^{(\circ m)}{t}}$ and $\log^{(\circ m)}$ the composition $\log\circ\ldots\circ\log$ ($m$ times).
	\end{theoremletter}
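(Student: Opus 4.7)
The plan is to build $T$ as an odomutant of $S$ with carefully chosen block-permutations. Using the hypothesis $k_{p_{\star}}=+\infty$, one can extract a subsequence of Rokhlin towers for $S$ with heights $h_k$ that grow through arbitrarily high powers of $p_{\star}$; in particular, the ratios $h_{k+1}/h_k$ are entire powers of $p_{\star}$ and can be made as large as desired. The level-sets of these towers provide a natural generating filtration and a combinatorial scaffolding on which to inject both entropy and a small cocycle.

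The construction is then parametrised by a sequence of permutations $(\sigma_k)$: at scale $k$, inside each tower of height $h_{k+1}$, I would view it as a stacking of $h_{k+1}/h_k$ sub-towers of height $h_k$ and permute the order of these sub-towers according to $\sigma_k$. The odomutant $T$ is defined so that it agrees with $S$ on most of each tower and only differs at the junctions between sub-towers, which guarantees automatically that $S$ and $T$ have the same orbits. Two quantitative estimates remain:
\begin{enumerate}
	\item[(i)] Entropy: by using the refining sequence of tower partitions as a sequence of generators, $\hmu(T)$ is governed by the asymptotic density of combinatorial entropy $\log|\sigma_k|/h_{k+1}$. Choosing $\log|\sigma_k|\approx \alpha\, h_{k+1}$ (or a sequence $\alpha_k\to+\infty$ in the $\alpha=+\infty$ case) yields $\hmu(T)=\alpha$.
	\item[(ii)] Cocycles: the permutation $\sigma_k$ can move a point by at most $h_{k+1}$ positions in the $S$-orbit, and the set on which $|c_T|$ or $|c_S|$ exceeds $h_k$ is contained in the union of the junction slabs at scales $\geq k$, of total measure $O(1/h_k)$. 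Hence
	$$\int_X{\varphi_m(|c_S|)\,\mathrm{d}\mu}\leq C\sum_{k\geq 1}\frac{\varphi_m(h_{k+1})}{h_k}$$
	and similarly for $c_T$, so it suffices to make $(h_k)$ grow fast enough that this series converges for every $m$.
\end{enumerate}

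The main obstacle is the tension between (i) and (ii): positive entropy forces $|\sigma_k|$ to be essentially as large as combinatorially possible, and thus the cocycle to be of order $h_{k+1}$ on the junction slabs, so one really is on the verge of $\log$-integrability. The escape is precisely the flexibility granted by $k_{p_{\star}}=+\infty$: because one can pick $h_{k+1}/h_k$ as large as desired (in particular so that $\log h_{k+1}=o(\log h_k\cdot\log^{(\circ m)} h_k)^{-1}$ after the required rearrangement for each $m$ along a diagonal argument), the series in (ii) converges for every $m$ simultaneously, giving $\varphi_m$-integrability for all $m$.

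Once (i) and (ii) are established, it remains to check that this odomutant is indeed an orbit equivalence, which is immediate from the construction (the map $\Psi$ is the identity on $X$ and $T$ is defined within the $S$-orbit of each point), and that the same estimates bound $c_T$ by a symmetric argument exchanging the roles of $S$ and $T$. I expect step (ii) — quantifying the measure of the set where the cocycle is large and controlling it uniformly in $m$ — to be the most delicate, as it requires a joint diagonal choice of the scales $(h_k)$ that works for every iterated logarithm simultaneously; this is also where the hypothesis on $p_{\star}$ is used in an essential way.
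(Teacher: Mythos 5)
Your outline is essentially the paper's own strategy: an odomutant of $S$ obtained by permuting the $h_{k+1}/h_k$ sub-towers of height $h_k$ inside each tower of height $h_{k+1}$, the identity map as the orbit equivalence, and the cocycle estimate $\int_X\varphi_m(|c_S|)\,\mathrm{d}\mu\lesssim\sum_k\varphi_m(h_{k+1})/h_k$, which is exactly Condition~\ref{oecond2} of Theorem~\ref{oeq}. (One remark there: no diagonal argument over $m$ is needed; once $\log q_k/h_k\to\alpha$, the convergence of this series for every $m$ simultaneously is automatic, since $\log^{\circ m}(h_{k+1})\gtrsim e^{\beta h_{k+1-m}}$ — this is Lemma~\ref{summable}. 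Also, the growth of $h_{k+1}/h_k$ is not a free parameter you can enlarge at will: it is pinned by the entropy target $q_k\approx e^{\alpha h_k}$.) The genuine gap is in your step (i). Counting the words read from the bottoms of the towers only gives the lower bound $\hmu(T)\geq\lim\log q_k/h_k$. For the upper bound, a word of length $h_n$ read from an arbitrary starting level is the concatenation of the tail of one tower-word and the head of another, so the naive count is of order $h_nq_n^2$, which only yields $\hmu(T)\leq 2\alpha$; for an arbitrary family of pairwise distinct permutations nothing forces the entropy to be $\alpha$ rather than something in $[\alpha,2\alpha]$. The paper resolves this by choosing the permutations to act nontrivially only on a short initial segment $\{1,\ldots,i_n\}$ with $(i_n-1)!<q_{n+1}\leq i_n!$ (Lemma~\ref{prelemmaentr}), which collapses the number of straddling words to $h_{n-1}q_nq_{n-1}^2 2^{q_{n-1}}$ and gives the exact formula $\htop(T)=\lim\log q_n/h_n$ (Lemma~\ref{entr}). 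Without some such device your construction does not prescribe the entropy.

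Two further points you pass over. First, the hypothesis $k_{p_\star}=+\infty$ is not used merely to make the ratios $h_{k+1}/h_k$ large: one must realize the \emph{given} supernatural number as $\prod_n q_n$ while simultaneously forcing $\log q_n/h_n\to\alpha$ exactly and keeping $q_{n+1}\leq(q_n-2)!$ so that enough distinct permutations exist. Hitting the target $e^{\alpha h_n}$ within the available granularity is the content of Lemma~\ref{choiceqn}, which runs an arithmetic-progression argument in $\log q_{n+1}$ with common difference $\log K/h_{n+1}$ for a fixed power $K$ of $p_\star$; this is a real step, not bookkeeping. Second, the paper does not compute $\Hmu$ of the tower partitions directly: it builds $T$ as a Cantor minimal homeomorphism strongly orbit equivalent to $S$ (Proposition~\ref{exthom}), deduces unique ergodicity from Proposition~\ref{oeuniqueerg}, and transfers $\htop(T)=\alpha$ to $\hmu(T)=\alpha$ by the variational principle. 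A direct measure-theoretic computation would face the same straddling-word problem, since the bottoms of the towers carry total measure only $1/h_n$.
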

	
	The notion of supernatural number associated to an odometer is defined after Definition~\ref{supernatural}, it totally describes its conjugacy class. Examples of odometers $S$ to which this theorem applies are the dyadic odometer, more generally the $p$-odometer for every prime number $p$, or the universal odometer. In our proof, the transformation $T$ is an odomutant associated to $S$, we now explain how to build such a system.\par
	Theorem~\ref{corthB} is actually a corollary of Theorem~\ref{thB} which is stated in a topological framework. Indeed, to prove this corollary, the main idea was to use topological entropy instead, simpler than measure-theoretic entropy in this context, and connected to it via the variational principle. Moreover, for the topological entropy to be well-defined, we have to consider odomutants that can be extended as homeomorphisms on the Cantor set. We notice that we build a \textit{strong orbit equivalence}, namely an orbit equivalence between homeomorphisms on the Cantor set such that the equality of the orbits holds at every point of the space (and not up to measure zero), and whose associated cocycles each have at most one point of discontinuity.
	
	\begin{theoremletter}[See Theorem~\ref{thBbis}]\label{thB}
		Let $\alpha$ be either a positive real number or $+\infty$. Let $S$ be an odometer whose associated supernatural number $\prod_{p\in\Pi}{p^{k_p}}$ satisfies the following property: there exists a prime number $p_{\star}$ such that $k_{p_{\star}}=+\infty$. Then there exists a Cantor minimal homeomorphism $T$ such that
		\begin{enumerate}
			\item $\htop(T)=\alpha$;
			\item there exists a strong orbit equivalence between $S$ and $T$, which is $\varphi_m$-integrable for all integers $m\geq 0$,
		\end{enumerate}
		where $\varphi_m$ denotes the map $t\to \frac{\log{t}}{\log^{(\circ m)}{t}}$ and $\log^{(\circ m)}$ the composition $\log\circ\ldots\circ\log$ ($m$ times).
	\end{theoremletter}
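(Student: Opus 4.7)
The plan is to realize $T$ as an odomutant of $S$. Since $k_{p_{\star}}=+\infty$, we may present $S$ as an inverse limit of cyclic groups $\Z/n_k\Z$ whose ratios $a_k := n_k/n_{k-1}$ are all powers of $p_\star$, say $a_k=p_\star^{b_k}$ with the integers $b_k\geq 1$ at our disposal. Equivalently, $S$ appears as the Vershik map of a Bratteli-Vershik representation whose level-$k$ Kakutani-Rokhlin tower has height $n_k$ and $a_k$ sub-columns. For each $k$ we pick a permutation $\sigma_k$ of $\{0,1,\dots,a_k-1\}$ and let $T$ be the odomutant obtained by re-stacking the sub-columns in the order prescribed by $\sigma_k$. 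Such a $T$ is a Cantor minimal homeomorphism sharing its orbits with $S$ pointwise, and the identity is a strong orbit equivalence whose cocycles $c_S, c_T$ each have a single point of discontinuity.

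The cocycles are then controlled by the tower heights. The set of points with $|c_S(x)|>n_k$ is contained in the top floor of the level-$k$ tower, which has measure $1/n_k$; moreover $|c_S|\leq n_{k+1}$ on the set of points that are at the top of level $k$ but not of level $k+1$. Telescoping gives
\[
\int_X \varphi_m\bigl(|c_S(x)|\bigr)\,\mathrm{d}\mu(x) \;\leq\; \sum_{k\geq 0} \frac{\log n_{k+1}}{n_k\,\log^{(\circ m)} n_{k+1}},
\]
and the analogous estimate holds for $c_T$. Provided $b_k\geq 1$ and $p_\star\geq 2$, one has $n_k\geq 2^k$, and the right-hand side converges for every $m\geq 0$.

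It remains to tune the topological entropy. The entropy $\htop(T)$ is the exponential growth rate, in $k$, of the number $N_k$ of distinct $T$-names read on the level-$k$ towers, and a standard counting argument shows that $N_k$ is determined by the permutations $\sigma_1,\dots,\sigma_k$. Because $(a_k)!$ can be made arbitrarily large by enlarging $b_k$, any prescribed value $\alpha\in(0,+\infty]$ can be hit: for $\alpha<+\infty$ we select at each level a family of $\approx e^{\alpha n_k}$ admissible permutations $\sigma_k$, while for $\alpha=+\infty$ we arrange $\log(a_k!)/n_k\to+\infty$.

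The main difficulty is the simultaneous control of entropy and of the cocycles: a large $a_k$ is needed to reach a prescribed entropy, but enlarging $a_k$ also enlarges the set on which the cocycles are of order $n_{k+1}$. The hypothesis $k_{p_{\star}}=+\infty$ is precisely what supplies the required freedom, since the exponents $b_k$ alone determine both the block sizes and the permutation pools at every level, allowing us to balance the two constraints against each other.
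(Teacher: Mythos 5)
Your overall strategy is the paper's: realize $T$ as an odomutant of $S$, take the identity as the (strong) orbit equivalence, bound the cocycles by tower heights, and tune the entropy through the permutations. But there are three genuine gaps. First, you cannot present $S$ with ratios $a_k$ that are all powers of $p_\star$: the supernatural number is a complete conjugacy invariant of odometers, and the hypothesis only says $k_{p_\star}=+\infty$ — other primes may occur with positive or infinite multiplicity (e.g.\ the universal odometer). Replacing $S$ by the $p_\star$-odometer proves a different theorem. One must distribute the factors $p^{k_p}$, $p\neq p_\star$, among the $q_n$ \emph{while simultaneously} forcing $\log q_n/h_n\to\alpha$; this interleaving is exactly what Lemma~\ref{choiceqn} does and it is not automatic. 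Second, your entropy argument is not enough to get $\htop(T)=\alpha$ rather than $\htop(T)\geq\alpha$. (Also note that ``for each $k$ we pick a permutation $\sigma_k$'' describes a system conjugate to $S$, hence of zero entropy; an odomutant needs a \emph{family} of $q_{n+1}$ permutations at level $n$, indexed by the next coordinate.) Counting one $h_n$-name per distinct permutation gives only the lower bound $\lim\log q_n/h_n$. The delicate part is the matching \emph{upper} bound on the number of $h_n$-words over all starting points, including those straddling tower boundaries; this forces a careful choice of permutations with many common fixed points and is the content of Lemma~\ref{prelemmaentr}. You cannot dismiss it as ``a standard counting argument''.

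Third, the convergence of $\sum_k \frac{\log n_{k+1}}{n_k\,\log^{(\circ m)} n_{k+1}}$ does \emph{not} follow from $n_k\geq 2^k$. If $n_k$ grew only like $2^k$, then for $m=2$ the general term would be of order $\frac{\log n_{k+1}}{n_k}\cdot\frac{1}{\log k}$, and since positive entropy forces $\frac{\log n_{k+1}}{n_k}\to\alpha>0$, the series would diverge. What actually saves the estimate is precisely that $\log n_{k+1}\approx\alpha n_k$ makes $(n_k)$ grow like a tower of exponentials, whence $\log^{(\circ m)}(n_{k+m})\geq e^{\beta n_k}$ for large $k$ (Lemma~\ref{summable}); this is the point that must be made, and it ties the integrability of the cocycles to the entropy hypothesis rather than being a free consequence of $a_k\geq 2$. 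A smaller omission: for $T$ to extend to a homeomorphism of the Cantor set and for the cocycles to have a single point of discontinuity, the permutations must fix $0$ and $q_n-1$ (Proposition~\ref{exthom}); without this, $T$ is only defined almost everywhere and ``strong orbit equivalence'' does not make sense.
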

	
	In order to create topological entropy, we build an odomutant $T$ from the odometer $S$ in such a way that the dynamics of $T$ describes more words $\{\p(T^i(x))_{0\leq i\leq n-1}\mid x\in X\}$ than $S$ does, for partitions $\p$ in clopen sets that we will define\footnote{$\p(y)$ denotes the atom of the partition $\p$ which contains $y\in X$.}. Note that this is more or less the strategy applied by Feldman for the construction of a non loosely Bernoulli system, since loose Bernoullicity property also deals with the words produced by a system. Then Theorem~\ref{corthB} follows from Theorem~\ref{thB} and the variational principle since such a transformation $T$ is necessarily uniquely ergodic (see Proposition~\ref{oeuniqueerg}).\newline
	
	For the study of strong orbit equivalence, \textit{Bratteli diagrams} have played a crucial role. Every properly ordered Bratteli diagram provides a Cantor minimal homeomorphism, called a Bratteli-Vershik system. Conversely, Herman, Putnam and Skau proved in~\cite{hermanOrderedBratteliDiagrams1992} that every Cantor minimal homeomorphism is topologically conjugate to a \textit{Bratteli-Vershik system}. Moreover, using this characterization, Giordano, Putnam and Skau completely classified the Cantor minimal homeomorphisms up to strong orbit equivalence, using the \textit{dimension group} which turns out to be a complete invariant (see~\cite{giordanoTopologicalOrbitEquivalence1995}). We refer the reader to Appendix~\ref{secbrat} for a brief overview.\par
	An earlier version of Theorem~\ref{corthB} (and more generally Theorem~\ref{thB}) stated that there exists an odomutant with positive entropy which is orbit equivalent to an odometer with almost $\log$-integrable cocycles. Thanks to a suggestion of Thierry Giordano, we noticed that odomutants have already appeared in~\cite{boyleEntropyOrbitEquivalence1994}. Indeed Boyle and Handelman stated a result similar to Theorem~\ref{thB}, without any quantitative information on the cocycles.
	
	\begin{theorem*}[{\cite[Theorem~2.8 and Section~3]{boyleEntropyOrbitEquivalence1994}}]
		Let $S$ be the dyadic odometer. If $\alpha$ is a positive real number or $\alpha=+\infty$, then there exists a Cantor minimal homeomorphism $T$ such that:
		\begin{enumerate}
			\item $\htop(T)=\alpha$;
			\item $S$ and $T$ are strongly orbit equivalent.
		\end{enumerate}
	\end{theorem*}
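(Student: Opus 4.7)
The plan is to combine the Herman--Putnam--Skau representation of Cantor minimal systems as Bratteli--Vershik systems with the Giordano--Putnam--Skau classification of strong orbit equivalence by dimension groups (with distinguished order unit). The dyadic odometer $S$ has dimension group $(\Z[1/2],\Z[1/2]_+,1)$, so the second conclusion becomes equivalent, by Giordano--Putnam--Skau, to producing a Bratteli--Vershik system $T=(\xb,\phi_B)$ with this same dimension group. The problem thus reduces to: for each $\alpha\in(0,+\infty]$, construct a properly ordered Bratteli diagram $B$ whose dimension group is $(\Z[1/2],\Z[1/2]_+,1)$ and whose Vershik map has topological entropy $\alpha$.

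To construct such a $B$, I would start from the stationary diagram of the dyadic odometer (one vertex per level, two edges between consecutive levels) and refine it by inserting auxiliary intermediate levels whose composed incidence matrices factor the original edge multiplicity $(2)$. Because the dimension group is invariant under telescoping, this insertion preserves $(\Z[1/2],\Z[1/2]_+,1)$ while drastically multiplying the number of paths. The topological entropy of the resulting Bratteli--Vershik system can be computed from the Kakutani--Rokhlin tower structure: at each level $n$ the space $\xb$ is partitioned into towers of height $h_n$, and $\htop(T)$ is the limit of $\frac{1}{h_n}\log N_n$, where $N_n$ is the number of distinct words produced by $\phi_B$ in the canonical clopen partition indexed by the level-$n$ vertices. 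By tuning the widths of the intermediate blocks so that $N_n$ grows like $\exp(\alpha h_n)$, one targets $\htop(T)=\alpha$; for $\alpha=+\infty$ one lets the block sizes grow super-exponentially. One must also check that the chosen edge ordering is proper (unique minimal and maximal paths) so that the Vershik map is everywhere defined and yields a Cantor minimal homeomorphism.

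The main obstacle is reconciling the two constraints. The dimension group condition rigidly forces the intermediate incidence matrices to factor scalar multiplication by a power of $2$, yet realising a prescribed entropy requires enough combinatorial freedom to produce exponentially many paths of length $h_n$. The technical heart of the argument is to design intermediate matrices that are simultaneously \emph{wide enough} to generate the required word complexity and whose compositions telescope back to the correct dyadic structure. Once this is achieved, the two items follow: the dimension group computation gives the strong orbit equivalence, and the direct combinatorial count of words gives the entropy. This is precisely the flexibility that Boyle and Handelman exploit to realise every $\alpha\in(0,+\infty]$ within the strong orbit equivalence class of $S$, and it is the starting point upon which Theorem~\ref{thB} above refines by also controlling the size of the cocycles.
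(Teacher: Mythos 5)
Your strategy is sound and is essentially the \emph{original} Boyle--Handelman route: reduce strong orbit equivalence to an isomorphism of dimension groups with distinguished order unit via Giordano--Putnam--Skau, then build a properly ordered Bratteli diagram whose dimension group telescopes to $(\Z[1/2],\Z[1/2]_+,1)$ and whose Vershik map has word complexity $\exp(\alpha h_n)$. This is \emph{not} the route taken in the present paper, which instead realises the (stronger) Theorem~\ref{thB} by an explicit odomutant $T$ on the same Cantor set as $S$: the identity map is the orbit equivalence, the cocycles are computed in closed form (Proposition~\ref{cocycle}, Theorem~\ref{oeq}) and shown to have at most one discontinuity point (Proposition~\ref{exthom}), so strong orbit equivalence is obtained directly without ever invoking the dimension group. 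What the abstract route buys is brevity on the orbit-equivalence side (the group $\Z[1/2]$ is computed once and for all); what the explicit route buys is quantitative control of the cocycles, which is the whole point of Theorem~\ref{thB} and is invisible to the dimension-group argument. Appendix~\ref{secbrat} spells out the translation between the two pictures.

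That said, your sketch defers the combinatorial heart of the proof, and two points need real work. First, the entropy of a Bratteli--Vershik system is not a function of the unordered diagram: telescoping-compatible incidence data fix the dimension group, but the word count $N_n$ is created by choosing the linear orders on the sets $r^{-1}(v)$ to be pairwise distinct (or at least $\kappa$-to-one for controlled $\kappa$) across the vertices of a given level. You must verify that \emph{enough} distinct orderings exist: the number of admissible orders of $r^{-1}(v)$ is factorial in the level width, the number of vertices at the next level must stay a power of $2$ to keep the group equal to $\Z[1/2]$, and reconciling these is exactly the content of Lemma~\ref{powerK}, the constraint $q_{n+1}\leq (q_n-2)!$, and Lemma~\ref{to-one} here (and of Section~3 in Boyle--Handelman). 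Second, for finite $\alpha$ you need $\htop(T)=\alpha$ exactly, hence a matching \emph{upper} bound on $N(\p(\ell)_0^{h_n-1})$ for every $\ell$, not merely a lower bound at one scale; this is Lemma~\ref{prelemmaentr} combined with the delicate choice of the $q_n$ in Lemma~\ref{choiceqn}. Also beware the phrase ``drastically multiplying the number of paths'': the dyadic odometer's diagram already has $2^n$ paths to level $n$ and entropy zero, so path count alone is not the relevant quantity; only the diversity of the induced words is.
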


	Their proof exactly consists in building a Bratteli diagram of an odomutant associated to the dyadic odometer. We thus manage to give a similar statement but with quantitative information on the cocycles (Theorem~\ref{thB}). The case of the finite entropy is an improvement of our earlier proof, and the case of the infinite entropy is a translation of Boyle and Handelman's proof in our formalism.\par
	Another crucial point is that the orbit equivalence we build in our paper is explicit, whereas Boyle and Handelman use the dimension group and so establish the strong orbit equivalence in a more abstract way. The comparison between Boyle and Handelman's construction and our formalism will be detailed in Appendix~\ref{secbrat}.
	
	\paragraph{Optimality of Belinskaya's theorem.}
	
	Belinskaya's theorem~\cite{belinskayaPartitionsLebesgueSpace1969} states that if $S$ and $T$ are orbit equivalent and one of the two associated cocycles is integrable, then $S$ and $T$ are flip-conjugate, meaning that $S$ is conjugate to $T$ or $T^{-1}$. As a consequence, $\ld^1$ orbit equivalence is exactly flip-conjugacy. Since integrability exactly means $\varphi$-integrability for linear maps $\varphi$, it is interesting to study the sublinear case, as was done in~\cite{carderiBelinskayaTheoremOptimal2023}.
	
	\begin{theorem*}[{\cite[Theorem~1.3]{carderiBelinskayaTheoremOptimal2023}}]\label{cjlt}
		Let $\varphi\colon\R_+\to\R_+$ be a sublinear function\footnote{This means that $\lim\limits_{t\to +\infty}{\frac{\varphi(t)}{t}}=0$.}. Let $S$ be an ergodic probability measure-preserving transformation and assume that $S^n$ is ergodic
		for some $n\geq 2$. Then there is another ergodic probability measure-preserving transformation
		$T$ such that $S$ and $T$ are $\varphi$-integrably orbit equivalent but not flip-conjugate.
	\end{theorem*}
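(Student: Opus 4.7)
My plan is to build $T$ as an orbit-preserving perturbation of $S$ using a Rokhlin-tower construction, making the perturbation "cheap" in the $\varphi$-norm while introducing a qualitative feature that rules out flip-conjugacy.

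\textbf{Setup and cocycle budget.} Since $\varphi$ is sublinear, I pick sequences $\epsilon_k \to 0$ and $\ell_k \to \infty$ with
\[
\sum_{k\geq 1} \epsilon_k\,\varphi(\ell_k) < \infty \quad\text{but}\quad \sum_{k\geq 1} \epsilon_k\,\ell_k = \infty,
\]
which is possible precisely because $\varphi(t)/t\to 0$. The first sum bounds $\int \varphi(|c_S|)\,d\mu$, while the divergence of the second is what will eventually prevent $c_S \in \ld^1$, consistent with Belinskaya's theorem forcing any $\ld^1$ orbit equivalent system to be flip-conjugate.

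\textbf{Inductive perturbation.} At stage $k$ I apply the Rokhlin lemma (using that $S^n$ is ergodic for some $n \geq 2$ to freely choose tower heights compatible with the desired modifications) to obtain a tower $B_k, SB_k, \ldots, S^{h_k-1}B_k$ of height $h_k \gg \ell_k$ with $\mu(B_k) \approx 1/h_k$. I define $T$ to agree with $S$ everywhere, except on a subset $E_k$ of level $j_k$ of measure $\epsilon_k$, where I swap the action with level $j_k+\ell_k$: on $E_k$, $T$ jumps ahead by $\ell_k$ steps, and symmetrically $T^{-1}$ compensates at the far level. Each local swap preserves orbits (only two levels of a tower are permuted), so the identity is an orbit equivalence between $S$ and $T$; moreover $c_S$ takes values $\pm \ell_k$ on $E_k$ and zero elsewhere, yielding the $\varphi$-integrability budget above. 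A standard Borel-Cantelli argument with the $\epsilon_k$ summable ensures only finitely many swaps affect almost every point, so $T$ is well-defined and ergodic (orbit equivalent to an ergodic map).

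\textbf{Defeating flip-conjugacy.} This is the main obstacle. I enumerate a weakly dense sequence of isomorphisms $(\Phi_k)_{k\geq 1}$ of $\espalg$ and pick a clopen/measurable test set $A$. At stage $k$ I choose the swap parameters so that a chosen distinguishing statistic — e.g. the return-time distribution of $T$ to $A$ along a scale $t_k$ associated to the tower, or a rigidity quantity $\mu(T^{t_k} A \triangle A)$ — is forced away from the corresponding statistic of $\Phi_k^{-1} S^{\pm 1} \Phi_k$. The freedom to pick $h_k$ (available because $S^n$ is ergodic for the chosen $n$, which lets me find towers of coprime heights to any given period) is what makes this separation survive further stages; the smallness of $\epsilon_k$ ensures later stages do not undo the separation at scale $t_k$.

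\textbf{Main difficulty.} The hard part is reconciling Step 3 with the quantitative constraint: each separation of $T$ from $\Phi_k^{-1}S^{\pm 1}\Phi_k$ forces a certain minimal amount of reordering at scale $\ell_k$, and one must verify that this minimal cost fits into the $\varphi$-budget $\sum \epsilon_k \varphi(\ell_k) < \infty$. Here sublinearity is critical: it provides enough slack to take $\ell_k$ growing much faster than $1/\epsilon_k$, so arbitrarily coarse rearrangements remain affordable. Once the separation statistic is a flip-conjugacy invariant and is preserved in the limit $T$, non-flip-conjugacy follows, completing the proof.
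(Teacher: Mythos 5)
This statement is quoted from Carderi--Joseph--Le Ma\^itre--Tessera and is not proved in the paper; the closest internal analogue is Theorem~\ref{thCbis}, whose proof is structurally very different from yours. There, $T$ is built as an explicit extension of the odometer $S$ (an odomutant) with a concrete factor map $\psi$ that is shown \emph{not} to be injective almost everywhere (Lemma~\ref{notinj}), and non-flip-conjugacy then falls out of coalescence of discrete-spectrum systems (Theorem~\ref{thcoalescent}) plus Halmos--von Neumann to dispose of $S^{-1}$; no enumeration of isomorphisms is needed. Your quantitative bookkeeping in the first two steps is broadly in the right spirit (sublinearity giving $\sum\epsilon_k\varphi(\ell_k)<\infty$ while $\sum\epsilon_k\ell_k=\infty$, orbit-preserving swaps inside Rokhlin towers), modulo minor slips: you need $\sum\epsilon_k<\infty$ explicitly for Borel--Cantelli (it does not follow from $\sum\epsilon_k\varphi(\ell_k)<\infty$ for a general sublinear $\varphi$), the cocycle is $1$ rather than $0$ off the swapped sets, and both cocycles $c_S$ and $c_T$ must be estimated.

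The genuine gap is your third step. Ruling out flip-conjugacy is the entire content of the theorem, and ``choose the swap parameters so that a distinguishing statistic is forced away from that of $\Phi_k^{-1}S^{\pm1}\Phi_k$'' is a statement of intent, not an argument: you never specify the statistic, never show it is a flip-conjugacy invariant that can be read off from finitely many stages and is preserved in the limit, and never justify why defeating a countable weakly dense family $(\Phi_k)$ defeats \emph{all} isomorphisms (this requires the statistic to be continuous in $\Phi$ for the weak topology with quantitative control, which is exactly where such diagonalizations usually break). Note also that divergence of $\sum\epsilon_k\ell_k$ only prevents \emph{this particular} orbit equivalence from having integrable cocycles; it says nothing about other potential conjugacies. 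Finally, your stated use of the hypothesis that $S^n$ is ergodic for some $n\geq 2$ is off: Rokhlin towers of arbitrary height exist for every aperiodic transformation, so that hypothesis buys you nothing about tower heights. In the cited proof it is used to manufacture an honest invariant --- roughly, arranging that $T$ acquires the eigenvalue $e^{2i\pi/n}$ (equivalently $T^n$ is not ergodic) while $S^n$ is ergodic --- and some such concrete invariant (spectral, or the coalescence argument of this paper) is what your proof is missing.
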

	
	The authors asked whether this holds for a system $S$ such that $S^n$ is non-ergodic for all $n\geq 2$. The following statement provides an answer for the odometers which satisfy this property.
	
	\begin{theoremletter}[See Theorem~\ref{thCbis}]\label{thC}
		Let $\varphi\colon\R_+\to\R_+$ be a sublinear map and $S$ an odometer. There exists a probability measure-preserving transformation $T$ such that $S$ and $T$ are $\varphi$-integrably orbit equivalent but not flip-conjugate.
	\end{theoremletter}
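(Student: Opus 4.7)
The plan is to realize $T$ as an odomutant of $S$, with the distortion schedule tuned to the given sublinear $\varphi$. Since the odomutant construction produces transformations that share the orbits of $S$ by definition, the orbit equivalence is built in; what remains is to check that the cocycles can be made $\varphi$-integrable and that $T$ is neither conjugate to $S$ nor to $S^{-1}$.

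For the $\varphi$-integrability, I would fix a tower representation of $S$ and, at each level $k$, choose a column permutation $\sigma_{k}$ whose maximal displacement is some integer $d_{k}$ and whose non-trivial action occurs on a set of mass $\mu_{k}$. The contribution of the $k$-th level to $\int \varphi(|c_{S}|)\,\mathrm{d}\mu$ is then essentially of order $\varphi(d_{k})\mu_{k}$, and by sublinearity we can choose the sequences $(d_{k})$ and $(\mu_{k})$ so that $d_{k}\to\infty$ (ensuring that the odomutant is genuinely non-trivial) while $\sum_{k}\varphi(d_{k})\mu_{k}<\infty$, yielding $\varphi$-integrability of $c_{S}$, and symmetrically of $c_{T}$.

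To guarantee non-flip-conjugacy I would split into two regimes according to the growth of $\varphi$. If $\varphi(t)=O(\log t)$, no entropy-preservation constraint applies and I would adapt the distortion recipe from Theorem~\ref{thB} to force $T$ to have positive topological entropy; since $S$ has zero entropy, $T$ cannot be flip-conjugate to it. If instead $\log t=O(\varphi(t))$, entropy is preserved by the Kerr--Li theorem recalled above, so I would adapt the Feldman-style shuffle of Theorem~\ref{thA} to produce an odomutant $T$ that fails loose Bernoullicity; since every odometer is loosely Bernoulli and this property is invariant under flip-conjugacy (being a Kakutani invariant), this rules out $T\cong S^{\pm 1}$.

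The main obstacle I expect is the second regime. Feldman's construction in Theorem~\ref{thA} only yields $\ld^{<1/2}$ orbit equivalence, whereas $\varphi$ here may grow almost linearly, so the displacements $d_{k}$ must be much smaller. This forces a sharpening of the Hamming-type combinatorial estimates that detect non-loose-Bernoullicity, to make them survive with milder distortion. Should that prove too restrictive, a natural alternative is to exploit Theorem~\ref{thspectrum}, which controls only the point spectrum: arranging for $T$ to carry a continuous spectral component absent from $S$ already excludes conjugacy of $T$ with the pure-point-spectrum odometer $S$ or with its inverse.
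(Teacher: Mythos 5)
Your overall framework (realize $T$ as an odomutant, tune the distortion schedule to $\varphi$) matches the paper, and your accounting of the cocycle integral is in the right spirit. But the mechanism you propose for ruling out flip-conjugacy has a genuine gap, and you essentially admit it: in the regime where $\log t=O(\varphi(t))$ and $\varphi$ is close to linear, neither the entropy route nor the Feldman route survives, because any invariant of that "robust" kind (entropy, loose Bernoullicity) requires distortions far too violent to keep the cocycles $\varphi$-integrable. Your fallback via a continuous spectral component is also unsubstantiated: Theorem~\ref{thspectrum} only pins down the \emph{point} spectrum of an odomutant, it gives no tool for producing or detecting a continuous component, and there is no reason such a component could be created with almost-integrable cocycles. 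In addition, your dichotomy is not exhaustive (a sublinear $\varphi$ can oscillate so that neither $\varphi=O(\log)$ nor $\log=O(\varphi)$ holds), and your first regime relies on Theorem~\ref{thB}, which requires the supernatural number of $S$ to have a prime with infinite exponent, whereas Theorem~\ref{thC} must cover \emph{all} odometers (e.g.\ $q_n=p_n$ the $n$-th prime).

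The paper's proof avoids all of this with a different key idea: \emph{coalescence}. Every odometer is coalescent (Theorem~\ref{thcoalescent}), and every odomutant comes with an explicit factor map $\psi$ onto its odometer. Hence to rule out conjugacy it suffices to make $\psi$ fail to be injective on a set of positive measure; no conjugacy invariant needs to be changed at all. Lemma~\ref{notinj} achieves this with the cyclic shifts $\sigma^{(n)}_{x_{n+1}}(i)=i+x_{n+1}\bmod q_n$, exhibiting a nontrivial measure-preserving $\theta$ with $\psi\circ\theta=\psi$ on a positive-measure set; since $S$ is conjugate to $S^{-1}$ by Halmos--von Neumann, non-conjugacy already gives non-flip-conjugacy. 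Crucially, these permutations displace all but one point of each $\{0,\dots,q_n-1\}$ by exactly $1$, so via Condition~\ref{oecond1} of Theorem~\ref{oeq} the cocycle integral is controlled by $\sum_n\varphi(2h_n)/h_n$, which sublinearity makes summable after telescoping the odometer so that $(q_n)$ grows fast enough. This is the idea your proposal is missing, and without it (or a substitute of comparable strength) the argument does not close.
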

	
	As in the proofs of Theorems~\ref{thA} and~\ref{thB}, the counter-example $T$ for Theorem~\ref{thC} is again an odomutant associated to $S$. To ensure that $S$ and $T$ are not flip-conjugate, we notice that an odometer is a factor of its associated odomutants, and we use the property of \textit{coalescence} for the odometers, which states that an extension of an odometer is conjugate to it if and only if every factor map associated to this extension is an isomorphism.
	
	\begin{remark}
		Note that a probability measure-preserving transformation $S$ such that $S^n$ is non-ergodic for every $n\geq 2$ factors onto some odometer. It would be interesting to combine the proof of Theorem~\ref{thC} with this remark so as to completely remove the assumption that $S^n$ is ergodic for some $n\geq 2$ in~\cite[Theorem~1.3]{carderiBelinskayaTheoremOptimal2023}.
	\end{remark}
	
	\paragraph{Outline of the paper.} After a few preliminaries in Section \ref{secPrel}, we introduce the notion of odomutants in Section \ref{secOdo}, we study its measure-theoretic and topological properties, and the orbit equivalence with their associated odometers. Theorems~\ref{thA},~\ref{thB} and~\ref{thC} are respectively proven in Sections~\ref{secA},~\ref{secB} and~\ref{secC}. Appendix~\ref{seccombi} deals with combinatorial results preparing for the proof of Theorem~\ref{thB}. In Appendix~\ref{secbrat}, we describe odomutants as Bratteli-Vershik systems and compare our proof of Theorem~\ref{thB} with the proof of Boyle and Handelman's theorem in~\cite{boyleEntropyOrbitEquivalence1994}. Finally Appendix~\ref{secappLB} is devoted to prove the well-known (but left unproved in the literature) equivalence between definitions of loose Bernoullicity in the zero-entropy case.
	
	\paragraph{Acknowledgements.}
	
	I thank my advisors François Le Maître and Romain Tessera for their support and valuable advice on writing this paper. I also thank Fabien Durand and Samuel Petite for fruitful discussion about Cantor minimal systems. Finally, I am very greatful to Thierry Giordano for enlightening conversations about Boyle and Handelman's works and more generally the notion of strong orbit equivalence.
	
	\section{Preliminaries}\label{secPrel}
	
	\subsection{Basic definitions in ergodic theory}\label{PrelDef}
	
	\paragraph{In a measure-theoretic framework.}
	The author may refer to~\cite{kerrErgodicTheoryIndependence2016} and~\cite{vianaFoundationsErgodicTheory2016} for complete surveys about the notions introduced in this section.\par
	The probability space $\espalg$ is assumed to be standard and atomless. Such a space is isomorphic to $\esplebalg$, i.e.~there exists a bimeasurable bijection $\Psi\colon X\to [0,1]$ (defined almost everywhere) such that $\Psi_{\star}\mu=\mathrm{Leb}$, where $\Psi_{\star}\mu$ is defined by $\Psi_{\star}\mu(A)=\mu(\Psi^{-1}(A))$ for every measurable set $A$. We consider maps $T\colon X\to X$ acting on this space and which are bijective, bimeasurable and \textbf{probability measure-preserving} (\textbf{p.m.p.}), meaning that $\mu(T^{-1}(A))=\mu(A)$ for all measurable sets $A\subset X$, and the set of these transformations is denoted by $\autalg$, or simply $\aut$, two such maps being identified if they coincide on a measurable set of full measure. In this paper, elements of $\aut$ are called \textbf{transformations} or (\textbf{dynamical}) \textbf{systems}.\par
	A measurable set $A\subset X$ is $T$\textbf{-invariant} if $\mu(T^{-1}(A)\Delta A)=0$, where $\Delta$ denotes the symmetric difference. The system $T\in\aut$ is $(\mu$\textbf{-)ergodic}, or $\mu$ is $T$\textbf{-ergodic}, if every $T$-invariant set is of measure $0$ or $1$. If $T$ is ergodic, then $T$ is \textbf{aperiodic}, i.e.~$T^n(x)\not =x$ for almost every $x\in X$ and for every $n\in\Z\setminus\{0\}$, or equivalently the $T$\textbf{-orbit} of $x$, denoted by $\orb_T(x)\coloneq\{T^n(x)\mid n\in\Z\}$, is infinite for almost every $x\in X$. A transformation $T$ is \textbf{uniquely ergodic} on $X$ if it admits a unique $T$-invariant probability measure $\mu$. In this case, $\mu$ is $T$-ergodic since in full generality the extremal points of the convex set of $T$-invariant probability measures are exactly the ergodic ones.\par
	Denoting by $\ld^2\espalg$ the space of complex-valued and square-integrable functions defined on $X$, a complex number $\lambda$ is an \textbf{eigenvalue} of $T$ if there exists $f\in\ld^2\espalg\setminus\{0\}$ such that $f\circ T=\lambda f$ almost everywhere ($f$ is then called an \textbf{eigenfunction}). An eigenvalue $\lambda$ is automatically an element of the unit circle $\T\coloneq\{z\in\C\mid |z|=1\}$. The \textbf{point spectrum} of $T$, denoted by $\spec(T)$, is then the set of all its eigenvalues. Notice that $\lambda=1$ is always an eigenvalue since the constant functions are in its eigenspace. Moreover $T$ is ergodic if and only if the constant functions are the only eigenfunctions with eigenvalue one, in other words the eigenspace of $\lambda=1$ is the line of constant functions (we say that it is a simple eigenvalue). Finally, a system has \textbf{discrete spectrum} if the span of all its eigenfunctions is dense in $\ld^2\espalg$.\par
	All the properties that we have introduced are preserved under conjugacy. Two transformations $T\in\aut$ and $S\in\mathrm{Aut}(Y,\nu)$ are \textbf{conjugate} if there exists a bimeasurable bijection $\Psi\colon X\to Y$ such that $\Psi_{\star}\mu=\nu$ and $\Psi\circ T=S\circ\Psi$ almost everywhere. Some classes of transformations have been classified up to conjugacy, the two examples to keep in mind are the following. By Ornstein~\cite{ornsteinBernoulliShiftsSame1970}, entropy is a total invariant of conjugacy among Bernoulli shifts (entropy will be introduced in Section~\ref{PrelEntr}). Moreover Halmos and von Neumann~\cite{halmosOperatorMethodsClassical1942} prove that two ergodic systems with discrete spectrums are conjugate if and only if they have equal point spectrums. For example, the odometers (introduced in Section~\ref{PrelOdo}) have discrete spectrum and this theorem enables us to classify them up to conjugacy.\par
	Transformations $T$ and $S$ are said to be \textbf{flip-conjugate} if $T$ is conjugate to $S$ or to $S^{-1}$. Since the point spectrum forms a circle subgroup, the Halmos-von Neumann theorem actually states that the point spectrum is a total invariant of flip-conjugacy in the class of ergodic discrete spectrum systems. Therefore we are able to classify the odometers up to flip-conjugacy.\par
	We say that $S$ is a \textbf{factor} of $T$, or $T$ is an \textbf{extension} of $S$, if there exists a measurable map $\Psi\colon X\to Y$ which is onto and such that $\Psi_{\star}\nu=\mu$ and $S\circ\Psi=\Psi\circ T$ almost everywhere. The map $\Psi$ is called a \textbf{factor map} from $T$ to $S$.
	
	\paragraph{In a topological framework.}
	
	The notions that we have introduced are part of a measure-theoretic setting. On the topological side, a \textbf{topological (dynamical) system} is a continuous map $T\colon X\to X$ on a topological space $X$ (usually $X$ is assumed to be compact). Two topological systems $T$ and $S$, respectively on topological spaces $X$ and $Y$, are \textbf{topologically conjugate} if there exists a homeomorphism $\Psi\colon X\to Y$ such that $\Psi\circ T=S\circ\Psi$ on $X$. A topological system is \textbf{minimal} if every orbit is dense. In this paper, we will only consider \textbf{Cantor minimal homeomorphisms}, namely minimal invertible topological systems on the Cantor set.\par
	In this paper, "systems", "conjugacy", "entropy" will always refer to the measure-theoretic setting. For the topological setting, we will always specify "topological system", "topological conjugacy", "topological entropy".
	
	\subsection{Measurable partitions}\label{Prelpart}
	
	A set $\mathcal{P}$ of measurable subsets of $X$ is a \textbf{measurable partition} of $X$ if:
	\begin{itemize}
		\item for every $P_1,P_2\in\p$, we have $\mu(P_1\cap P_2)=0$;
		\item the union $\bigcup_{P\in\p}{P}$ has full measure.
	\end{itemize}
	The elements of $\p$ are called the \textbf{atoms}. If $\p$ and $\mathcal{Q}$ are measurable partitions of $\esp$, we say that $\p$ \textbf{refines} (or is a refinement of, or is finer than) $\mathcal{Q}$, denoted by $\p\finer\mathcal{Q}$, if every atom of $\mathcal{Q}$ is a union of atoms of $\p$ (up to a null set). More generally, their \textbf{joint partition} is
	$$\p\vee\mathcal{Q}\coloneq\{P\cap Q\mid P\in\p, Q\in\mathcal{Q}\},$$
	namely the coarsest partition which refines $\p$ and $\mathcal{Q}$.\par
	A measurable partition $\p$ defines almost everywhere a map $\p(.)\colon X\to\p$ where $\p(x)$ is the atom of $\p$ which contains $x$. Given a measurable map $T\colon X\to X$, $\p$ provides \textbf{coding maps}
	$$[\mathcal{P}]_{i,n}\colon x\in X\mapsto (\p(T^jx))_{i\leq j\leq n}\in\p^{\{i,\ldots,n\}}.$$
	In particular, $[\mathcal{P}]_n(x)\coloneq[\mathcal{P}]_{0,n-1}(x)$ is the $n$\textbf{-word} of $x$.\par
	Given atoms $P_i,P_{i+1},\ldots,P_n$ of $\p$, the equality $[\mathcal{P}]_{i,n}(x)=(P_i,\ldots,P_n)$ exactly means that $x$ is an element of $T^{-i}(P_i)\cap T^{-(i+1)}(P_{i+1})\cap\ldots\cap T^{-n}(P_n)$. Therefore the partition which gives the values of $[\mathcal{P}]_{i,n}$ is the following joint partition
	$$\p_i^{n}\coloneq\bigvee_{j=i}^{n}{T^{-j}(\p)}$$
	with $T^{-j}(\p)\coloneq\{T^{-j}(P)\mid P\in\p\}$, this is a division of the space given by the dynamics of $T$, over the timeline $\{i,\ldots,n\}$ and with respect to $\p$.
	
	\subsection{Measure-theoretic entropy, topological entropy}\label{PrelEntr}
	
	Here we present two notions of entropy. For more details, the reader may refer to~\cite{downarowiczEntropyDynamicalSystems2011} and~\cite{kerrErgodicTheoryIndependence2016}.
	
	\paragraph{Measure-theoretic entropy.}
	
	Entropy, or measure-theoretic entropy, or metric entropy, of a measurable transformation is an invariant of conjugacy. To define it, we first define the entropy of a partition, which then enables us to quantify how much a transformation complexifies the partitions.\par
	Let $T$ be a system on $\esp$, not necessarily invertible, and $\p$ a finite measurable partition of $X$. Let us define the \textbf{entropy of }$\p$ by
	$$\Hmu (\p)\coloneq-\sum_{P\in\p}{\mu(P)\log{\mu(P)}},$$
	where $\mu(P)\log{\mu(P)}=0$ if $P$ is a null set. This is a positive real number. The following quantity
	$$\hmu(T,\p)\coloneq\lim_{n\to +\infty}{\frac{\Hmu(\p_0^{n-1})}{n}}$$
	is well-defined, this is the \textbf{entropy of }$T$\textbf{ with respect to }$\p$, and it tells us how quickly the dynamics of $T$ is dividing the space $X$ with the partition $\p$. Finally, let us define the \textbf{entropy of }$T$ by
	$$\hmu(T)\coloneq\sup_{\p}{\hmu(T,\p)},$$
	where the supremum is over all the finite measurable partitions $\p$ of $X$. This quantity is non-negative and can be infinite.\par
	The following result, due to Kolmogorov and Sinaï, enables us to prove the well-known fact that the odometers have zero entropy (see Section~\ref{PrelOdo}).
	
	\begin{theorem}[{\cite[after Definition~4.1.1]{downarowiczEntropyDynamicalSystems2011}}]\label{propmesentr}
		Let $(\mathcal{P}_k)_{k\geq 0}$ be an increasing sequence of partitions which generates the $\sigma$-algebra of $X$ (up to restriction to full-measure sets). Then we have
		$$\hmu\left (T,\mathcal{P}_k\right )\underset{k\to +\infty}{\to}\hmu(T).$$
	\end{theorem}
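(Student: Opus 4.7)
The plan is to prove the two inequalities $\limsup_k \hmu(T,\mathcal{P}_k) \leq \hmu(T)$ and $\liminf_k \hmu(T,\mathcal{P}_k) \geq \hmu(T)$ separately, with the first being immediate from the definition and the second being the substantial direction.

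For the easy direction, I would first recall the monotonicity principle: if $\mathcal{P} \finer \mathcal{Q}$, then $\Hmu(\mathcal{P}_0^{n-1}) \geq \Hmu(\mathcal{Q}_0^{n-1})$, so $\hmu(T,\mathcal{P}) \geq \hmu(T,\mathcal{Q})$. Since the sequence $(\mathcal{P}_k)_k$ is increasing, the sequence $\hmu(T,\mathcal{P}_k)$ is non-decreasing, and each term is bounded above by $\hmu(T)$ by definition, so the limit exists and is at most $\hmu(T)$.

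For the reverse direction, the plan is to show that for every finite measurable partition $\mathcal{Q}$ of $X$, one has $\hmu(T,\mathcal{Q}) \leq \lim_k \hmu(T,\mathcal{P}_k)$, and then take the supremum over $\mathcal{Q}$. The key tool is the standard subadditivity-type inequality
\[
\hmu(T,\mathcal{Q}) \leq \hmu(T,\mathcal{P}_k) + \Hmu(\mathcal{Q} \mid \mathcal{P}_k),
\]
where $\Hmu(\mathcal{Q}\mid\mathcal{P}_k) = -\sum_{P\in\mathcal{P}_k}\mu(P)\sum_{Q\in\mathcal{Q}}\frac{\mu(Q\cap P)}{\mu(P)}\log\frac{\mu(Q\cap P)}{\mu(P)}$ is the conditional entropy. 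This inequality follows by applying $\Hmu(\mathcal{Q}_0^{n-1}) \leq \Hmu(\mathcal{P}_k{}_0^{n-1}) + \Hmu(\mathcal{Q}_0^{n-1} \mid \mathcal{P}_k{}_0^{n-1})$ together with the bound $\Hmu(\mathcal{Q}_0^{n-1} \mid \mathcal{P}_k{}_0^{n-1}) \leq n \, \Hmu(\mathcal{Q}\mid\mathcal{P}_k)$ (by $T$-invariance of $\mu$ and subadditivity of conditional entropy), then dividing by $n$ and letting $n\to\infty$.

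Thus it remains to show that $\Hmu(\mathcal{Q} \mid \mathcal{P}_k) \to 0$ as $k \to \infty$. This is exactly where the generating hypothesis comes in. The main (and essentially only) obstacle is this step: one applies the martingale convergence theorem to the conditional expectations $\mathbb{E}[\mathbf{1}_Q \mid \sigma(\mathcal{P}_k)]$ for each $Q \in \mathcal{Q}$; since $\bigvee_k \sigma(\mathcal{P}_k)$ is the full $\sigma$-algebra (up to null sets), these conditional expectations converge almost surely and in $\ld^1$ to $\mathbf{1}_Q$. Using continuity of $x \mapsto -x\log x$ on $[0,1]$ and dominated convergence, this yields $\Hmu(\mathcal{Q} \mid \mathcal{P}_k) \to 0$. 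Combining this with the displayed inequality gives $\hmu(T,\mathcal{Q}) \leq \lim_k \hmu(T,\mathcal{P}_k)$, and taking the supremum over $\mathcal{Q}$ concludes.
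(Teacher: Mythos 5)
Your proof is correct and is the standard Kolmogorov--Sina\u{\i} argument: monotonicity gives $\lim_k \hmu(T,\mathcal{P}_k)\leq\hmu(T)$, and the Rokhlin-type inequality $\hmu(T,\mathcal{Q})\leq\hmu(T,\mathcal{P}_k)+\Hmu(\mathcal{Q}\mid\mathcal{P}_k)$ combined with the martingale convergence theorem (which makes $\Hmu(\mathcal{Q}\mid\mathcal{P}_k)\to 0$ under the generating hypothesis) gives the reverse bound. The paper does not prove this statement itself but cites it from Downarowicz, where essentially this same argument appears, so there is nothing to reconcile.
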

	
	\paragraph{Topological entropy.}
	
	In the topological setting, topological entropy is an invariant of topological conjugacy and is defined with similar ideas.\par
	The topological space $X$ has to be compact. We define the \textbf{joint cover} of two open covers $\mathcal{U}$ and $\mathcal{V}$ by
	$$\mathcal{U}\vee\mathcal{V}\coloneq\{U\cap V\mid U\in\mathcal{U}, V\in\mathcal{V}\}.$$
	Let $T$ be a topological system on $X$ and $\mathcal{U}$ an open cover of $X$. Let us define
	$$\mathcal{U}_0^{n-1}\coloneq\bigvee_{i=0}^{n-1}{T^{-i}(\mathcal{U})},$$
	where $T^{-i}(\mathcal{U})\coloneq\{T^{-i}(U)\mid U\in\mathcal{U}\}$, and
	$$\mathcal{N}(\mathcal{U})\coloneq\min\{|\mathcal{U}'|\mid \mathcal{U}'\text{ is a subcover of }\mathcal{U}\},$$
	where $|\mathcal{U}'|$ denotes the cardinality of $\mathcal{U}'$. The quantity $\mathcal{N}(\mathcal{U})$ is finite since $X$ is compact.\par
	The \textbf{topological entropy of }$T$\textbf{ with respect to the open cover }$\mathcal{U}$ is the well-defined limit
	$$\htop(T,\mathcal{U})\coloneq\lim_{n\to +\infty}{\frac{\log{\mathcal{N}(\mathcal{U}_0^{n-1})}}{n}},$$
	it tells us how quickly the dynamics of $T$ is shrinking the open sets of $\mathcal{U}$.
	
	Finally, let us define the \textbf{topological entropy of }$T$ by
	$$\htop(T)\coloneq\sup_{\mathcal{U}}{\htop(T,\mathcal{U})},$$
	where the supremum is over all the open covers $\mathcal{U}$ of $X$. This quantity is non-negative and can be infinite.\par
	The following result will enables us to build an odomutant with a prescribed topological entropy (see Lemma~\ref{entr}). We say that a sequence $(\mathcal{U}_n)_{n\geq 0}$ of open covers generates the topology on $X$ if for every $\varepsilon>0$, there exists $N\geq 0$ such that for every $n\geq N$, the open sets of $\mathcal{U}_n$ has a diameter less than $\varepsilon$.
	
	\begin{theorem}[{\cite[Remark~6.1.7]{downarowiczEntropyDynamicalSystems2011}}]\label{proptopentr}
		Let $T$ be a topological system on $X$ and $(\mathcal{U}_n)_{n\geq 0}$ a generating sequence of open covers. Then we have
		$$\htop(T)=\lim_{n\to +\infty}{\htop(T,\mathcal{U}_n)}.$$
	\end{theorem}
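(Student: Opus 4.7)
The plan is to prove two inequalities and combine them. The easy direction is $\limsup_{n\to+\infty} \htop(T,\mathcal{U}_n) \leq \htop(T)$, which is immediate since $\htop(T)$ is defined as the supremum of $\htop(T, \mathcal{U})$ over all open covers $\mathcal{U}$. All the work therefore lies in showing $\htop(T) \leq \liminf_{n\to+\infty} \htop(T,\mathcal{U}_n)$.

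The driving mechanism for this inequality is a monotonicity property of $\htop(T,\cdot)$: if two open covers $\mathcal{U},\mathcal{V}$ are such that every $U \in \mathcal{U}$ is contained in some $V \in \mathcal{V}$, then $\htop(T,\mathcal{V}) \leq \htop(T,\mathcal{U})$. I would prove this by choosing a map $\phi \colon \mathcal{U} \to \mathcal{V}$ with $U \subseteq \phi(U)$; for any tuple $(U_0,\ldots,U_{n-1})$ one has the inclusion $U_0 \cap T^{-1}U_1 \cap \cdots \cap T^{-(n-1)}U_{n-1} \subseteq \phi(U_0) \cap T^{-1}\phi(U_1) \cap \cdots \cap T^{-(n-1)}\phi(U_{n-1})$. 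Hence $\phi$ transforms any subcover of $\mathcal{U}_0^{n-1}$ into a subcover of $\mathcal{V}_0^{n-1}$ of no greater cardinality, yielding $\mathcal{N}(\mathcal{V}_0^{n-1}) \leq \mathcal{N}(\mathcal{U}_0^{n-1})$ and then the inequality on topological entropies by passing to the $\log$ divided by $n$ and to the limit.

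To finish, I fix an arbitrary open cover $\mathcal{V}$ and invoke the Lebesgue number lemma on the compact metric space $X$: there exists $\delta>0$ such that every subset of diameter less than $\delta$ is contained in some $V \in \mathcal{V}$. The generating hypothesis forces the open sets of $\mathcal{U}_n$ to eventually have diameter less than $\delta$, hence eventually each $U \in \mathcal{U}_n$ sits inside some $V \in \mathcal{V}$, and the monotonicity lemma gives $\htop(T,\mathcal{V}) \leq \htop(T,\mathcal{U}_n)$ for all large $n$. Taking the $\liminf$ in $n$ and then the supremum over $\mathcal{V}$ yields $\htop(T) \leq \liminf_{n\to+\infty} \htop(T,\mathcal{U}_n)$, which combined with the easy direction shows convergence to $\htop(T)$.

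The main obstacle, though elementary, is the monotonicity argument in the joint-cover formalism: one must be careful that the assignment $\phi$ applied coordinatewise genuinely produces a \emph{subcover} of $\mathcal{V}_0^{n-1}$ (it does, because the inclusions of atoms cover the same part of $X$) and that several distinct atoms of $\mathcal{U}_0^{n-1}$ collapsing to the same atom of $\mathcal{V}_0^{n-1}$ only helps by reducing cardinality. Beyond this bookkeeping, the argument is a straightforward compactness plus diameter comparison.
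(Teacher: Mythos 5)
Your proof is correct and is the standard argument: the paper itself gives no proof of this statement, citing it directly from Downarowicz's book, so there is nothing to compare against. The two ingredients you use — monotonicity of $\htop(T,\cdot)$ under refinement of covers via a coordinatewise choice map, and the Lebesgue number lemma to show that a generating sequence eventually refines any fixed open cover (note that the diameter condition in the definition of a generating sequence already presupposes the compact metric setting you need) — are exactly the textbook route, and your handling of the $\limsup$/$\liminf$ sandwich is sound.
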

	
	\begin{example}\label{clopen}
		The compact space $X$ that we consider in this paper is of the form
		$$X\coloneq\prod_{n\geq 0}{\{0,1,\ldots,q_n-1\}},$$
		with integers $q_n$ greater or equal to $2$. It admits open covers which are partitions in clopen sets. If $\mathcal{U}$ is such an open cover, then $\mathcal{U}_0^{n-1}$ denotes both joint of open covers and joint of partitions. We have $\mathcal{N}(\mathcal{U}_0^{n-1})=|\mathcal{U}_0^{n-1}\setminus\{\emptyset\}|$ and this is exactly the number of words of the form $[\mathcal{U}]_n(x)$, for $x\in X$, where $[\mathcal{U}]_n$ is the coding map associated to the partition $\mathcal{U}$ (see Section~\ref{Prelpart}). Therefore, in the proof of Theorem~\ref{thB}, a method to create topological entropy consists in building a system $T$ whose number of $n$-words (with respect to some partition in clopen sets) increases quickly enough as $n$ goes to $\infty$.\par
		More precisely, the open covers $\mathcal{U}$ that we will consider are
		$$\p(\ell)\coloneq\left\{[i_0,\ldots,i_{\ell-1}]_{\ell}\mid 0\leq i_0<q_0,\ldots, 0\leq i_{\ell-1}<q_{\ell-1}\right\},$$
		for $\ell\geq 1$, where $[i_0,\ldots,i_{\ell-1}]_{\ell}$ denotes the $\ell$-cylinder
		$$\{x=(x_n)_{n\geq 0}\mid x_0=i_0,\ldots,x_{\ell-1}=i_{\ell-1}\}.$$
		Note that $(\p(\ell))_{\ell\geq 1}$ is a generating sequence of open covers. In Definition~\ref{defodostack}, we will also consider other partitions $\ptilde(\ell)$, for some reasons explained in the paragraph following this definition.
	\end{example}
	
	\paragraph{The variational principle.} In Example~\ref{clopen}, we explain the method that we will apply in this paper to create topological entropy and then prove Theorem~\ref{thB}. However we also would like to create measure-theoretic entropy to prove Theorem~\ref{corthB}. The variational principle enables us to connect these notions.
	
	\begin{theorem}[{Variational principle~\cite[Theorem~6.8.1]{downarowiczEntropyDynamicalSystems2011}}]\label{variationalprinciple}
		Let $T\colon X\to X$ be a topological system on a metric compact set $X$. Then we have
		$$\htop(T)=\sup_{\mu}{\hmu(T)},$$
		where the supremum is over all the $T$-invariant Borel probability measures $\mu$ on $X$.
	\end{theorem}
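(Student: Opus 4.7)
The plan is to prove the two inequalities of the variational principle separately, which is the standard Misiurewicz-style strategy.

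For the direction $\sup_\mu \hmu(T) \leq \htop(T)$, I would fix a $T$-invariant Borel probability measure $\mu$ and a finite measurable partition $\mathcal{P}=\{P_1,\ldots,P_k\}$ of $X$. By inner regularity of $\mu$, approximate each $P_i$ by a compact subset $K_i\subset P_i$ with $\mu(P_i\setminus K_i)<\varepsilon$, and consider the open cover $\mathcal{U}=\{U_1,\ldots,U_k\}$ where $U_i\coloneq X\setminus \bigcup_{j\neq i}K_j$. Then each $K_i$ is contained in the single open set $U_i$, and the refined partition $\mathcal{P}'=\{K_1,\ldots,K_k,X\setminus\bigcup_i K_i\}$ has the property that at most $2^n\mathcal{N}(\mathcal{U}_0^{n-1})$ atoms of $(\mathcal{P}')_0^{n-1}$ are non-empty. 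This yields $\Hmu((\mathcal{P}')_0^{n-1})\leq n\log 2+\log\mathcal{N}(\mathcal{U}_0^{n-1})$; dividing by $n$ and passing to the limit gives $\hmu(T,\mathcal{P}')\leq \log 2+\htop(T,\mathcal{U})$. A conditional-entropy estimate bounds $\hmu(T,\mathcal{P})-\hmu(T,\mathcal{P}')$ by a function of $\varepsilon$ tending to $0$, and letting $\varepsilon\to 0$ together with a refinement trick to remove the $\log 2$ (e.g.\ applied to $T^N$ and using $\hmu(T^N)=N\hmu(T)$, $\htop(T^N)=N\htop(T)$) concludes this direction.

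For the direction $\htop(T) \leq \sup_\mu \hmu(T)$, I would use Bowen's equivalent definition via $(n,\varepsilon)$-separated sets: $\htop(T)=\lim_{\varepsilon\to 0}\limsup_n \frac{1}{n}\log s_n(\varepsilon)$. For each $n$, fix a maximal $(n,\varepsilon)$-separated set $E_n$, define $\sigma_n\coloneq\frac{1}{|E_n|}\sum_{x\in E_n}\delta_x$, and average along the orbit to obtain $\mu_n\coloneq\frac{1}{n}\sum_{k=0}^{n-1}T_{\star}^k\sigma_n$. A weak-$\ast$ subsequential limit $\mu$ is automatically $T$-invariant. Now pick a finite Borel partition $\mathcal{P}$ of diameter less than $\varepsilon$ whose atom boundaries are $\mu$-null (possible by a perturbation of the atom boundaries, since only countably many concentric sphere radii can fail). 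Since $E_n$ is $(n,\varepsilon)$-separated, each atom of $\mathcal{P}_0^{n-1}$ contains at most one point of $E_n$, giving $H_{\sigma_n}(\mathcal{P}_0^{n-1})=\log|E_n|$. The key transfer step is the Misiurewicz trick: fix an auxiliary window size $m$, decompose $\{0,\ldots,n-1\}$ into blocks of length $m$ (with $m-1$ starting positions averaged), apply subadditivity of entropy along each decomposition, and collect boundary terms; this yields $\frac{1}{n}\log|E_n|\leq \frac{1}{m}H_{\mu_n}(\mathcal{P}_0^{m-1})+O(m/n)+O(\log|\mathcal{P}|\cdot m/n)$.

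The main obstacle is controlling this Misiurewicz averaging and passing to the limit along the chosen subsequence. The null-boundary condition ensures $H_{\mu_n}(\mathcal{P}_0^{m-1})\to H_\mu(\mathcal{P}_0^{m-1})$ as $n\to\infty$, so sending $n\to\infty$ first yields $\limsup_n \frac{1}{n}\log|E_n|\leq \frac{1}{m}H_\mu(\mathcal{P}_0^{m-1})$; then letting $m\to\infty$ gives $\limsup_n \frac{1}{n}\log|E_n|\leq \hmu(T,\mathcal{P})\leq \hmu(T)$. Finally letting $\varepsilon\to 0$ and taking the supremum of $\hmu(T)$ over all subsequential limits completes the proof. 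The delicate combinatorial accounting of block-boundary contributions in the Misiurewicz step, together with verifying that the entropy function $\mu\mapsto H_\mu(\mathcal{P}_0^{m-1})$ is genuinely weak-$\ast$ continuous on null-boundary partitions, constitute the bulk of the technical work.
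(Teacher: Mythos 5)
The paper does not prove this statement: it is imported verbatim as Theorem~6.8.1 of Downarowicz's book, so there is no internal argument to compare yours against. Your outline is the standard Misiurewicz proof (as in Walters or Downarowicz) and is essentially correct: the first direction via compact approximation of atoms, the cover $U_i = X\setminus\bigcup_{j\neq i}K_j$, the $2^n\mathcal{N}(\mathcal{U}_0^{n-1})$ count, and the power trick to kill the $\log 2$; the second direction via maximal separated sets, orbit-averaged empirical measures, null-boundary partitions, and the block-decomposition subadditivity estimate. Two small points you should make explicit if you were to write this out in full. First, the paper defines $\htop$ through open covers, while your second direction works with Bowen's $(n,\varepsilon)$-separated sets; the equivalence of the two definitions on compact metric spaces is itself a (standard but nontrivial) lemma that must be invoked. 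Second, the map $\mu\mapsto \Hmu(\mathcal{P}_0^{m-1})$ is not weak-$\ast$ continuous in general; what you actually use is the Portmanteau theorem applied to the atoms of $\mathcal{P}_0^{m-1}$, whose boundaries are contained in $\bigcup_{j=0}^{m-1}T^{-j}(\partial\mathcal{P})$ and hence are $\mu$-null by $T$-invariance of $\mu$ and the choice of $\mathcal{P}$ — your phrasing gestures at this but the invariance step is the point that makes it work. With those caveats the plan is sound.
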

	As a consequence, if $T$ is uniquely ergodic, then we have
	$$\htop(T)=\hmu(T),$$
	where $\mu$ denotes the only $T$-invariant Borel probability measure.
	
	\subsection{Even Kakutani equivalence, loose Bernoullicity}\label{PrelKak}
	
	The notions introduced in this section can be found in~\cite{feldmanNewKautomorphismsProblem1976} and~\cite{ornsteinEquivalenceMeasurePreserving1982}.\par
	Let $T\in\aut$. Given a measurable set $A$, the return time $r_{A}\colon A\to\N^*\cup\{\infty\}$ is defined by:
	$$\forall x\in A,\ r_{A}(x)\coloneq\inf{\{k\geq 1\mid T^kx\in A\}}.$$
	
	It follows from Poincaré recurrence theorem that, if $A$ has positive measure, then the set $\{k\in\N^*\mid T^kx\in A\}$ is infinite for almost every $x\in A$. In particular, $r_A(x)$ is finite for almost every $x\in A$.\par
	Then we can define a transformation $T_A$ on the set $\{x\in A\mid r_A(x)<\infty\}$, namely on $A$ up to a null set, called the \textbf{induced tranformation} on $A$:
	$$T_Ax\coloneq T^{r_A(x)}x.$$
	The map $T_A$ is an element of $\mathrm{Aut}(A,\mu_A)$, where $\mu_A\coloneq\mu(.)/\mu(A)$ is the conditional probability measure. Its entropy is given by Abramov's formula:
	$$\mathrm{h}_{\mu_A}(T_A)=\frac{\hmu(T)}{\mu(A)}.$$
	
	\begin{definition}
		Let $S\in\aut$, $T\in\auty$ be two ergodic transformations.
		\begin{enumerate}
			\item $T$ and $S$ are said to be \textbf{Kakutani equivalent} if $T_A$ and $S_B$ are isomorphic for some measurable sets $A\subset X$ and $B\subset Y$.
			\item Moreover they are \textbf{evenly Kakutani equivalent} if in addition two such measurable sets have the same measure: $\mu(A)=\nu(B)$.
		\end{enumerate}
	\end{definition}
	
	It is well-known that Kakutani equivalence and even Kakutani equivalence are equivalence relations. It follows from Abramov's formula that entropy is preserved under even Kakutani equivalence.\par
	Similarly to Ornstein's theory~\cite{ornsteinBernoulliShiftsSame1970} for the conjugacy problem, Ornstein, Rudolph and Weiss~\cite{ornsteinEquivalenceMeasurePreserving1982} found a class of systems, called loosely Bernoulli system, where Kakutani and even Kakutani equivalences are well understood. These systems were first introduced by Feldman~\cite{feldmanNewKautomorphismsProblem1976}.
	
	\begin{definition}[see~\cite{feldmanNewKautomorphismsProblem1976}]\label{defLB}\ 
		\begin{itemize}
			\item The $f$-metric between words of same length is defined by:
			$$f_n((a_i)_{1\leq i\leq n},(b_i)_{1\leq i\leq n})=1-\frac{k}{n}$$
			where $k$ is the greatest integer for which we can find equal subsequences $(a_{i_{\ell}})_{1\leq \ell\leq k}$ and $(b_{j_{\ell}})_{1\leq \ell\leq k}$, with $i_1<\ldots <i_k$ and $j_1<\ldots <j_k$.
			\item Let $T\in\aut$ and $\p$ be a partition of $X$. The couple $(T,\p)$, called a process, is \textbf{loosely Bernoulli} if for every $\varepsilon>0$, for every sufficiently large integer $N$ and for each $M>0$, there exists a collection $\mathcal{G}$ of "good" atoms in $\p_{-M}^{0}$ whose union has measure greater than or equal to $1-\varepsilon$, and so that for each pair $A,B$ of atoms in $\mathcal{G}$, the following holds: there is a probability measure $n_{A,B}$ on $\p^N\times\p^N$ satisfying
			\begin{enumerate}
				\item $n_{A,B}(\{w\}\times \p^N)=\mu_A(\{[\mathcal{P}]_{1,N}(.)=w\})$ for every $w\in\p^N$;
				\item $n_{A,B}(\p^N\times\{w'\})=\mu_B(\{[\mathcal{P}]_{1,N}(.)=w'\})$ for every $w'\in\p^N$;
				\item $n_{A,B}(\{(w,w')\in\p^N\times\p^N\mid f_N(w,w')>\varepsilon\})<\varepsilon$.
			\end{enumerate}
			\item $T$ is \textbf{loosely Bernoulli} if $(T,\mathcal{P})$ is loosely Bernoulli for all finite partitions $\mathcal{P}$ of $X$.
		\end{itemize}
	\end{definition}
	
	Loose Bernoullicity for a process $(T,\p)$ expresses the fact that, conditionally to two pasts $A$ and $B$, the laws for the future are close, meaning that there exists a good coupling between them, with close words for the $f$-metric.
	
	\begin{example}
		The Bernoulli shift on $\{1,\ldots,k\}^{\Z}$ is loosely Bernoulli with respect to the partition $\{[1]_1,\ldots,[k]_1\}$. Indeed, conditionally to every past, the law for the $N$-word is always the uniform distribution on $\{1,\ldots,k\}^N$, so it suffices to define $n_{A,B}$ as the uniform distribution on the diagonal of $\p^N\times\p^N$, with the notations of the previous definition. This system is more generally loosely Bernoulli since $\{[1]_1,\ldots,[k]_1\}$ is a generating partition\footnote{To prove that a system is loosely Bernoulli, it is enough to prove it with respect to a generating partition (see~\cite{ornsteinEquivalenceMeasurePreserving1982} and the equivalent notion of finitely fixed process).}.
	\end{example}
	
	We will also prove that odometers are loosely Bernoulli (see Proposition~\ref{odozeroentr} in the next section), using the following equivalent definition of loose Bernoullicity for zero-entropy systems.
	
	\begin{theorem}\label{thEquivalenceLB}
		Let $T\in\aut$ and $\p$ be a partition of $X$ and assume that $\hmu(T,\p)=0$. Then $(T,\p)$ is loosely Bernoulli if and only if for every $\varepsilon>0$ and for every sufficiently large integer $N$, there exists a collection $\mathcal{H}$ of "good" atoms in $\p_{1}^{N}$ whose union has measure greater than or equal to $1-\varepsilon$ and so that we have $f_N(w,w')\leq\varepsilon$ for every $w,w'\in [\p]_{1,N}(\mathcal{H})$.
	\end{theorem}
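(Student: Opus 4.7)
The plan is to prove the two directions separately, with the zero-entropy hypothesis being crucial only for the forward (standard $\Rightarrow$ simplified) implication. Throughout, I identify atoms of $\p_1^N$ with $N$-words in the image of $[\p]_{1,N}$, so a collection $\mathcal{H}$ of atoms corresponds to a set of words $[\p]_{1,N}(\mathcal{H})$.

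For the reverse direction (simplified $\Rightarrow$ standard), which does not use zero entropy, I would proceed as follows. Given $\varepsilon>0$, apply the simplified condition with parameter $\varepsilon/3$ and $N$ large, yielding $\mathcal{H}\subset\p_1^N$ of measure $\geq 1-\varepsilon/3$ with all pairs of words in $[\p]_{1,N}(\mathcal{H})$ being $(\varepsilon/3)$-close in $f_N$. For arbitrary $M$, let $\mathcal{G}$ be the atoms $A$ of $\p_{-M}^0$ with $\mu_A([\p]_{1,N}(\cdot)\in[\p]_{1,N}(\mathcal{H}))\geq 1-\sqrt{\varepsilon/3}$; Markov's inequality gives $\mu(\cup\mathcal{G})\geq 1-\sqrt{\varepsilon/3}$. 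For $A,B\in\mathcal{G}$, build $n_{A,B}$ by coupling the two conditional laws in an arbitrary way on $[\p]_{1,N}(\mathcal{H})\times[\p]_{1,N}(\mathcal{H})$ (all such pairs are already $(\varepsilon/3)$-close) and arbitrarily on the complement. The marginal conditions are satisfied by construction, and $n_{A,B}(f_N>\varepsilon)$ is bounded by the mass outside $[\p]_{1,N}(\mathcal{H})\times[\p]_{1,N}(\mathcal{H})$, which is $\leq 2\sqrt{\varepsilon/3}$, hence $<\varepsilon$ after relabeling.

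For the forward direction, I would use zero entropy to concentrate the conditional future distributions. Since $\hmu(T,\p)=0$, one has $\Hmu(\p_0\,|\,\p_{-\infty}^{-1})=0$, so by the chain rule and martingale convergence, $\Hmu(\p_1^N\,|\,\p_{-M}^0)\xrightarrow[M\to\infty]{}0$ for each fixed $N$. Therefore, given $\eta>0$ and $N$, one can choose $M$ so large that, writing $\mu_A$ for the conditional law on an atom $A$ of $\p_{-M}^0$ and $\pi_A$ for the push-forward by $[\p]_{1,N}$, a set $\mathcal{G}_0$ of atoms of total measure $\geq 1-\eta$ satisfies $H(\pi_A)\leq\eta$ for all $A\in\mathcal{G}_0$; a standard concentration estimate for low-entropy distributions (or Fano's inequality) then yields a single word $w_A$ with $\pi_A(\{w_A\})\geq 1-\sqrt{\eta}$. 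Applying the standard LB condition with threshold $\varepsilon/3$ on a set $\mathcal{G}_1$ of measure $\geq 1-\varepsilon/3$, set $\mathcal{G}:=\mathcal{G}_0\cap\mathcal{G}_1$; for $A,B\in\mathcal{G}$, any coupling $n_{A,B}$ puts mass $\geq 1-2\sqrt{\eta}$ on $\{(w_A,w_B)\}$, so if $f_N(w_A,w_B)>\varepsilon/3$ then $n_{A,B}(f_N>\varepsilon/3)\geq 1-2\sqrt{\eta}>\varepsilon/3$ for $\eta$ small, contradicting the LB hypothesis. Therefore $f_N(w_A,w_B)\leq\varepsilon/3$ for all $A,B\in\mathcal{G}$. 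Defining $\mathcal{H}$ as the collection of atoms of $\p_1^N$ corresponding to the words $\{w_A\mid A\in\mathcal{G}\}$, one gets $\mu(\cup\mathcal{H})\geq(1-\varepsilon/3-\eta)(1-\sqrt{\eta})\geq 1-\varepsilon$ and pairwise $\varepsilon$-closeness in $f_N$, as desired.

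The main obstacle I expect is the quantitative concentration step: translating the smallness of the conditional entropy $\Hmu(\p_1^N\,|\,\p_{-M}^0)$ on typical atoms into concentration of $\pi_A$ on a single word with quantitatively controlled mass. One must be careful that the concentration constants are uniform enough to be combined with the LB tolerance $\varepsilon$, which requires choosing the parameters in the correct order ($\varepsilon$ first, then $N$, then $\eta$, then $M$). A second minor point is that in the simplified condition the set $\mathcal{H}$ must be independent of $M$; but since $M$ disappears from the statement, this is automatic once the forward construction is carried out after choosing $M$ large enough.
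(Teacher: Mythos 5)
Your proposal is correct, but it takes a genuinely different route from the paper's. The paper proves both directions through a single key lemma (Lemma~\ref{keylemmaEquivalenceLB}): since $\hmu(T,\p)=0$, the partition $\p_1^N$ is measurable with respect to the remote past $\p_{-\infty}^0$ (citing Downarowicz), so each future atom $C\in\p_1^N$ can be approximated by a $\sigma(\p_{-M}^0)$-set, from which one extracts disjoint families $\mathcal{Q}_C$ of past atoms, each concentrating on $C$ and jointly capturing a proportion $1-2\sqrt{\alpha}$ of $C$; both implications are then read off this fibering of past atoms over future atoms, with product couplings in the reverse direction, plus an appeal to Feldman's Corollary~2 to fix the quantifier on $M$. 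You instead (i) prove the reverse implication without using zero entropy at all — Markov's inequality selects the past atoms $A$ with $\pi_A(\mathcal{H})\geq 1-\sqrt{\varepsilon/3}$, and then \emph{any} coupling of $\pi_A,\pi_B$ (e.g.\ the product) automatically puts mass at least $1-2\sqrt{\varepsilon/3}$ on $\mathcal{H}\times\mathcal{H}$, since the complement is contained in $(\mathcal{H}^c\times\p^N)\cup(\p^N\times\mathcal{H}^c)$; this works for every $M$ directly, so you also avoid the citation to Feldman; and (ii) prove the forward implication via $\Hmu(\p_1^N\mid\p_{-M}^0)\to 0$ and concentration of low-entropy conditional laws on a single word. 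Your reverse direction thus isolates exactly where zero entropy is used, which the paper's symmetric treatment obscures; the paper's lemma, in exchange, is a single reusable statement handling both directions at once. Two small remarks: the "concentration step" you flag as the main obstacle is in fact elementary — if $\max_w\pi_A(\{w\})=p$ then $H(\pi_A)\geq -\log p$, so $H(\pi_A)\leq\eta$ forces $p\geq e^{-\eta}\geq 1-\eta$, even better than $1-\sqrt{\eta}$ — and the phrase "coupling the two conditional laws in an arbitrary way on $[\p]_{1,N}(\mathcal{H})\times[\p]_{1,N}(\mathcal{H})$ and arbitrarily on the complement" should simply be "take any coupling", since one cannot prescribe a coupling separately on a rectangle and its complement while keeping the marginals, and no such prescription is needed.
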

	
	This has been stated by Feldman~\cite[Remark in p.~22]{feldmanNewKautomorphismsProblem1976} and Ornstein, Rudolph and Weiss~\cite[after Definition~6.1]{ornsteinEquivalenceMeasurePreserving1982} for instance. However, to our knowledge, there is no justification of this statement in the literature. This is the reason why we provide a proof in Appendix~\ref{secappLB}.\newline
	
	The choice of the metric is very important. Indeed, with the $d$-metric:
	$$d_n((a_i)_{1\leq i\leq n},(b_i)_{1\leq i\leq n})=|\{1\leq i\leq n\mid a_i\not= b_i\}|,$$
	also called the Hamming distance, we get the notion of very weakly Bernoulli systems and this is exactly the class considered in Ornstein's theory for the conjugacy problem.\par
	As mentioned above, Kakutani equivalence and even Kakutani equivalence are well understood in the class of loosely Bernoulli systems.
	
	\begin{theorem}[{\cite[Theorems~5.1 and~5.2]{ornsteinEquivalenceMeasurePreserving1982}}]\label{ORWkakutaniEq}
		Let $S\in\aut$, $T\in\auty$ be two ergodic transformations.
		\begin{enumerate}
			\item If $S$ is loosely Bernoulli and is Kakutani equivalent to $T$, then $T$ is also loosely Bernoulli.
			\item If $S$ and $T$ are loosely Bernoulli, then they are evenly Kakutani equivalent if and only if they have the same entropy.
		\end{enumerate}
	\end{theorem}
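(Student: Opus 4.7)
The plan is to prove the two parts separately, using the language of processes and the $f$-metric that was set up in the preceding definitions. For Part 1, the key structural observation is that any Kakutani equivalence factors through two elementary moves: inducing on a positive measure set, and its inverse (building a tower). So it suffices to show that loose Bernoullicity is preserved under both moves. Given a loosely Bernoulli $(T,\p)$ and a measurable $A\subset X$ with $\mu(A)>0$, I would build a natural partition of $A$ adapted to $(T_A,\p)$ (taking $\p|_A$ joined with a partition recording the Rokhlin column in which each point sits) and then, given an $f_N$-good family of atoms for $T$, translate it into an $f_{N'}$-good family for $T_A$ by sampling the $T$-trajectory along return times: two $T$-trajectories that are $\varepsilon$-close in $f$ give $T_A$-trajectories that are $\varepsilon'$-close in $f$, because subsequences of subsequences are still subsequences. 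The reverse direction, recovering loose Bernoullicity of $T$ from that of $T_A$, is similar after approximating an arbitrary finite partition of $X$ by one measurable with respect to the Rokhlin tower.

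For Part 2, the easy direction is Abramov's formula: if $T_A$ is isomorphic to $S_B$ and $\mu(A)=\nu(B)$, then $\hmu(T)=\mu(A)\cdot\mathrm{h}_{\mu_A}(T_A)=\nu(B)\cdot\mathrm{h}_{\nu_B}(S_B)=\hmu(S)$. The substantive direction is the converse, and here I would mimic Ornstein's strategy for Bernoulli shifts, with the $d$-metric replaced by the $f$-metric. First reduce to comparing an arbitrary loosely Bernoulli system with a convenient model of the same entropy (an irrational rotation in the zero-entropy case, a Bernoulli shift in the positive-entropy case, a direct sum thereof for infinite entropy); an induction/exhaustion argument then yields the general case. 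The core is a copying argument: given loosely Bernoulli processes $(T,\p)$ and $(S,\mathcal{Q})$ with $\hmu(T,\p)=\mathrm{h}_{\nu}(S,\mathcal{Q})$, one shows that one can find an isomorphic copy of $\mathcal{Q}$ inside the $\p$-process arbitrarily close in $\bar f$. Iterating and passing to the limit produces an isomorphism between the induced systems $T_A$ and $S_B$, and the construction can be arranged so that $\mu(A)=\nu(B)$.

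The main obstacle will be the copying step. It requires a version of the Ornstein fundamental lemma adapted to the $f$-metric, whose proof is considerably more delicate than in the $d$-metric case, because the $f$-metric allows time reparametrizations and does not behave well under product measures. The correct technical notion is the \emph{finitely fixed} property, an $f$-metric analogue of the very weak Bernoulli condition, which Ornstein, Rudolph and Weiss show to be equivalent to loose Bernoullicity; it is precisely this property that enables the inductive copying. The scheme is therefore: (i) establish the finitely fixed characterization of loose Bernoullicity; (ii) use it to prove the copying lemma via a successive marriage/coupling argument on $f$-balls; (iii) iterate the copying lemma to synthesize the desired measure-preserving isomorphism between $T_A$ and $S_B$; (iv) control the measures $\mu(A)$ and $\nu(B)$ during the construction to force them to be equal, using the freedom to thin out $A$ and $B$ at each stage while preserving the entropy ratio via Abramov's formula.
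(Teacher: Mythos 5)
The paper does not prove this statement: it is imported wholesale from Ornstein--Rudolph--Weiss (their Theorems 5.1 and 5.2) and used as a black box, so there is no in-paper argument to compare yours against. Judged on its own terms, your outline reproduces the actual architecture of the ORW proof correctly: Part~1 does reduce to showing that loose Bernoullicity is closed under inducing and under the inverse tower construction, and the ``subsequences of subsequences are subsequences'' observation is exactly why the $f$-metric (rather than $\bar d$) is the right metric for Kakutani equivalence; for Part~2 the easy direction is indeed Abramov's formula, and the converse is the ORW equivalence theorem, whose proof does run through the finitely fixed characterization, a copying lemma adapted to $f$, and a reduction to standard models in each entropy class.

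That said, what you have written is a roadmap, not a proof. Every step that carries genuine mathematical weight is named rather than carried out: the quantitative $f$-estimates needed to show that inducing preserves LB (one must control the density of return times via the ergodic theorem and check LB only on a generating partition of $A$, which you gesture at but do not set up); the equivalence of loose Bernoullicity with the finitely fixed property; and above all the copying lemma, which is the $f$-metric analogue of Ornstein's fundamental lemma and occupies the bulk of the ORW monograph. None of these can be dispatched in a paragraph, and your proposal does not attempt them. Since the theorem is a deep external result that the paper itself only cites, deferring to the literature is the reasonable course --- but then the honest conclusion is that one should cite \cite{ornsteinEquivalenceMeasurePreserving1982} as the paper does, rather than present this sketch as a proof.
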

	
	\subsection{Odometers}\label{PrelOdo}
	
	Given integers $q_0, q_1, q_2, \ldots$ greater than or equal to $2$, let us consider the Cantor space
	$$X\coloneq\prod_{n\geq 0}{\{0,1,\ldots ,q_n-1\}},$$
	endowed with the infinite product topology and the associated Borel $\sigma$-algebra. The \textbf{odometer} on $X$ is the adding machine $S\colon X\to X$, defined for every $x\in X$ by
	$$Sx=\left\{\begin{array}{ll}
		(\underbrace{0,\ldots,0}_{i\text{ times}},1+x_i,x_{i+1},\ldots)&\text{if }i\coloneq\min{\{j\geq 0\mid x_j\not=q_j-1\}}\text{ is finite}\\
		(0,0,0,\ldots)&\text{if }x=(q_0-1,q_1-1,q_2-1,\ldots)
	\end{array}\right..$$
	In other words, $S$ is the addition by $(1,0,0,\ldots)$ with carry over to the right.\par
	An odometer is more generally a system which is conjugate to $S$ for some  choice of integers $q_n$. In this paper, we only consider this concrete example with the adding machine and we refer to it as "the odometer on $\prod_{n\geq 0}{\{0,1,\ldots,q_n-1\}}$".\par
	Let us introduce the \textbf{cylinders of length }$k$, or $k$\textbf{-cylinders},
	$$[x_0,\ldots ,x_{k-1}]_{k}\coloneq\left \{(y_n)_{n\geq 0}\in\prod_{n\geq 0}{\{0,1,\ldots ,q_n-1\}}\ \Big |\ y_0=x_0,\ldots ,y_{k-1}=x_{k-1}\right \}.$$
	We can define a cylinder with a subset $I_j$ of $\{0,1,\ldots,q_j-1\}$ instead of $x_j$. For instance, $[x_0,I_1,x_2]_3$ denotes the set of sequences $(y_n)_{n\geq 0}$ satisfying $y_0=x_0$, $y_1\in I_1$ and $y_2=x_2$. We also use the symbol $\bullet$ when we do not want to fix the value at some coordinate. For instance, $[x_0,\bullet,x_2]_3$ denotes the set of sequences $(y_n)_{n\geq 0}$ satisfying $y_0=x_0$ and $y_2=x_2$. By convention, the $0$-cylinder is $X$. For any $n\geq 1$, we also set a partially defined map
	$$\sn_n\colon X\setminus [\bullet,\ldots,\bullet,q_{n-1}-1]_{n}\to X\setminus [\bullet,\ldots,\bullet,0]_{n}$$
	which is the addition by
	$$(\underbrace{0,\ldots ,0}_{n-1\text{ times}},1,0,0,\ldots)$$
	with carry over to the right, and which coincides with $S^{q_0\ldots q_{n-2}}$ on $X\setminus [\bullet,\ldots,\bullet,q_{n-1}-1]_n$. As illustrated in Figure~\ref{odometer}, the cylinders and the maps $\sn_n$ offer a very interesting combinatorial structure with successive nested towers $\mathcal{R}_1,\mathcal{R}_2,\ldots$.\footnote{This kind of construction that we see in Figure~\ref{odometer} is called a cutting-and-stacking construction.}
	
	\begin{figure}[ht]
		\centering
		\includegraphics[width=1\linewidth]{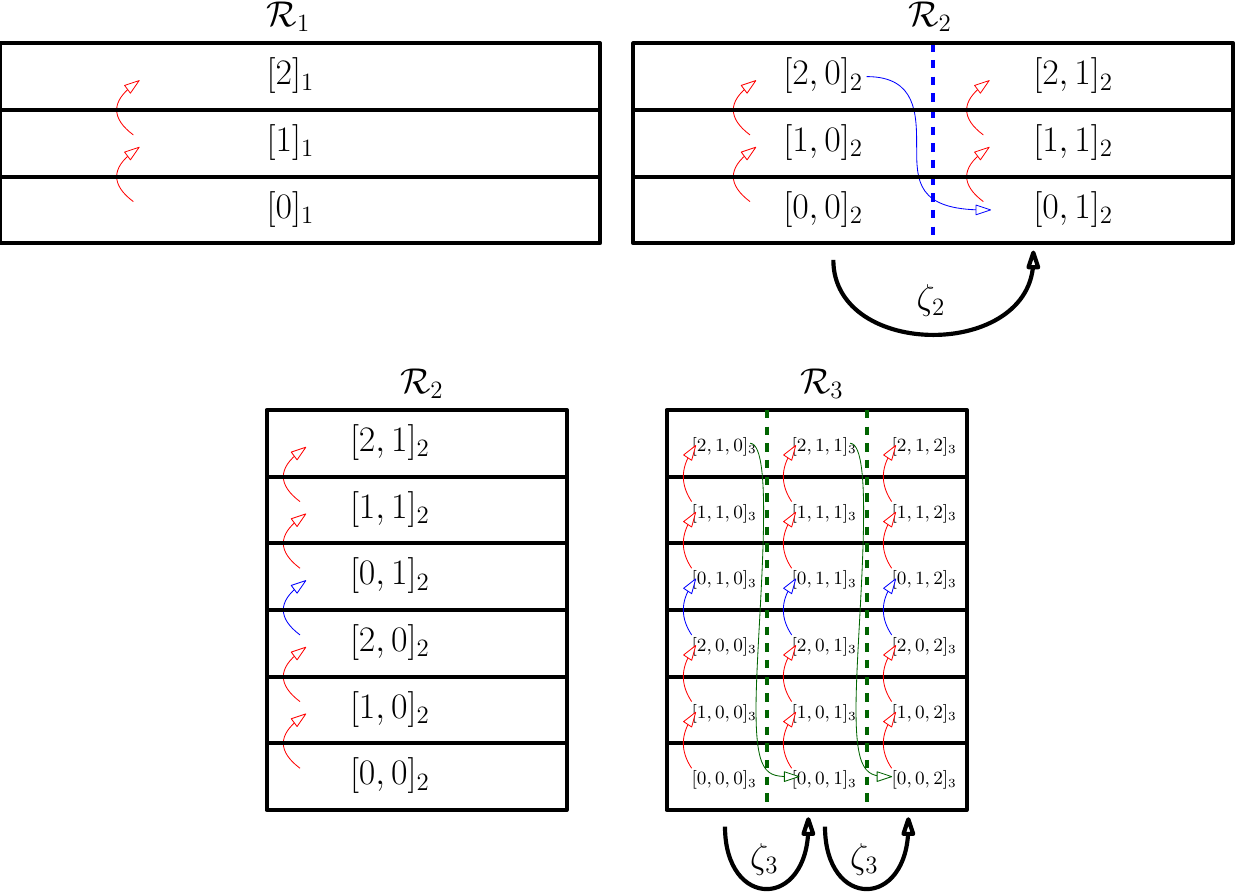}
		\caption{\footnotesize Example of odometer with $q_0=3$, $q_1=2$, $q_2=3$ (so $h_1=3$, $h_2=6$, $h_3=18$).
		}
		\label{odometer}
	\end{figure}
	
	From $(q_n)_{n\geq 0}$, a new sequence $(h_n)_{n\geq 1}$ is defined by
	$$\forall n\geq 1,\ h_n\coloneq q_0q_1\ldots q_{n-1}.$$
	The integer $h_n$ is the height of the tower $\mathcal{R}_{n}$ (see Figure~\ref{odometer}). By convention, we set $h_0\coloneq1$, the height of the tower $\mathcal{R}_0\coloneq(X)$ with a single level.\par
	As a topological system, $S$ is a Cantor minimal homeomorphism. As a measure-theoretic system, $S$ is uniquely ergodic and its only invariant measure is the product $\mu\coloneq\bigotimes_{n\geq 0}{\mu_n}$ where $\mu_n$ is the uniform distribution on $\{0,1,\ldots,q_n-1\}$. For the sake of completeness, we give a proof of the following well-known fact on odometers, which shows that the point spectrum is also fully understood.
	
	\begin{proposition}\label{ododiscr}
		Let $S$ be the odometer on $\prod_{n\geq 0}{\{0,1,\ldots,q_n-1\}}$. Its point spectrum is
		$$\spec(S)=\left\{\exp{\left (\frac{2i\pi k}{h_n}\right )}\mid n\geq 1, 0\leq k\leq h_n-1\right\}$$
		and for every $\lambda=\exp{\left (\frac{2i\pi k}{h_n}\right )}\in\spec(S)$, the map
		$$f_{\lambda}\colon x\in X\mapsto \sum_{j=0}^{h_n-1}{\lambda^j\mathds{1}_{S^j([0,\ldots ,0]_{n})}(x)}$$
		is an eigenfunction associated to $\lambda$. Moreover $S$ has discrete spectrum.
	\end{proposition}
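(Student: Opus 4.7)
The plan is to verify directly that each $f_{\lambda}$ is an eigenfunction, then to show discrete spectrum by exhibiting an $\mathrm{L}^2$-dense collection of such eigenfunctions, which will simultaneously force the point spectrum to consist of exactly the stated roots of unity.

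First I would check the eigenfunction equation. The sets $S^{j}([0,\ldots,0]_{n})$ for $0\leq j\leq h_{n}-1$ form the levels of the tower $\mathcal{R}_{n}$, and hence partition $X$ exactly (not just up to null sets); indeed the level $S^{j}([0,\ldots,0]_{n})$ is precisely the cylinder $[x_{0},\ldots,x_{n-1}]_{n}$ where $j=x_{0}+x_{1}q_{0}+\cdots+x_{n-1}q_{0}\cdots q_{n-2}$. The map $S$ sends level $j$ to level $j+1$ for $j<h_{n}-1$ and the top level back to the bottom, so for $x$ in level $j$ one has $f_{\lambda}(Sx)=\lambda^{(j+1)\bmod h_{n}}$. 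Since $\lambda^{h_{n}}=1$, this equals $\lambda\cdot\lambda^{j}=\lambda f_{\lambda}(x)$. So each $\lambda=\exp(2i\pi k/h_{n})$ lies in $\spec(S)$ with eigenfunction $f_{\lambda}$.

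Next I would show that the linear span $E$ of the family $\{f_{\lambda}\}$, as $n$ and $k$ vary, is dense in $\mathrm{L}^{2}(X,\mu)$. Fix $n$ and let $V_{n}\subset\mathrm{L}^{2}$ denote the space of functions measurable with respect to the level partition of $\mathcal{R}_{n}$; this is an $h_{n}$-dimensional subspace, identified with $\mathrm{L}^{2}(\Z/h_{n}\Z)$ via the bijection between levels and $\{0,\ldots,h_{n}-1\}$. Under this identification, the $h_{n}$ functions $f_{\lambda}$, $\lambda^{h_{n}}=1$, become the characters of $\Z/h_{n}\Z$, which form a basis. Hence $V_{n}\subset E$ for every $n$. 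Because the level partition of $\mathcal{R}_{n}$ coincides with the partition into $n$-cylinders, and cylinders generate the Borel $\sigma$-algebra of $X$, the union $\bigcup_{n}V_{n}$ is dense in $\mathrm{L}^{2}$ by the martingale convergence theorem (or by a direct approximation argument). Therefore $E$ is dense, which immediately gives discrete spectrum.

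It remains to rule out any eigenvalue outside the stated set. If $\lambda\in\T$ is an eigenvalue with eigenfunction $g\neq 0$, then $g$ is orthogonal to every eigenfunction associated to a different eigenvalue (distinct eigenspaces of the unitary Koopman operator are orthogonal). In particular, if $\lambda$ is not one of the $h_{n}$-th roots of unity for any $n$, then $g\perp V_{n}$ for every $n$, so $g\perp E$, forcing $g=0$ by density, a contradiction. I do not expect any serious obstacle: the only step requiring a little care is the identification of $V_{n}$ with $\mathrm{L}^{2}(\Z/h_{n}\Z)$ and checking that the $f_{\lambda}$ correspond to the characters, but this is a direct computation using that $\mu$ assigns mass $1/h_{n}$ to each level of $\mathcal{R}_{n}$.
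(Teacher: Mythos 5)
Your proof is correct and follows essentially the same route as the paper: verify the eigenfunction equation via the tower structure, then show the span of the $f_{\lambda}$ contains the indicator functions of all cylinders and is hence dense in $\ld^2$, which yields discrete spectrum and pins down the point spectrum. The only cosmetic difference is that you invoke the basis of characters of $\Z/h_n\Z$ where the paper uses Lagrange interpolation polynomials — the same linear-algebra fact — and you spell out the orthogonality argument excluding other eigenvalues, which the paper leaves implicit.
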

	
	\begin{remark}
		The definition of $f_{\lambda}$ does not depend on the choice of $k$ and $n$ such that $\lambda=\exp{\left (\frac{2i\pi k}{h_n}\right )}$. Moreover, for $n=0$, we have $f_1=\mathds{1}_X$ (by convention, the $0$-cylinder is $X$).
	\end{remark}
	
	\begin{proof}[Proof of Proposition~\ref{ododiscr}]
		Let us set $\Lambda\coloneq\left\{\exp{\left (\frac{2i\pi k}{h_n}\right )}\mid n\geq 1, 0\leq k\leq h_n-1\right\}$. It is straightforward to check that $f_{\lambda}$ is an eigenfunction associated to $\lambda$, for every $\lambda\in\Lambda$. Let us show that the span of $\{f_{\lambda}\mid \lambda\in\Lambda\}$ is dense in $\ld^2\esp$. It will implies that $S$ has discrete spectrum and that $\Lambda=\spec(S)$.\par
		Let $n\geq 1$ and $\lambda=\exp{\left (\frac{2i\pi}{h_n}\right )}$.  Given $a_0,\ldots,a_{h_n-1}\in\C$, we have
		$$\sum_{\ell=0}^{h_n-1}{a_{\ell}f_{\lambda^{\ell}}}=\sum_{j=0}^{h_n-1}{P(\lambda^j)\mathds{1}_{S^j([0,\ldots ,0]_{n})}}$$
		with the polynomial $P=a_0+a_1Y+\ldots+a_{h_n-1}Y^{h_n-1}$. For every $j\in\{0,\ldots,h_n-1\}$, there exists a polynomial $P_j$ of degree less than $h_n$, satisfying $P_j(\lambda^{j})=1$ and $P(\lambda^{k})=0$ for all $k\in\{0,\ldots,h_n-1\}\setminus\{j\}$. This implies that the characteristic functions of cylinders are linear combinations of the eigenfunctions $f_{\lambda}$ for $\lambda\in\Lambda$, hence the result.
	\end{proof}
	
	Let us now explain the classification of odometers up to conjugacy (and even flip-conjugacy). Let $\Pi$ denote the set of prime numbers.
	
	\begin{definition}\label{supernatural}
		A \textbf{supernatural number} is a formal product of the form $\prod_{p\in\Pi}{p^{k_p}}$, with $k_p\in\N\cup\{+\infty\}$.
	\end{definition}
	
	Given a prime number $p\in\Pi$, denote by $\nu_p(k)$ the $p$-adic valuation of a positive integer $k$. To every odometer defined with integers $q_0,q_1,\ldots$, we associate a supernatural number $\prod_{p\in\Pi}{p^{k_p}}$ defined by
	$$k_p\coloneq\sum_{n\geq 0}{\nu_p(q_n)}.$$
	As a consequence of Proposition~\ref{ododiscr} and the Halmos-von Neumann theorem, the supernatural number $\prod_{p\in\Pi}{p^{k_p}}$ forms a total invariant of measure-theoretic conjugacy in the class of odometers. If $k_p=\infty$ for every prime number $p$, then the odometer is said to be \textbf{universal}. Given a prime number $p$, the $p$\textbf{-odometer} is the odometer such that $k_p=\infty$ and $k_q=0$ for every $q\in\Pi\setminus\{p\}$. In the case $p=2$, it is also called the \textbf{dyadic} odometer.\par
	Proposition~\ref{ododiscr} also implies that every odometer is coalescent.
	
	\begin{definition}\label{defcoalescent}
		A transformation $S\in\aut$ is coalescent if every system $T\in\aut$ which is isomorphic to $S$ satisfies the following: every factor map from $T$ to $S$ is an isomorphism.
	\end{definition}
	
	The fact that odometers are coalescent is proven in~\cite{hahnCharacteristicPropertiesDynamical1968} and~\cite{newtonCoalescenceSpectrumAutomorphisms1971}. In these articles, one proves that more general systems are coalescent and the phenomenon can be generalized in the context of group actions (see~\cite{ioanaWeakContainmentRigidity2016}). Here we give a short proof for ergodic systems with discrete spectrum.
	
	\begin{theorem}\label{thcoalescent}
		Every ergodic system with discrete spectrum is coalescent.
	\end{theorem}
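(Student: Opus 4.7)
The plan is to first reduce coalescence to the following self-factor statement: every factor map $\Psi\colon X\to X$ from $S$ to itself is already an isomorphism. One direction of the equivalence is immediate (take $T=S$); conversely, if $T$ is conjugate to $S$ via some isomorphism $\Phi$ and $\Psi\colon T\to S$ is a factor map, then $\Psi\circ\Phi^{-1}\colon S\to S$ is a self-factor, hence invertible by assumption, which forces $\Psi$ itself to be invertible. So from now on I assume $\Psi\colon X\to X$ is measure-preserving and satisfies $\Psi\circ S=S\circ\Psi$ almost everywhere, and I aim to show $\Psi\in\aut$.

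Consider the Koopman operator $U_{\Psi}\colon f\mapsto f\circ\Psi$ on $\ld^2(X,\mu)$. It is an isometry (because $\Psi_{\star}\mu=\mu$) and it commutes with $U_S$. Since $S$ is ergodic, every eigenvalue $\lambda\in\spec(S)$ is simple: any two nonzero $\lambda$-eigenfunctions $f,g$ yield an $S$-invariant ratio $f/g$, which must be constant. Hence, commuting with $U_S$, the operator $U_{\Psi}$ preserves each one-dimensional eigenspace $E_{\lambda}$ and acts on it as multiplication by a scalar $c_{\lambda}$; the isometry condition forces $|c_{\lambda}|=1$, so in particular $E_{\lambda}\subset U_{\Psi}(\ld^2)$.

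The range of an isometry between Hilbert spaces is closed, so $U_{\Psi}(\ld^2)$ is a closed subspace containing every eigenspace $E_{\lambda}$. Because $S$ has discrete spectrum, the span of these eigenspaces is dense in $\ld^2(X,\mu)$, whence $U_{\Psi}$ is onto: it is a surjective isometry, i.e.~a unitary operator on $\ld^2(X,\mu)$.

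The final step, which I expect to be the main obstacle, is to translate the unitarity of $U_{\Psi}$ into the invertibility of $\Psi$ modulo null sets. Surjectivity of $U_{\Psi}$ means that every $f\in\ld^2(X,\mu)$ agrees almost everywhere with some $g\circ\Psi$; applied to indicators, this gives $\Psi^{-1}(\A)=\A$ modulo $\mu$-null sets. Since $\espalg$ is a standard atomless probability space, a measure-preserving self-map whose preimage $\sigma$-algebra recovers the full $\sigma$-algebra is automatically invertible modulo null sets (a classical consequence of Rokhlin's theorem on Lebesgue spaces). Hence $\Psi\in\aut$, which completes the proof.
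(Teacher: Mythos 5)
Your proof is correct and follows essentially the same route as the paper: ergodicity forces one-dimensional eigenspaces, the Koopman isometry of the factor map preserves them, and discrete spectrum plus the closedness of the range of an isometry yields surjectivity of $U_{\Psi}$, hence invertibility of $\Psi$. The only differences are organizational — you first reduce to self-factor maps of $S$ (the paper instead compares $E_S(\lambda)$ with $E_T(\lambda)$ directly) and you spell out the final passage from a unitary Koopman operator to invertibility of the point map on a standard probability space, a step the paper leaves implicit.
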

	
	\begin{proof}[Proof of Theorem~\ref{thcoalescent}]
		Let $S\in\aut$ be an ergodic system with discrete spectrum, $T\in\aut$ isomorphic to $S$, and $\Psi\colon X\to X$ a factor map from $T$ to $S$. Given $\lambda\in\mathbb{T}$, let us denote by $E_S(\lambda)$ (resp.~$E_T(\lambda)$) the eigenspace of $S$ (resp.~$T$) associated to $\lambda$. First, ergodicity implies that non-zero eigenspaces have dimension $1$ (see Proper Value Theorem in~\cite[page 34]{halmosLecturesErgodicTheory1956}). Secondly, since $\Psi$ is a factor map, every eigenfunction $f$ of $S$ gives rise to the eigenfunction $f\circ\Psi$ of $T$, and more precisely $f\circ\Psi$ lies in $E_T(\lambda)$ if $f$ lies in $E_S(\lambda)$. Hence, since $S$ and $T$ are isomorphic, these two remarks imply that $E_T(\lambda)=\{f\circ\Psi\mid f\in E_S(\lambda)\}$ for every $\lambda$ in the point spectrum of $S$ (or equivalently the point spectrum of $T$). This implies
		$$\ld^2(X,\mu)=\{f\circ\Psi\mid f\in\ld^2(X,\mu)\}$$
		since they have discrete spectrum. Hence $\Psi$ is an isomorphism.
	\end{proof}
	
	For the proof of Theorem~\ref{thC}, the systems that we will consider will be an odometer $S$ and an associated odomutant $T$ (the odomutants are introduced in Section~\ref{subsecdefodo}). Since the odomutants are extensions of their associated odometer and since we explicitely know a factor map $\psi$ between them (see Proposition~\ref{aut}), Theorem~\ref{thcoalescent} will ensure that we will not build an orbit equivalence between flip-conjugate systems if $\psi$ is not invertible.\par
	Finally, odometers have the following properties.
	
	\begin{proposition}\label{odozeroentr}
		Odometers have zero measure-theoretic and topological entropies.
	\end{proposition}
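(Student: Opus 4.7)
My plan is to prove both entropy statements simultaneously by exploiting the combinatorial structure visible in Figure~\ref{odometer}: the partitions $\mathcal{P}(\ell)$ in $\ell$-cylinders are permuted by $S$, so their dynamical refinements $\mathcal{P}(\ell)_0^{n-1}$ do not actually refine $\mathcal{P}(\ell)$ at all. Once this is established, both entropies drop out at once from their respective limit formulas.

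The key combinatorial observation I will establish first is that $S^{-1}(\mathcal{P}(\ell)) = \mathcal{P}(\ell)$ for every $\ell \geq 1$. To see this, I will check that $S$ sends each $\ell$-cylinder into a single $\ell$-cylinder: if $(x_0,\ldots,x_{\ell-1}) \neq (q_0-1,\ldots,q_{\ell-1}-1)$, the addition with carry does not propagate beyond position $\ell-1$ and $S$ maps $[x_0,\ldots,x_{\ell-1}]_\ell$ into the cylinder indexed by the successor tuple; on the exceptional cylinder $[q_0-1,\ldots,q_{\ell-1}-1]_\ell$, the image lies in $[0,\ldots,0]_\ell$. Since $S$ is a bijection and $\mathcal{P}(\ell)$ is a finite partition, $S$ must permute the $h_\ell$ atoms cyclically; hence $\mathcal{P}(\ell)_0^{n-1} = \mathcal{P}(\ell)$ for every $n$.

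For the measure-theoretic entropy, it follows that $\mathrm{H}_\mu(\mathcal{P}(\ell)_0^{n-1}) = \mathrm{H}_\mu(\mathcal{P}(\ell)) = \log h_\ell$ is bounded in $n$, so $\mathrm{h}_\mu(S,\mathcal{P}(\ell)) = 0$. Because $(\mathcal{P}(\ell))_{\ell \geq 1}$ is an increasing sequence of finite partitions generating the Borel $\sigma$-algebra of $X$, Kolmogorov--Sinaï (Theorem~\ref{propmesentr}) yields $\mathrm{h}_\mu(S) = \lim_\ell \mathrm{h}_\mu(S,\mathcal{P}(\ell)) = 0$. The topological side is immediate by the same permutation observation applied to $\mathcal{P}(\ell)$ viewed as an open cover in clopen sets: $\mathcal{N}(\mathcal{P}(\ell)_0^{n-1}) = h_\ell$, so $\mathrm{h}_{\mathrm{top}}(S,\mathcal{P}(\ell)) = 0$, and since $(\mathcal{P}(\ell))_{\ell \geq 1}$ generates the topology (Example~\ref{clopen}), Theorem~\ref{proptopentr} gives $\mathrm{h}_{\mathrm{top}}(S) = 0$. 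Alternatively, once $\mathrm{h}_\mu(S) = 0$ is established, unique ergodicity of $S$ combined with the variational principle (Theorem~\ref{variationalprinciple}) delivers $\mathrm{h}_{\mathrm{top}}(S) = 0$ for free.

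There is no serious obstacle here: the only point that warrants a careful check is the claim that $S$ genuinely preserves $\mathcal{P}(\ell)$, including on the carry-over cylinder $[q_0-1,\ldots,q_{\ell-1}-1]_\ell$ (which is handled by the second case in the definition of $S$), and this is a straightforward verification from the definition of the adding machine.
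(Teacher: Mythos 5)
Your proof is correct and follows essentially the same route as the paper's: the paper likewise observes that $S$ acts as a cyclic permutation on the $\ell$-cylinders, so that the sequence $(\p(\ell)_0^{n-1})_{n\geq 1}$ is stationary, deduces $\hmu(S,\p(\ell))=0$, and concludes via Kolmogorov--Sina\"i and the variational principle. The only cosmetic difference is that you also offer the direct computation of $\htop(S,\p(\ell))$ as an alternative to the variational-principle argument, which is a harmless addition.
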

	
	\begin{proposition}\label{odoLB}
		Odometers are loosely Bernoulli.\footnote{More generally, rank-one systems are loosely Bernoulli, this is proven by Ornstein, Rudolph and Weiss~\cite{ornsteinEquivalenceMeasurePreserving1982} (see Lemma 8.1) and we present their proof in the special case of odometers.}
	\end{proposition}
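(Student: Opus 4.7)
The plan is to invoke the simpler zero-entropy characterization of loose Bernoullicity (Theorem~\ref{thEquivalenceLB}) allowed by Proposition~\ref{odozeroentr}, and then exploit the cutting-and-stacking / rank-one structure of the odometer illustrated in Figure~\ref{odometer}. Since it suffices to check the condition on a generating sequence of partitions, I will work with $\mathcal{P}\coloneq\mathcal{P}(\ell_0)$ for an arbitrary but fixed $\ell_0\geq 1$: its atoms are precisely the $h_{\ell_0}$ levels of the tower $\mathcal{R}_{\ell_0}$, so I identify them with $\{0,1,\ldots,h_{\ell_0}-1\}$ via $[x_0,\ldots,x_{\ell_0-1}]_{\ell_0}\leftrightarrow x_0+x_1q_0+\cdots+x_{\ell_0-1}q_0\cdots q_{\ell_0-2}$.

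Fix $\varepsilon>0$ and take $N$ so large that $h_{\ell_0}/N<\varepsilon$, then pick $\ell>\ell_0$ large enough that $h_\ell\geq N/\varepsilon$. The key geometric observation is: if $x$ lies in level $i$ of the tower $\mathcal{R}_\ell$ with $i+N<h_\ell$, then the orbit $S(x),\ldots,S^N(x)$ stays within $\mathcal{R}_\ell$, so its $\mathcal{P}$-coordinates are just the consecutive integers $i+1,\ldots,i+N$ read modulo $h_{\ell_0}$. Let $\mathcal{H}$ be the collection of atoms of $\mathcal{P}_1^N$ whose associated word is of this "consecutive mod $h_{\ell_0}$" form. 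Since the set of $x$ just described has measure $(h_\ell-N)/h_\ell\geq 1-\varepsilon$ and is contained in $\bigcup\mathcal{H}$, the measure condition is satisfied.

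For the $f$-metric bound, take two words $w=(a_1,\ldots,a_N)$ and $w'=(b_1,\ldots,b_N)$ in $[\mathcal{P}]_{1,N}(\mathcal{H})$, with $a_k\equiv i+k\pmod{h_{\ell_0}}$ and $b_k\equiv j+k\pmod{h_{\ell_0}}$ for some $i,j\in\{0,\ldots,h_{\ell_0}-1\}$. Setting $d\coloneq(j-i)\bmod h_{\ell_0}$, a direct check gives $a_{k+d}=b_k$ for every $k=1,\ldots,N-d$, exhibiting a common subsequence of length $N-d\geq N-h_{\ell_0}+1$. Hence $f_N(w,w')\leq (h_{\ell_0}-1)/N<\varepsilon$, which verifies the condition of Theorem~\ref{thEquivalenceLB}.

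The only delicate point is the reduction to generating partitions (mentioned as a footnote in Definition~\ref{defLB} via the equivalent notion of finitely fixed process), which I will quote rather than reprove; everything else is a transparent consequence of the tower combinatorics, because in a rank-one tower of height much larger than $N$ the $\mathcal{P}$-name of a typical point is essentially a cyclic shift of a single periodic pattern, and two cyclic shifts always admit a long common subword.
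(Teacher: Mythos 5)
Your core combinatorial argument is correct and is essentially the paper's: both proofs pass through the zero-entropy characterization of Theorem~\ref{thEquivalenceLB} and the observation that, since $S$ cyclically permutes the atoms of $\p(\ell_0)$, every $N$-name is a cyclic shift of a single periodic pattern, so any two such names share a common subword of length at least $N-h_{\ell_0}$ and are therefore $f_N$-close. (Your detour through the taller tower $\mathcal{R}_\ell$ is unnecessary: the ``consecutive mod $h_{\ell_0}$'' description of the $N$-name holds for \emph{every} $x$, not just for points low in $\mathcal{R}_\ell$, so one can take $\mathcal{H}=\p_1^N$, as the paper does.)

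The genuine issue is the step you propose to ``quote rather than reprove.'' The footnote you invoke asserts that it suffices to verify loose Bernoullicity with respect to a \emph{generating} partition. But no $\p(\ell_0)$ is generating for an odometer: $S$ permutes the atoms of $\p(\ell_0)$, so $\bigvee_{n\in\Z}S^{-n}\p(\ell_0)=\p(\ell_0)$, which is very far from the Borel $\sigma$-algebra. What you actually need is the different (though standard) statement that loose Bernoullicity of $(S,\p(k))$ for an increasing sequence of partitions generating the $\sigma$-algebra implies loose Bernoullicity of $(S,\p)$ for every finite partition $\p$. This is precisely the part the paper does \emph{not} take for granted: it approximates an arbitrary finite $\p$ by a $\p(k)$-measurable partition $\mathcal{Q}$ with $\sum_{j}\mu(P_j\Delta Q_j)<\varepsilon$, and then uses the ergodic theorem to show that on a set of measure at least $1-\varepsilon$ the $\mathcal{Q}$-name of a point determines all but a fraction $2\varepsilon$ of its $\p$-name, so that the $f_N$-estimate transfers with a controlled loss. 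To complete your proof you should either supply this approximation argument or cite a reference that states the increasing-sequence version of the reduction rather than the single-generating-partition version.
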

	
	\begin{remark}
		In the case of odometers, we can notice in the following proofs that zero entropy and loose Bernoullicity follow from a poor dynamics of these systems. Indeed, given concrete partitions (for instance the partitions $\p(k)$ given by the cylinders of length $k$, which increase to the $\sigma$-algebra), the dynamics of an odometer does not generate a lot of words and the different futures are close (in the sense of the definition of loose Bernoullicity). The idea behind the definition of odomutants will be to get systems with a less "laconic" dynamics.
	\end{remark}
	
	\begin{proof}[Proof of Proposition~\ref{odozeroentr}]
		Let $S$ be an odometer. The equality $\hmu(S)=\htop(S)$ follows from unique ergodicity and the variational principle (Theorem~\ref{variationalprinciple}). Let $\p(k)$ be the partition given by the cylinders of length $k$. The odometer $S$ acts as a cyclic permutation on the elements of $\p(k)$, so the sequence $((\p(k))_0^{n-1})_{n\geq 1}$ of partitions is stationary and we have $\hmu(S,\p(k))=0$. The sequence $(\p(k))_{k\geq 0}$ increases to the $\sigma$-algebra of $X$, so we have $\hmu(S,\p(k))\underset{k\to +\infty}{\to}\hmu(S)$ by Theorem~\ref{propmesentr}, and we get $\hmu(S)=0$.
	\end{proof}
	
	\begin{proof}[Proof of Proposition~\ref{odoLB}]
		Let $S$ be an odometer, associated to the integers $q_0,q_1,\ldots$, let $\p(k)$ be the partition given by the cylinders of length $k$. We prove that $(S,\p(k))$ is loosely Bernoulli for every $k\geq 1$, and we deduce from this that $(S,\p)$ is loosely Bernoulli for any finite partition $\p$. We use the caracterisation provided by Theorem~\ref{thEquivalenceLB}.\par
		Let us prove that $(S,\p(k))$ is loosely Bernoulli. Let $\varepsilon>0$, $N\geq 2h_k/\varepsilon$ and $\mathcal{H}=\p_1^N$. Let us denote by $W$ the word $\left (S^i([0,\ldots,0]_{k})\right )_{0\leq i\leq h_k-1}\in (\p(k))^{\{0,\ldots,h_k-1\}}$ of length $h_k$, this is the enumeration of the $k$-cylinders, with the order given by the dynamics of $S$. For every $x\in X$, the word $[\p(k)]_{1,N}(x)$ consists of the tail of the word $W$, followed by many concatenations of $W$, and the beginning of $W$. So any two words $w=[\p(k)]_{1,N}(x)$ and $w'=[\p(k)]_{1,N}(x')$ satisfy $f_N(w,w')\leq 2h_k/N\leq\varepsilon$. This proves that $(S,\p(k))$ is loosely Bernoulli.\par
		Now let $\p$ be a finite measurable partition and let us show that $(S,\p)$ is loosely Bernoulli. The sequence $(\p(k))_{k\geq 0}$ increases to the $\sigma$-algebra of $X$, so for a given $\varepsilon>0$, there exists $k\geq 0$ such that $\p$ and $\p(k)$ are close, meaning that there exists a $\p(k)$-measurable partition $\mathcal{Q}$, with $|\mathcal{Q}|=|\mathcal{P}|\eqcolon n$, and a good enumeration of the atoms of $\mathcal{Q}$ and $\mathcal{P}$ such that $\sum_{i=j}^{n}{\mu(P_j\Delta Q_j)}<\varepsilon$.
		Since $\p(k)$ refines $\mathcal{Q}$, words with respect to $\p(k)$ completely determine words with respect to $\mathcal{Q}$, so $(S,\mathcal{Q})$ is immediately loosely Bernoulli. Then, if $N$ is sufficiently large, there exists $\mathcal{H}\subset\mathcal{Q}_1^N$ covering at least $1-\varepsilon$ of the space and such that any two words $w,w'\in [\mathcal{Q}]_{1,N}(\mathcal{H})$ satisfy $f_N(w,w')\leq\varepsilon$ (the $f$-metric with respect to $\mathcal{Q}$).
		By the ergodic theorem, for every sufficiently large integer $N>0$, there exists a subset $X_0$ of $X$ such that $\mu(X_0)\geq 1-\varepsilon$ and every $x\in X_0$ satisfies
		$$\frac{1}{N}\left|\left\{i\in\{1,2,\ldots,N\}\mid S^ix\in\bigcup_{j=1}^{n}{(P_j\cap Q_j)}\right\}\right|\geq 1-2\varepsilon.$$
		This implies that for every $x\in X_0$, the word $[\mathcal{Q}]_{1,N}(x)$ determines at least a fraction $1-2\varepsilon$ of the word $[\p]_{1,N}(x)$. Therefore, given $x,x'\in X_0\cap\left (\bigcup_{C\in\mathcal{H}}{C}\right)$, the words $w=[\p]_{1,N}(x)$ and $w'=[\p]_{1,N}(x')$ satisfy $f_N(w,w')\leq 5\varepsilon$ (the $f$-metric with respect to $\p$). It remains to define $\mathcal{H}'\subset\p_1^N$ as the set of atoms with non trivial intersection with $X_0\cap\bigcup_{C\in\mathcal{H}}{C}$. It covers at least $1-3\varepsilon$ of the space and, with respect to $\p$, every two $N$-words $w$ and $w'$ produced in $\mathcal{H}'$ satisfy $f_N(w,w')\leq 5\varepsilon$, so we are done.
	\end{proof}
	
	\subsection{Orbit equivalence}\label{PrelOE}
	
	The conjugacy problem in full generality is very complicated (see~\cite{foremanConjugacyProblemErgodic2011}). We now give the formal definition of orbit equivalence, which is a weakening of the conjugacy problem.
	
	\begin{definition}\label{defoe}
		Two aperiodic transformations $S\in \aut$ and $T\in \auty$ are \textbf{orbit equivalent} if there exists a bimeasurable bijection $\Psi\colon X\to Y$ satisfying $\Psi_{\star}\mu=\nu$, such that $\mathrm{Orb}_S(x)=\mathrm{Orb}_{\Psi^{-1}T\Psi}(x)$ for almost every $x\in X$. The map $\Psi$ is called an \textbf{orbit equivalence} between $S$ and $T$.\par
		We can then define the \textbf{cocycles} associated to this orbit equivalence. These are measurable functions $c_S\colon X\to\mathbb{Z}$ and $c_T\colon Y\to\mathbb{Z}$ defined almost everywhere by
		$$Sx=\Psi^{-1}T^{c_S(x)}\Psi(x)\text{ and }Ty=\Psi S^{c_T(y)}\Psi^{-1}(y)$$
		($c_S(x)$ and $c_T(y)$ are uniquely defined by aperiodicity).
	\end{definition}
	
	\begin{remark}
		Conversely, the existence of a cocycle, let us say $c_T$, implies the inclusion of the $(\Psi^{-1}T\Psi)$-orbits in the $S$-orbits. So the existence of both cocycles $c_S$ and $c_T$ implies equality of orbits. This well-known characterization of orbit equivalence will be used in the proof of Theorem~\ref{oeq}.
	\end{remark}
	
	Given a map $\varphi\colon\R_+\to\R_+$, a measurable function $f\colon X\to\Z$ is said to be $\varphi$\textbf{-integrable} if
	$$\int_{X}{\varphi(|f(x)|)\mathrm{d}\mu}<+\infty.$$
	For example, integrability is exactly $\varphi$-integrability when $\varphi$ is non-zero and linear, and a weaker quantification on cocycles is the notion of $\varphi$-integrability for a sublinear map $\varphi$, meaning that $\lim_{t\to +\infty}{\varphi(t)/t}=0$. Two transformations in $\aut$ are said to be $\varphi$\textbf{-integrably orbit equivalent} if there exists an orbit equivalence between them whose associated cocycles are $\varphi$-integrable. The notion of $\ld^p$ \textbf{orbit equivalence} refers to the map $\varphi\colon x\to x^p$, and a $\ld^{<p}$ \textbf{orbit equivalence} is by definition an orbit equivalence which is $\ld^q$ for all $q<p$.\par
	Another form of quantitative orbit equivalence is Shannon orbit equivalence. We say that a measurable function $f\colon X\to\Z$ is \textbf{Shannon} if the associated partition $\{f^{-1}(n)\mid n\in\Z\}$ of $X$ has finite entropy, namely
	$$-\sum_{n\in\Z}{\mu(f^{-1}(n))\log{\mu(f^{-1}(n))}}<+\infty.$$
	Two transformations in $\aut$ are \textbf{Shannon orbit equivalent} if there exists an orbit equivalence between them whose associated cocycles are Shannon.\par
	Note that orbit equivalence preserves ergodicity. The next statement specifically connects orbit equivalence and unique ergodicity. Theorem~\ref{thB} and this proposition together with the variational principle directly imply Theorem~\ref{corthB}.
	
	\begin{proposition}\label{oeuniqueerg}
		Assume that two aperiodic measurable bijections $S$ and $T$ on a Borel space $X$ are orbit equivalent in the following stronger way: $S$ and $T$ are defined on the whole $X$ and the equality $\mathrm{Orb}_S(x)=\mathrm{Orb}_{T}(x)$ holds for every $x\in X$.\footnote{This is stronger than asking this property up to a null set.} Then $S$ is uniquely ergodic if and only if $T$ is uniquely ergodic. In this case, $S$ and $T$ have the same invariant probability measure.
	\end{proposition}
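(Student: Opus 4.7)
The plan is to use the cocycle coming from the pointwise equality of orbits. Since $S$ and $T$ are aperiodic bimeasurable bijections satisfying $\mathrm{Orb}_S(x)=\mathrm{Orb}_T(x)$ for every $x\in X$, aperiodicity of $S$ ensures that for each $x\in X$ there is a unique integer $c(x)\in\Z$ with $Tx=S^{c(x)}(x)$. The level sets $A_n\coloneq\{x\in X\mid Tx=S^n x\}=\{x\in X\mid c(x)=n\}$ are Borel, so $c$ is measurable, and $\{A_n\}_{n\in\Z}$ is a Borel partition of $X$. By symmetry, using aperiodicity of $T$, one also obtains a measurable cocycle $c'$ with $Sx=T^{c'(x)}(x)$.

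The main step is to prove that the sets of $S$-invariant and $T$-invariant Borel probability measures on $X$ coincide; unique ergodicity of one system then immediately transfers to the other, with the same invariant probability measure. Given a Borel set $A$, I would write
\[
T^{-1}A=\bigsqcup_{n\in\Z}(T^{-1}A\cap A_n)=\bigsqcup_{n\in\Z}(A_n\cap S^{-n}A),
\]
so $\mu(T^{-1}A)=\sum_n\mu(A_n\cap S^{-n}A)$. On the other hand, since $T$ is a bijection and $T=S^n$ on $A_n$, the sets $T(A_n)=S^n(A_n)$ also form a Borel partition of $X$, hence $\mu(A)=\sum_n\mu(A\cap S^n(A_n))$. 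If $\mu$ is $S$-invariant, each term rewrites as $\mu(S^{-n}A\cap A_n)$, and summing gives $\mu(A)=\mu(T^{-1}A)$, proving $T$-invariance. The reverse implication follows by the exact same calculation with the roles of $S,T$ swapped and the cocycle $c'$ in place of $c$.

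No serious obstacle is expected: the computation is purely formal, and the hypothesis that orbit equivalence holds \emph{everywhere} (not just almost everywhere) is precisely what guarantees that the partition $\{A_n\}$ and its image $\{S^n(A_n)\}$ are honest measurable partitions of the whole space $X$, allowing the summations above to be carried out without having to track measure-zero error sets — an important point since one does not yet know any invariant measure when the argument begins.
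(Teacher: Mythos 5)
Your proof is correct and follows essentially the same route as the paper: decompose the space into the level sets of the cocycle and use the invariance of the measure to re-sum. The only (minor) difference is that you explicitly establish both implications, so that the sets of $S$-invariant and $T$-invariant probability measures coincide — which in particular makes clear that the unique $S$-invariant measure is itself $T$-invariant, a point the paper's one-directional computation leaves implicit.
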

	
	\begin{proof}[Proof of Proposition~\ref{oeuniqueerg}]
		Assume that $S$ is uniquely ergodic and denote by $\mu$ its only invariant probability measure. The cocycle $c_S\colon X\to\Z$ is defined on the whole $X$ and is measurable. Let $\nu$ be a $T$-invariant probability measure. For every measurable set $A$, we have
		\begin{align*}
			\displaystyle\nu(S(A))&=\displaystyle\sum_{k\in\Z}{\nu(S(A\cap\{c_S=k\}))}\\
			&=\displaystyle\sum_{k\in\Z}{\nu(T^{k}(A\cap\{c_S=k\}))}\\
			&=\displaystyle\sum_{k\in\Z}{\nu(A\cap\{c_S=k\})}\\
			&=\displaystyle\nu(A),
		\end{align*}
		so $\nu$ is $S$-invariant and is equal to $\mu$. Therefore $T$ is uniquely ergodic and $\mu$ is its only invariant probability measure.
	\end{proof}
	
	For instance, strong orbit equivalence is a form of orbit equivalence, introduced in a topological framework by Giordano, Putnam and Skau~\cite{giordanoTopologicalOrbitEquivalence1995}, to which Proposition~\ref{oeuniqueerg} applies. The definition is the following.
	
	\begin{definition}
		Two Cantor minimal homeomorphisms $(X, S)$ and $(Y, T)$ are strongly orbit equivalent if there exists a homeomorphism $\Psi\colon X\to Y$ such that $S$ and $\Psi^{-1}T\Psi$ have the same orbits on $X$ and the associated cocycles each have at most one point of discontinuity.
	\end{definition}
	
	Boyle proved in his thesis~\cite{boyleTopologicalOrbitEquivalence1983} that strong orbit equivalence with continuous cocycles boils down to topological flip-conjugacy, namely $S$ is topologically conjugate to $T$ or to $T^{-1}$. As mentioned in the introduction, the classification of Cantor minimal homeomorphisms up to strong orbit equivalence is fully understood, with complete invariants such as the dimension group (see~\cite{giordanoTopologicalOrbitEquivalence1995}, and Appendix~\ref{secbrat} for a brief overview).
	
	\section{Odomutants}\label{secOdo}
	
	\subsection{Definitions}\label{subsecdefodo}
	
	Let $X\coloneq\prod_{n\geq 0}{\{0,1,\ldots, q_n-1\}}$ with integers $q_n\geq 2$, and let us recall the notation $h_n\coloneq q_0\ldots q_{n-1}$. The space $X$ is endowed with the infinite product topology and we denote by $\mu$ the product of the uniform distributions on each $\{0,1,\ldots,q_n-1\}$. We consider the odometer $S\colon X\to X$ on this space. Recall that it is defined by
	$$Sx=\left\{\begin{array}{ll}
		(\underbrace{0,\ldots,0}_{i\text{ times}},x_i+1,x_{i+1},\ldots)&\text{if }i\coloneq\min{\{j\geq 0\mid x_j\not=q_j-1\}}\text{ is finite}\\
		(0,0,0,\ldots)&\text{if }x=(q_0-1,q_1-1,q_2-1,\ldots)
	\end{array}\right.,$$
	and it is a $\mu$-preserving homeomorphism.\newline
	
	In this section, we introduce new systems that we call odomutants, defined from $S$ with successive distortions of its orbits, encoded by the following maps $\psi$ and $\psi_n$ (for $n\geq 0$).\par
	For every $n\geq 0$, we fix a finite sequence $\left (\sigma^{(n)}_i\right )_{0\leq i<q_{n+1}}$ of permutations of the set $\{0,1,\ldots,q_n-1\}$, and we introduce
	$$\psi_n\colon\left\{\begin{array}{lcl}
		X&\to &X\\
		x=(x_0,x_1,\ldots) &\mapsto & (\sigma^{(0)}_{x_1}(x_0),\sigma^{(1)}_{x_2}(x_1),\sigma^{(2)}_{x_3}(x_2),\ldots ,\sigma^{(n)}_{x_{n+1}}(x_n),x_{n+1},x_{n+2},\ldots)
	\end{array}\right..$$	
	It is not difficult to see that $\psi_n$ is a homeomorphism and preserves the measure $\mu$, its inverse is given by
	$$\psi_n^{-1}\colon \left\{\begin{array}{lcl}
		X &\to &X\\
		x=(x_0,x_1,\ldots)&\mapsto &(z_0(x),z_1(x),\ldots,z_{n}(x),x_{n+1},x_{n+2},\ldots)
	\end{array}\right.$$
	with $z_i(x)$ defined by backwards induction as follows:
	\begin{equation}
		\begin{aligned}\label{psiinv}
			&z_n(x)\coloneq\left (\sigma_{x_{n+1}}^{(n)}\right )^{-1}(x_n),\\
			&z_i(x)\coloneq\left (\sigma_{z_{i+1}(x)}^{(i)}\right )^{-1}(x_i)\text{ for every }i\in\{0,1,\ldots, n-1\}.
		\end{aligned}
	\end{equation}
	Let us also introduce
	$$\psi\colon\left\{\begin{array}{lcl}
		X&\to &X\\
		x=(x_0,x_1,\ldots) &\mapsto & \left (\sigma^{(n)}_{x_{n+1}}(x_n)\right )_{n\geq 0}
	\end{array}\right..$$
	The map $\psi$ is continuous but is not invertible in full generality. It is not difficult to see that $\psi_n(x)\underset{n\to +\infty}{\to}\psi(x)$ for every $x\in X$. The map $\psi$ also have the following properties.
	
	\begin{proposition}\label{preservonto}
		$\psi\colon X\to X$ preserves the probability measure $\mu$ and is onto.
	\end{proposition}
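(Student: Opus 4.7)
The plan is to handle the two conclusions separately, and to leverage the approximation $\psi_n \to \psi$ as $n\to\infty$, which the excerpt has just established pointwise on $X$.

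For measure preservation, I would argue by dominated convergence. Each $\psi_n$ is a homeomorphism preserving $\mu$, so for every bounded continuous $f\colon X\to\C$ we have $\int_X f\circ \psi_n\,\mathrm d\mu=\int_X f\,\mathrm d\mu$. The pointwise convergence $\psi_n(x)\to\psi(x)$ combined with continuity of $f$ gives $f\circ\psi_n\to f\circ\psi$ pointwise, and the sequence is uniformly bounded by $\|f\|_\infty$, so dominated convergence yields
\[
\int_X f\circ\psi\,\mathrm d\mu=\lim_{n\to+\infty}\int_X f\circ\psi_n\,\mathrm d\mu=\int_X f\,\mathrm d\mu.
\]
Since bounded continuous functions separate Borel probability measures on the compact metrizable space $X$ (Riesz representation), this forces $\psi_{\star}\mu=\mu$.

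For surjectivity, the cleanest route is to combine the measure-preservation just established with a support argument. The map $\psi$ is continuous, hence $\psi(X)$ is compact and in particular closed in $X$. Its complement $X\setminus\psi(X)$ is therefore open, and has measure
\[
\mu(X\setminus\psi(X))=\psi_{\star}\mu(X\setminus\psi(X))=\mu\bigl(\psi^{-1}(X\setminus\psi(X))\bigr)=0.
\]
On the other hand, $\mu$ is the product of the uniform probabilities on the finite sets $\{0,\ldots,q_n-1\}$, so every non-empty cylinder has positive measure; as cylinders form a basis for the topology, every non-empty open subset of $X$ has positive $\mu$-measure. Thus $X\setminus\psi(X)$ must be empty, i.e., $\psi(X)=X$.

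Neither step looks difficult; the only point worth care is the second one, where one must not try to invert $\psi$ coordinate by coordinate (there is no ``last'' coordinate to start from, since the recursion in \eqref{psiinv} runs \emph{backwards} from index $n$). An alternative surjectivity proof by a diagonal extraction is available: for each $N$ pick any value of $x_{N+1}$ and define $x_N,\ldots,x_0$ by the backwards recursion $x_i=\bigl(\sigma^{(i)}_{x_{i+1}}\bigr)^{-1}(y_i)$, then extract a subsequential limit by compactness of $X$; but the topological-measure argument above seems shorter and is self-contained once $\psi_{\star}\mu=\mu$ is in hand.
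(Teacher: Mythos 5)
Your proof is correct, but both halves take a genuinely different route from the paper. For measure preservation, the paper observes that $\psi$ and $\psi_n$ agree on the first $n+1$ coordinates, so $\psi^{-1}(A)=\psi_n^{-1}(A)$ for every $(n+1)$-cylinder $A$, and invariance on cylinders suffices; you instead pass through weak-$*$ convergence of $(\psi_n)_{\star}\mu$ via dominated convergence and the Riesz representation theorem. Both are valid; the paper's argument is more elementary (no functional-analytic input) and makes the cylinder-level mechanism explicit, while yours is a soft argument that would survive any setting where a measure-preserving sequence converges pointwise to $\psi$. For surjectivity, the paper takes a subsequential limit of $\psi_n^{-1}(y)$ by compactness and uses continuity of $\psi$ --- essentially the ``diagonal extraction'' alternative you sketch at the end --- whereas your main argument derives ontoness from measure preservation plus the fact that $\mu$ has full support and $\psi(X)$ is closed. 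Your support argument is shorter and quite clean, though it makes surjectivity logically dependent on the first half, while the paper's two halves are independent; your closing caution about the backwards recursion in \eqref{psiinv} is exactly the point the paper's choice of $\psi_n^{-1}(y)$ is designed to handle.
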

	
	\begin{proof}[Proof of Proposition~\ref{preservonto}]
		To prove that $\mu$ is $\psi$-invariant, it suffices to prove the equality $\mu(\psi^{-1}(A))=\mu(A)$ when $A$ is a cylinder. If $A$ is an $(n+1)$-cylinder, then $\psi^{-1}(A)=\psi_{n}^{-1}(A)$, so the $\psi$-invariance follows from the $\psi_n$-invariance for all $n\geq 0$.\par
		Given $y\in X$, let us find $x\in X$ such that $\psi(x)=y$. By definition, for every $n\geq 0$, $\psi(\psi_n^{-1}(y))$ is in the cylinder $[y_0,\ldots,y_n]_{n+1}$, so $\psi(\psi_n^{-1}(y))\underset{n\to +\infty}{\to}y$. By compactness, there exists a convergent subsequence of $\left (\psi_n^{-1}(y)\right )_{n\geq 0}$, of limit $x\in X$, and we have $\psi(x)=y$ since $\psi$ is continuous.
	\end{proof}
	
	The following computations motivate the definition of odomutants. Let us respectively set the \textbf{minimal} and \textbf{maximal} points of $X$:
	$$\xmin\coloneq(0,0,0,\ldots)\text{ and }\xmax\coloneq(q_0-1,q_1-1,q_2-1,\ldots).$$
	We define the following sets
	$$\xminn\coloneq\{x\in X\mid (x_0,\ldots,x_n)\not=(\xmin_0,\ldots,\xmin_n)\},$$
	$$\xmaxn\coloneq\{x\in X\mid (x_0,\ldots,x_n)\not=(\xmax_0,\ldots,\xmax_n)\},$$
	$$\xmininfty\coloneq X\setminus\{\xmin\}\text{ and }\xmaxinfty\coloneq X\setminus\{\xmax\}.$$
	It is not difficult to see that $\xmaxinfty$ is the increasing union of the sets $\xmaxn$, so for every $x\in\xmaxinfty$, we denote by $\nmax(x)$ the least integer $n\geq 0$ satisfying $x\in\xmaxn$. This also holds for $\xmininfty$ and $\xminn$, and $\nmin(x)$ is defined similarly.\par 
	Let $x\in\psi^{-1}(\xmaxinfty)$ and $N\coloneq\nmax(\psi(x))$. By definition of $N$, for every $n\geq N$, $S\psi_n(x)$ is equal to
	$$(\underbrace{\bm{0,\ldots,0}}_{N\text{ times }},\ \sigma^{N}_{x_{N+1}}(x_{N})\bm{+1},\ \sigma^{(N+1)}_{x_{N+2}}(x_{N+1}),\ \ldots,\ \sigma^{(n)}_{x_{n+1}}(x_{n}),\ x_{n+1},\ x_{n+2},\ \ldots).$$
	Using~\eqref{psiinv}, we get
	$$\psi_n^{-1}S\psi_n(x)=(y^{(n)}_0(x),\ldots,y^{(n)}_{n}(x),x_{n+1},x_{n+2},\ldots)$$
	with $y^{(n)}_i(x)$ defined by backwards induction as follows:
	\begin{align*}
		&y^{(n)}_n(x)\coloneq\left (\sigma_{x_{n+1}}^{(n)}\right )^{-1}(\sigma_{x_{n+1}}^{(n)}(x_n))=x_n,\\
		\forall\ n> i> N,\ &y^{(n)}_i(x)\coloneq\left (\sigma_{y^{(n)}_{i+1}(x)}^{(i)}\right )^{-1}(\sigma^{(i)}_{x_{i+1}}(x_{i})),\\
		&y^{(n)}_{N}(x)\coloneq\left (\sigma_{y^{(n)}_{N+1}(x)}^{(N)}\right )^{-1}(\sigma_{x_{N+1}}^{(N)}(x_{N})+1),\\
		\forall\ N>i\geq 0,\ &y^{(n)}_i(x)\coloneq\left (\sigma_{y^{(n)}_{i+1}(x)}^{(i)}\right )^{-1}(0).
	\end{align*}
	By induction, it is easy to get $(y^{(n)}_{N+1}(x),\ldots,y^{(n)}_n(x))=(x_{N+1},\ldots,x_n)$ and this implies the following simplification: $\psi_n^{-1}S\psi_n(x)$ is equal to $(y^{(n)}_0(x),\ldots,y^{(n)}_{N}(x),x_{N+1},x_{N+2},\ldots)$ with $y^{(n)}_i(x)$ inductively defined by
	\begin{align*}
		&y^{(n)}_{N}(x)\coloneq\left (\sigma_{x_{N+1}}^{(N)}\right )^{-1}(\sigma_{x_{N+1}}^{(N)}(x_{N})+1),\\
		\forall\ 0\leq i\leq N-1,\ &y^{(n)}_i(x)\coloneq\left (\sigma_{y^{(n)}_{i+1}(x)}^{(i)}\right )^{-1}(0).
	\end{align*}
	Finally, $(y^{(n)}_0(x),\ldots,y^{(n)}_{N}(x))$ does not depend on the integer $n\geq \nmax(\psi(x))$.
	
	\begin{definition}\label{defodomutant}
		For every $x\in\psi^{-1}(\xmaxinfty)$, let us define
		$$Tx\coloneq\psi_n^{-1}S\psi_n(x)$$
		for any $n\geq \nmax(\psi(x))$. The map $T$ is called the \textbf{odomutant} associated to the odometer $S$ and the sequences of permutations $\left (\sigma_i^{(n)}\right )_{0\leq i<q_{n+1}}$ for $n\geq 0$.
	\end{definition}
	
	\begin{figure}[!ht]
		\centering
		\includegraphics[width=0.8\linewidth]{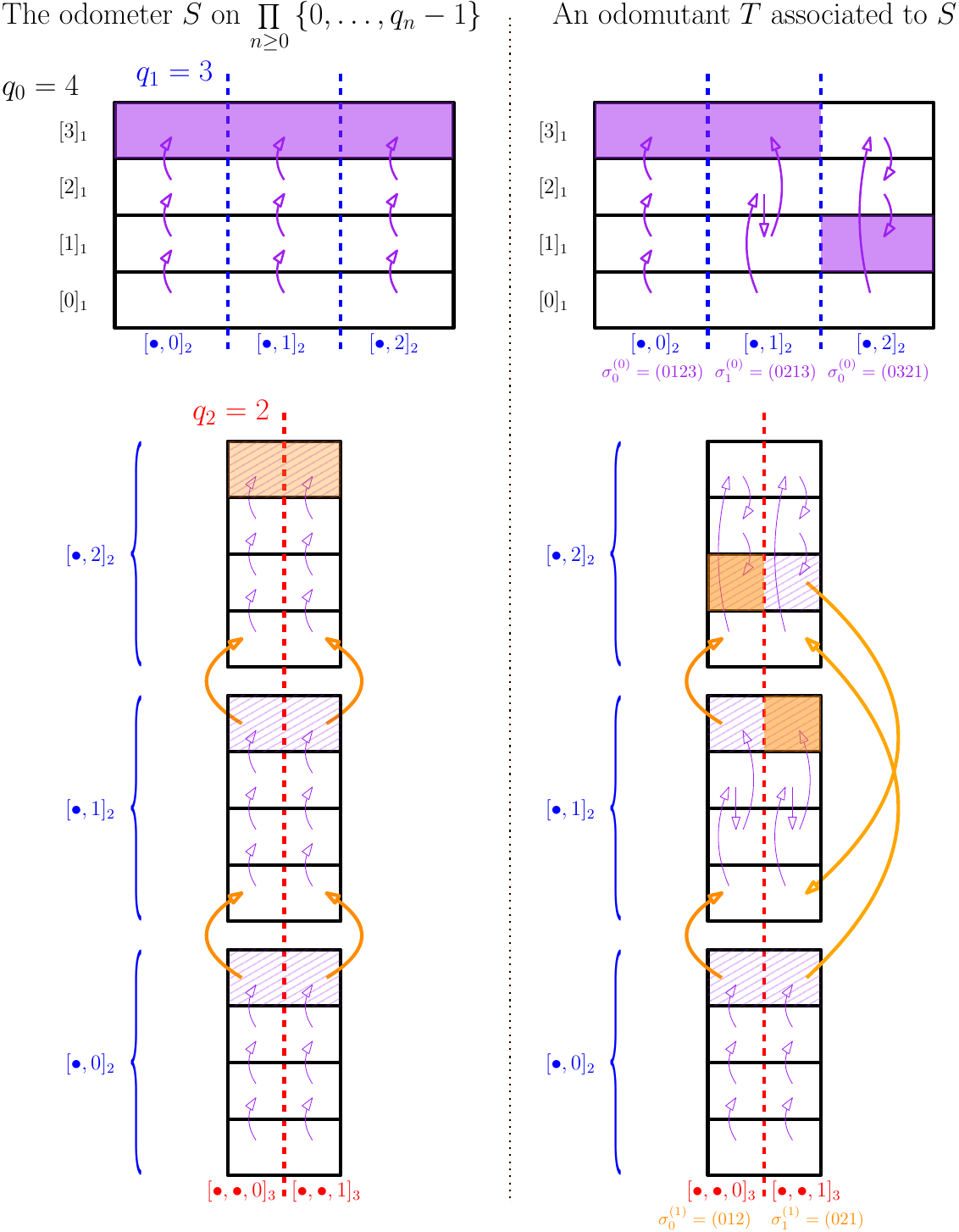}
		\caption{\footnotesize Example of the first two steps in the construction of an odometer (on the left) and an associated odomutant (on the right). For a permutation $\sigma$ of the set $\{0,\ldots,k-1\}$, the notation $\sigma=(i_0\ldots i_{k-1})$ means that $\sigma$ is defined by $\sigma(j)=i_j$ for every $j\in\{0,\ldots,k-1\}$. The area coloured in purple (resp. orange) is the subset on which $S$ and $T$ are not yet defined at the end of the first step (resp. second step), it is equal to $\{\nmax=1\}$ (resp. $\{\nmax=2\}$) for the odometer, $\{\nmax\circ\psi=1\}$ (resp. $\{\nmax\circ\psi=2\}$) for the odomutant.}
		\label{odomutant}
	\end{figure}
	
	As illustrated in Figure~\ref{odomutant}, an odomutant $T$ is a probability measure-preserving bijection that we build step by step. At step $n$, $T$ is well-defined on $\{\nmax(x)=n\}$. This is a cutting-and-stacking method very similar to the odometer, but at every step the way we connect the subcolumns of the tower depend on the next coordinates.
	
	\subsection{Odomutants with multiplicities}\label{odocutsta}
	
	At first view, when looking at Figure~\ref{odomutant}, we can think that an odomutant is encoded by a cutting-and-stacking construction where the new towers at each step are built by stacking only \textit{one} copy of the dynamics of each subcolumn. Actually, with some redondancies in the permutations of a same step, it is possible to encode a cutting-and-stacking construction where, at every step and for every subcolumn, many copies of its dynamics could appear in each new tower (as illustrated in Figure~\ref{odomutant2}). In this case, the partitions in cylinder of the same length are not the information we want to keep in mind, since they also remember that we divide the subcolumns to get many copies of its dynamics. This motivates the following definition that we explain with more details after.
	
	\begin{definition}\label{defodostack}
		Let $(q_n)_{n\geq 0}$ be a sequence of integers greater than or equal to $2$. Let $\bm{c}=\left (c_{n,0},\ldots,c_{n,\tilde{q}_n-1}\right )_{n\geq 1}$ be a sequence where $\tilde{q}_n$ and $c_{n,i}$ are positive integers satisfying $q_n=c_{n,1}+\ldots +c_{n,\tilde{q}_n}$, and $(\tau_j^{(n)})_{j\in\{0,\ldots,\tilde{q}_{n+1}-1\}}$ be a sequence of permutations of the set $\{0,\ldots,q_n-1\}$ for every $n\geq 0$. For every $n\geq 1$ and every $j\in\{0,\ldots,\tilde{q}_{n}-1\}$, we set
		$$I_j^{(n)}\coloneq\left (\sum_{i=0}^{j-1}{c_{n,i}}\right )+\left\{0,1,\ldots,c_{n,j}-1\right\}.\footnote{We write $s+\{0,1,\ldots,k\}\coloneq\{s,s+1,\ldots,s+k\}$. The family $(I^{(n)}_{0},\ldots,I^{(n)}_{\tilde{q}_{n}-1})$ forms a partition of $\{0,1,\ldots,q_{n}-1\}$.}$$
		Then we say that $T$ is the odomutant built with $\bm{c}$\textbf{-multiple permutations} $\tau^{(n)}_j$, if $T$ is the odomutant associated to the odometer on the space $\prod_{n\geq 0}{\{0,\ldots,q_n-1}\}$ and families of permutations $(\sigma_{i}^{(n)})_{0\leq i<q_{n+1}}$, where for every $n\geq 0$ and every $j\in\{0,\ldots,\tilde{q}_{n+1}\}$, we have $\sigma^{(n)}_i\coloneq\tau_j^{(n)}$ for all integers $i\in I^{(n+1)}_j$.\par
		In this case, we associate partitions $\ptilde(\ell)$ for every $\ell\geq 1$, defined by
		$$\ptilde(\ell)\coloneq\left\{[i_0,\ldots,i_{\ell-2},I_j^{(\ell-1)}]_{\ell}\mid 0\leq i_0<q_0,\ldots, 0\leq i_{\ell-2}<q_{\ell-2}, 0\leq j\leq \tilde{q}_{\ell-1}-1\right\}.$$
		
		We say that the odomutant is built with \textbf{uniformly} $\bm{c}$\textbf{-multiple permutations} if we have $c_{n,0}=\ldots =c_{n,\tilde{q}_n-1}=:c_n$ for every $n\geq 1$, and we simply write $\bm{c}\coloneq(c_n,\tilde{q}_n)_{n\geq 0}$.
	\end{definition}
	
	At the beginning of step $n$, for every $i\in\{0,\ldots,\tilde{q}_n-1\}$ there are $c_i$ subcolumns which have been defined with the same permutation\footnote{Note that the permutations $\tau^{(n-1)}_0,\ldots,\tau^{(n-1)}_{\tilde{q}_n-1}$ are not necessarily pairwise different.} $\tau^{(n-1)}_i$ at step $n-1$, they actually play the role of $c_i$ copies of the dynamics of a subcolumn that we would like to stack $c_i$ times in each tower. When considering the partition $\ptilde(n+1)$, we cannot distinguish between these "copies", as if it was the partition made up of the subcolumns that we would like to stack more than once in each tower.\par
	The odomutants built with uniformly multiple permutations, equipped with the associated partitions $(\ptilde(\ell))_{\ell\geq 1}$, better describe Boyle and Handelman's contructions~\cite{boyleEntropyOrbitEquivalence1994} than odomutants equipped with $\p(\ell)_{\ell\geq 1}$. We refer the reader to Appendix~\ref{secbrat} for more details, more precisely in Section~\ref{appendixComparison}. The sequences $(c_n)_{n}$ and $(\tilde{q}_n)_{n}$ respectively correspond to the sequences $(n_k)_{k}$ and $(m_k)_{k}$ introduced in their paper. Then, to prove Theorem~\ref{thB} in the case $\alpha=+\infty$, we will partly reformulate the proof of their similar statement with our formalism. Our proof in the case $\alpha<+\infty$ will be different than theirs since we will build an odomutant with pairwise different permutations at each step.
	
	\begin{figure}[!ht]
		\centering
		\includegraphics[width=0.74\linewidth]{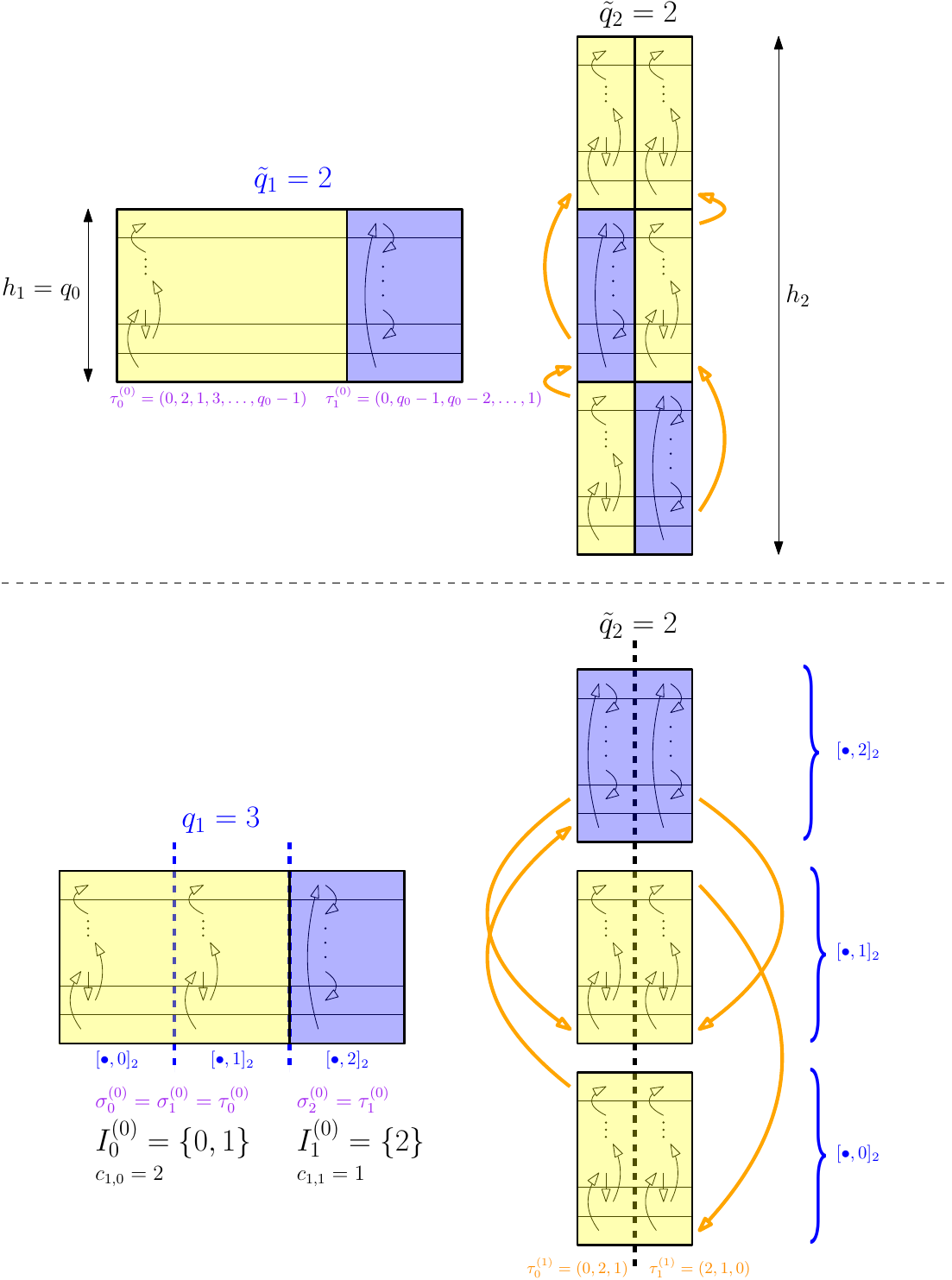}
		\caption{\footnotesize At the top, the second step of a less restrictive cutting-and-stacking construction that we want to describe with an odomutant. At the bottom, the way we encode it with such a system. Here, the dynamics of the yellow tower appears twice in each new towers, so we divide it in two subtowers. Note that the partition $\ptilde(2)$ is exactly the partition which gives the colour (yellow or blue) and the level in the $h_0$-tower to each points of the space, so that we cannot distinguish between points of the two yellow subtowers which are at the same level, contrary to the partition $\p(2)$. For the third step of the construction, the value of $q_2$ will depend on the number ($c_{2,0}$ and $c_{2,1}$) of copies for the dynamics of the two current towers in the next ones.}
		\label{odomutant2}
	\end{figure}
	
	As mentionned in the introduction, our formalism of odomutants was inspired by Feldman's construction~\cite{feldmanNewKautomorphismsProblem1976} of a non-loosely Bernoulli system. As we will see in the proof of Theorem~\ref{thA}, this system is an odomutant built with uniformly $\bm{c}$-multiple permutations where the integers $c_n$ are powers of $2$ and for a fixed $n$, the permutations $\tau^{(n)}_i$ are pairwise different at each step.
	
	\subsection{Odomutants as p.m.p. bijections on a standard probability space}
	
	In this section, we study odomutants with a measure-theoretic viewpoint.
	
	\subsubsection{First properties}
	
	\begin{proposition}\label{aut}
		$T$ is a bijection from $\psi^{-1}(\xmaxinfty)$ to $\psi^{-1}(\xmininfty)$, its inverse is given by
		$$T^{-1}y=\psi_n^{-1}S^{-1}\psi_n(y)$$
		for every $y\in\psi^{-1}(\xmininfty)$ and any $n\geq\nmin(\psi(y))$. Moreover $T$ is an element of $\aut$ and $\psi$ is a factor map from $T$ to $S$.
	\end{proposition}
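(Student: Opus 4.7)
The plan is to build everything from the explicit description of $Tx$ derived in the computation immediately preceding Definition~\ref{defodomutant}. That computation already shows $Tx$ is independent of the choice of $n\ge\nmax(\psi(x))$, which is the key flexibility I will exploit in every step. I would first establish the intertwining $\psi\circ T=S\circ\psi$ on $\psi^{-1}(\xmaxinfty)$, because this immediately locates the image of $T$ inside $\psi^{-1}(\xmininfty)$ and later controls where the indices $\nmin(\psi(Tx))$ land. The key observation is that for $n\ge\nmax(\psi(x))$, the carry in $S\psi_n(x)$ stops at the index $\nmax(\psi(x))$, so $S\psi_n(x)$ agrees with $S\psi(x)$ on coordinates $0,\ldots,n$. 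Applying the formula~\eqref{psiinv} for $\psi_n^{-1}$ and then $\psi$, one verifies by direct substitution that $\psi(\psi_n^{-1}(y))$ equals $y$ on its first $n+1$ coordinates and equals $\sigma^{(i)}_{y_{i+1}}(y_i)$ on coordinate $i>n$; plugging $y=S\psi_n(x)$ reconstructs $S\psi(x)$ coordinate by coordinate.

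Next I would address bijectivity by exploiting that, on the subset $\psi^{-1}(\xmaxn)$, $T$ coincides with the p.m.p.\ homeomorphism $\psi_n^{-1}S\psi_n$ of $X$. Injectivity of $T$ on $\psi^{-1}(\xmaxinfty)=\bigcup_n\psi^{-1}(\xmaxn)$ then follows by choosing $n$ large enough to contain any two given preimages. For the inverse, I would repeat the preliminary computation word for word with $S$ replaced by $S^{-1}$ and $\nmax$ replaced by $\nmin$: an analogous backward induction yields an explicit formula for $\psi_n^{-1}S^{-1}\psi_n(y)$ that depends only on coordinates up to $\nmin(\psi(y))$, defining a candidate inverse $Uy\coloneq\psi_n^{-1}S^{-1}\psi_n(y)$ on $\psi^{-1}(\xmininfty)$. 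To check $U\circ T=\mathrm{id}$ at $x\in\psi^{-1}(\xmaxinfty)$, I would pick any $n$ satisfying both $n\ge\nmax(\psi(x))$ and $n\ge\nmin(\psi(Tx))$; the first bound ensures $Tx=\psi_n^{-1}S\psi_n(x)$, and the second is automatic from the intertwining, since $S\psi(x)$ has a nonzero entry at index $\nmax(\psi(x))$. For such $n$ the two formulas telescope to give $x$. The reverse identity $T\circ U=\mathrm{id}$ is symmetric, and surjectivity of $T$ onto $\psi^{-1}(\xmininfty)$ falls out.

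Finally, $\psi$ is a factor map because it is continuous, onto, and $\mu$-preserving by Proposition~\ref{preservonto}, and we have just proved $\psi\circ T=S\circ\psi$. For $T\in\aut$, measurability on each $\psi^{-1}(\xmaxn)$ is inherited from $\psi_n^{-1}S\psi_n$, and for any Borel set $A$,
\[
T^{-1}(A)\cap\psi^{-1}(\xmaxn)=(\psi_n^{-1}S\psi_n)^{-1}(A)\cap\psi^{-1}(\xmaxn);
\]
letting $n\to+\infty$, using $\mu(\psi^{-1}(\xmaxn))\to 1$ together with the $\mu$-invariance of each $\psi_n^{-1}S\psi_n$, one concludes $\mu(T^{-1}(A))=\mu(A)$. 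The main, though modest, obstacle throughout is keeping careful track of which $n$ is large enough for each identity to apply simultaneously to the various points in play; once that bookkeeping is done, everything reduces to direct substitution into the explicit formula for $Tx$.
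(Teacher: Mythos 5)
Your proposal is correct and follows essentially the same route as the paper's proof: both verify the intertwining $\psi\circ T=S\circ\psi$, deduce injectivity from the coincidence of $T$ with the homeomorphisms $\psi_n^{-1}S\psi_n$ on the exhausting sets $\psi^{-1}(\xmaxn)$, exhibit the inverse as $\psi_n^{-1}S^{-1}\psi_n$ via the analogous computation with $\nmin$, and inherit measurability and $\mu$-invariance from $S$ and the $\psi_n$. The only differences are cosmetic (you check the intertwining by direct coordinate substitution where the paper invokes pointwise convergence of $\psi_n$ to $\psi$, and you spell out the measure-preservation limit argument in more detail).
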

	
	\begin{proof}[Proof of Proposition~\ref{aut}]
		The equality $\psi_n(Tx)=S\psi_n(x)$ implies $\psi(Tx)=S\psi(x)$ since $\psi_n$ converges pointwise to $\psi$. Moreover, the map $\psi$ preserves the measure $\mu$ and is onto (see Proposition~\ref{preservonto}). Thus, assuming that $T$ is in $\aut$, $S$ is a factor of $T$ via the factor map $\psi$.\par
		Since $\xmaxinfty$ is the increasing union of the sets $\xmaxn$, and for every $n\geq 0$, $T$ and $\psi_n^{-1}S\psi_n$ coincide on $\xmaxn$, the injectivity of $T$ on $\psi^{-1}(\xmaxinfty)$ follows from the injectivity of $S$ and the maps $\psi_n$ and $\psi_n^{-1}$.\par
		For $x\in\psi^{-1}(\xmaxinfty)$, we have $\psi(Tx)=S\psi(x)$ and $\psi(x)\not=\xmax$, so $\psi(Tx)$ is not equal to $\xmin$. Conversely, for $y\in\psi^{-1}(\xmininfty)$, the element $x\coloneq\psi_n^{-1}S^{-1}\psi_n(y)$ does not depend on the choice of an integer $n\geq\nmin(\psi(y))$ (these are the same computations as before Definition~\ref{defodomutant}) and satisfies $Tx=y$.\par
		By $\psi$-invariance, the sets $\psi^{-1}(\xmaxinfty)$ and $\psi^{-1}(\xmininfty)$ have full measure, so $T\colon X\to X$ is a bijection up to measure zero. It follows again from the properties of $S$ and the maps $\psi_n$ that $T$ is bimeasurable and preserves the measure $\mu$.
	\end{proof}
	
	The next result provides a criterion for $\psi$ to be an isomorphism between $T$ and $S$. We will not apply it in this paper but it enables us to understand that, in case permutations have common fixed points\footnote{For Theorem~\ref{thA} (resp.~Theorem~\ref{thB}), we will require $\sigma^{(n)}_i(0)=0$ (resp.~$\sigma^{(n)}_i(0)=0$ and $\sigma^{(n)}_i(q_n-1)=q_n-1$).} (see Section~\ref{fixedpoint}), we will need the sequence $(q_n)_{n\geq 0}$ to increase quickly enough, otherwise we get an odomutant $T$ conjugate to $S$.
	
	\begin{lemma}\label{fixedpoint}
		For every $n\geq 0$, we set
		$$F_n\coloneq\{x_n\in\{0,\ldots,q_n-1\}\mid \forall x_{n+1}\in\{0,\ldots,q_{n+1}-1\},\ \sigma^{(n)}_{x_{n+1}}(x_n)=x_n\}.$$
		If the series $\sum{\frac{|F_n|}{q_n}}$ diverges, then $\psi$ is an isomorphism between $S$ and $T$.
	\end{lemma}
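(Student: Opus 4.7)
The plan is to upgrade the factor map $\psi$ (whose properties are already recorded in Propositions~\ref{preservonto} and~\ref{aut}) to an isomorphism by proving that it is essentially injective. The idea is to show that on a set of full measure the image $\psi(x)$ retains, at infinitely many coordinates, an unambiguous copy of $x$, from which all remaining coordinates of $x$ can then be reconstructed by a backwards recursion analogous to the one appearing in~\eqref{psiinv}.

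First I would record a simple algebraic observation about $F_n$. If $a\in F_n$, then every permutation $\sigma^{(n)}_i$ fixes $a$, so by bijectivity of $\sigma^{(n)}_i$ no other element of $\{0,\ldots,q_n-1\}$ is mapped to $a$ by $\sigma^{(n)}_i$. Consequently, the identity $\psi(x)_n=\sigma^{(n)}_{x_{n+1}}(x_n)$ forces $x_n=\psi(x)_n$ whenever $\psi(x)_n\in F_n$.

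Next I would apply the second Borel--Cantelli lemma to the events $A_n\coloneq\{x\in X\mid x_n\in F_n\}$. Since $\mu$ is a product of uniform distributions and $A_n$ depends only on the $n$-th coordinate, the events are independent with $\mu(A_n)=|F_n|/q_n$. The divergence of $\sum |F_n|/q_n$ yields $\mu(\limsup_n A_n)=1$, so almost every $x$ has $x_n\in F_n$ for infinitely many $n$. By the previous observation, the same is then true of the coordinates $\psi(x)_n$ of a typical point in the image, and by $\psi_\star\mu=\mu$ the set $\{z\in X\mid z_n\in F_n\text{ for infinitely many }n\}$ also has full measure.

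The core step is to recover $x$ from $z=\psi(x)$ for such a typical $z$. Let $n_0<n_1<\cdots$ enumerate the indices with $z_{n_k}\in F_{n_k}$. By the first observation, $x_{n_k}=z_{n_k}$ for every $k$. Knowing $x_{n_k}$, the backwards recursion
\[
x_m=\bigl(\sigma^{(m)}_{x_{m+1}}\bigr)^{-1}(z_m)\qquad (0\leq m<n_k),
\]
already used to compute $\psi_n^{-1}$ in~\eqref{psiinv}, determines $x_0,\ldots,x_{n_k-1}$ uniquely from $z$. Letting $k\to +\infty$ determines every coordinate of $x$, hence $\psi$ is injective on a full-measure set. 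Combined with Propositions~\ref{preservonto} and~\ref{aut}, this shows that $\psi$ is a measure-theoretic isomorphism conjugating $T$ to $S$.

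The only delicate point is the first observation (that $F_n$-valued output coordinates force equality of the corresponding input coordinate), because it is what breaks the a priori ambiguity coming from the dependence of $\sigma^{(n)}_{x_{n+1}}$ on the unknown $x_{n+1}$; the remainder is a standard Borel--Cantelli argument followed by the backwards recursion already built into the definition of the $\psi_n^{-1}$.
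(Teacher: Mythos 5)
Your proposal is correct and follows essentially the same route as the paper: both rest on the Borel--Cantelli lemma applied to the independent events $\{x_n\in F_n\}$ together with the key observation that $\sigma^{(n)}_{x_{n+1}}(x_n)=y_n$ with $y_n\in F_n$ forces $x_n=y_n$. You merely spell out the reconstruction of $x$ from $\psi(x)$ via the backwards recursion of~\eqref{psiinv}, which the paper leaves as an "easy to check" bijectivity claim on the full-measure invariant set $X_0$.
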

	
	\begin{proof}[Proof of Lemma~\ref{fixedpoint}]
		By the Borel-Cantelli lemma, the set
		$$X_0\coloneq\{(x_n)_{n\geq 0}\in X\mid x_n\in F_n\text{ for infinitely many integers }n\}$$
		has full measure. It is also $S$-, $T$- and $\psi$-invariant and it is easy to check that $\psi\colon X_0\to X_0$ is a bijection, using the fact that the equality $\sigma^{(n)}_{x_{n+1}}(x_n)=y_n$ implies $x_n=y_n$ when $y_n$ is in $F_n$.
	\end{proof}
	
	\begin{remark}
		It is not hard to see, independently of Lemma~\ref{fixedpoint}, that in order to prove Theorems~\ref{thA} and~\ref{corthB}, one needs the sequence $(q_n)_{n\geq 0}$ to be unbounded. Otherwise, let $K$ denote an upper bound of the sequence, then the underlying odomutant admits a cutting-and-stacking construction with at most $K$ towers at each step. A system satisfying such property is said to have \textit{rank $K$} (and more generally \textit{finite rank}) and it is well-known that it is loosely Bernoulli and has zero entropy (see~\cite{ferencziSystemsFiniteRank1997}).
	\end{remark}
	
	\begin{question}
		Is it possible to find a necessary and sufficient condition on the permutations $\sigma^{(n)}_{i}$ (for $n\geq 0$ and $0\leq i<q_{n+1}$) for the factor map $\psi$ to be an isomorphism? Since every odometer is coalescent (see Theorem~\ref{thcoalescent}), this would enable us to know whether or not an odomutant is conjugate to its associated odometer.
	\end{question}
	
	The following two results will be useful for some computations in the proofs of Lemma~\ref{lemmaspectrum} and Proposition~\ref{cocycle}. They deal with the well-definedness of powers (positive or negative) of an odomutant at some point of $X$.
	
	\begin{proposition}\label{powerT}
		For $k\in\N$, the following assertion hold.\footnote{For instance, this holds for every $x\in\xmaxinfty$ such that $\psi(x)$ is not in $\orb_S(\xmax)$ (which is also the $S$-orbit of $\xmin$), so the hypothesis holds for a set of points $x$ of full measure.}
		\begin{itemize}
			\item If $\psi(x)$ is in $\bigcap_{i=0}^{k-1}{S^{-i}(\xmaxinfty)}$, then $Tx,T^2x,\ldots,T^kx$ are well-defined and for every $i\in\{ 0,\ldots,k\}$, we have
			$$T^ix=\psi_n^{-1}S^i\psi_n(x)$$
			for any $n\geq\max_{0\leq j\leq i-1}\nmax(\psi(T^jx))$.
			\item If $\psi(x)$ is in $\bigcap_{i=-(k-1)}^{0}{S^{-i}(\xmininfty)}$, then $T^{-1}x,T^{-2}x,\ldots,T^{-k}x$ are well-defined and for every $i\in\{ -(k-1),\ldots,0\}$, we have
			$$T^{-i}x=\psi_n^{-1}S^{-i}\psi_n(x)$$
			for any $n\geq\max_{-(i-1)\leq j\leq 0}\nmin(\psi(T^jx))$.
		\end{itemize}
	\end{proposition}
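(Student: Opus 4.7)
The plan is to prove both statements by induction on $i$, using Definition~\ref{defodomutant} together with the factor map property $\psi\circ T=S\circ\psi$ already established in Proposition~\ref{aut}.

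For the first assertion, the base case $i=0$ is trivial since $\psi_n$ is a homeomorphism, so $\psi_n^{-1}S^0\psi_n(x)=x=T^0x$. For the inductive step, I would assume that $Tx,\ldots,T^{i-1}x$ are well-defined and that for any integer $m\geq\max_{0\leq j\leq i-2}\nmax(\psi(T^jx))$ (with the convention that the maximum over an empty set is $0$) one has $T^{i-1}x=\psi_m^{-1}S^{i-1}\psi_m(x)$. Applying $\psi$ to both sides and using the factor relation $\psi\circ T=S\circ\psi$ from Proposition~\ref{aut} iteratively, one gets $\psi(T^{i-1}x)=S^{i-1}\psi(x)$. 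Since $i-1\leq k-1$, the hypothesis $\psi(x)\in\bigcap_{j=0}^{k-1}S^{-j}(\xmaxinfty)$ gives $\psi(T^{i-1}x)\in\xmaxinfty$, so $T^{i-1}x\in\psi^{-1}(\xmaxinfty)$ and therefore $T^ix=T(T^{i-1}x)$ is well-defined by Definition~\ref{defodomutant}.

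For the formula, fix $n\geq\max_{0\leq j\leq i-1}\nmax(\psi(T^jx))$. Then $n\geq\nmax(\psi(T^{i-1}x))$, so by Definition~\ref{defodomutant} applied at the point $T^{i-1}x$,
\[
T^ix=T(T^{i-1}x)=\psi_n^{-1}S\psi_n(T^{i-1}x).
\]
On the other hand $n\geq\max_{0\leq j\leq i-2}\nmax(\psi(T^jx))$, so the induction hypothesis yields $T^{i-1}x=\psi_n^{-1}S^{i-1}\psi_n(x)$, i.e.\ $\psi_n(T^{i-1}x)=S^{i-1}\psi_n(x)$. Substituting gives $T^ix=\psi_n^{-1}S^i\psi_n(x)$, as required.

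The second assertion is proven by the exact same induction, replacing $S$ by $S^{-1}$, $\xmaxinfty$ by $\xmininfty$, and $\nmax$ by $\nmin$: one uses the explicit formula $T^{-1}y=\psi_n^{-1}S^{-1}\psi_n(y)$ on $\psi^{-1}(\xmininfty)$ provided by Proposition~\ref{aut}, together with the consequence $\psi\circ T^{-1}=S^{-1}\circ\psi$ of the factor relation, to verify at each step that $T^{-i}x$ is well-defined and that $\psi_n(T^{-i}x)=S^{-i}\psi_n(x)$ for $n$ large enough. There is no real obstacle; the only thing to be careful about is that the threshold for $n$ is a maximum over the finitely many orbit points visited, which grows with $i$ but remains finite, so the inductive step is consistent with the inductive hypothesis.
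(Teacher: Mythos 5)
Your proof is correct and follows essentially the same route as the paper's: an induction in which the key step is to show $\psi(T^{i-1}x)\in\xmaxinfty$ so that $T^ix$ is well-defined, and then to compose $\psi_n^{-1}S\psi_n$ with the formula for $T^{i-1}x$. The only (harmless) difference is that you obtain $\psi(T^{i-1}x)=S^{i-1}\psi(x)$ by iterating the factor relation from Proposition~\ref{aut}, whereas the paper re-derives it by comparing the first $n+1$ coordinates of $\psi(T^{k-1}x)$ and $\psi_n(T^{k-1}x)=S^{k-1}\psi_n(x)$; both are valid and non-circular.
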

	
	\begin{proof}[Proof of Proposition~\ref{powerT}]
		For example, let us prove the first point by induction over $k\geq 1$. The proof of the second point is similar.\par
		The result is clear for $k=0$. Let $k\geq 1$. Let us assume that the result holds for $k-1$ and that
		$$\psi(x)\in\bigcap_{i=0}^{k-1}{S_n^{-i}(\xmaxinfty)}.$$
		This implies that $T^{k-1}x$ is well-defined and is equal to $\psi_n^{-1}S^{k-1}\psi_n(x)$ for any $n$ greater than or equal to $\max_{0\leq j\leq k-2}\nmax(\psi(T^jx))$. Moreover $\psi(T^{k-1}x)$ is not equal to $\xmax$. Indeed, the first $n+1$ coordinates of $\psi(T^{k-1}x)$ and $\psi_n(T^{k-1}x)$ are the same and we have
		$$\psi_n(T^{k-1}x)=S^{k-1}\psi_n(x)$$
		for any $n\geq\max_{0\leq j\leq k-2}\nmax(\psi(T^jx))$, so this follows from the fact that $S^{k-1}\psi(x)$ is not equal to $\xmax$. This implies that $T^kx$ is well-defined and equal to $\psi^{-1}_nS\psi_n(T^{k-1}x)$ for any $n\geq\nmax(\psi(T^{k-1}x))$. Finally, for any $n\geq\max_{0\leq j\leq k-1}\nmax(\psi(T^jx))$, we get
		$$T^kx=\psi^{-1}_nS\psi_n(T^{k-1}x)=\psi^{-1}_nS\psi_n(\psi_n^{-1}S^{k-1}\psi_n(x))=\psi_n^{-1}S^{k}\psi_n(x),$$
		hence the result for $k$.
	\end{proof}
	
	\begin{corollary}\label{corpowerT}
		Let $x, y\in X$ and $M\in\N^*$ such that $x_j=y_j$ for every $j\geq M$, and set
		$$K\coloneq\sum_{j=0}^{M-1}{h_j\left (\sigma_{y_{j+1}}^{(j)}(y_j)-\sigma_{x_{j+1}}^{(j)}(x_j)\right )}.$$
		Assume that $x$ and $y$ are different. Then the following hold:
		\begin{itemize}
			\item if $K>0$, then $Tx,T^2x,\ldots,T^Kx$ are well-defined;
			\item if $K<0$, then $T^{-1}x,T^{-2}x,\ldots,T^Kx$ are well-defined.
		\end{itemize}
		Moreover we have $T^Kx=y$.
	\end{corollary}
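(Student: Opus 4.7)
The plan is to reduce everything to an application of Proposition~\ref{powerT} via a careful analysis of the odometer's action on $\psi_n(x)$. The key observation is that for every $n\geq M-1$, the sequences $\psi_n(x)$ and $\psi_n(y)$ agree on every coordinate of index at least $M$: indeed for $j\geq n+1$ this follows from $x_j=y_j$, and for $M\leq j\leq n$ we have $\sigma^{(j)}_{x_{j+1}}(x_j)=\sigma^{(j)}_{y_{j+1}}(y_j)$ since $x_{j+1}=y_{j+1}$ and $x_j=y_j$. They may only differ in coordinates $0,\ldots,M-1$, where their values are respectively $\sigma^{(j)}_{x_{j+1}}(x_j)$ and $\sigma^{(j)}_{y_{j+1}}(y_j)$.

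Introducing the mixed-base encoding $n_M(z)\coloneq\sum_{j=0}^{M-1}z_jh_j$, which is a bijection onto $\{0,\ldots,h_M-1\}$, the integer $K$ in the statement equals $n_M(\psi_n(y))-n_M(\psi_n(x))$ for any $n\geq M-1$, so $|K|\leq h_M-1$. I would first prove by induction on $i$ that, in the case $K>0$, for every $0\leq i\leq K$ the iterate $S^i\psi_n(x)$ has the same coordinates as $\psi_n(x)$ from index $M$ onward, with first $M$ coordinates encoding $n_M(\psi_n(x))+i$. This works because at each step the first $M$ coordinates encode an integer strictly less than $h_M-1$ (as $n_M(\psi_n(x))+i<n_M(\psi_n(x))+K=n_M(\psi_n(y))\leq h_M-1$), so no carry ever propagates to index $M$. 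In particular $S^K\psi_n(x)=\psi_n(y)$.

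Since $\psi(x)$ and $\psi_n(x)$ coincide on coordinates $0,\ldots,n$, exactly the same induction applied to $\psi(x)$ (rather than $\psi_n(x)$) shows that for $0\leq i<K$ the sequence $S^i\psi(x)$ has first $M$ coordinates encoding a value strictly less than $h_M-1$, hence not all equal to $q_j-1$; therefore $S^i\psi(x)\neq\xmax$. This is precisely the hypothesis $\psi(x)\in\bigcap_{i=0}^{K-1}S^{-i}(\xmaxinfty)$ required by the first bullet of Proposition~\ref{powerT}. Applying it yields that $Tx,T^2x,\ldots,T^Kx$ are well-defined and, for $n$ large enough,
\[
T^Kx=\psi_n^{-1}S^K\psi_n(x)=\psi_n^{-1}\psi_n(y)=y,
\]
using the bijectivity of $\psi_n$. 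The case $K<0$ is entirely symmetric: one runs the same argument with $S^{-1}$ in place of $S$ and $\xmin$ in place of $\xmax$, invoking the second bullet of Proposition~\ref{powerT}.

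The only delicate point is the carry-propagation step, but the bound $|K|\leq h_M-1$ makes it transparent: the first $M$ coordinates never reach the maximal value $h_M-1$ during the intermediate iterations, so carries remain confined to the range $\{0,\ldots,M-1\}$, both for $\psi_n(x)$ (to establish the equality $S^K\psi_n(x)=\psi_n(y)$) and for $\psi(x)$ (to verify the hypothesis of Proposition~\ref{powerT}).
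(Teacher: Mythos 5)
Your proof is correct and follows essentially the same route as the paper: reduce to Proposition~\ref{powerT}, verify that $S^i\psi(x)\neq\xmax$ for $0\leq i<K$, and identify $S^K\psi_n(x)$ with $\psi_n(y)$ via elementary odometer arithmetic. The only cosmetic difference is that you establish both the equality and the $\xmax$-avoidance by a single direct carry-tracking induction on the first $M$ coordinates, whereas the paper cites the odometer formula as a known fact and checks the avoidance separately by contradiction with an infinite-series lower bound; the content is the same.
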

	
	\begin{remark}
		The proof of the equality $T^Kx=y$ is based on the well-understood case of an odometer, namely the permutations $\sigma^{(n)}_i$ are all identity maps and $T=S$. More precisely, given $w,z\in X$ satisfying $w_j=z_j$ for every $j$ greater than or equal to some $M$, we know that $S^Kw=z$ with
		$$K=\sum_{j=0}^{M-1}{h_j\left (z_j-w_j\right )}.$$
		It remains to apply this well-known fact to $w=\psi_n(x)$ and $z=\psi_n(y)$ for a large enough integer $n$.
	\end{remark}
	
	\begin{proof}[Proof of Corollary~\ref{corpowerT}]
		Let us consider the case $K>0$ (the proof for the other case is similar). By the previous remark, it is clear that we have
		$$y=\psi_{n}^{-1}S^K\psi_n(x)$$
		for every $n\geq M$. Using Proposition~\ref{powerT}, it remains to prove that $S^i\psi(x)$ is not equal to $\xmax$ for every $i\in\{0,\ldots,K-1\}$. If there exists a positive integer $i$ such that $S^i\psi(x)=\xmax$, then we have 
		$$\sigma_{x_{j+1}}^{(j)}(x_j)=q_j-1$$
		for every sufficiently large integers $j$, and
		\begin{align*}
			\displaystyle i&=\displaystyle\sum_{j=0}^{+\infty}{h_j\left (q_i-1-\sigma_{x_{j+1}}^{(j)}(x_j)\right )}\\
			&=\displaystyle \sum_{j=0}^{M-1}{h_j\left (q_i-1-\sigma_{x_{j+1}}^{(j)}(x_j)\right )} + \sum_{j=M}^{+\infty}{h_j\left (q_i-1-\sigma_{x_{j+1}}^{(j)}(x_j)\right )}\\
			&\geq \displaystyle \sum_{j=0}^{M-1}{h_j\left (\sigma_{y_{j+1}}^{(j)}(y_j)-\sigma_{x_{j+1}}^{(j)}(x_j)\right )}.
		\end{align*}
		Therefore $i$ is greater than or equal to $K$ and we are done.
	\end{proof}
	
	\subsubsection{An odomutant and its associated odometer have the same point spectrum}
	
	Since every odomutant $T$ factors onto its associated odometer $S$, we have the inclusion $\spec(S)\subset\spec(T)$ between the point spectrums. We actually show that this is an equality. The following lemma is inspired by Danilenko and Vieprik's methods to study the point spectrum of rank-one systems (see Proposition 3.7 in~\cite{danilenkoExplicitRank1Constructions2023}).
	
	\begin{lemma}\label{lemmaspectrum}
		Let $T$ be an odomutant built from the odometer $S$ on $X=\prod_{n\geq 0}{\{0,\ldots,q_n-1\}}$ and the families of permutations $(\sigma_{x_{n+1}}^{(n)})_{0\leq x_{n+1}<q_{n+1}}$. If $\lambda\in\T$ is an eigenvalue of $T$, then for every $\varepsilon>0$, there exists a positive integer $n$ such that for every $m\geq n$, there exist $E_{n,m}\subset\prod_{j=n}^{m}{\{0,\ldots,q_j-1\}}$ and $x_{m+1}\in\{0,\ldots q_{m+1}-1\}$ satisfying the following:
		\begin{itemize}
			\item $\displaystyle\frac{|E_{n,m}|}{q_nq_{n+1}\ldots q_m}>1-\varepsilon$;
			\item for every $(y_n,\ldots,y_m),(z_n,\ldots,z_m)\in E_{n,m}$, we have
			$$\left |1-\lambda^{\sum_{j=n}^{m}{h_j\left (\sigma_{y_{j+1}}^{(j)}(y_j)-\sigma_{z_{j+1}}^{(j)}(z_j)\right )}}\right |<\varepsilon,$$
			with $y_{m+1}=z_{m+1}\coloneq x_{m+1}$.
		\end{itemize}
	\end{lemma}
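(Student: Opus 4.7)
The plan is to exploit an eigenfunction $f$ of $T$ for $\lambda$ to derive an $L^2$-approximation, then use the explicit orbit relations of Corollary~\ref{corpowerT} to translate a concentration property of $f$ into the desired estimate on $\lambda$.

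First I would normalise $f$ so that $|f(x)|=1$ almost everywhere, which is possible since $T$ is ergodic (by Proposition~\ref{aut}, it factors onto the ergodic odometer $S$). The cylinders of length $N$ generate the Borel $\sigma$-algebra of $X$ as $N\to\infty$, so by $L^2$-martingale convergence the conditional expectations $g_N:=\mathbb{E}[f\mid\mathcal{F}_N]$, where $\mathcal{F}_N$ is generated by the first $N$ coordinates, converge to $f$ in $L^2$. Given $\varepsilon>0$, I fix an auxiliary $\delta>0$ (with $\delta\ll\varepsilon$) and choose $n$ so large that $\|f-g_n\|_2<\delta$. By Markov, $G:=\{x:|f(x)-g_n(x)|<\sqrt{\delta}\}$ satisfies $\mu(G)>1-\delta$, and $g_n$ is constant on each cylinder of length $n$.

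Now fix $m\geq n$ and partition $X$ into cylinders of length $m+2$, indexed by a prefix $\mathbf{a}=(a_0,\ldots,a_{n-1})$, a middle block $\mathbf{y}=(y_n,\ldots,y_m)$, and a boundary coordinate $b\in\{0,\ldots,q_{m+1}-1\}$. By averaging over $(\mathbf{a},b)$ and a standard pigeonholing on $G$, one finds a prefix $\mathbf{a}^*$ and a value $x_{m+1}^*$ for which the set $E_{n,m}$ of middle blocks $\mathbf{y}$ with $\mu(G\cap C_{\mathbf{a}^*,\mathbf{y},x_{m+1}^*})/\mu(C_{\mathbf{a}^*,\mathbf{y},x_{m+1}^*})$ close to $1$ satisfies $|E_{n,m}|/(q_n\cdots q_m)>1-\varepsilon$, once $\delta$ has been chosen small enough relative to $\varepsilon$.

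For any $\mathbf{y},\mathbf{z}\in E_{n,m}$, I pick representatives $x\in C_{\mathbf{a}^*,\mathbf{y},x_{m+1}^*}\cap G$ and $w\in C_{\mathbf{a}^*,\mathbf{z},x_{m+1}^*}\cap G$ that agree on coordinates $j\geq m+2$. Corollary~\ref{corpowerT} then gives $T^K x=w$ where the total exponent decomposes as $K=K_0+K_1$, with
$$K_1=\sum_{j=n}^{m}h_j\bigl(\sigma^{(j)}_{z_{j+1}}(z_j)-\sigma^{(j)}_{y_{j+1}}(y_j)\bigr)$$
(the expression appearing in the lemma) and a correction $K_0=h_{n-1}\bigl(\sigma^{(n-1)}_{z_n}(a^*_{n-1})-\sigma^{(n-1)}_{y_n}(a^*_{n-1})\bigr)$ from the junction at index $n-1$. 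Since $x,w\in G$ lie in the same length-$n$ cylinder and $g_n$ is constant there, $|f(w)-f(x)|<2\sqrt{\delta}$, and the eigenfunction identity $f(w)=\lambda^K f(x)$ yields $|\lambda^K-1|<2\sqrt{\delta}$.

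The main obstacle is the correction $K_0$, which depends on $(y_n,z_n)$ and is not present in the lemma's sum. The natural way around it is to show, as a preliminary instance of the same $L^2$-approximation run at one finer scale, that $\lambda^{h_{n-1}}$ is arbitrarily close to $1$ once $n$ is large enough in a way depending only on $q_{n-1}$ and $\varepsilon$; this forces $|1-\lambda^{K_0}|<\varepsilon/2$ uniformly in $(y_n,z_n)$ because $|K_0|\leq h_{n-1}(q_{n-1}-1)$. Combining with $|\lambda^K-1|<\varepsilon/2$ via the triangle inequality $|\lambda^{K_1}-1|\leq |\lambda^{K_1}-\lambda^K|+|\lambda^K-1|=|\lambda^{-K_0}-1|+|\lambda^K-1|$ closes the argument. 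Making the preliminary step quantitative and compatible with the same choice of $E_{n,m}$ is the delicate part; once handled, the rest is bookkeeping on $\delta$ and $\varepsilon$.
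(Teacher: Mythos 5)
Your overall strategy coincides with the paper's: normalise the eigenfunction to modulus one, approximate it by a function depending on finitely many coordinates, locate a prefix and a value of $x_{m+1}$ where the good set has high relative density, and transport mass between the cylinders $C_{\mathbf{a}^*,\mathbf{y},x^*_{m+1}}$ and $C_{\mathbf{a}^*,\mathbf{z},x^*_{m+1}}$ via Corollary~\ref{corpowerT} (the paper uses a level set $A=\{|g_\lambda-\alpha|<\varepsilon/2\}$ rather than a conditional expectation, which is cosmetic). The genuine gap is the step you yourself flag: your argument yields $|1-\lambda^{K_0+K_1}|<2\sqrt{\delta}$, and the removal of the boundary term $K_0=h_{n-1}\bigl(\sigma^{(n-1)}_{z_n}(a^*_{n-1})-\sigma^{(n-1)}_{y_n}(a^*_{n-1})\bigr)$ is not carried out. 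The fix you sketch does not go through: to get $|1-\lambda^{K_0}|<\varepsilon/2$ uniformly over $|K_0|\leq h_{n-1}(q_{n-1}-1)$ you need $|1-\lambda^{h_{n-1}}|\lesssim\varepsilon/q_{n-1}$, but the martingale convergence $\|f-\mathbb{E}[f\mid\mathcal{F}_N]\|_2\to 0$ comes with no rate, while the required accuracy degrades with the very index $n$ you are trying to select; for the rapidly growing $q_n$ used elsewhere in the paper there is no reason any $n$ satisfies both constraints. Moreover, uniform control of $\lambda^{jh_{n-1}}$ for all $0\leq j<q_{n-1}$ is essentially what Theorem~\ref{thspectrum} is ultimately trying to establish, so the ``preliminary instance at a finer scale'' is dangerously close to circular.

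For comparison, the paper's proof never meets $K_0$: it applies Corollary~\ref{corpowerT} with the exponent $K=\sum_{j=n}^{m}h_j\bigl(\sigma^{(j)}_{y_{j+1}}(y_j)-\sigma^{(j)}_{z_{j+1}}(z_j)\bigr)$ alone and asserts $T^{-K}(B_y)\subset C\coloneq[x_0,\ldots,x_{n-1},z_n,\ldots,z_m,x_{m+1}]_{m+2}$, whence $T^{-K}(B_y)\cap B_z\neq\emptyset$ and $|1-\lambda^K|<\varepsilon$. Your bookkeeping at the junction $j=n-1$ is the more careful one: two points sharing the prefix $(x_0,\ldots,x_{n-1})$ and the tail from coordinate $m+1$ onward are connected by the exponent $K$ \emph{plus} $h_{n-1}\bigl(\sigma^{(n-1)}_{y_n}(x_{n-1})-\sigma^{(n-1)}_{z_n}(x_{n-1})\bigr)$, and $T^{-K}(B_y)$ lands on the rung of the $(m+1)$-st tower whose $(n-1)$-st coordinate is $\bigl(\sigma^{(n-1)}_{z_n}\bigr)^{-1}\bigl(\sigma^{(n-1)}_{y_n}(x_{n-1})\bigr)$ rather than $x_{n-1}$, so the asserted inclusion needs justification (or the argument needs to be modified to absorb this term). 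In short, you have correctly identified a real subtlety that the paper's write-up passes over, but your proposal does not overcome it, and as it stands it is not a complete proof of the lemma.
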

	
	\begin{proof}[Proof of Lemma~\ref{lemmaspectrum}]
		Let $\varepsilon>0$, $\lambda\in\T$ an eigenvalue of $T$ and $g_{\lambda}$ an eigenfunction of $T$ associated to $\lambda$. Without loss of generality, we assume that $\varepsilon\leq 1/2$. Moreover, the modulus of $g_{\lambda}$ is almost everywhere constant (since it is $T$-invariant and $T$ is ergodic), so we assume that $g_{\lambda}$ has modulus $1$. There exists $\alpha\in\T$ and a measurable subset $A\subset X$ of positive measure such that \begin{equation}\label{eqlemmaspectrum1}
			\forall x\in A,\ |g_{\lambda}(x)-\alpha|<\varepsilon/2.
		\end{equation}
		Since the partition given by the $n$-cylinders is increasing to the $\sigma$-algebra on $X$ as $n\to +\infty$, we can find $n>0$ and $(x_0,\ldots,x_{n-1})\in\prod_{j=0}^{n-1}{\{0,\ldots,q_j-1\}}$ such that
		$$\mu(A\cap [x_0,\ldots,x_{n-1}]_n)>(1-\varepsilon^2)\mu([x_0,\ldots,x_{n-1}]_n).$$
		Let $m\geq n$. Then there exists $x_{m+1}\in\{0,\ldots,q_{m+1}-1\}$ such that
		\begin{equation}\label{eqlemmaspectrum2}
			\mu(A\cap [x_0,\ldots,x_{n-1},\bullet,\ldots,\bullet,x_{m+1}]_{m+2})>(1-\varepsilon^2)\mu([x_0,\ldots,x_{n-1},\bullet,\ldots,\bullet,x_{m+1}]_{m+2})
		\end{equation}
		and we set
		$$E_{n,m}\coloneq\left\{(y_n,\ldots,y_m)\in\prod_{j=n}^{m}{\{0,\ldots,q_j-1\}}\ \Big | \begin{array}{l}
			\mu(A\cap [x_0,\ldots,x_{n-1},y_n,\ldots,y_m,x_{m+1}]_{m+2})>\\
			\ \ (1-\varepsilon)\mu([x_0,\ldots,x_{n-1},y_n,\ldots,y_m,x_{m+1}]_{m+2})
		\end{array}\right\}.$$
		By Inequality~\eqref{eqlemmaspectrum2}, we get
		$$\frac{|E_{n,m}|}{q_n\ldots q_m}>1-\varepsilon.$$
		Let $(y_n,\ldots,y_m),(z_n,\ldots,z_m)\in E_{n,m}$. Let us set
		$$B_y\coloneq A\cap [x_0,\ldots,x_{n-1},y_n,\ldots,y_m,x_{m+1}]_{m+2},$$
		$$B_z\coloneq A\cap [x_0,\ldots,x_{n-1},z_n,\ldots,z_m,x_{m+1}]_{m+2}.$$
		and
		$$K\coloneq\sum_{j=n}^{m}{h_j\left (\sigma_{y_{j+1}}^{(j)}(y_j)-\sigma_{z_{j+1}}^{(j)}(z_j)\right )}$$
		(with $y_{m+1}=z_{m+1}\coloneq x_{m+1}$). By Corollary~\ref{corpowerT}, the set $T^{-K}\left (B_y\right )$ is included in the cylinder
		$$C\coloneq[x_0,\ldots,x_{n-1},z_n,\ldots,z_m,x_{m+1}]_{m+2},$$
		which implies that $B\coloneq T^{-K}(B_y)\cap B_z$ has positive measure. Indeed, if $B$ were a null set, the cylinder $C$ would contain two subsets $T^{-K}(B_y)$ and $B_z$ of negligeable intersection and we would get $\mu(C)> 2(1-\varepsilon)\mu(C)$ by definition of $E_{n,m}$, this is not possible since $\varepsilon\leq 1/2$.\par
		Then we have $g_{\lambda}(T^Kx)=\lambda^K g_{\lambda}(x)$ for almost every $x\in B$, and since every $x\in B$ is in $A$ and satisfies $T^Kx\in A$, we get $|1-\lambda^K|<\varepsilon$ using~\eqref{eqlemmaspectrum1}.
	\end{proof}
	
	\begin{lemma}\label{lemmacomplex}
		Let $0<\varepsilon<2$ and $\theta=\theta(\varepsilon)>0$ such that
		$$\{\nu\in\T\mid |1-\nu|<\varepsilon\}=\{\exp{\left (2i\pi\tau\right )}\mid -\theta<\tau<\theta\}.$$
		Let $\nu\in\T\setminus\{1\}$ satisfying $|1-\nu|<\varepsilon$. We write it as $\nu=\exp{\left (2i\pi\tau\right )}$ with $-\theta<\tau<\theta$, $\tau\not=0$. If $\varepsilon$ is small enough so that $\theta<1/4$, then for every interval\footnote{By an interval of $\Z$, we mean a set of the form $\{k\in\Z\mid a\leq k\leq b\}$ for some integers $a$ and $b$.} $J$ of $\Z$, we have
		$$\sum_{j\in J}{\mathds{1}_{|1-\nu^j|<\varepsilon}}\leq\frac{3\theta}{1-2\theta}|J|+\frac{6\theta}{|\tau|}.$$
	\end{lemma}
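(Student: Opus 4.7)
The plan is to reinterpret the condition $|1-\nu^j|<\varepsilon$ as a rational approximation statement and exploit the resulting block structure in $\Z$. Writing $\nu^j=\exp(2i\pi j\tau)$, the definition of $\theta$ says that $|1-\nu^j|<\varepsilon$ holds if and only if $j\tau$ lies within $\theta$ of some integer; equivalently, $j$ belongs to the block
$$A_k\coloneq\{j\in\Z\mid |j\tau-k|<\theta\}$$
for some $k\in\Z$, and this $k$ is unique because $\theta<1/4<1/2$. Each $A_k$ is the set of integers in a real interval of length $2\theta/|\tau|$, so $|A_k|\leq 2\theta/|\tau|+1$. The quantity we want to control is $\sum_{k\in\Z}|A_k\cap J|$.

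The core step is a gap dichotomy: if $j_1<j_2$ are two hits with corresponding integers $k_1,k_2$, writing $j_i\tau=k_i+r_i$ with $|r_i|<\theta$, then $(j_2-j_1)|\tau|$ lies within $2\theta$ of the nonnegative integer $|k_2-k_1|$. Since $\theta<1/4$, exactly one of two cases occurs: either $k_1=k_2$, forcing $j_2-j_1<2\theta/|\tau|$ (hits in the same block); or $|k_1-k_2|\geq 1$, forcing $j_2-j_1>(1-2\theta)/|\tau|$ (hits in different blocks). Thus distinct blocks are well separated in $\Z$.

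With this, let $N$ denote the number of blocks $A_k$ meeting $J$. Picking the smallest hit from each such block yields an increasing sequence of $N$ integers in $J$ whose consecutive differences exceed $(1-2\theta)/|\tau|$. Since these integers span at most $|J|-1<|J|$, we get $(N-1)(1-2\theta)/|\tau|<|J|$, hence $N\leq |J||\tau|/(1-2\theta)+1$. Multiplying by the upper bound on each block size gives
$$\sum_{j\in J}\mathds{1}_{|1-\nu^j|<\varepsilon}\;\leq\;\left(\frac{|J||\tau|}{1-2\theta}+1\right)\!\left(\frac{2\theta}{|\tau|}+1\right).$$

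It remains to expand and simplify, and this is where the precise constants $3\theta/(1-2\theta)$ and $6\theta/|\tau|$ come from. Expanding the product yields four summands: the cross-term $2\theta|J|/(1-2\theta)$, the term $|J||\tau|/(1-2\theta)$ which using $|\tau|<\theta$ is bounded by $\theta|J|/(1-2\theta)$, together giving $3\theta|J|/(1-2\theta)$; and finally $2\theta/|\tau|+1$, which is bounded by $6\theta/|\tau|$ because $|\tau|<\theta<4\theta$ yields $1\leq 4\theta/|\tau|$. I expect the dichotomy step to be the main conceptual point, whereas the closing algebra is routine but requires careful use of $|\tau|<\theta$ and $\theta<1/4$ to land exactly on the advertised constants.
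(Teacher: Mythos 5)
Your proof is correct and follows essentially the same route as the paper's: both arguments partition the set of hits into blocks of size at most $2\theta/|\tau|+1$, show that distinct blocks are separated by gaps of length at least roughly $(1-2\theta)/|\tau|$, bound the number of blocks meeting $J$ accordingly, and close with the same use of $|\tau|<\theta$ to reach the constants $\frac{3\theta}{1-2\theta}$ and $\frac{6\theta}{|\tau|}$. The only cosmetic difference is that you index blocks by the nearest integer $k$ to $j\tau$ and compare leftmost hits, whereas the paper writes $\Z$ explicitly as an alternating union of hit-runs $C_{\ell}$ and gap-runs $D_{\ell}$.
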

	
	\begin{proof}[Proof of Lemma~\ref{lemmacomplex}]
		Without loss of generality, we assume that $\tau$ is positive. Let $J$ be an interval of $\Z$. If we have
		$$\sum_{j\in J}{\mathds{1}_{|1-\nu^j|}}=0,$$
		then the result is clear. Now we assume that there exists $j\in J$ such that $|1-\nu^j|<\varepsilon$. Since $\nu$ is not equal to $1$, this implies that we have $|1-\nu^k|<\varepsilon$ for infinitely many integers $k$. Since $\theta$ is less than $1/4$, we also have $|1-\nu^k|\geq\varepsilon$ for infinitely many integers $k$. Therefore we can find sequences $(n_{\ell})_{\ell\in\Z}$ and $(m_{\ell})_{\ell\in\Z}$ of integers such that $n_{\ell}<m_{\ell}<n_{\ell+1}<m_{\ell+1}$ for every $\ell\in\Z$, so that we can write
		$$\Z =\ldots\sqcup C_{-2}\sqcup D_{-2}\sqcup C_{-1}\sqcup D_{-1}\sqcup C_{0}\sqcup D_{0}\sqcup C_{1}\sqcup D_{1}\sqcup\ldots$$
		with intervals $C_{\ell}\coloneq\{k\in\Z\mid n_{\ell}\leq k<m_{\ell}\}$ and $D_{\ell}\coloneq\{k\in\Z\mid m_{\ell}\leq k<n_{\ell+1}\}$ such that
		$$\forall k\in C_{\ell},\ |1-\nu^{k}|<\varepsilon\text{ and }\forall k\in D_{\ell},\ |1-\nu^{k}|\geq\varepsilon.$$
		For every $\ell\in\Z$, we have
		$$(m_{\ell}-n_{\ell}-1)\tau<2\theta <(m_{\ell}-n_{\ell}+1)\tau$$
		$$\text{and }(n_{\ell+1}-m_{\ell}+1)\tau\geq 1-2\theta,$$
		this implies
		$$\frac{2\theta}{\tau}-1<|C_{\ell}|<\frac{2\theta}{\tau}+1$$
		$$\text{and }|D_{\ell}|\geq\frac{1-2\theta}{\tau}-1.$$
		Now we set $\ell_0\coloneq\max{\{\ell\in\Z\mid n_{\ell}\leq\min{J}\}}\text{ and }\ell_1\coloneq\max{\{\ell\in\Z\mid n_{\ell}\leq\max{J}\}}$. We then have the inclusion $\bigsqcup_{\ell_0+1\leq \ell\leq\ell_1-1}{(C_{\ell}\sqcup D_{\ell})}\subset J$ which yields
		$$|J|>(\ell_1-\ell_0-1)\left (\frac{1}{\tau}-2\right ).$$
		Finally, we have
		\begin{align*}
			\displaystyle \sum_{j\in J}{\mathds{1}_{|1-\nu^j|<\varepsilon}}
			&\leq\displaystyle \sum_{\ell=\ell_0}^{\ell_1}{|C_{\ell}|}\\
			&\leq\displaystyle (\ell_1-\ell_0+1)\left (\frac{2\theta}{\tau}+1\right )\\
			&\leq\displaystyle \left (\frac{\frac{2\theta}{\tau}+1}{\frac{1}{\tau}-2}\right )|J|+2\left (\frac{2\theta}{\tau}+1\right )\\
			&\leq\displaystyle \left (\frac{\frac{2\theta}{\tau}+\frac{\theta}{\tau}}{\frac{1}{\tau}-2\frac{\theta}{\tau}}\right )|J|+2\left (\frac{2\theta}{\tau}+\frac{\theta}{\tau}\right )\text{ since }1\leq\frac{\theta}{\tau}\\
			&=\displaystyle\frac{3\theta}{1-2\theta}|J|+\frac{6\theta}{\tau}
		\end{align*}
		and we are done.
	\end{proof}
	
	\begin{theorem}\label{thspectrum}
		Let $T$ be an odomutant built from the odometer $S$ on $X=\prod_{n\geq 0}{\{0,\ldots,q_n-1\}}$. Then $T$ and $S$ have the same point spectrum.
	\end{theorem}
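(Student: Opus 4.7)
The plan is to establish the equality $\spec(T)=\spec(S)$. The inclusion $\spec(S)\subset\spec(T)$ is immediate: by Proposition~\ref{aut}, $\psi$ is a measure-preserving factor map from $T$ to $S$, so any eigenfunction $f\in\ld^2\esp$ of $S$ with eigenvalue $\lambda$ lifts to $f\circ\psi$, a non-zero eigenfunction of $T$ with the same eigenvalue. For the reverse inclusion, by Proposition~\ref{ododiscr} it suffices to show that every $\lambda\in\spec(T)$ satisfies $\lambda^{h_n}=1$ for some $n\geq 1$.

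Fix $\lambda\in\spec(T)$ and set $\nu_n:=\lambda^{h_n}$. I would choose $\varepsilon>0$ small enough that the angle $\theta=\theta(\varepsilon)$ from Lemma~\ref{lemmacomplex} satisfies, say, $\theta<1/22$. Applying Lemma~\ref{lemmaspectrum} specialized to $m=n$ yields a subset $E_{n,n}\subset\{0,\ldots,q_n-1\}$ of cardinality greater than $(1-\varepsilon)q_n$ and an index $x_{n+1}$ such that $|1-\nu_n^{a-b}|<\varepsilon$ for all $a,b$ in $F_n:=\sigma^{(n)}_{x_{n+1}}(E_{n,n})$. A standard sumset computation gives $F_n-F_n\supset\{-k_0,\ldots,k_0\}$ with $k_0\geq(1-2\varepsilon)q_n-1$. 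Either $\nu_n=1$, in which case we are done, or Lemma~\ref{lemmacomplex} applied to $\nu_n$ on the interval $\{-k_0,\ldots,k_0\}$ forces $\nu_n=\exp(2i\pi\tau_n)$ with $|\tau_n|\leq C\theta/q_n$ for some absolute constant $C$.

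A crucial technical point is that this bound must hold not only for one $n$ but for every sufficiently large $n$. This follows from the proof scheme of Lemma~\ref{lemmaspectrum}: the cylinder $(x_0,\ldots,x_{n-1})$ with density greater than $1-\varepsilon^2$ inside the support of the eigenfunction can be found at arbitrarily large level, by the Lebesgue density theorem applied to the generating sequence of cylinder partitions. I would either strengthen the lemma or rerun its proof at each level, concluding that there exists $N(\varepsilon)$ such that for every $n\geq N(\varepsilon)$, either $\nu_n=1$ or $|\tau_n|\leq C\theta/q_n$.

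To close, suppose for contradiction that $\nu_n\neq 1$ for all $n$, so $|\tau_n|\leq C\theta/q_n$ holds for every $n\geq N(\varepsilon)$. The relation $\nu_{n+1}=\nu_n^{q_n}$ translates to $\tau_{n+1}\equiv q_n\tau_n\pmod{1}$; since $|q_n\tau_n|\leq C\theta<1/2$, no wraparound occurs and $\tau_{n+1}=q_n\tau_n$ holds as a real equality. Iterating gives $\tau_{N+k}=(h_{N+k}/h_N)\tau_N$ for all $k\geq 0$, and plugging in the bound $|\tau_{N+k}|\leq C\theta/q_{N+k}$ yields $|\tau_N|\leq C\theta\, h_N/h_{N+k+1}\to 0$ as $k\to\infty$ (because $h_n\to\infty$). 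Hence $\tau_N=0$, a contradiction, so $\lambda\in\spec(S)$. The principal expected obstacle is the uniform-in-$n$ version of Lemma~\ref{lemmaspectrum}: the single-level statement alone would constrain $\tau_n$ only at one value of $n$, which is too weak to sustain the bootstrap driven by the product relation $\nu_{n+1}=\nu_n^{q_n}$.
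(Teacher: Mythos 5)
Your proposal is correct, and while it leans on the same two technical lemmas (Lemma~\ref{lemmaspectrum} and Lemma~\ref{lemmacomplex}), the way you combine them for the inclusion $\spec(T)\subset\spec(S)$ is genuinely different from the paper's. The paper fixes the single level $n$ produced by Lemma~\ref{lemmaspectrum}, sets $\nu=\lambda^{h_n}$, and then lets $m$ grow: the set $E_{n,m}$ fills a proportion $1-\varepsilon$ of an interval $J$ of length $q_n\cdots q_m$ under the bijection $H$, so the upper bound $\frac{3\theta}{1-2\theta}|J|+\frac{6\theta}{|\tau|}$ from Lemma~\ref{lemmacomplex} is eventually beaten by the lower bound $(1-\varepsilon)|J|$, and the contradiction forces $\nu=1$ at that one level $n$ — no recursion in $n$ and no strengthening of Lemma~\ref{lemmaspectrum} is needed. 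You instead use only the diagonal case $m=n$, extract via a sumset argument that $|1-\nu_n^{j}|<\varepsilon$ on a full interval of length comparable to $q_n$, deduce $|\tau_n|\leq C\theta/q_n$ at each level, and then exploit the multiplicative recursion $\nu_{n+1}=\nu_n^{q_n}$ (hence $\tau_{n+1}=q_n\tau_n$ with no wraparound) to bootstrap $\tau_N$ down to $0$. The price you correctly identify is that you need the conclusion of Lemma~\ref{lemmaspectrum} at \emph{every} sufficiently large level $n$, not just one; this does follow from its proof, since the martingale (or Lebesgue density) argument producing the cylinder $[x_0,\ldots,x_{n-1}]_n$ of $A$-density greater than $1-\varepsilon^2$ in fact yields such a cylinder for all large $n$. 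What each approach buys: the paper's is shorter and uses the lemma as stated, exploiting the extra room in the parameter $m$; yours uses only the weakest ($m=n$) instance of the lemma per level and makes the arithmetic mechanism ($\lambda^{h_n}$ converging to a root of unity through the tower structure) more transparent, at the cost of the uniform-in-$n$ restatement and some bookkeeping of the constants ($\theta$ must be taken small enough that $q_n|\tau_n|<1/2$, which your choice essentially ensures).
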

	
	Using the Halmos-von Neumann theorem~\cite{halmosOperatorMethodsClassical1942}, we get the following corollary.
	
	\begin{corollary}
		Let $T$ be an odomutant built from the odometer $S$ on $X=\prod_{n\geq 0}{\{0,\ldots,q_n-1\}}$. If $T$ is conjugate to an odometer, then $T$ is conjugate to $S$.\qed
	\end{corollary}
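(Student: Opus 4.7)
The plan is to deduce this Corollary by combining Theorem~\ref{thspectrum} with the Halmos--von Neumann classification, as the $\qed$ symbol and the sentence preceding the statement already suggest. Conjugacy is a spectral invariant (in the sense that conjugate systems share the same point spectrum and discrete-spectrum property), so the argument reduces to checking that $T$ and $S$ belong to the class where point spectrum is a complete conjugacy invariant.

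First, I would recall that by Proposition~\ref{ododiscr}, every odometer is ergodic and has discrete spectrum. Assume $T$ is conjugate to some odometer $S'$. Then $T$ is itself ergodic with discrete spectrum, and $\spec(T) = \spec(S')$. The odometer $S$ underlying the odomutant construction is of course ergodic with discrete spectrum as well. By Theorem~\ref{thspectrum}, we have $\spec(T)=\spec(S)$. Hence $S$ and $T$ are two ergodic systems with discrete spectrum having the same point spectrum.

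The Halmos--von Neumann theorem, recalled in Section~\ref{PrelDef}, says that two ergodic systems with discrete spectrum are conjugate if and only if they have the same point spectrum. Applying it to $S$ and $T$ immediately gives that $T$ is conjugate to $S$.

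There is no real obstacle here; the only thing to verify carefully is that the hypothesis ``$T$ is conjugate to an odometer'' is genuinely used, namely to guarantee that $T$ has discrete spectrum (so that Halmos--von Neumann applies to $T$). Without this hypothesis, Theorem~\ref{thspectrum} alone only gives equality of point spectra, which is not sufficient to conclude conjugacy for a general $T$. Once discrete spectrum is at hand, the conclusion is a direct application of a classical theorem.
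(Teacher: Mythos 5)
Your proof is correct and is exactly the argument the paper intends: Theorem~\ref{thspectrum} gives $\spec(T)=\spec(S)$, the hypothesis supplies discrete spectrum for $T$, and Halmos--von Neumann concludes. Nothing to add.
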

	
	\begin{proof}[Proof of Theorem~\ref{thspectrum}]
		Since $T$ factors onto $S$, we already know that $\spec(S)\subset\spec(T)$. Let $\lambda$ be an eigenvalue of $T$. Let us show that this is an eigenvalue of $S$. Let $\varepsilon>0$ small enough so that $\theta<1/4$ and $\frac{3\theta}{1-2\theta}\leq 1/4$, with $\theta=\theta(\varepsilon)$ introduced in Lemma~\ref{lemmacomplex}. We also assume that $\varepsilon\leq 1/2$. Let $n$ be a positive integer given by Lemma~\ref{lemmaspectrum} for the eigenvalue $\lambda$, and $\nu\coloneq\lambda^{h_n}$. If $\nu=1$, we are done.\par
		Now assume $\nu\not=1$. Let us choose a sufficiently large enough integer $m$ so that $m\geq n$ and $\frac{6\theta}{\tau q_n\ldots q_m}\leq\frac{1}{4}$. We consider a set $E_{n,m}\subset\prod_{j=n}^{m}{\{0,\ldots,q_j-1\}}$ and an integer $x_{m+1}\in\{0,\ldots,q_{m+1}-1\}$ satisfying
		\begin{itemize}
			\item $\displaystyle\frac{|E_{n,m}|}{q_nq_{n+1}\ldots q_m}>1-\varepsilon$;
			\item for every $y=(y_n,\ldots,y_m),z=(z_n,\ldots,z_m)\in E_{n,m}$, we have
			$$\left |1-\nu^{H(y)-H(z)}\right |<\varepsilon,$$
			where $H\colon\prod_{j=n}^{m}{\{0,\ldots,q_j-1\}}\to\{0,\ldots,q_n\ldots q_m-1\}$ is defined by
			$$H\colon y=(y_n,\ldots,y_m)\mapsto \sum_{j=n}^{m}{\frac{h_j}{h_n}\sigma_{y_{j+1}}^{(j)}(y_j)}\text{ with }y_{m+1}\coloneq x_{m+1}.$$
		\end{itemize}
		The existence of $E_{n,m}$ and $x_m$ is granted by Lemma~\ref{lemmaspectrum}. Since $\varepsilon\leq 1/2$ and $H$ is a bijection, there exists two different elements $y$ and $z$ in $E_{n,m}$ such that $H(y)-H(z)=1$. This implies
		$$|1-\nu|<\varepsilon.$$
		Let us fix $z\in E_{n,m}$ and set
		$$J=\left \{H(y)-H(z)\mid y\in\prod_{j=n}^{m}{\{0,\ldots,q_j-1\}}\right \}.$$
		By Lemma~\ref{lemmacomplex}, we have
		$$\sum_{j\in J}{\mathds{1}_{|1-\nu^j|<\varepsilon}}\leq\frac{3\theta}{1-2\theta}|J|+\frac{6\theta}{|\tau|}\leq \frac{q_n\ldots q_m}{2}$$
		and we get a contradiction since we have
		$$\sum_{j\in J}{\mathds{1}_{|1-\nu^j|<\varepsilon}}\geq |E_{n,m}|> (1-\varepsilon)q_n\ldots q_m$$
		with $\varepsilon\leq 1/2$. Thus we have $\lambda^{h_n}=1$.
	\end{proof}
	
	\subsection{Orbit equivalence between odometers and odomutants}
	
	In this section, we prove that an odomutant and its associated odometer have the same orbits. Moreover, given a non-decreasing map $\varphi\colon\R_+\to\R_+$, we give sufficient conditions for the cocycles to be $\varphi$-integrable.
	
	\begin{proposition}\label{cocycle}
		For all $x\in\psi^{-1}(\xmaxinfty)$, we have $Tx=S^{c_T(x)}x$ where the integer $c_T(x)$ is defined by
		\begin{equation}\label{cocycleT}
			c_T(x)=\sum_{i=0}^{N_1}{h_i(y_i(x)-x_i)}
		\end{equation}
		with $N_1\coloneq\nmax(\psi(x))$ and $y_0,\ldots,y_{N_1}(x)$ inductively defined by
		\begin{align*}
			&y_{N_1}(x)\coloneq\left (\sigma_{x_{N_1+1}}^{(N_1)}\right )^{-1}(\sigma_{x_{N_1+1}}^{(N_1)}(x_{N_1})+1),\\
			\forall\ 0\leq i\leq N_1-1,\ &y_i(x)\coloneq\left (\sigma_{y_{i+1}(x)}^{(i)}\right )^{-1}(0).
		\end{align*}
		For all $x\in\xmaxinfty$, let us define the integer $c_S(x)$ by:
		\begin{equation}
			\begin{aligned}\label{cocycleS}
				\displaystyle c_S(x)=\ &
				\displaystyle h_{N_2}\left (\sigma^{(N_2)}_{x_{N_2+1}}(1+x_{N_2})-\sigma^{(N_2)}_{x_{N_2+1}}(x_{N_2})\right ) \\
				&+\displaystyle h_{N_2-1}\left (\sigma^{(N_2-1)}_{1+x_{N_2}}(0)-\sigma^{(N_2-1)}_{x_{N_2}}(x_{N_2-1})\right )\\
				&+\displaystyle \sum_{i=0}^{N_2-2}{h_i\left (\sigma^{(i)}_{0}(0)-\sigma^{(i)}_{x_{i+1}}(x_{i})\right )}
			\end{aligned}
		\end{equation}
		with $N_2\coloneq\nmax(x)$. Then we have $Sx=T^{c_S(x)}x$ for every $x\in\xmaxinfty$.
	\end{proposition}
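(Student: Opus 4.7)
The plan is to treat the two formulas separately but by the same philosophy: both $Tx$ and $Sx$ differ from $x$ only in finitely many coordinates, so the ``exchange'' between $S$-powers and $T$-powers reduces to the elementary fact that on the Cantor product the odometer sends $w$ to $z$ in $K = \sum_{j=0}^{M-1} h_j(z_j - w_j)$ steps whenever $w_j = z_j$ for all $j \geq M$. This fact (already used in the remark following Corollary~\ref{corpowerT}) plus the explicit formulas already derived for $\psi_n^{-1} S \psi_n$ will give everything.

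First I would handle the $c_T$ formula. By Definition~\ref{defodomutant}, for $x \in \psi^{-1}(X_\infty^+)$ and $n \geq N_1 \coloneq N^+(\psi(x))$, one has $Tx = \psi_n^{-1} S \psi_n(x)$, and the computation carried out just before Definition~\ref{defodomutant} shows that this equals $(y_0(x),\ldots,y_{N_1}(x),x_{N_1+1},x_{N_1+2},\ldots)$ with the inductive $y_i(x)$ written in the statement. Thus $x$ and $Tx$ agree past index $N_1$, so the standard odometer formula recalled above immediately gives $Tx = S^{c_T(x)}x$ with $c_T(x) = \sum_{i=0}^{N_1} h_i(y_i(x) - x_i)$, which is precisely \eqref{cocycleT}.

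For the $c_S$ formula, the natural move is to apply Corollary~\ref{corpowerT} with $y \coloneq Sx$, since on $X^+_\infty$ the odometer acts by $Sx = (0,\ldots,0,x_{N_2}+1,x_{N_2+1},x_{N_2+2},\ldots)$ with $N_2 \coloneq N^+(x)$, and $x_i = q_i-1$ for $i < N_2$. In particular $x$ and $Sx$ coincide past index $N_2$, so Corollary~\ref{corpowerT} (applied with $M = N_2+1$) yields $Sx = T^{K}x$ for
\[
K = \sum_{j=0}^{N_2} h_j\bigl(\sigma^{(j)}_{(Sx)_{j+1}}((Sx)_j)-\sigma^{(j)}_{x_{j+1}}(x_j)\bigr).
\]
It then remains to identify $K$ with the displayed $c_S(x)$. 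This is a plug-in step: using $(Sx)_i = 0$ for $i < N_2$, $(Sx)_{N_2} = x_{N_2}+1$, $(Sx)_{N_2+1} = x_{N_2+1}$, the $j=N_2$ term becomes $h_{N_2}\bigl(\sigma^{(N_2)}_{x_{N_2+1}}(x_{N_2}+1)-\sigma^{(N_2)}_{x_{N_2+1}}(x_{N_2})\bigr)$; the $j=N_2-1$ term (using $x_{N_2-1}=q_{N_2-1}-1$ and $(Sx)_{N_2-1}=0$) becomes $h_{N_2-1}\bigl(\sigma^{(N_2-1)}_{1+x_{N_2}}(0)-\sigma^{(N_2-1)}_{x_{N_2}}(x_{N_2-1})\bigr)$; and for $0 \leq j \leq N_2-2$ we have $x_{j+1}=q_{j+1}-1$ while $(Sx)_{j+1}=(Sx)_j=0$, giving the last sum in \eqref{cocycleS}. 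This exactly matches the asserted expression.

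The argument is essentially bookkeeping, so I do not expect a substantive obstacle. The only point requiring some care is verifying the hypothesis ``$x \neq y$'' and the sign of $K$ in Corollary~\ref{corpowerT}, so that the cited result actually applies and produces the correct $T$-orbit connection rather than merely an equality of odometer positions; this is handled automatically because $x \in X^+_\infty$ forces $Sx \neq x$, and the distinction between $K > 0$ and $K < 0$ is irrelevant to the final formula since Corollary~\ref{corpowerT} covers both cases.
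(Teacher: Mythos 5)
Your proof is correct and follows essentially the same route as the paper: the $c_T$ formula is read off from the explicit expression for $Tx$ computed before Definition~\ref{defodomutant} together with the elementary odometer step-count identity, and the $c_S$ formula is obtained by applying Corollary~\ref{corpowerT} to the pair $(x, Sx)$ and substituting the coordinates of $Sx$. Your plug-in verification of the three terms of \eqref{cocycleS} matches the paper's (terser) argument exactly.
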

	
	\begin{proof}[Proof of Proposition~\ref{cocycle}]
		For $x\in\psi^{-1}(\xmaxinfty)$, the value of $c_T(x)$ follows from the computations before Definition~\ref{defodomutant}. For $x\in\xmaxinfty$ and $N_2\coloneq\nmax(x)$, we have
		$$x=(q_0-1,\ldots,q_{N_2-1}-1,\underbrace{x_{N_2}}_{\not =q_{N_2}-1},x_{N_2+1},x_{N_2+2},\ldots)$$
		$$\text{and }Sx=(0,\ldots,0,1+x_{N_2},x_{N_2+1},x_{N_2+2},\ldots)$$
		so the second result is clear by Corollary~\ref{corpowerT}.
	\end{proof}
	
	\begin{theorem}\label{oeq}
		The map $\Psi\coloneq id_X$ is an orbit equivalence between $T$ and $S$. Moreover, given an non-decreasing map $\varphi\colon\R_+\to\R_+$, this orbit equivalence is $\varphi$-integrable if one of the following two conditions is satisfied:
		\begin{enumerate}[label=(C\arabic*)]
			\item\label{oecond2} the series $\sum{\frac{\varphi(h_{n+1})}{h_n}}$ converges;
			\item\label{oecond1} the series
			$$\sum_{n\geq 0}{\frac{1}{h_{n+2}}\sum_{\substack{0\leq x_n<q_{n},\\0\leq x_{n+1}<q_{n+1},\\
						\sigma^{(n)}_{x_{n+1}}(x_n)\not=q_{n}-1}}{\varphi\left (h_{n}\left (1+\left |\left (\sigma^{(n)}_{x_{n+1}}\right )^{-1}(\sigma^{(n)}_{x_{n+1}}(x_n)+1)-x_n\right |\right )\right )}}$$
			$$\text{and }\sum_{n\geq 0}{\frac{1}{h_{n+2}}\sum_{\substack{0\leq x_n\leq q_{n}-2,\\0\leq x_{n+1}<q_{n+1}}}{\varphi\left (h_{n}\left (1+\left |\sigma^{(n)}_{x_{n+1}}(1+x_n)-\sigma^{(n)}_{x_{n+1}}(x_n)\right |\right )\right )}}$$
			converge.
		\end{enumerate}
	\end{theorem}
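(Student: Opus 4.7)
The orbit equivalence claim is essentially immediate from Proposition~\ref{cocycle}. Since $\Psi=\mathrm{id}_X$ trivially satisfies $\Psi_{\star}\mu=\mu$, and Proposition~\ref{cocycle} exhibits cocycles $c_T$ and $c_S$ defined on the full-measure sets $\psi^{-1}(\xmaxinfty)$ and $\xmaxinfty$ respectively, the remark after Definition~\ref{defoe} gives $\orb_S(x)=\orb_T(x)$ almost everywhere. It then remains to establish the $\varphi$-integrability of $c_T$ and $c_S$ under each of the two conditions. The uniform strategy is to control $|c_T|$ and $|c_S|$ using formulas~\eqref{cocycleT} and~\eqref{cocycleS}, then integrate after stratifying by the value of $N_1=\nmax(\psi(x))$ (for $c_T$) or $N_2=\nmax(x)$ (for $c_S$), noting that $\mu(\{N_1=n\})=\mu(\{N_2=n\})=\tfrac{q_n-1}{q_nh_n}\leq 1/h_n$, the first equality being $\psi$-invariance of $\mu$ from Proposition~\ref{preservonto}.

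For condition~\ref{oecond2}, the crude bound $|y_i(x)-x_i|\leq q_i-1$ combined with the telescoping identity $\sum_{i=0}^{n}h_i(q_i-1)=h_{n+1}-1$ yields $|c_T(x)|\leq h_{N_1+1}$ on $\{N_1=n\}$, and an analogous estimate gives $|c_S(x)|\leq h_{N_2+1}$ on $\{N_2=n\}$. Monotonicity of $\varphi$ then produces $\int_X\varphi(|c_T|)\,d\mu\leq\sum_{n\geq 0}\varphi(h_{n+1})/h_n$ and similarly for $c_S$, and convergence of these series is precisely the assumption~\ref{oecond2}.

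For the sharper condition~\ref{oecond1}, the approach is to isolate the leading contribution (the term of index $i=N_1$ in~\eqref{cocycleT}, and the first term of~\eqref{cocycleS}) from the lower-order remainder, which is again dominated by $h_{N_1}-1$ (resp.\ $h_{N_2}-1$), producing
$$|c_T(x)|\leq h_{N_1}\bigl(|y_{N_1}(x)-x_{N_1}|+1\bigr)\text{ on }\{N_1=n\},$$
$$|c_S(x)|\leq h_{N_2}\bigl(|\sigma^{(N_2)}_{x_{N_2+1}}(1+x_{N_2})-\sigma^{(N_2)}_{x_{N_2+1}}(x_{N_2})|+1\bigr)\text{ on }\{N_2=n\}.$$
The crucial point, which I expect to be the main technical step, is a disjoint-union decomposition of $\{N_1=n\}$ into cylinders of measure $1/h_{n+2}$ indexed by admissible pairs $(x_n,x_{n+1})$ satisfying $\sigma^{(n)}_{x_{n+1}}(x_n)\neq q_n-1$: reading the constraints $\sigma^{(j)}_{x_{j+1}}(x_j)=q_j-1$ for $j<n$ from $j=n-1$ downwards and using invertibility of each $\sigma^{(j)}_{x_{j+1}}$, one sees that $x_0,\ldots,x_{n-1}$ are uniquely determined by $x_n$. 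The corresponding decomposition of $\{N_2=n\}$ is more transparent since there $x_0,\ldots,x_{n-1}$ are directly prescribed as $q_0-1,\ldots,q_{n-1}-1$. Summing $\varphi$ of the above bounds over these cylinders then reproduces the two series of~\ref{oecond1} exactly.
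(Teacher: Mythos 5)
Your proposal is correct and follows essentially the same route as the paper: establish the orbit equivalence from Proposition~\ref{cocycle}, derive the coarse bound $|c_T|\leq h_{N_1+1}$ for~\ref{oecond2} and the refined bound isolating the top term for~\ref{oecond1}, and integrate by decomposing $\{\nmax(\psi(\cdot))=n\}$ (resp.\ $\{\nmax(\cdot)=n\}$) into cylinders of measure $1/h_{n+2}$ indexed by the admissible pairs $(x_n,x_{n+1})$. The one step you flag as the "main technical point" --- that the constraints $\sigma^{(j)}_{x_{j+1}}(x_j)=q_j-1$ for $j<n$ determine $x_0,\ldots,x_{n-1}$ from $x_n$ by backwards induction --- is exactly the measure computation the paper uses, so nothing is missing.
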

	
	As we notice in the next proof, we need coarse bounds to get that Condition~\ref{oecond2} implies $\varphi$-integrably orbit equivalence, whereas Condition~\ref{oecond1} is a finer hypothesis. For Theorem~\ref{thC}, Condition~\ref{oecond1} will enable us to exploit the sublinearity of the map $\varphi$, and Condition~\ref{oecond2} will be enough for Theorems~\ref{thA} and~\ref{thB}.
	
	\begin{proof}[Proof of Theorem~\ref{oeq}]
		By Proposition~\ref{cocycle}, the set of points $x\in X$ satisfying $Tx=S^{c_T(x)}x$ and $Sx=T^{c_S(x)}x$ for integers $c_T(x)$ and $c_S(x)$ defined by~\eqref{cocycleT} and~\eqref{cocycleS} have full measure, so the map $id_X$ is an orbit equivalence between $S$ and $T$.\par
		The value of $c_T(x)$ gives the following bound:
		\begin{equation}\label{boundcocycle}
			|c_T(x)|\leq h_{N_1} \left |\left (\sigma^{(N_1)}_{x_{N_1+1}}\right )^{-1}(\sigma^{(N_1)}_{x_{N_1+1}}(x_{N_1})+1)-x_{N_1}\right |+\underbrace{\sum_{i=0}^{N_1-1}{h_{i} \left |y_i(x)-x_i\right |}}_{\leq h_{N_1}}
		\end{equation}
		with $N_1=\nmax(\psi(x))$. Given $n\geq 0$, $z_n\in\{0,\ldots,q_n-1\}$ and $z_{n+1}\in\{0,\ldots,q_{n+1}-1\}$ such that $\sigma^{(n)}_{z_{n+1}}(z_n)\not=q_n-1$, we have
		$$\mu(\{x\in X \mid \nmax(\psi(x))=n, x_{n}=z_n, x_{n+1}=z_{n+1}\})=\frac{1}{h_{n+2}}.$$
		We finally get
		\begin{align*}
			\displaystyle\int_{X }{\varphi(|c_T(x)|)\mathrm{d}\mu(x)}&=\displaystyle\sum_{n\geq 0}{\sum_{\substack{0\leq z_n<q_{n},\\0\leq z_{n+1}<q_{n+1},\\
						\sigma^{(n)}_{z_{n+1}}(z_n)\not=q_{n}-1}}{\int_{\substack{\nmax(\psi(x))=n,\\x_{n}=z_n,\\x_{n+1}=z_{n+1}}}{\varphi(|c_T(x)|)\mathrm{d}\mu(x)}}}\\
			&\leq\displaystyle\sum_{n\geq 0}{\frac{1}{h_{n+2}}\sum_{\substack{0\leq z_n<q_{n},\\0\leq z_{n+1}<q_{n+1},\\
						\sigma^{(n)}_{z_{n+1}}(z_n)\not=q_{n}-1}}{\varphi\left (h_{n}\left (1+\left |\left (\sigma^{(n)}_{z_{n+1}}\right )^{-1}(\sigma^{(n)}_{z_{n+1}}(z_n)+1)-z_n\right |\right )\right )}}.
		\end{align*}
		From Inequality~\eqref{boundcocycle}, we also get $|c_T(x)|\leq h_{N_1+1}$ and the following coarser bound:
		\begin{align*}
			\displaystyle\int_{X }{\varphi(|c_T(x)|)\mathrm{d}\mu(x)}&=\displaystyle\sum_{n\geq 0}{\sum_{\substack{0\leq z_n<q_{n},\\0\leq z_{n+1}<q_{n+1},\\
						\sigma^{(n)}_{z_{n+1}}(z_n)\not=q_{n}-1}}{\int_{\substack{\nmax(\psi(x))=n,\\x_{n}=z_n,\\x_{n+1}=z_{n+1}}}{\varphi(|c_T(x)|)\mathrm{d}\mu(x)}}}\\
			&\leq\displaystyle\sum_{n\geq 0}{\frac{1}{h_{n+2}}\sum_{\substack{0\leq z_n<q_{n},\\0\leq z_{n+1}<q_{n+1},\\
						\sigma^{(n)}_{z_{n+1}}(z_n)\not=q_{n}-1}}{\varphi\left (h_{n+1}\right )}}\\
			&\leq\displaystyle\sum_{n\geq 0}{\frac{1}{h_{n}}\varphi(h_{n+1})}.
		\end{align*}
		For the other cocycle, we have
		\begin{align*}
			|c_S(x)|&\leq h_{N_2} \left |\sigma^{(N_2)}_{x_{N_2+1}}(1+x_{N_2})-\sigma^{(N_2)}_{x_{N_2+1}}(x_{N_2})\right |\\
			&+\underbrace{h_{N_2-1}\left |\sigma^{(N_2-1)}_{1+x_{N_2}}(0)-\sigma^{(N_2-1)}_{x_{N_2}}(x_{N_2-1})\right |+\sum_{i=0}^{N_2-2}{h_{i} \left |\sigma^{(i)}_{0}(0)-\sigma^{(i)}_{x_{i+1}}(x_i)\right |}}_{\leq h_{N_2}}.
		\end{align*}
		with $N_2=\nmax(x)$. Moreover it is easy to get
		$$\mu(\{x\in X \mid \nmax(x)=n, x_{n}=z_n, x_{n+1}=z_{n+1}\})=\frac{1}{h_{n+2}}$$
		for every $n\geq 0$, $z_n\in\{0,\ldots,q_n-2\}$ and $z_{n+1}\in\{0,\ldots,q_{n+1}-1\}$. Thus we find a bound on the $\varphi$-integral of $c_S$ with the same method as $c_T$.
	\end{proof}
	
	\subsection{Extension to a homeomorphism on the Cantor set, strong orbit equivalence}\label{secexthom}
	
	We move on to a topological viewpoint. We give a sufficient condition for an odomutant to have an extension to a homeomorphism. It turns out that in this case the orbit equivalence that we obtained in the last section is a strong orbit equivalence.
	
	\begin{proposition}\label{exthom}
		Assume that $\sigma^{(n)}_i(0)=0$ and $\sigma^{(n)}_i(q_n-1)=q_n-1$ for every $n\geq 0$ and every $0\leq i\leq q_n-1$. Then the odomutant $T$ admits a unique extension, also denoted by $T$, which is a homeomorphism on the whole compact set $X=\prod_{n\geq 0}{\{0,1,\ldots,q_n-1\}}$. It is furthermore strongly orbit equivalent to the associated odometer $S$. In particular, it follows from Proposition~\ref{oeuniqueerg} that $T$ is uniquely ergodic.
	\end{proposition}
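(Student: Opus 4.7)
The plan is to isolate the single point where $T$ is not yet defined, force the extension there, and exploit the two fixed point conditions separately: the condition $\sigma^{(n)}_i(q_n-1)=q_n-1$ identifies the undefined set, while the condition $\sigma^{(n)}_i(0)=0$ drives the continuity argument.

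First I would observe that, since each $\sigma^{(n)}_i$ is a permutation fixing $q_n-1$, the only $x\in X$ with $\psi(x)=\xmax$ is $x=\xmax$ itself: indeed, $\sigma^{(n)}_{x_{n+1}}(x_n)=q_n-1$ together with $\sigma^{(n)}_{x_{n+1}}(q_n-1)=q_n-1$ and the injectivity of $\sigma^{(n)}_{x_{n+1}}$ forces $x_n=q_n-1$ for every $n$. Symmetrically $\psi^{-1}(\{\xmin\})=\{\xmin\}$. Hence by Proposition~\ref{aut}, $T$ is already a bijection $X\setminus\{\xmax\}\to X\setminus\{\xmin\}$, and the only possible bijective extension is $T(\xmax):=\xmin$.

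The main obstacle is continuity of $T$ at $\xmax$. Here the other fixed point condition takes over. From the explicit formula recalled in Proposition~\ref{cocycle}, for $x\in\psi^{-1}(\xmaxinfty)$ with $N_1:=\nmax(\psi(x))$ one has
$$Tx=(y_0,\ldots,y_{N_1-1},y_{N_1},x_{N_1+1},x_{N_1+2},\ldots),$$
and the inductive definition $y_i=\bigl(\sigma^{(i)}_{y_{i+1}}\bigr)^{-1}(0)$ for $i<N_1$ combined with $\sigma^{(i)}_{y_{i+1}}(0)=0$ yields $y_i=0$ for every $i<N_1$. Now if $x^{(k)}\to\xmax$ in $X$, then the first coordinates of $x^{(k)}$ are eventually $q_j-1$, and the fixed point property at $q_j-1$ (plus injectivity) propagates this to $\psi(x^{(k)})$, so $\nmax(\psi(x^{(k)}))\to+\infty$. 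Hence the first $N_1(x^{(k)})$ coordinates of $Tx^{(k)}$ are $0$, and $Tx^{(k)}\to\xmin=T(\xmax)$. At any other point $x_0\neq\xmax$, the value $N_1$ is finite and locally constant, and the formula for $Tx$ depends only on finitely many coordinates, so $T$ is automatically continuous at $x_0$. The same argument, applied to the reverse formula in Proposition~\ref{aut}, shows that $T^{-1}$ extends continuously by $T^{-1}(\xmin):=\xmax$. Since $X$ is compact and $T,T^{-1}$ are continuous, $T$ is a homeomorphism, and uniqueness of the extension follows from the density of $X\setminus\{\xmax\}$.

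For strong orbit equivalence, the orbit equivalence $\Psi=\mathrm{id}_X$ is already provided by Theorem~\ref{oeq}. I would then invoke the explicit formulas~\eqref{cocycleT} and~\eqref{cocycleS}: each depends only on the first $N_1(x)+2$ (respectively $N_2(x)+2$) coordinates of $x$, and both $N_1\circ\psi$ and $N_2$ are locally constant on $X\setminus\{\xmax\}$. Hence both cocycles $c_T$ and $c_S$ are locally constant on $X\setminus\{\xmax\}$, so each has at most one point of discontinuity (namely $\xmax$), which is exactly the definition of strong orbit equivalence. Finally, unique ergodicity of $T$ follows immediately from Proposition~\ref{oeuniqueerg} applied to the (uniquely ergodic) odometer $S$, since $S$ and $T$ are defined on all of $X$ with equal orbits at every point.
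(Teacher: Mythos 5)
Your proposal is correct and follows essentially the same route as the paper: the fixed-point condition at $q_n-1$ gives $\psi^{-1}(\{\xmax\})=\{\xmax\}$ and forces $T\xmax=\xmin$, the fixed-point condition at $0$ gives $T([q_0-1,\ldots,q_n-1]_{n+1})\subset[0,\ldots,0]_{n+1}$ and hence continuity at $\xmax$, and local constancy of the cocycles away from $\xmax$ yields the strong orbit equivalence. Your write-up is slightly more explicit than the paper's on the continuity at $\xmax$ and on uniqueness of the extension, but the argument is the same.
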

	
	\begin{remark}
		In this case, the equality $S\circ\psi(x)=\psi\circ T(x)$ holds for all $x\in X$.
	\end{remark}
	
	\begin{proof}[Proof of Proposition~\ref{exthom}]
		Since, for every $n\geq 0$, the points $0$ and $q_n-1$ are fixed by the $n$-th permutations, $\xmin$ is the only point $x\in X$ satisfying $\psi(x)=\xmin$ and $\xmax$ is the only point $x\in X$ satisfying $\psi(x)=\xmax$. This implies that we have
		$$\psi^{-1}(\xmininfty)=\xmininfty=X\setminus\{\xmin\}\text{ and }\psi^{-1}(\xmaxinfty)=\xmaxinfty=X\setminus\{\xmax\},$$
		and $T$ is a bijection from $X\setminus\{\xmax\}$ to $X\setminus\{\xmin\}$, so we set $T\xmax\coloneq\xmin$. The map $T\colon X\to X$ is now a well-defined bijection.\par
		The odometer $S$ and the maps $\psi_n$ are continuous on $X$ so it is not difficult to see that $T$ is continuous on each point of $X\setminus\{\xmax\}$. It is easy to check the equality $T([q_0-1,\ldots,q_n-1]_{n+1})=[0,\ldots,0]_{n+1}$, so the continuity at $\xmax$ is clear. Therefore $T\colon X\to X$ is continuous and invertible, where $X$ is a Haussdorf compact space, so $T$ is a homeomorphism.\par
		By Proposition~\ref{cocycle}, we have $Tx=S^{c_T(x)}x$ and $Sx=T^{c_S(x)}$ for every $x\in\xmaxinfty$, with $c_T(x)$ and $c_S(x)$ defined by~\eqref{cocycleT} and~\eqref{cocycleS}. These relations are extended at $\xmax$, with $c_T(x^+)=c_S(x^+)=1$. Thus $S$ and $T$ have the same orbits and it is clear that the cocycles are continuous on $\xmaxinfty$ ($\xmax$ is the only point of discontinuity if the cocycles are not continuous).\footnote{We can notice that we have $\{T^n\xmin\mid n\in\N\}=\{S^n\xmin\mid n\in\N\}$ and $\{T^{-n}\xmax\mid n\in\N\}=\{S^{-n}\xmax\mid n\in\N\}$.
		}
	\end{proof}
	
	\section{On non-preservation of loose Bernoullicity property under \texorpdfstring{$\ld^{<1/2}$}{L<1/2} orbit equivalence}\label{secA}
	
	In this section, we prove that $\ld^{<1/2}$ orbit equivalence (in particular Shannon orbit equivalence) does not imply even Kakutani equivalence.
	
	\begin{theorem}\label{thAbis}
		There exists an ergodic probability measure-preserving bijection $T$ which is $\ld^{<1/2}$ orbit equivalent (in particular Shannon orbit equivalent) to the dyadic odometer but not evenly Kakutani equivalent to it.
	\end{theorem}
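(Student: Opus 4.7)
The plan is to take $T$ to be the non-loosely Bernoulli zero-entropy system constructed by Feldman in~\cite{feldmanNewKautomorphismsProblem1976}, realized as an odomutant over the dyadic odometer $S$ in the sense of Definition~\ref{defodomutant}. Feldman's construction fits into the odomutant framework with integers $q_n=2^{k_n}$, so that by the Halmos--von Neumann theorem the underlying odometer is conjugate to the dyadic odometer; more precisely, using the formalism of Section~\ref{odocutsta}, this is the odomutant built with uniformly $\bm{c}$-multiple permutations in which the $c_n$ are powers of $2$ and for each fixed $n$ the permutations $\tau^{(n)}_i$ are pairwise different. The permutations are selected in Feldman's spirit so that in each new tower the subtowers appear in very different orders, forcing the $N$-words produced by the coding maps to be pairwise $f_N$-far -- this is what prevents $T$ from being loosely Bernoulli.

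By Theorem~\ref{oeq}, the identity map on $X$ is an orbit equivalence between $S$ and $T$; since orbit equivalence preserves ergodicity and $S$ is ergodic, so is $T$. To quantify the cocycles, I would apply condition~\ref{oecond2} of Theorem~\ref{oeq} to $\varphi(t)=t^{p}$ for $p<1/2$: one needs convergence of
\[
\sum_{n\geq 0}\frac{h_{n+1}^{p}}{h_{n}}=\sum_{n\geq 0}q_{n}^{p}\,h_{n}^{p-1}.
\]
Feldman's construction requires $q_{n}$ to grow roughly on the order of $h_{n}$, which yields $h_{n+1}\sim h_{n}^{2}$ and a general term of size $h_{n}^{2p-1}$; this is summable precisely when $p<1/2$. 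This establishes that $S$ and $T$ are $\ld^{<1/2}$ orbit equivalent (and hence, by the lemma quoted in the introduction, Shannon orbit equivalent).

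For the negative statement, Feldman proved in~\cite{feldmanNewKautomorphismsProblem1976} that $T$ is not loosely Bernoulli, whereas by Proposition~\ref{odoLB} the dyadic odometer $S$ is loosely Bernoulli. If $T$ were evenly Kakutani equivalent to $S$ then it would in particular be Kakutani equivalent to $S$, and Theorem~\ref{ORWkakutaniEq}(1) would force $T$ to be loosely Bernoulli, a contradiction. The main obstacle is to pin down the precise growth rate of $(q_{n})$ and the combinatorics of the permutations so that Feldman's non-loose Bernoullicity argument (which exploits rapid proliferation of $f_{N}$-distant futures) goes through, while simultaneously keeping $\sum_{n}q_{n}^{p}h_{n}^{p-1}$ finite for every $p<1/2$. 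Once this balance is set up, the verification splits cleanly: the $\ld^{<1/2}$ bound reduces to a direct series estimate via Theorem~\ref{oeq}, and the non-loose Bernoullicity is imported wholesale from Feldman's paper.
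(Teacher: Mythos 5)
Your proposal follows essentially the same route as the paper: realize Feldman's non-loosely Bernoulli system as an odomutant over the dyadic odometer built with uniformly $\bm{c}$-multiple permutations, apply condition~\ref{oecond2} of Theorem~\ref{oeq} with $\varphi(t)=t^p$ to get $\ld^{<1/2}$ orbit equivalence (the paper's explicit choice $\tilde{q}_n=2^{n+10}$, $q_n=(\tilde{q}_n)^{2\tilde{q}_{n+1}+3}$ gives exactly the growth $h_{n+1}\approx h_n^2$ you identify, whence summability of $h_{n+1}^p/h_n$ precisely for $p<1/2$), and conclude via Proposition~\ref{odoLB} and Theorem~\ref{ORWkakutaniEq}. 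The details you defer --- pinning down the parameters and checking that the odomutant's coding words reproduce Feldman's words $a^{(n)}_j$ so that non-loose Bernoullicity transfers --- are exactly what the paper supplies through Lemmas~\ref{lemmaword} and~\ref{recword}.
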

	
	Feldman~\cite{feldmanNewKautomorphismsProblem1976} has built a zero-entropy system which is not loosely Bernoulli. In his construction, for some partition that we will specify, the elements in $[0,\ldots,0]_n$ produce words, describing the future, which are not pairwise $f$-close for the $f$-metric introduced in Section~\ref{PrelKak} (therefore, the underlying system is not loosely Bernoulli). The goal is to describe his system as an odomutant built from the dyadic odometer and permutations that we are going to define. These permutations will fix $0$, so that we will be able to read the words produced by the points at the bottom of the towers (using Lemmas~\ref{lemmaword} and~\ref{recword}), with respect to the partition that we will consider.\par
	Let us set $\tilde{q}_n\coloneq2^{n+10}$, $q_n\coloneq(\tilde{q}_n)^{2\tilde{q}_{n+1}+3}$ and $c_n\coloneq\frac{q_n}{\tilde{q}_n}=(\tilde{q}_n)^{2\tilde{q}_{n+1}+2}$ for every $n\geq 0$, $h_{0}\coloneq1$ and $h_{n+1}\coloneq q_{n}h_n$. We inductively define words $a^{(n)}_i$ (we keep the notations of Feldman in his paper) for every $n\geq 0$ and every $i\in\{0,\ldots,\tilde{q}_n-1\}$. Let us start with $\tilde{q}_0$ different letters $a^{(0)}_{0},\ldots, a^{(0)}_{\tilde{q}_0-1}$ seen as words of length $h_{0}=1$. For $n\geq 0$, if words $a^{(n)}_{0},\ldots, a^{(n)}_{\tilde{q}_n-1}$ of length $h_{n}$ have been set, then we define new words $a^{(n+1)}_{0},\ldots, a^{(n+1)}_{\tilde{q}_{n+1}-1}$, of length $h_{n+1}$, by
	$$a^{(n+1)}_j=\left\langle\langle a^{(n)}_{0}\rangle^{(\tilde{q}_n)^{2(j+1)}}\langle a^{(n)}_{1}\rangle^{(\tilde{q}_n)^{2(j+1)}}\ldots \langle a^{(n)}_{\tilde{q}_n-1}\rangle^{(\tilde{q}_n)^{2(j+1)}}\right\rangle^{(\tilde{q}_n)^{2(\tilde{q}_{n+1}-j)}},$$
	where $\langle w\rangle^k$ denotes the concatenation of $k$ copies of a word $w$.\par
	For $j\in\{0,1,\ldots,\tilde{q}_{n+1}-1\}$, $\tau^{(n)}_j$ is a permutation of the set $\{0,1,\ldots,q_n-1\}$ which permutes the entries of the finite sequence
	$$\bm{u}\coloneq(\underbrace{a^{(n)}_0,\ldots,a^{(n)}_0}_{c_n\text{ times}},\underbrace{a^{(n)}_1,\ldots,a^{(n)}_1}_{c_n\text{ times}},\ldots,\underbrace{a^{(n)}_{\tilde{q}_n-1},\ldots,a^{(n)}_{\tilde{q}_n-1}}_{c_n\text{ times}})$$
	so that the concatenation gives $a^{(n+1)}_j$, namely $\tau^{(n)}_j$ satisfies
	$$a^{(n)}_j=u_{\tau^{(n)}_j(0)}\cdot u_{\tau^{(n)}_j(1)}\cdot\ldots\cdot u_{\tau^{(n)}_j(q_n-1)}.$$
	We now consider the odomutant $T$ associated to the odometer $S$ on the space $X=\prod_{n\geq 0}{\{0,1,\ldots,q_n-1\}}$, and built with uniformly $\bm{c}$-multiple permutations $\tau^{(n)}_j$ where $\bm{c}\coloneq(c_n,\tilde{q}_n)_{n\geq 0}$.\par
	In view of the cutting-and-stacking construction behind the definition of this odomutant, we can be convinced that $T$ is isomorphic to the non Bernoulli system built by Feldman. However we give more details on the fact that $T$ is not loosely Bernoulli, based on the justifications of Feldman. Given $n\geq 0$, Lemmas~\ref{lemmaword} and~\ref{recword} in Appendix~\ref{seccombi} imply that the words $[\ptilde(1)]_{h_n}(x)$ for $x\in [0,\ldots,0]_n$ (i.e. the points $x$ at the bottom of the towers at step $n$) exactly correspond to the words $a^{(n)}_{0},\ldots,a^{(n)}_{\tilde{q}_n-1}$. As in~\cite{feldmanNewKautomorphismsProblem1976}, the properties we are interested in can be deduced purely from this fact. Indeed, given any point $x$ not necessarily at the bottom of the towers at step $n$, the word $[\ptilde(1)]_{h_n}(x)$ is the concatenation of the tail of some $a^{(n)}_{i}$ and the beginning of some $a^{(n)}_{j}$, and this observation leads us to apply the same reasoning as in~\cite[Step~III and Step~V in p.~36]{feldmanNewKautomorphismsProblem1976} to conclude that $T$ has zero entropy and $(T,\ptilde(1))$ is not loosely Bernoulli using the caracterisation provided by Theorem~\ref{thEquivalenceLB}. Therefore $T$ is not loosely Bernoulli.
	
	\begin{proof}[Proof of Theorem~\ref{thA}]
		Let $S$ be the odometer on $X=\prod_{n\geq 0}{\{0,1,\ldots,q_n-1\}}$ (so $S$ is loosely Bernoulli) and $T$ the odomutant described above, which is not loosely Bernoulli, so that $S$ and $T$ are not evenly Kakutani equivalent by the theory of Ornstein, Rudolph and Weiss (see Theorem~\ref{ORWkakutaniEq}). Note that $S$ is the dyadic odometer since the integers $q_n$ are powers of $2$.\par
		Let us prove that $S$ and $T$ are $\ld^{<1/2}$ orbit equivalent, using Condition~\ref{oecond2} of Theorem~\ref{oeq}. Let us fix a real number $p$ satisfying $0<p<1/2$. We have
		$$h_{n}={h_{n-1}}\tilde{q}_{n-1}^{2\tilde{q}_n+3}= {h_{n-1}}2^{(n+9)(2^{n+11}+3)}$$
		and this gives $h_n=2^{S_n}$ with
		$$S_n\coloneq\sum_{i=1}^{n}{{(i+9)(2^{i+11}+3)}}=Cn2^{n}+D2^{n}+o(2^n)$$
		for some positive constants $C$ and $D$. For a fixed constant $C'\in [C,\frac{C}{2p}]$ and for a sufficiently large integer $n$, we have
		$$Cn2^{n}<S_n<C'n2^{n},$$
		this gives
		$$h_{n}<2^{C'n2^{n}}=\left (2^{C(n-1)2^{n-1}}\right )^{2\frac{C'}{C}} 2^{C'2^{n}}<\left (2^{S_{n-1}}\right )^{2\frac{C'}{C}} 2^{C'2^{n}}=\displaystyle (h_{n-1})^{2\frac{C'}{C}} 2^{C'2^{n}}$$
		and
		$$\frac{{h_{n}}^{p}}{h_{n-1}}<\displaystyle (h_{n-1})^{2\frac{C'}{C}p-1}2^{C'p2^n}<\left (2^{C'(n-1)2^{n-1}}\right )^{2\frac{C'}{C}p-1}2^{C'p2^n}.$$
		Since we have $2\frac{C'}{C}p<1$, the series $\sum{\frac{\varphi(h_{n})}{h_{n-1}}}$ converges for $\varphi(x)=x^{p}$, so we are done by Theorem~\ref{oeq}.
	\end{proof}
	
	\section{On non-preservation of entropy under orbit equivalence with almost \texorpdfstring{$\log$}{log}-integrable cocycles}\label{secB}

	In this section, we prove that orbit equivalence with almost $\log$-integrable cocycles does not preserve entropy. The statement is actually stronger, with a topological framework:
	
	\begin{theorem}\label{thBbis}
		Let $\alpha$ be either a positive real number or $+\infty$. Let $S$ be an odometer whose associated supernatural number $\prod_{p\in\Pi}{p^{k_p}}$ satisfies the following property: there exists a prime number $p_{\star}$ such that $k_{p_{\star}}=+\infty$. Then there exists a Cantor minimal homeomorphism $T$ such that
		\begin{enumerate}
			\item $\htop(T)=\alpha$;
			\item there exists a strong orbit equivalence between $S$ and $T$, which is $\varphi_m$-integrable for all integers $m\geq 0$,
		\end{enumerate}
		where $\varphi_m$ denotes the map $t\to \frac{\log{t}}{\log^{(\circ m)}{t}}$ and $\log^{(\circ m)}$ the composition $\log\circ\ldots\circ\log$ ($m$ times).
	\end{theorem}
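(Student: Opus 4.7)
The strategy is to realize $T$ as an odomutant associated to $S$ in the sense of Definition~\ref{defodomutant}. I begin by choosing a sequence $(q_n)_{n\geq 0}$ so that the odometer on $\prod_{n\geq 0}\{0,\ldots,q_n-1\}$ has the same supernatural number as $S$ and is therefore conjugate to it. The hypothesis $k_{p_\star}=+\infty$ gives complete freedom to absorb additional factors of $p_\star$ into each $q_n$, so $(q_n)$ can be made to grow at any desired rate while still matching the supernatural number. I then impose that every permutation $\sigma^{(n)}_i$ fixes both $0$ and $q_n-1$, so Proposition~\ref{exthom} produces a Cantor minimal homeomorphism $T$ strongly orbit equivalent to $S$, which by Proposition~\ref{oeuniqueerg} is automatically uniquely ergodic.

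For the $\varphi_m$-integrability of the cocycles, I use Condition~\ref{oecond2} of Theorem~\ref{oeq}: the convergence of $\sum_n \varphi_m(h_{n+1})/h_n$. Targeting entropy $\alpha$ forces the $q_n$ to grow roughly like $e^{\alpha h_n}$ (or much faster if $\alpha=+\infty$), so $\log h_{n+1}/h_n$ stays close to $\alpha$ (resp.\ diverges). The key observation is that $(h_n)$ then grows at least like an iterated exponential, hence $\log^{(\circ m)} h_{n+1}\geq c_m\, h_{n+1-m}$ for $n$ large, and the sum $\sum_n \alpha/\log^{(\circ m)}h_{n+1}$ is dominated by a tail of the convergent series $\sum_n 1/h_n$. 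This yields $\varphi_m$-integrability simultaneously for every $m\geq 0$.

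The bulk of the argument is to pin down the topological entropy at exactly $\alpha$. By unique ergodicity of $T$ and the variational principle (Theorem~\ref{variationalprinciple}), it suffices to compute $\hmu(T)$ via the generating sequence of partitions $\p(\ell)$ (or $\ptilde(\ell)$) from Example~\ref{clopen}, which by Theorem~\ref{proptopentr} reduces to counting $h_n$-words. For $\alpha<+\infty$, I would use pairwise distinct permutations $\sigma^{(n)}_i$ acting on $\{1,\ldots,q_n-2\}$, and calibrate the number of distinct $h_n$-words for $\p(1)$ to be of order $e^{\alpha h_n}$, so the limit entropy equals $\alpha$; the combinatorial bookkeeping needed to turn this word count into the precise limit is deferred to Appendix~\ref{seccombi}. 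For $\alpha=+\infty$, I would recast Boyle and Handelman's construction in the language of Definition~\ref{defodostack}, using uniformly $\bm{c}$-multiple permutations $(\tau^{(n)}_j)$ with $\tilde q_n$ growing fast enough that $\log\tilde q_n/h_n\to+\infty$, and reading the entropy off from the partitions $\ptilde(\ell)$.

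The main obstacle is the calibration of entropy to exactly $\alpha$ in the finite case: one must simultaneously (i)~preserve the fixed points $0$ and $q_n-1$ for every permutation (needed for the homeomorphic extension of Proposition~\ref{exthom}), (ii)~produce enough $h_n$-word diversity to reach $\alpha$ from below, (iii)~control total diversity to prevent overshoot, and (iv)~keep $(q_n)$ compatible with the prescribed supernatural number and with the rapid growth required for the cocycle bound. Balancing these constraints and computing the corresponding asymptotic word count is precisely what the combinatorial results of Appendix~\ref{seccombi} accomplish; the integrability argument above then feeds the chosen $(q_n)$ back into Condition~\ref{oecond2} to close the loop.
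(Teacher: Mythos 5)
Your proposal is correct and follows essentially the same route as the paper: an odomutant with permutations fixing $0$ and $q_n-1$ (Proposition~\ref{exthom}), cocycle control via Condition~\ref{oecond2} of Theorem~\ref{oeq} using the iterated-exponential growth of $(h_n)$ (this is exactly Lemma~\ref{summable}), entropy pinned down by counting $h_n$-words with the combinatorial estimates of Appendix~\ref{seccombi} (Lemmas~\ref{prelemmaentr} and~\ref{to-one}), a sequence $(q_n)$ calibrated to the supernatural number with $\log q_n/h_n\to\alpha$ (Lemma~\ref{choiceqn}), and the Boyle--Handelman multiple-permutation variant for $\alpha=+\infty$. The only cosmetic difference is that you route the entropy computation through the variational principle, whereas the paper computes $\htop(T)$ directly via Theorem~\ref{proptopentr}; since $T$ is uniquely ergodic the two are interchangeable.
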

	
	We will crucially use the combinatorial lemmas stated in Appendix~\ref{seccombi}. The cases $\alpha<+\infty$ and $\alpha=+\infty$ will be in fact separated, but in both proofs, we will apply the following lemma which will be useful for the quantification of the cocycles.
	
	\begin{lemma}\label{summable}
		Let $(q_n)_{n\geq 0}$ be a sequence of integers greater than or equal to $2$, and let $\beta>0$ such that
		$$\liminf_{n\to +\infty}{\frac{\log{q_n}}{h_n}}\geq\beta$$
		where $h_n\coloneq q_0\ldots q_{n-1}$. Then, for every integer $m\geq 0$, we have
		$$\frac{1}{\log^{\circ m}(q_{n+m})}\leq\exp{(-\beta h_{n})}$$
		for all sufficiently large integers $n$. In particular, the sequence $\left (\frac{1}{\log^{\circ m}(q_n)}\right )_{n\geq 0}$ is summable.
	\end{lemma}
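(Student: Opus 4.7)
The plan is to proceed by induction on $m$, with the core observation being that since $h_n$ itself grows at least as fast as $q_n$, the hypothesis $\log q_n \geq \beta h_n$ forces $q_n$ to grow super-exponentially, which in turn makes the iterated logarithms quite large. As a preliminary reduction, the $\liminf$ hypothesis gives a uniform bound of the form $\log q_n \geq \beta h_n$ for all sufficiently large $n$ (up to replacing $\beta$ by an arbitrarily close smaller positive constant, a loss that is invisible to the summability statement that is actually used later in the paper).

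The base case $m=0$ is then immediate: $\frac{1}{q_n}\leq\exp(-\beta h_n)$ is equivalent to the reformulated hypothesis. For the inductive step, suppose the conclusion holds at rank $m-1$: for all sufficiently large $k$,
$$\log^{\circ(m-1)}(q_{k+m-1})\geq\exp(\beta h_k).$$
Substituting $k=n+1$ yields, for $n$ large,
$$\log^{\circ(m-1)}(q_{n+m})\geq\exp(\beta h_{n+1})=\exp(\beta q_n h_n).$$
Taking one further logarithm and using the base case bound $q_n\geq\exp(\beta h_n)$:
$$\log^{\circ m}(q_{n+m})\geq\beta q_n h_n\geq \beta h_n\cdot\exp(\beta h_n).$$
For $n$ large enough that $\beta h_n\geq 1$, this last quantity exceeds $\exp(\beta h_n)$, which closes the induction.

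The summability statement then follows by a shift of indices: for every $m\geq 0$ and every sufficiently large $n$, one has $\frac{1}{\log^{\circ m}(q_n)}\leq\exp(-\beta h_{n-m})$, and since $q_k\geq 2$ gives $h_n\geq 2^n$, the series $\sum\exp(-\beta h_{n-m})$ converges (extremely quickly).

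There is no real obstacle; the argument reduces to a two-line computation wrapped in induction. The only mildly delicate point is bookkeeping the threshold $N_m$ beyond which the bound at rank $m$ holds, and handling the small gap between the $\liminf$ hypothesis and the strict pointwise inequality by allowing a negligible shrinking of $\beta$.
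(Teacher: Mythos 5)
Your proof is correct and follows essentially the same route as the paper's: an induction on $m$ whose engine is the identity $h_{n+1}=q_nh_n$ together with $q_n\geq\exp(\beta h_n)$ and $\beta h_n\geq 1$ for large $n$ (the paper's intermediate bound is $\log^{\circ m}(q_{n+m})\geq q_n$, yours is $\log^{\circ m}(q_{n+m})\geq\beta q_nh_n$; these are the same two-line computation). Your explicit handling of the gap between the $\liminf$ hypothesis and the pointwise bound $\log q_n\geq\beta h_n$ is a point where you are slightly more careful than the paper, which uses the pointwise bound directly; as you observe, the resulting shrinking of $\beta$ is harmless for the summability conclusion, which is all that is used later.
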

	
	\begin{proof}[Proof of Lemma~\ref{summable}]
		Let us consider an integer $N$ such that
		$$\forall n\geq N,\ \beta h_{n}\geq 1.$$
		For every $n\geq N$, we have
		$$\log{q_{n+1}}\geq\beta h_{n+1}= q_n\times \beta h_n\geq q_n.$$
		By induction, we easily get for every $n\geq N$,
		$$\log^{\circ m}(q_{n+m})\geq q_{n},$$
		so we have $\log^{\circ m}(q_{n+m})\geq\exp{(\beta h_{n})}$.
	\end{proof}
	
	Before the proof of Theorem~\ref{thBbis} in the case $\alpha<+\infty$, we need two preliminary lemmas. The first one (Lemma~\ref{entr}) provides permutations so that we can easily compute the entropy of the underlying odomutant. The second one (Lemma~\ref{choiceqn}) proves that the formula given by Lemma~\ref{entr} enables us to get all possible finite values of the entropy with a proper choice of parameters.
	
	\begin{lemma}\label{entr}
		Let $(q_n)_{n\geq 0}$ be a sequence of integers greater than or equal to $2$ and satisfying $q_{n+1}\leq (q_n-2)!$. Then there exist permutations $\sigma_{x_{n+1}}^{(n)}$, for $n\geq 0$ and $x_{n+1}\in\{0,\ldots,q_{n+1}-1\}$, satisfying the following properties:
		\begin{enumerate}
			\item for every $n\geq 0$, the maps $\sigma_{0}^{(n)},\sigma_{1}^{(n)}, \ldots, \sigma_{q_{n+1}-1}^{(n)}$ are pairwise different permutations of the set $\{0,\ldots,q_n-1\}$, fixing $0$ and $q_n-1$;
			\item the topological entropy of the underlying odomutant is equal to
			$$\lim_{n\to +\infty}{\frac{\log{q_n}}{h_n}}.$$
		\end{enumerate}
	\end{lemma}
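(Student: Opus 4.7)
The plan is to proceed in two steps: first establish the existence of the permutations and the extension of the odomutant to a homeomorphism (Part~1), then compute the topological entropy (Part~2).

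For Part~1, I observe that the permutations of $\{0,\ldots,q_n-1\}$ fixing both $0$ and $q_n-1$ correspond bijectively to permutations of the $q_n-2$ middle elements, giving exactly $(q_n-2)!$ such permutations. The hypothesis $q_{n+1}\le (q_n-2)!$ then allows me to choose $q_{n+1}$ pairwise distinct ones $\sigma^{(n)}_0,\ldots,\sigma^{(n)}_{q_{n+1}-1}$, which establishes property~(1). By Proposition~\ref{exthom}, the associated odomutant $T$ extends uniquely to a homeomorphism of $X$, is strongly orbit equivalent to the underlying odometer $S$, and is uniquely ergodic; in particular $\htop(T)=\hmu(T)$, so either notion of entropy may be computed.

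For Part~2, I apply Theorem~\ref{proptopentr} to the generating sequence of clopen partitions $(\p(\ell))_{\ell\ge 1}$ of Example~\ref{clopen}, reducing the problem to computing $\htop(T,\p(\ell))$ for each fixed $\ell$. Writing $\mathcal{N}_n(\ell):=\mathcal{N}(\p(\ell)_0^{h_n-1})$ for $n\ge\ell$, I would estimate this count using (the appropriate analogues of) Lemmas~\ref{lemmaword} and~\ref{recword} of Appendix~\ref{seccombi}. The words read from the bottom of the $n$-tower $[0,\ldots,0]_n$ as $x_n$ varies are exactly the $q_n$ tower words $a^{(n)}_0,\ldots,a^{(n)}_{q_n-1}$, and they are pairwise distinct because the chosen permutations at each level are, giving $\mathcal{N}_n(\ell)\ge q_n$. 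For the matching upper bound, every $h_n$-word decomposes as a suffix of some $a^{(n)}_i$ followed by a prefix of some $a^{(n)}_{i'}$, where $(i,i')$ is a pair of consecutive subcolumns in one of the $(n+1)$-tower orderings. The hypothesis that every $\sigma^{(k)}_j$ fixes $0$ and $q_k-1$ forces the first (resp.\ last) $h_{n-1}$ letters of every $a^{(n)}_i$ to coincide with $a^{(n-1)}_0$ (resp.\ $a^{(n-1)}_{q_{n-1}-1}$) independently of $i$, and iterating this remark through the levels yields a bound of the form $\mathcal{N}_n(\ell)\le R(h_n)\,q_n$ for some subexponentially growing $R$. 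Subadditivity of $N\mapsto\log\mathcal{N}(\p(\ell)_0^{N-1})$ transfers the estimate from the subsequence $N=h_n$ to the full limit, giving
\[
\htop(T,\p(\ell))=\lim_{n\to\infty}\frac{\log\mathcal{N}_n(\ell)}{h_n}=\lim_{n\to\infty}\frac{\log q_n}{h_n},
\]
a value independent of $\ell$, and hence equal to $\htop(T)$ by Theorem~\ref{proptopentr}.

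The main technical obstacle is the sharp upper bound $\mathcal{N}_n(\ell)\le R(h_n)\,q_n$: a naive count parametrising each word by (starting position in the $n$-tower, current subcolumn, next subcolumn) produces $O(q_n^2 h_n)$ possibilities and would yield twice the announced entropy. Making the bound linear in $q_n$ requires carefully exploiting the fixed-point structure of the permutations---and very possibly, a judicious selection of the $q_{n+1}$ permutations among the $(q_n-2)!$ candidates---so that the number of \emph{distinct} consecutive-subcolumn pairs arising across the $q_{n+1}$ orderings remains $O(q_n)$ (up to subexponential factors) rather than $O(q_n^2)$.
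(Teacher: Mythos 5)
Your overall architecture matches the paper's: the lower bound $N(\p(1)_0^{h_n-1})\geq q_n$ coming from the pairwise distinctness of the permutations, an upper bound of the form (subexponential factor) times $q_n$ at time scale $h_n$, and Theorem~\ref{proptopentr} applied to the generating sequence $(\p(\ell))_{\ell\geq 1}$ to conclude. The lower bound and the reduction to the subsequence $N=h_n$ are fine. But the heart of the lemma is precisely the upper bound, and there you have a genuine gap, which you yourself flag in your final paragraph. The claim in the body of your Part~2 --- that because every $\sigma^{(k)}_j$ fixes $0$ and $q_k-1$ the first and last $h_{n-1}$ letters of each tower word are independent of the tower, and that ``iterating this remark'' yields $\mathcal{N}_n(\ell)\leq R(h_n)\,q_n$ --- does not work. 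A word of length $h_n$ read from an intermediate offset records roughly the last $q_{n-1}-j$ blocks of the ordering $(\sigma^{(n-1)}_{x_n})^{-1}$ and the first $j$ blocks of the ordering $(\sigma^{(n-1)}_{z_n})^{-1}$; for a generic choice of $q_n$ distinct permutations fixing only $0$ and $q_{n-1}-1$, these middle blocks can distinguish on the order of $q_n^2$ pairs, and the entropy can genuinely come out as large as $2\lim\log q_n/h_n$. So the conclusion is not true for an \emph{arbitrary} admissible family of permutations: the existential in the statement must be exploited.

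The paper's resolution (Lemma~\ref{prelemmaentr}) is exactly the ``judicious selection'' you suspect is needed: one takes $i_n$ minimal with $q_{n+1}\leq i_n!$ and chooses the $q_{n+1}$ permutations to fix all of $0,i_n+1,i_n+2,\ldots,q_n-1$, so that they act only on the short initial segment $\{1,\ldots,i_n\}$. Then the ``tail'' of an ordering from position $j$ onward is uniquely determined once $j>i_{n-1}$, while the ``head'' up to position $j$ has at most $i_{n-1}\cdots(i_{n-1}-j+1)$ possible values for small $j$ and at most $q_n$ in general; a count of (tail, head) pairs then gives $N(\p(\ell)_0^{h_n-1})\leq h_{n-1}q_nq_{n-1}^22^{q_{n-1}}$, whose logarithm divided by $h_n$ tends to $\lim\log q_n/h_n$ as required. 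Without this construction (or some equivalent control keeping the number of distinct consecutive-block patterns at $O(q_n)$ up to subexponential factors), your argument does not close.
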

	
	\begin{remark}\label{decreasing}
		The entropy $\htop(T)$ is well-defined by Lemma~\ref{exthom}, as well as the limit $\lim{\frac{\log{q_n}}{h_n}}$. Indeed, the sequence $\left (\frac{\log{q_n}}{h_n}\right )_{n\geq 0}$ is decreasing since we have $q_n< (q_{n-1})^{q_{n-1}}$.
	\end{remark}
	
	\begin{proof}[Proof of Lemma~\ref{entr}]
		By Lemma~\ref{prelemmaentr} in Appendix~\ref{seccombi}, we can choose permutations satisfying the first item and such that the following hold for every $n\geq\ell\geq 1$:
		$$q_n\leq N(\p(\ell)_0^{h_n-1})\leq h_{n-1}q_n q_{n-1}^2 2^{q_{n-1}}.$$
		On the one hand, we have
		$$\htop(T)\geq\htop(T,\p(1))=\lim_{n\to +\infty}{\frac{\log{N(\p(1)_0^{h_n-1})}}{h_n}}\geq\lim_{n\to +\infty}{\frac{\log{q_n}}{h_n}}.$$
		On the other hand, this gives
		\begin{align*}
			\displaystyle\htop(T,\p(\ell))&=\displaystyle\lim_{n\to +\infty}{\frac{\log{N(\p(\ell)_0^{h_n-1})}}{h_n}}\\
			&\leq\displaystyle\lim_{n\to +\infty}{\frac{\log{h_{n-1}}+\log{q_n}+2\log{q_{n-1}}+q_{n-1}\log{2}}{h_n}}\\
			&=\displaystyle\lim_{n\to +\infty}{\frac{\log{q_n}}{h_n}}
		\end{align*}
		and we finally get, using Theorem~\ref{proptopentr},
		$$\htop(T)=\lim_{\ell\to +\infty}{\htop(T,\p(\ell))}\leq\lim_{n\to +\infty}{\frac{\log{q_n}}{h_n}}.$$
		Hence the result.
	\end{proof}
	
	\begin{lemma}\label{choiceqn}
		Let $\alpha$ be a positive real number and $\prod_{p\in\Pi}{p^{k_p}}$ a supernatural number. We assume that there exists a prime number $p_{\star}$ such that $k_{p_{\star}}=+\infty$. Then there exists a sequence $(q_n)_{n\geq 0}$ of integers greater than or equal to $2$, satisfying the following properties:
		\begin{enumerate}
			\item we have $q_{n+1}\leq (q_n-2)!$ for every $n\geq 0$;
			\item the sequence $\left (\frac{\log{q_n}}{q_{0}\ldots q_{n-1}}\right )_{n\geq 0}$ tends to $\alpha$;
			\item we have $\sum_{n\geq 0}{\nu_p(q_n)}=k_p$ for every $p\in\Pi$.
		\end{enumerate}
	\end{lemma}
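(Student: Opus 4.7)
The plan is to construct $(q_n)_{n\geq 0}$ inductively, writing each $q_n$ as a product of an ``obligatory'' prime factor (drawn from the supernatural number $\prod_p p^{k_p}$) and a power of $p_\star$ whose exponent is tuned so that $\log q_n$ sits slightly above $\alpha h_n$. Because $p_\star$ has infinite multiplicity, it can serve as a ``bulk multiplier,'' decoupling the divisibility condition~(3) from the growth condition~(2).

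First I enumerate the obligatory factors. The multiset $\{(p,i):p\in\Pi\setminus\{p_\star\},\ 1\leq i\leq k_p\}$ is countable, so a diagonal round-robin produces a sequence $(a_m)_{m\geq 1}$ of primes distinct from $p_\star$ such that each $p\neq p_\star$ appears exactly $k_p$ times (with the convention $a_m=p_\star$ for all large $m$ if the multiset is finite or empty). The enumeration can be arranged so that $a_m$ lies among the first $m$ primes, hence $\log a_m=O(\log m)$. Next I fix a strictly decreasing sequence $(\alpha_n)_{n\geq 0}$ with $\alpha_n\searrow\alpha$ (for instance $\alpha_n=\alpha+\tfrac{1}{n+1}$), choose $\alpha_0$ sufficiently large, and define
\[
q_n \coloneq a_{n+1}\cdot p_\star^{e_n},\qquad e_n \coloneq \max\!\left(0,\ \left\lceil\frac{\alpha_n h_n-\log a_{n+1}}{\log p_\star}\right\rceil\right).
\]
Then $\log q_n=\alpha_n h_n+\delta_n$ with $|\delta_n|\leq\log p_\star+\log a_{n+1}=o(h_n)$, so $\log q_n/h_n\to\alpha$, giving property~(2). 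Property~(3) follows immediately: each prime $p\neq p_\star$ contributes exactly $k_p$ to $\sum_n\nu_p(q_n)$ through the $a_{n+1}$'s, while $p_\star$ contributes $\sum_n e_n=+\infty$ since $e_n\to+\infty$ as $\alpha_n h_n\to+\infty$.

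For property~(1) I invoke Stirling: $\log((q_n-2)!)=q_n\log q_n-q_n+O(\log q_n)$, and since $q_n\log q_n = q_n(\alpha_n h_n+\delta_n)=\alpha_n h_{n+1}+q_n\delta_n$ with $q_n\delta_n=o(h_{n+1})$ and $q_n=o(h_{n+1})$, one gets $\log((q_n-2)!)=\alpha_n h_{n+1}+o(h_{n+1})$, whereas $\log q_{n+1}=\alpha_{n+1}h_{n+1}+o(h_{n+1})$. The gap $(\alpha_n-\alpha_{n+1})h_{n+1}$ grows much faster than both $o(h_{n+1})$ error terms (since $h_{n+1}$ grows super-exponentially and $\alpha_n-\alpha_{n+1}$ decays only polynomially), so the inequality $q_{n+1}\leq(q_n-2)!$ holds for all large $n$; the finitely many initial indices are handled by hand by seeding the induction with $q_0$ sufficiently large.

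The main obstacle will be the enumeration step: when several primes (not only $p_\star$) have $k_p=+\infty$, all of them must be interleaved into $(a_m)$ infinitely often while keeping $\log a_m=O(\log m)$. A double enumeration handles this — at ``round'' $j$, one inserts one copy of each infinite-multiplicity prime encountered so far together with the unfinished quota of any finite-multiplicity prime — but it demands careful bookkeeping to ensure that every $p$ appears the correct number of times without letting any $a_m$ grow too fast.
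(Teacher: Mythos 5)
Your construction is correct and is essentially the paper's: both pad each $q_n$ with a power of $p_\star$ tuned so that $\log q_n/h_n$ sits at a prescribed level decreasing to $\alpha$, interleave the remaining obligatory primes, and check $q_{n+1}\leq(q_n-2)!$ via Stirling (the paper's window $[\alpha+\tfrac{5}{h_n},\alpha+\tfrac{2}{h_{n-1}}]$ plays the role of your $\alpha_n$; it absorbs a variable batch of obligatory primes per step and proves a posteriori that all get used, whereas you fix the enumeration in advance with one per step). Two remarks. First, the one step where "handled by hand" hides a real constraint is $n=0$: since $h_0=1$, the $-q_0$ term in $\log((q_0-2)!)\approx q_0\log q_0-q_0$ is \emph{not} negligible against the gap $(\alpha_0-\alpha_1)h_1=(\alpha_0-\alpha_1)q_0$, so you need $\alpha_0-\alpha_1>1$; with the parenthetical choice $\alpha_n=\alpha+\tfrac{1}{n+1}$ one has $\alpha_0-\alpha_1=\tfrac12$ and $q_1\approx e^{(\alpha+1/2)q_0}>(q_0-2)!\approx e^{\alpha q_0}$ no matter how the later terms behave — so the example must be discarded in favour of your other prescription that $\alpha_0$ be genuinely large (for $n\geq 1$ the gap $(\alpha_n-\alpha_{n+1})h_{n+1}$ dwarfs all errors because $h_n$ is already enormous). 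Second, the "main obstacle" you flag at the end is not one: the only property of the enumeration you actually use is $\log a_{n+1}=o(h_n)$, and since $h_n$ grows super-exponentially this holds for \emph{any} diagonal enumeration of the multiset $\{(p,i):p\neq p_\star,\ 1\leq i\leq k_p\}$, infinite multiplicities included; no growth control on $(a_m)$ beyond, say, $\log a_m=O(2^m)$ is needed, and requiring $a_m$ among the first $m$ primes is overkill.
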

	
	\begin{proof}[Proof of Lemma~\ref{choiceqn}]
		Let $K$ be a large enough power of $p_{\star}$ so that the following property holds: for every integer $q$ satisfying $q\geq K$, we have $(q-2)!\geq K$. Let
		$$N\coloneq\sum_{p\in\Pi\setminus\{p_{\star}\}}{k_p}\in\N\cup\{+\infty\}$$
		and $(p_i)_{1\leq i\leq N}$ be a sequence of prime numbers satisfying $\sum_{1\leq i\leq N}{\mathds{1}_{p_i=p}}=k_p$ for every $p\in\Pi\setminus\{p_{\star}\}$, and $\sum_{1\leq i\leq N}{\mathds{1}_{p_i=p_{\star}}}=0$.\footnote{"$(p_i)_{i\geq 1}$" and "$\sum_{i\geq 1}$" in the case $N=+\infty$.}
		By induction, we build a sequence $(q_n)_{n\geq 0}$ of integers greater than or equal to $2$, an increasing sequence $(i_n)_{n\geq 0}$ and a non-decreasing sequence $(j_n)_{n\geq 0}$ of non-negative integers, satisfying the following properties:
		\begin{enumerate}
			\item $q_0>\frac{2}{\alpha}\log{p_{\star}}$ and $\log{q_0}\geq \alpha+5$;
			\item $K\leq q_{n+1}\leq (q_n-2)!$ for every $n\geq 0$;
			\item for every $n\geq 1$, the following holds:
			$$\alpha +\frac{5}{q_0\ldots q_{n-1}}\leq\frac{\log{q_n}}{q_0\ldots q_{n-1}}\leq \alpha +\frac{2}{q_0\ldots q_{n-2}},$$
			where $q_0\ldots q_{n-2}$ is equal to $1$ if $n=1$;
			\item $\displaystyle q_n=K^{i_n}\prod_{j=j_{n-1}+1}^{j_{n}}{p_j}$ for every $n\geq 0$, with $j_{-1}=j_0=0$ (so we have $q_0=p_{\star}^{i_0})$;
			\item $j_n\underset{n\to\infty}{\to} N$ if $N<+\infty$, $j_n\underset{n\to\infty}{\to} +\infty$ otherwise.
		\end{enumerate}
		Such a sequence $(q_n)_{n\geq 0}$ satisfies the assumptions of the lemma.\par
		We choose a large enough integer $i_0$ such that the hypotheses on $q_0\coloneq K^{i_0}$ are satisfied. Let $n\geq 0$. Assume that the integers $q_0,\ldots,q_{n},i_0,\ldots,i_{n},j_1,\ldots,j_{n}$ have been defined and let us build $q_{n+1},i_{n+1}, j_{n+1}$. In particular, the integers $q_0,\ldots,q_{n}$ satisfy
		$$\forall k\in\{0,\ldots,n\}, \frac{\log{q_k}}{q_0\ldots q_{k-1}}\geq \alpha +\frac{5}{q_0\ldots q_{k-1}}.$$
		Let $j_{n+1}$ be the greatest integer $k$ satisfying
		\begin{itemize}
			\item $k\geq j_{n}$ and, if $N<+\infty$, $k\leq N$;
			\item $\displaystyle K\prod_{j=j_n+1}^{k}{p_j}\leq (q_{n}-2)!$;
			\item $\displaystyle\frac{\log{\left (\prod_{j=j_n+1}^{k}{p_j}\right )}}{q_0\ldots q_n}\leq\frac{\alpha}{2}$.
		\end{itemize}
		Let us consider the sequence $(\alpha_i)_{i\geq 1}$ defined by
		$$\alpha_i\coloneq\frac{\log{\left (K^i\prod_{j=j_n+1}^{j_{n+1}}{p_j}\right )}}{q_0\ldots q_{n}},$$
		and let $I$ be the greatest integer $i$ such that $K^{i}\prod_{j=j_n+1}^{j_{n+1}}{p_j}\leq (q_n-2)!$. The sequence $(\alpha_i)_{i\geq 1}$ is an arithmetic progression with common difference
		$$\frac{\log{K}}{q_0\ldots q_{n}}.$$
		Moreover, we have
		$$\alpha_1\leq\frac{\log{K}}{q_0}+\frac{\log{\left (\prod_{j=j_n+1}^{j_{n+1}}{p_j}\right )}}{q_0\ldots q_n}\leq\alpha,$$
		and, using the assumption on $q_n$ and the inequalities $\log{(k!)}\geq k\log{(k)}-k$ and $\log{k}\leq k$,
		\begin{align*}
			\displaystyle \alpha_{I+1}&=\displaystyle \frac{\log{\left (K^{I+1}\prod_{j=j_n+1}^{j_{n+1}}{p_j}\right )}}{q_0\ldots q_{n}}\\
			&\geq\displaystyle \frac{\log{((q_n-2)!)}}{q_0\ldots q_{n}}\\
			&=\displaystyle \frac{\log{(q_n!)}}{q_0\ldots q_{n}}-\frac{\log{(q_n-1)}}{q_0\ldots q_{n}}-\frac{\log{(q_n)}}{q_0\ldots q_{n}}\\
			&\geq\displaystyle \frac{q_n\log{(q_n)}-q_n}{q_0\ldots q_{n}}-\frac{\log{(q_n-1)}}{q_0\ldots q_{n}}-\frac{\log{(q_n)}}{q_0\ldots q_{n}}\\
			&\geq\displaystyle\frac{\log{(q_n)}}{q_0\ldots q_{n-1}}-\frac{3}{q_0\ldots q_{n-1}}\\
			&\geq\displaystyle\alpha +\frac{2}{q_0\ldots q_{n-1}}.
		\end{align*}
		Therefore, there exists $i'\in\{1,\ldots,I\}$ such that
		$$\alpha +\frac{2}{q_0\ldots q_{n-1}}-\frac{\log{K}}{q_0\ldots q_{n}}\leq\alpha_{i'}\leq\alpha +\frac{2}{q_0\ldots q_{n-1}}.$$
		Since we have $2q_n\geq 2K\geq K+\log{K}\geq 5+\log{K}$, we get
		$$\alpha +\frac{5}{q_0\ldots q_{n}}\leq\alpha_{i'}\leq\alpha +\frac{2}{q_0\ldots q_{n-1}}.$$
		It remains to set $i_{n+1}\coloneq i'$ and $q_{n+1}\coloneq K^{i_{n+1}}\prod_{j=j_n+1}^{j_{n+1}}{p_j}$.\par
		Finally, we have to check that the increasing sequence $(j_n)_{n\geq 1}$ of integers diverges if $N=+\infty$, or converges to $N$ if $N$ is finite. If it was not the case, then there would exist a positive integer $n$ such that the following hold for every $k\geq n$:
		$$Kp_{j_{n}+1}>(q_{k}-2)!\text{ or }\frac{\log{p_{j_{n}+1}}}{q_0\ldots q_k}>\frac{\alpha}{2}.$$
		But the integers $q_k$ are greater than or equal to $2$, so it would mean that the sequence $(q_k)_{k\geq 0}$ is bounded, which is in contradiction with the inequality $\log{q_k}\geq \alpha q_0\ldots q_k$, so $(j_n)_{n\geq 1}$ satisfies the desired property. Hence the lemma.
	\end{proof}
	
	\begin{proof}[Proof of Theorem~\ref{thB} in the case $\alpha<+\infty$]
		Let $\alpha$ be a positive real number and let $S$ be an odometer whose associated supernatural number $\prod_{p\in\Pi}{p^{k_p}}$ satisfies the following property: there exists a prime number $p_{\star}$ such that $k_{p_{\star}}=+\infty$. Without loss of generality, $S$ is the odometer on the Cantor set $X\coloneq\prod_{n\geq 0}{\{0,1,\ldots,q_n-1\}}$, where the sequence $(q_n)_{n\geq 0}$ satisfies $2\leq q_n\leq (q_n-2)!$ for every $n\geq 0$ and $\frac{\log{q_n}}{h_n}\to\alpha$. The existence of such a sequence is granted by Lemma~\ref{choiceqn}. By Lemma~\ref{entr} and Proposition~\ref{exthom}, we can find families of permutations such that the underlying odomutant $T$ is a homeomorphism strongly orbit equivalent to $S$ and its topological entropy is equal to $\alpha$.\par
		Finally, given an increasing map $\varphi\colon\R_+\to\R_+$, the orbit equivalence is $\varphi$-integrable if $(\varphi(h_{n+1})/h_n)_n$ is summable, by Theorem~\ref{oeq} (see Condition~\ref{oecond2}). This holds for $\varphi(x)=\frac{\log(x)}{\log^{\circ m}(x)}$. Indeed, we have
		\begin{align*}
			\displaystyle\frac{\varphi(h_{n+1})}{h_n}&=\displaystyle\frac{1}{\log^{\circ m}(h_{n+1})}\frac{\log{(h_{n+1})}}{h_n}\\
			&\leq\displaystyle\frac{1}{\log^{\circ m}(q_{n})}\left (\frac{\log{(h_{n})}}{h_n}+\frac{\log{q_n}}{h_n}\right )\\
			&\leq\displaystyle\frac{1}{\log^{\circ m}(q_{n})}\left (1+\frac{\log{q_n}}{h_n}\right ),
		\end{align*}
		so using the monotonicity of the sequence $\left (\frac{\log{q_n}}{h_n}\right )_{n\geq 0}$ (see Remark~\ref{decreasing}), we get
		$$\frac{\varphi(h_{n+1})}{h_n}\leq\frac{1}{\log^{\circ m}(q_{n})}(1+\log{q_0})$$
		and we are done by Lemma~\ref{summable} with $\beta=\alpha$.
	\end{proof}
	
	In the case $\alpha=+\infty$, we prove Theorem~\ref{thB} with the same methods as in~\cite{boyleEntropyOrbitEquivalence1994}, but with our formalism. We will consider an odomutant $T$ on $\prod_{n\geq 0}{\{0,1,\ldots,q_n-1\}}$, built with uniform $\bm{c}$-multiple permutations $\tau_j^{(n)}$, where $\bm{c}=(c_n,\tilde{q}_n)_{n\geq 0}$, and for every  $n\geq 0$ and every $0\leq j<\tilde{q}_{n+1}$, $\tau^{(n)}_{j}$ is a permutation on $\{0,1,\ldots,q_n-1\}$ fixing $0$ and $q_n-1$. For every $n\geq 0$, we assume that the map
	$$j\in\{0,1,\ldots,\tilde{q}_{n+1}-1\}\mapsto (\tau^{(n)}_{j}(I^{(n)}_{0}),\ldots,\tau^{(n)}_{j}(I^{(n)}_{\tilde{q}_n-1}))$$
	is $\kappa_{n+1}$-to-one for some positive integer $\kappa_{n+1}$ (as in the assumption of Lemma~\ref{to-one}). Finally, we write $\chi_n\coloneq\frac{\tilde{q}_n}{\kappa_n}$ for every $n\geq 1$. Then we have $q_n=c_n\tilde{q}_n$ for every $n\geq 0$ and $\tilde{q}_n=\kappa_n\chi_n$ for every $n\geq 1$. The sequences $(h_n)$, $(\tilde{q}_n)$, $(c_n)$, $(\kappa_n)$, $(\chi_n)$ respectively correspond to the sequences $(l_k)$, $(m_k)$, $(n_k)$, $(j_k)$, $(\overline{m}_k)$ in~\cite{boyleEntropyOrbitEquivalence1994}. The integer $\chi_{n+1}$ is the number of sequences of the form $(\tau^{(n)}_{j}(I^{(n)}_{0}),\ldots,\tau^{(n)}_{j}(I^{(n)}_{\tilde{q}_n-1}))$ for $j\in\{0,\ldots,\tilde{q}_{n+1}-1\}$, so we have
	$$1\leq\chi_{n+1}\leq\frac{(q_n-2)!}{c_n!^{\tilde{q}_n-2}(c_n-1)!^2},$$
	thus motivating the following lemma.
	
	\begin{lemma}\label{powerK}
		Let $p$, $\tilde{q}$ and $c$ be positive integers and $q\coloneq\tilde{q}c$. Assume that $p\geq 2$ and $q\geq 3$. Then the greatest power of $p$ less than or equal to
		$$\frac{(q-2)!}{c!^{\tilde{q}-2}(c-1)!^2}$$
		is greater than or equal to
		$$\frac{1}{p}\frac{1}{\tilde{q}^2}\left (\frac{1}{ec}\right )^{\tilde{q}}\tilde{q}^{q}.$$
	\end{lemma}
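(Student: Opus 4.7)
\textbf{Proof plan for Lemma~\ref{powerK}.} Write $M\coloneq \frac{(q-2)!}{c!^{\tilde q-2}(c-1)!^2}$. The greatest power of $p$ which is at most $M$ is $p^{\lfloor \log_p M\rfloor}$, and since $\lfloor x\rfloor>x-1$ we have $p^{\lfloor \log_p M\rfloor}>M/p$. Therefore it suffices to prove the (slightly stronger) inequality
\[
M\;\geq\;\frac{\tilde q^{\,q-2}}{(ec)^{\tilde q}},
\]
because the claimed lower bound in the lemma is exactly $M/p$ with this replacement. So the lemma reduces to a clean, purely combinatorial inequality with no prime $p$ left.

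The plan is to exploit the identity $(c-1)!=c!/c$, which gives $c!^{\tilde q-2}(c-1)!^2=c!^{\tilde q}/c^{2}$, hence
\[
M\;=\;\frac{c^{2}(q-2)!}{c!^{\tilde q}}\;=\;\frac{c^{2}}{q(q-1)}\,\binom{q}{c,c,\ldots,c}.
\]
The point is that the multinomial coefficient $\binom{q}{c,\ldots,c}$ is essentially of size $\tilde q^{q}$ up to a Stirling factor, and $q=c\tilde q$ will make the powers of $c$ and of $e$ line up with the target $\tilde q^{q-2}/(ec)^{\tilde q}$.

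Concretely I would use the two Stirling-type bounds $q!\geq \sqrt{2\pi q}(q/e)^{q}$ (valid for every $q\geq 1$) and $c!\leq e\sqrt{c}\,(c/e)^{c}$ (valid for every $c\geq 1$; this follows from Robbins' refinement $c!\leq \sqrt{2\pi c}(c/e)^{c}e^{1/(12c)}$ together with the elementary check $\sqrt{2\pi}e^{1/24}<e$ to handle $c\geq 2$, the case $c=1$ being an equality). Using $q=c\tilde q$, the ratio $(q/e)^{q}/(c/e)^{c\tilde q}$ collapses to $\tilde q^{q}$, so
\[
\binom{q}{c,\ldots,c}\;\geq\;\frac{\sqrt{2\pi q}\,(q/e)^{q}}{(e\sqrt{c})^{\tilde q}(c/e)^{c\tilde q}}\;=\;\frac{\sqrt{2\pi q}\,\tilde q^{q}}{e^{\tilde q}c^{\tilde q/2}}.
\]
Substituting this into the expression for $M$ and comparing with the target, the ratio
\[
\frac{M}{\tilde q^{\,q-2}/(ec)^{\tilde q}}\;\geq\;\frac{c^{2}\,\tilde q^{2}\,c^{\tilde q/2}\sqrt{2\pi q}}{q(q-1)}
\]
is at least $\sqrt{2\pi q}\geq\sqrt{6\pi}>1$, because $c^{2}\tilde q^{2}=q^{2}\geq q(q-1)$ and $c^{\tilde q/2}\geq 1$. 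This finishes the proof.

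The main obstacle is cosmetic rather than conceptual: one has to pick the right Stirling bounds so that the constants $e$ and $\sqrt{2\pi}$ are arranged correctly. In particular the bound $c!\leq e\sqrt{c}(c/e)^{c}$ is slightly tighter than the naive $c!\leq c^{c}$ that one is tempted to use first, and it is precisely what makes the factor $(ec)^{\tilde q}$ in the denominator of the target appear naturally. Once that choice is made, everything else is a routine rearrangement using the factorization $q=c\tilde q$.
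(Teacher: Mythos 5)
Your proof is correct and follows essentially the same route as the paper: rewrite the quotient as $\frac{c^2}{q(q-1)}\cdot\frac{q!}{c!^{\tilde q}}$, apply Stirling-type bounds so that $(q/e)^q/(c/e)^{c\tilde q}$ collapses to $\tilde q^{q}$ via $q=c\tilde q$, and absorb the prime $p$ by noting the greatest power of $p$ below $M$ exceeds $M/p$. The only difference is cosmetic: the paper uses the elementary bounds $(k/e)^k\leq k!\leq e\left(\frac{k+1}{e}\right)^{k+1}$ applied to $(c-1)!$, which produces the factor $(ec)^{-\tilde q}$ directly and avoids your Robbins-type $\sqrt{2\pi}$ bookkeeping and the separate $c=1$ check.
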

	
	\begin{proof}[Proof of Lemma~\ref{powerK}]
		Using the inequalities
		$$\left (\frac{k}{e}\right )^k\leq k!\leq e\left (\frac{k+1}{e}\right )^{k+1},$$
		we get
		$$\frac{(q-2)!}{c!^{\tilde{q}-2}(c-1)!^2}=\frac{c^2}{q(q-1)}\frac{q!}{(c-1)!^{\tilde{q}}}\frac{1}{c^{\tilde{q}}}\geq\left (\frac{c}{q}\right )^2\frac{\left (\frac{q}{e}\right )^q}{  e^{\tilde{q}}\left (\frac{c}{e}\right )^{c\tilde{q}}  }\frac{1}{c^{\tilde{q}}}=\frac{1}{\tilde{q}^2}\left (\frac{1}{ec}\right )^{\tilde{q}}\tilde{q}^{q}$$
		and we are done.
	\end{proof}
	
	\begin{proof}[Proof of Theorem~\ref{thB} in the case $\alpha=+\infty$]
		Let
		$$N\coloneq\sum_{p\in\Pi\setminus\{p_{\star}\}}{k_p}\in\N\cup\{+\infty\}$$
		and $(p_i)_{1\leq i\leq N}$ be a sequence of prime numbers satisfying $\sum_{1\leq i\leq N}{\mathds{1}_{p_i=p}}=k_p$ for every $p\in\Pi\setminus\{p_{\star}\}$, and $\sum_{1\leq i\leq N}{\mathds{1}_{p_i=p_{\star}}}=0$.\footnote{"$(p_i)_{i\geq 1}$" and "$\sum_{i\geq 1}$" in the case $N=+\infty$.}
		Let us define $c_n=p_{\star}^n$ for every $n\geq 0$. By induction, we build sequences $(\kappa_n)_{n\geq 1}$ and $(\chi_n)_{n\geq 0}$ of integers, and a non-decreasing sequence $(j_n)_{n\geq 1}$ of non-negative integers, satisfying the following properties:
		\begin{enumerate}
			\item for every $n\geq 0$, $\chi_{n+1}$ is the greatest power of $p_{\star}$ less than or equal to $\frac{(q_n-2)!}{c_n!^{\tilde{q}_n-2}(c_n-1)!^2}$, where $\tilde{q}_n=\kappa_n\chi_n$ (with $\kappa_0\coloneq1$) and $q_n\coloneq c_n\tilde{q}_n$;
			\item $\displaystyle \kappa_n=p_{\star}^{h_n}\prod_{j=j_{n-1}+1}^{j_{n}}{p_j}$ for every $n\geq 1$, with $j_0\coloneq0$;
			\item $j_n\underset{n\to\infty}{\to} N$ if $N<+\infty$, $j_n\underset{n\to\infty}{\to} +\infty$ otherwise.
		\end{enumerate}
		Let us define $\tilde{q}_0\coloneq p_{\star}$. Given $n\geq 0$, assume that $\chi_0,\ldots,\chi_n,j_0,\ldots,j_n,\kappa_1,\ldots,\kappa_n$ have been set (if $n=0$, then there is no integer $\kappa_i$). We define $\chi_{n+1}$ as the greatest power of $p_{\star}$ less than or equal to $\frac{(q_{n}-2)!}{c_{n}!^{\tilde{q}_{n}}(c_{n}-1)!^2}$, $j_{n+1}$ as the greatest integer $k$ satisfying
		\begin{itemize}
			\item $j_n\leq k$ and, if $N<+\infty$, $k\leq N$;
			\item $\prod_{j=j_{n}+1}^{k}{p_j}\leq p_{\star}^{h_{n+1}}$,
		\end{itemize}
		and $\kappa_{n+1}\coloneq p_{\star}^{h_{n+1}}\prod_{j=j_{n}+1}^{j_{n+1}}{p_j}$. Let us define $T$ as the odomutant built with uniform $\bm{c}$-multiple permutations $\tau_j^{(n)}$, with $\bm{c}\coloneq(c_n,\tilde{q}_n)_{n\geq 0}$, and assume that the assumption of Lemma~\ref{to-one} in Appendix~\ref{seccombi} is satisfied: for every $n\geq 0$, the map
		$$j\in\{0,1,\ldots,\tilde{q}_{n+1}-1\}\mapsto (\tau^{(n)}_{j}(I^{(n)}_{0}),\ldots,\tau^{(n)}_{j}(I^{(n)}_{\tilde{q}_n-1}))$$
		is $\kappa_{n+1}$-to-$1$. Note that the fact that $\chi_{n+1}$ is less than or equal to
		$$\frac{(q_{n+1}-2)!}{c_{n+1}!^{\tilde{q}_{n+1}}(c_{n+1}-1)!^2}$$
		enables us to find such families of permutations. It is straightforward to prove that $j_n\to +\infty $ if $N=\infty$, or $j_n\to N$ if $N<+\infty$, so $T$ is an odomutant associated to $S$.\par
		Lemma~\ref{to-one} implies
		$$N(\ptilde(\ell)_0^{h_n-1})\geq\frac{\tilde{q}_n}{\prod_{k=\ell}^{n}{\kappa_{k}^{h_n/h_{k}}}}$$
		for all $n\geq\ell\geq 1$. By Lemma~\ref{powerK}, we have for every $i\geq 1$, 
		$$\tilde{q}_i=\kappa_i\chi_i\geq \kappa_i\frac{1}{p_{\star}}\frac{1}{(\tilde{q}_{i-1})^2}\left (\frac{1}{ec_{i-1}}\right )^{\tilde{q}_{i-1}}(\tilde{q}_{i-1})^{q_{i-1}},$$
		this gives
		$$\tilde{q}_i^{1/h_i}\geq \kappa_i^{1/h_i}\left (\frac{1}{p_{\star}\tilde{q}_{i-1}^2}\right )^{1/h_i}\left (\frac{1}{ec_{i-1}}\right )^{1/(c_{i-1}h_{i-1})}\tilde{q}_{i-1}^{1/h_{i-1}}$$
		and we can apply this inequality many times to get
		\begin{align*}
			\displaystyle\tilde{q}_{n}^{1/h_n}&\geq\displaystyle\left (\prod_{i=1}^{n}{\kappa_i^{1/h_{i}}}\right )\left (\prod_{i=1}^{n}{\left (\frac{1}{p_{\star}\tilde{q}_{i-1}^2}\right )^{1/h_i}\left (\frac{1}{ec_{i-1}}\right )^{1/(c_{i-1}h_{i-1})}}\right )\tilde{q}_0\\
			&\geq\displaystyle\left (\prod_{i=\ell}^{n}{\kappa_i^{1/h_{i}}}\right )\left (\prod_{i=1}^{n}{\left (\frac{1}{p_{\star}\tilde{q}_{i-1}^2}\right )^{1/h_i}\left (\frac{1}{ec_{i-1}}\right )^{1/(c_{i-1}h_{i-1})}}\right )p_{\star}^{\ell-1}\tilde{q}_0.
		\end{align*}
		Hence we have,
		$$\frac{\log{N(\ptilde(\ell)_0^{h_n-1})}}{h_n}\geq (\ell-1)\log{p_{\star}}+\log{\tilde{q}_0}-\sum_{i=1}^{n}{\left (\frac{\log{(p_{\star}\tilde{q}_{i-1}^2)}}{h_i}+\frac{\log{(ec_{i-1})}}{c_{i-1}h_{i-1}}\right )}.$$
		It is straightforward to check that the series $\sum_{i=1}^{+\infty}{\left (\frac{\log{(p_{\star}\tilde{q}_{i-1}^2)}}{h_i}+\frac{\log{(ec_{i-1})}}{c_{i-1}h_{i-1}}\right )}$ converges and we denote by $V$ its value. We are now able to get that $T$ has infinite topological entropy:
		$$\htop(T)\geq\lim_{\ell\to +\infty}{\htop(T,\ptilde(\ell))}\geq\lim_{\ell\to +\infty}{\left ((\ell-1)\log{p_{\star}}+\log{\tilde{q}_0}-V\right )}=+\infty.$$
		
		Let us finally check Condition~\ref{oecond2} in Lemma~\ref{oeq} to prove that there exists a strong orbit equivalence between $T$ and $S$, which is $\varphi_m$-integrable for every $m\geq 0$, where $\varphi_m(x)=\frac{\log(x)}{\log^{(\circ m)}(x)}$. We first have $c_n\leq (p_{\star})^{h_n}$, $\chi_n\leq (q_{n-1})^{q_{n-1}}\leq (h_{n})^{q_{n-1}}$ and $\log{\kappa_n}\leq 2h_n\log{p_{\star}}$ by definition, so
		$$\log{h_{n+1}}=\log{h_n}+\log{c_n}+\log{\kappa_n}+\log{\chi_n}\leq (1+3\log{p_{\star}})h_n+q_{n-1}\log{h_n},$$
		this implies
		$$\frac{\log{h_{n+1}}}{h_n}\leq (1+3\log{p_{\star}})+\frac{\log{h_n}}{h_{n-1}}$$
		and we get $\frac{\log{h_{n+1}}}{h_n}=O(n)$. Then, it remains to prove that the sequence $\left (\frac{n}{\log^{(\circ m)}(h_{n+1})}\right )_{n\geq 0}$ is summable. This is a consequence of Lemma~\ref{summable} with $\beta=\log{p_{\star}}$, since we have
		$$\log{q_n}\geq\log{\kappa_n}\geq h_n\log{p_{\star}}$$
		by definition of $\kappa_n$. So there exists a strong orbit equivalence between $T$ and $S$, which is $\varphi_m$-integrable for every $m\geq 0$.
	\end{proof}
	
	\section{Orbit equivalence with almost integrable cocycles}\label{secC}
	
	In this section, we prove that being orbit equivalent to an odometer, with almost integrable cocycles, does not imply being flip-conjugate to it.
	
	\begin{theorem}\label{thCbis}
		Let $\varphi\colon\R_+\to\R_+$ be a sublinear map and $S$ an odometer. There exists a probability measure-preserving transformation $T$ such that $S$ and $T$ are $\varphi$-integrably orbit equivalent but not flip-conjugate.
	\end{theorem}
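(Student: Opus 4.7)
The plan is to construct $T$ as an odomutant of $S$ built from cyclic-shift permutations, quantify the cocycles via Theorem~\ref{oeq}, and deduce non-flip-conjugacy from coalescence of odometers. First, I would realize $S$ as the odometer on $X=\prod_{n\geq 0}\{0,\ldots,q_n-1\}$ with $(q_n)$ compatible with the supernatural number of $S$ and growing as fast as we wish (in particular $q_{n+1}\geq 2q_n$ and with whatever additional growth rate the given $\varphi$ will demand). This is possible because the supernatural number is infinite, so its prime factors can always be grouped into arbitrarily large $q_n$'s. I would then set $\sigma^{(n)}_i(x)\coloneq (x+i)\bmod q_n$ for every $0\leq i<q_{n+1}$ and $0\leq x<q_n$, and let $T$ be the resulting odomutant.

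A short computation then gives $\psi(x)_n=(x_n+x_{n+1})\bmod q_n$, so that for every $y\in X$ the backward recursion $x_{n+1}\equiv y_n-x_n\pmod{q_n}$ admits $\lfloor q_{n+1}/q_n\rfloor\geq 2$ choices of $x_{n+1}$ given $x_n$; iterating, one obtains an uncountable fiber $\psi^{-1}(y)$ at every point, and a standard disintegration argument shows the conditional measures on these fibers are non-atomic, so $\psi$ is not a measure-theoretic isomorphism. For the cocycles, the cyclic-shift structure yields $(\sigma^{(n)}_{x_{n+1}})^{-1}(\sigma^{(n)}_{x_{n+1}}(x_n)+1)=(x_n+1)\bmod q_n$, which equals $x_n+1$ for $x_n\leq q_n-2$ and equals $0$ at the single point $x_n=q_n-1$. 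Substituting into Condition~\ref{oecond1} of Theorem~\ref{oeq} gives, up to constants, the bound $\sum_n\bigl(\varphi(2h_n)/h_n+\varphi(h_{n+1})/h_{n+1}\bigr)$, and the analogous computation for the second cocycle yields the same bound. Since $\varphi$ is sublinear, $\varphi(t)/t\to 0$, so choosing $(q_n)$ to grow fast enough (e.g.\ so that $\varphi(h_{n+1})/h_{n+1}\leq 1/n^2$, which is achievable since $\varphi(t)/t\to 0$) makes both series summable, and Theorem~\ref{oeq} then produces a $\varphi$-integrable orbit equivalence via $\mathrm{id}_X$.

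To conclude non-flip-conjugacy, I would invoke coalescence: $S$ is ergodic with discrete spectrum, hence coalescent by Theorem~\ref{thcoalescent}, so if $T$ were conjugate to $S$ then the factor map $\psi\colon T\to S$ given by Proposition~\ref{aut} would have to be an isomorphism, contradicting the non-injectivity of $\psi$ established above. Moreover, the point spectrum $\spec(S)$ is a subgroup of $\mathbb{T}$ and is therefore closed under complex conjugation, so $\spec(S^{-1})=\spec(S)$; by the Halmos--von Neumann theorem $S\cong S^{-1}$, so flip-conjugacy to $S$ reduces to conjugacy to $S$, and we conclude that $T$ is not flip-conjugate to $S$.

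The main obstacle is the need to absorb an arbitrary sublinear $\varphi$—coming with no quantitative rate—into the growth of $(q_n)$, while remaining consistent with the fixed supernatural number of $S$; the sublinearity is exploited precisely through the finer Condition~\ref{oecond1}, as the authors observe just after Theorem~\ref{oeq}. The non-injectivity of $\psi$ then falls out cleanly from the cyclic-shift choice combined with $q_{n+1}/q_n\geq 2$: note that Lemma~\ref{fixedpoint} could only give a sufficient condition for $\psi$ to be an isomorphism, so it cannot be used in reverse and a direct analysis of the fibers is required, which is exactly what the cyclic-shift structure makes transparent.
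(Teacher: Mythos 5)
Your construction is the paper's: the same cyclic-shift permutations $\sigma^{(n)}_i(x)=x+i \bmod q_n$, the same regrouping of the $q_n$'s, Condition~\ref{oecond1} of Theorem~\ref{oeq} with the same displacement computation (displacement $1$ generically, $\leq q_n$ at the single wrap-around point), and the same endgame via coalescence (Theorem~\ref{thcoalescent}) plus $S\cong S^{-1}$ from Halmos--von Neumann. The one place you diverge is the proof that $\psi$ is not an isomorphism, and that is where there is a genuine gap. What you actually establish is that every fiber $\psi^{-1}(y)$ is uncountable; but uncountable fibers do not preclude $\psi$ being injective on a full-measure set (all but one point of each fiber could sit in a null set), so the whole weight of the argument rests on the assertion that ``a standard disintegration argument shows the conditional measures on these fibers are non-atomic.'' That is not standard and not automatic: one must actually bound the conditional mass $\mu_y([x_0,\ldots,x_{n+1}]_{n+2})$ of cylinders inside the fiber, and because $q_n\mid q_{n+1}$ cannot be assumed (the supernatural number may forbid it), the fiber counts $\lfloor q_{k+1}/q_k\rfloor$ versus $\lceil q_{k+1}/q_k\rceil$ accumulate multiplicative errors that only stay bounded under an extra condition such as $\sum q_k/q_{k+1}<\infty$. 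The paper avoids all of this with an explicit witness: the translation $\theta(x)=(x_n+(-1)^n \bmod q_n)_n$ satisfies $\psi\circ\theta=\psi$ on the positive-measure set $Y_1$ where no wrap-around occurs (this is why the hypothesis $\prod(1-1/q_n)>0$ appears in Lemma~\ref{notinj}), and $\theta(X_0\cap Y_1)$ being disjoint from $X_0$ immediately kills a.e.\ injectivity. You should either carry out the conditional-measure estimate honestly or replace that step by such a witness.

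A second, minor omission: Theorem~\ref{oeq} is stated for \emph{non-decreasing} $\varphi$ (its proof applies $\varphi$ to upper bounds on $|c_T|$), whereas your $\varphi$ is only assumed sublinear. You need the reduction to a non-decreasing sublinear majorant first, which is exactly what the paper does via Lemma~\ref{metriccompatible}; after that, your choice of $(q_n)$ forcing $\varphi(2h_n)/h_n\leq 1/n^2$ goes through as you describe.
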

	
	For Theorems~\ref{thA} and~\ref{thB}, some invariants (loose Bernoullicity property, entropy) ensure that we build an odomutant $T$ which is not flip-conjugate to the associated odometer $S$. For Theorem~\ref{thCbis}, we use the fact that every odometer is coalescent (see Theorem~\ref{thcoalescent}). Given a sublinear map $\varphi\colon\R_+\to\R_+$, the goal is to find families of permutations $\left (\sigma^{(n)}_{x_{n+1}}\right )_{0\leq x_{n+1}<q_{n+1}}$, for $n\geq 0$, such that the factor map
	$$\psi\colon x\in X\to (\sigma_{x_{1}}^{(0)}(x_0),\sigma_{x_{2}}^{(1)}(x_1),\sigma_{x_{3}}^{(2)}(x_2),\ldots)\in X$$
	from the associated odomutant $T$ to $S$ is not an isomorphism, with $\varphi$-integrable cocycles for the orbit equivalence between $S$ and $T$.
	
	\begin{lemma}\label{notinj}
		Let $(q_n)_{n\geq 0}$ be a sequence of integers greater or equal to $2$. For every $n\geq 0$, let $\left (\sigma_{x_{n+1}}^{(n)}\right )_{0\leq x_{n+1}<q_{n+1}}$ be a family of permutations of the set $\{0,1,\ldots ,q_n-1\}$, defined by:
		$$\forall x_{n+1}\in \{0,\ldots,q_{n+1}-1\},\ \forall i\in\{0,\ldots,q_n-1\},\ \sigma^{(n)}_{x_{n+1}}(i)=i+x_{n+1}\text{ mod }q_n.$$
		Assume that the infinite product $\prod_{n\geq 0}{\left (1-\frac{1}{q_n}\right )}$ converges\footnote{\label{footnote}By definition, the infinite product $\prod_{n\geq 0}{\left (1-\frac{1}{q_n}\right )}$ converges if the sequence $\left (\prod_{k= 0}^{n}{\left (1-\frac{1}{q_k}\right )}\right )_{n\geq 0}$ converges to a nonzero real number.}. Then $\psi\colon x\in X\to (\sigma_{x_{n+1}}^{(n)}(x_n))_{n\geq 0}\in X$ is not injective almost everywhere.
	\end{lemma}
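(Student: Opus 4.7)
My plan is to show that the factor map $\psi$ fails to be a measure isomorphism by exhibiting, under the hypothesis, a nonzero element of the fiber $\psi^{-1}(\{0\})$, and then propagating the non-injectivity to a positive-measure set of fibers via the ergodicity of $S$. Indeed, since $\psi$ intertwines $T$ and $S$ (Proposition~\ref{aut}), for any $y \in X$ the bijection $x \mapsto Tx$ sends $\psi^{-1}(\{y\})$ onto $\psi^{-1}(\{Sy\})$; the function $y \mapsto |\psi^{-1}(\{y\})|$ is therefore $S$-invariant, hence almost surely constant by ergodicity of the odometer $S$. So to conclude it suffices to prove that this constant is not $1$, which I plan to do by exhibiting a positive-measure set of $y$ whose fibers contain more than one point — for the specific equations $y_n = x_n + x_{n+1} \bmod q_n$, this will follow from a single nonzero element of $\psi^{-1}(\{0\})$ together with the "no carry" subset of $X$, which has positive measure exactly because $\prod(1-1/q_n) > 0$.

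Concretely, a nonzero element $\delta \in \psi^{-1}(\{0\})$ is a sequence $(\delta_n)$ in $X$, not identically zero, satisfying $\delta_n + \delta_{n+1} \equiv 0 \pmod{q_n}$ for every $n$; given such a $\delta$, the set of $x \in X$ with $x_n + \delta_n < q_n$ for every $n \geq 1$ has measure $\prod_{n \geq 1}(1 - \delta_n/q_n) > 0$ (under the summability hypothesis and a suitable choice of $\delta_n = o(q_n)$), and for such $x$ one checks directly that $\psi(x) = \psi(x + \delta)$ because the "componentwise addition modulo $q_n$" produces no carry at any coordinate. So each such $x$ gives two distinct preimages of $\psi(x)$.

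To construct $\delta$: the convergence of $\prod(1 - 1/q_n)$ is equivalent to $\sum 1/q_n < \infty$, forcing $q_n \to +\infty$, so infinitely many $n_0$ satisfy $q_{n_0} > q_{n_0-1}$. Fix such an $n_0$, set $\delta_n = 0$ for $n < n_0$ and $\delta_{n_0} = q_{n_0-1} \in \{1, \ldots, q_{n_0}-1\}$ (the constraint at $n = n_0 - 1$ is then automatic since $q_{n_0-1} \equiv 0 \pmod{q_{n_0-1}}$), and extend forward by choosing $\delta_{n+1} \in \{0, \ldots, q_{n+1}-1\}$ with $\delta_{n+1} \equiv -\delta_n \pmod{q_n}$.

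The hard part is showing that the forward recursion can be carried out indefinitely: when $q_{n+1} < q_n$ and $\delta_n$ is too small, the smallest nonnegative representative $q_n - \delta_n$ of $-\delta_n \pmod{q_n}$ may fail to lie in $\{0, \ldots, q_{n+1}-1\}$. To bypass this obstacle I plan to invoke a compactness argument on the tree of nontrivial admissible partial sequences: for every $M \geq n_0$, choosing any nonzero $\delta_M \in \{1, \ldots, q_M-1\}$ and computing the earlier coordinates by the \emph{backward} relation $\delta_n = (-\delta_{n+1}) \bmod q_n$ (which is unconditionally well-defined and unique in $\{0, \ldots, q_n-1\}$) yields an admissible nontrivial finite sequence on $\{n_0, \ldots, M\}$; those whose backward-computed $\delta_{n_0}$ is a nonzero multiple of $q_{n_0-1}$ form an infinite, finitely-branching tree (branching at level $n$ is bounded by $\lceil q_{n+1}/q_n\rceil$), so König's lemma produces the required infinite branch, and the conclusion of Step~1 together with the "no carry" argument of Step~2 completes the proof.
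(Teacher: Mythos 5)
Your Step~2 breaks down, and the failure is structural rather than a matter of missing details. You insist that $\delta_n$ be the nonnegative representative in $\{0,\ldots,q_n-1\}$, and then the admissibility constraint $\delta_n+\delta_{n+1}\equiv 0\pmod{q_n}$ forces $\delta_n+\delta_{n+1}$ to be a \emph{positive} multiple of $q_n$ as soon as $\delta_n\neq 0$; hence $\delta_n+\delta_{n+1}\geq q_n$ for all $n$ past the first nonzero coordinate (and once some $\delta_n\neq 0$, all later coordinates are nonzero). This gives
$$\frac{\delta_n}{q_n}+\frac{\delta_{n+1}}{q_{n+1}}\ \geq\ \frac{q_n}{\max(q_n,q_{n+1})}\ =\ \min\left (1,\frac{q_n}{q_{n+1}}\right ),$$
so $\sum_n \delta_n/q_n$ diverges unless $q_{n+1}/q_n\to+\infty$ fast enough that $\sum q_n/q_{n+1}<\infty$. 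The lemma's hypothesis only asks $\sum 1/q_n<\infty$: for $q_n=n^2$ (say, for $n\geq 2$) the hypothesis holds, yet \emph{every} nonzero admissible $\delta$ satisfies $\sum\delta_n/q_n=+\infty$, so your ``no wrap-around'' set has measure $\prod(1-\delta_n/q_n)=0$. There is no ``suitable choice of $\delta_n=o(q_n)$''; the K\H{o}nig's-lemma construction produces a $\delta$, but never one for which your positive-measure estimate can hold. (Your Step~1 reduction via ergodicity and fiber cardinality is also heavier than needed and would require a measurability justification for $y\mapsto|\psi^{-1}(\{y\})|$, but that is a secondary issue.)

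The paper's proof shows exactly how to escape this: it perturbs by the \emph{signed} sequence $(-1)^n$ rather than by nonnegative residues. Setting $\theta(x)\coloneq(x_n+(-1)^n\bmod q_n)_{n\geq 0}$, the identity $(-1)^n+(-1)^{n+1}=0$ holds in $\Z$, not merely modulo $q_n$, so $\psi\circ\theta=\psi$ on the set $Y_1$ of points where no coordinate wraps around; since the no-wrap condition excludes only \emph{one} value of $x_n$ per coordinate, $\mu(Y_1)=\prod_{n\geq 0}(1-1/q_n)>0$ under exactly the stated hypothesis. Because $\theta$ is a measure-preserving translation of the group $\prod_n\Z/q_n\Z$ with $\theta(x)\neq x$ everywhere, injectivity of $\psi$ on a conull set $X_0$ would force $X_0$ and $\theta(X_0\cap Y_1)$ to be disjoint, contradicting $\mu(\theta(X_0\cap Y_1))=\mu(X_0\cap Y_1)=\mu(Y_1)>0$. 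Your argument could be repaired by replacing your nonnegative $\delta$ with such a signed perturbation summing to zero over consecutive coordinates, at which point it essentially becomes the paper's proof.
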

	
	\begin{proof}[Proof of Lemma~\ref{notinj}]
		Let $Y_1\coloneq\{x\in X\mid\forall n\geq 0, x_n\not =(q_n-1)\mathds{1}_{n\text{ is even}}\}$ and $Y_2\coloneq\{x\in X\mid\forall n\geq 0, x_n\not =(q_n-1)\mathds{1}_{n\text{ is odd}}\}$. It is straightforward to check that
		$$\mu(Y_1)=\mu(Y_2)=\prod_{n\geq 0}{\left (1-\frac{1}{q_n}\right )}>0.$$
		Let $\theta\colon X\to X$ defined by:
		$$\theta(x)\coloneq(x_n+(-1)^n\text{ mod }q_n)_{n\geq 0}.$$
		The map $\theta$ is in $\aut$ since $X$ can be seen as the compact group $\prod_{n\geq 0}{\Z/q_n\Z}$, with its Haar probability measure $\mu$ and $\theta$ as the translation by $((-1)^n)_{n\geq 0}$. Moreover, $\theta$ is a bijection from $Y_1$ to $Y_2$ and we have $\psi(\theta(x))=\psi(x)$ for all $x\in Y_1$.\par
		Let us prove by contradiction that $\psi$ is not injective almost everywhere. Assume that $\psi$ is injective on a measurable set $X_0$ of full measure. This hypothesis and the equality $\psi\circ\theta=\psi$ on $Y_1$ imply that the sets $X_0$ and $\theta(X_0\cap Y_1)$ are disjoint. This finally gives
		$$\mu((X_0)^c)\geq \mu\left (\theta(X_0\cap Y_1)\right )=\mu(X_0\cap Y_1)=\mu(Y_1)>0$$
		and we get a contradiction since $(X_0)^c$ has zero measure.
	\end{proof}
	
	Before the proof of Theorem~\ref{thC}, we use a lemma stated in~\cite{carderiBelinskayaTheoremOptimal2023} and which enables us to reduce to the case where the sublinear map $\varphi$ is non-decreasing (actually the statement is stronger but we only need the monotonicity).
	
	\begin{lemma}[Lemma 2.12 in~\cite{carderiBelinskayaTheoremOptimal2023}]\label{metriccompatible}
		Let $\varphi\colon\R_+\to\R_+$ be a sublinear function. Then there is a sublinear non-decreasing function $\tilde{\varphi}\colon\R_+\to\R_+$ such that $\varphi(t)\leq\tilde{\varphi}(t)$ for all $t$ large enough.
	\end{lemma}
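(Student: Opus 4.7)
The plan is to construct $\tilde{\varphi}$ as the running supremum of $\varphi$, and verify each property separately. More precisely, using sublinearity to pick a threshold $T_0\geq 1$ such that $\varphi(s)\leq s$ for every $s\geq T_0$, I would define
$$\tilde{\varphi}(t)\coloneq\sup_{T_0\leq s\leq t}{\varphi(s)}\text{ for }t\geq T_0,$$
and extend $\tilde{\varphi}$ arbitrarily (say by $0$) on $[0,T_0)$. Since $\varphi(s)\leq s\leq t$ for every $s\in [T_0,t]$, this supremum is finite and lies in $[0,t]$, so $\tilde{\varphi}\colon\R_+\to\R_+$ is well-defined. That $\tilde{\varphi}$ is non-decreasing on $[T_0,+\infty)$ is immediate from the fact that the family of intervals $[T_0,t]$ grows with $t$, and up to replacing $\tilde{\varphi}$ by $\max(\tilde{\varphi}(T_0),\tilde{\varphi})$ on $[0,T_0)$ we get monotonicity on all of $\R_+$. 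The inequality $\varphi(t)\leq\tilde{\varphi}(t)$ for $t\geq T_0$ follows from the definition by taking $s=t$ in the supremum.

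The only real content is to check sublinearity of $\tilde{\varphi}$. Fix $\varepsilon>0$; by sublinearity of $\varphi$, choose $T_{\varepsilon}\geq T_0$ such that $\varphi(s)\leq \varepsilon s$ for all $s\geq T_{\varepsilon}$. Setting $M_{\varepsilon}\coloneq\sup_{T_0\leq s\leq T_{\varepsilon}}{\varphi(s)}$ (finite since $\varphi(s)\leq s\leq T_{\varepsilon}$ on this interval), I split the supremum for $t\geq T_{\varepsilon}$ as
$$\tilde{\varphi}(t)=\max\left(\sup_{T_0\leq s\leq T_{\varepsilon}}{\varphi(s)},\ \sup_{T_{\varepsilon}\leq s\leq t}{\varphi(s)}\right)\leq\max(M_{\varepsilon},\varepsilon t),$$
so that $\tilde{\varphi}(t)/t\leq M_{\varepsilon}/t+\varepsilon$. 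Letting $t\to +\infty$ gives $\limsup_{t\to +\infty}{\tilde{\varphi}(t)/t}\leq\varepsilon$, and since $\varepsilon$ was arbitrary, $\tilde{\varphi}(t)/t\to 0$, i.e.~$\tilde{\varphi}$ is sublinear.

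The main (mild) obstacle is purely a measurability/local boundedness concern: the supremum $\sup_{T_0\leq s\leq t}{\varphi(s)}$ must be finite, which is why I restricted the supremum to $s\geq T_0$, where sublinearity already forces the bound $\varphi(s)\leq s$. Under the standing implicit assumption that $\varphi$ is measurable (the notion of $\varphi$-integrability requires it), this restriction is enough, and the proof needs no further regularity on $\varphi$. The conclusion of the lemma is achieved with the explicit majorant $\tilde{\varphi}$ constructed above.
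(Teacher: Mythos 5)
Your construction is correct: the running supremum $\tilde{\varphi}(t)=\sup_{T_0\leq s\leq t}\varphi(s)$ is finite thanks to the restriction $s\geq T_0$ (where sublinearity gives $\varphi(s)\leq s$), it is non-decreasing by construction, it dominates $\varphi$ eventually, and your $\varepsilon$-splitting argument correctly establishes its sublinearity. The paper does not reproduce a proof of this lemma (it is quoted from~\cite{carderiBelinskayaTheoremOptimal2023}, whose Lemma~2.12 is in fact stronger), but your argument is the standard one for the monotone-majorant statement as used here, so there is nothing to flag.
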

	
	\begin{proof}[Proof of Theorem~\ref{thC}]
		Let $\varphi\colon\R_+\to\R_+$ be a sublinear map. If $\tilde{\varphi}$ is another sublinear map satisfying $\varphi(t)=O(\tilde{\varphi}(t))$, then $\tilde{\varphi}$-integrability implies $\varphi$-integrability. Therefore, by Lemma~\ref{metriccompatible}, we assume without loss of generality that $\varphi$ is non-decreasing.\par
		Let $(q_n)_{n\geq 0}$ be a sequence of integers greater or equal to $2$ and $S$ the odometer on $X\coloneq\prod_{n\geq 0}{\{0,1,\ldots,q_n-1\}}$. The Halmos-von Neumann theorem implies that $S$ is conjugate to the odometer on $\prod_{n\geq 0}{\{0,1,\ldots,q_{i_{n-1}}\ldots q_{i_n-1}-1\}}$ for any increasing sequence $(i_n)_{n\geq 0}$ satisfying $i_0=0$. Therefore, we can assume without loss of generality that the integers $q_n$ are sufficiently large so that they satisfy the following properties:
		\begin{enumerate}
			\item $\prod_{n\geq 0}{\left (1-\frac{1}{q_n}\right )}$ converges\footref{footnote};
			\item the series $\displaystyle\sum{\frac{\varphi(2h_n)}{h_n}}$ converges.
		\end{enumerate}
		Let $T$ be the odomutant built from $S$ and the same families $\left (\sigma_{x_{n+1}}^{(n)}\right )_{0\leq x_{n+1}<q_{n+1}}$ as in Lemma~\ref{notinj}. By this lemma and Theorem~\ref{thcoalescent}, $S$ and $T$ are not conjugate. Since $S$ is conjugate to its inverse $S^{-1}$ (by the Halmos-von Neumann theorem), $S$ and $T$ are not flip-conjugate.\par
		It remains to quantify the cocycles, using Condition~\ref{oecond1} of Theorem~\ref{oeq}. Let $n\geq 0$ and $x_{n+1}\in\{0,\ldots,q_{n+1}-1\}$, and $i\in\{0,\ldots,q_n-1\}$ such that $x_{n+1}=i\text{ mod }q_n$. For every $x\in\{0,\ldots,q_n-2\}\setminus\{q_n-i-1\}$, we have 
		$$\left (\sigma^{(n)}_{x_{n+1}}\right )^{-1}(\sigma^{(n)}_{x_{n+1}}(x_n)+1)-x_n=\sigma^{(n)}_{x_{n+1}}(1+x_n)-\sigma^{(n)}_{x_{n+1}}(x_n)=1.$$
		For $x_n=q_n-1$, we consider the following bounds:
		$$\left |\left (\sigma^{(n)}_{x_{n+1}}\right )^{-1}(\sigma^{(n)}_{x_{n+1}}(x_n)+1)-x_n\right |\leq q_n$$
		$$\text{and }\left |\sigma^{(n)}_{x_{n+1}}(1+x_n)-\sigma^{(n)}_{x_{n+1}}(x_n)\right |\leq q_n.$$
		We finally get
		
		\begin{align*}
			&\displaystyle\sum_{n\geq 0}{\frac{1}{h_{n+2}}\sum_{\substack{0\leq x_n<q_{n},\\0\leq x_{n+1}<q_{n+1},\\
						\sigma^{(n)}_{x_{n+1}}(x_n)\not=q_{n}-1}}{\varphi\left (h_{n}\left (1+\left |\left (\sigma^{(n)}_{x_{n+1}}\right )^{-1}(\sigma^{(n)}_{x_{n+1}}(x_n)+1)-x_n\right |\right )\right )}}\\
			&=\displaystyle\sum_{n\geq 0}{\frac{1}{h_{n+2}}\sum_{\substack{0\leq x_n\leq q_{n}-2,\\0\leq x_{n+1}<q_{n+1}\\x_n\not=q_n-i-1}}{\varphi\left (h_{n}\left (1+\left |\left (\sigma^{(n)}_{x_{n+1}}\right )^{-1}(\sigma^{(n)}_{x_{n+1}}(x_n)+1)-x_n\right |\right )\right )}}\\
			&\leq\displaystyle\sum_{n\geq 0}{\frac{1}{h_{n+2}}\sum_{0\leq x_{n+1}<q_{n+1}}{\left ((q_n-2)\varphi(2h_{n})+\varphi(h_{n}(1+q_n))\right )}}\\
			&\leq\displaystyle\sum_{n\geq 0}{\frac{\varphi(2h_{n})}{h_{n}}}+\sum_{n\geq 0}{\frac{\varphi(2h_{n+1})}{h_{n+1}}}<+\infty
		\end{align*}
		and similarly
		$$\sum_{n\geq 0}{\frac{1}{h_{n+2}}\sum_{\substack{0\leq x_n\leq q_{n}-2,\\0\leq x_{n+1}<q_{n+1}}}{\varphi\left (h_{n}\left (1+\left |\sigma^{(n)}_{x_{n+1}}(1+x_n)-\sigma^{(n)}_{x_{n+1}}(x_n)\right |\right )\right )}}<\infty,$$
		so $S$ and $T$ are $\varphi$-integrably orbit equivalent.
	\end{proof}
	
	\begin{remark}
		As Theorem~\ref{thB}, the odomutants $T$ in Theorem~\ref{thA} and~\ref{thB} can be built as homeomorphisms, with a strong orbit equivalence between them and the odometers $S$. This is clear for Theorem~\ref{thA} since we may assume $\sigma^{(n,i)}(q_n-1)=q_n-1$ without loss of generality. For Theorem~\ref{thC}, we have to slightly modify the settings in Lemma~\ref{notinj} and its proof. For example, we can define $\sigma^{(n)}_{x_{n+1}}$ as the permutation mapping $0$ to $0$, $q_n-1$ to $q_n-1$ and $i\in\{1,\ldots,q_n-2\}$ to $1+(i-1+x_{n+1}\text{ mod }q_n-2)$. The set $Y_1$ becomes the set of $x\in X$ such that $x_n\not\in \{0,q_n-2,q_n-1\}$ if $n$ is even, $x_n\not\in\{0,1,q_n-1\}$ if $n$ is odd, and vice versa for $Y_2$. Then the ideas remain the same.
	\end{remark}
	
	\appendix
	\section{Some combinatorial properties}\label{seccombi}
	
	In this section, we fix an odomutant $T$ built with uniformly $\bm{c}$-multiple permutations, with $\bm{c}=(c_n,\tilde{q}_n)_{n\geq 0}$ and $q_n\coloneq c_n\tilde{q}_n$. We refer the reader to Definition~\ref{defodostack} for all the notations that we will use, although not defined in this section (for instance the partitions $\ptilde(\ell)$, the segments $I_j^{(\ell)}$, etc).\par
	In the proof of Theorem~\ref{thB}, for combinatorial purposes appearing in the computation of topological entropy, we need to understand the dynamics of this odomutant with respect to the associated partition $\ptilde(\ell)$ for some $\ell$. Indeed, as explained in Example~\ref{clopen}, computing the topological entropy with respect to a clopen partition partly consists in counting words given by the associated coding map. Recall that, given $c_n=1$ for every $n\geq 0$, and an odomutant built with $\bm{c}$-multiple permutations, $\ptilde(\ell)$ is the partition $\p(\ell)$ in $\ell$-cylinders of the space $X=\prod_{n\geq 0}{\{0,\ldots,q_n-1\}}$, as introduced in Example~\ref{clopen}.\par
	As we can notice in the proofs of the following results, it is more convenient for the computations that the permutations have common fixed points (here this is the point $0$), as in Section~\ref{secexthom} when one wants to extend an odomutant to a homeomorphism. With this assumption, at each step of the cutting-and-stacking construction, we can study the words produced by the points in the first level of the towers, and the recurrence relation describing such a word at step $n+1$ as a concatenation of words at step $n$ (Lemmas~\ref{lemmaword} and~\ref{recword}). Counting only these words gives a lower bound of the number of all the words produced by the coding map, thus providing a lower bound of the topological entropy with respect to the clopen partition that we consider. If this lower bound of $\htop(T)$ diverges to $+\infty$, then we have built an odomutant of infinite entropy. This is the strategy that we will apply in the proof of Theorem~\ref{thB} in the case $\alpha=+\infty$, using a lower bound on the number of words provided by Lemma~\ref{to-one} when the odomutant satisfies some assumptions. Note that this lemma is a reformulation of the main ideas of Boyle and Handelman for the proof of their similar statement~\cite[Section~3]{boyleEntropyOrbitEquivalence1994}. In the case $\alpha<+\infty$, we will need an exact formula on the entropy. To this purpose, Lemma~\ref{prelemmaentr} provides an upper bound of the number of all words produced by a coding map, and thus a finer upper bound of the entropy as we see in the proof of Theorem~\ref{thB}.
	
	\begin{lemma}\label{lemmaword}
		Let $\ell\geq 1$ and $T$ be an odomutant built with uniformly $\bm{c}$-multiple permutations fixing $0$.
		\begin{enumerate}
			\item For every $n\geq \ell-1$, for every $x_{n}\in \{0,1,\ldots,q_{n}-1\}$, the set
			$$\{[\ptilde(\ell)]_{h_{n}}(x)\mid x\in [0,\ldots,0,x_{n}]_{n+1}\}$$
			is a singleton, denoted by $\{W(\ptilde(\ell))^{(n)}_{x_{n}}\}$.
			\item The following holds in the case $n=\ell-1$: the preimages of the map
			$$x_{\ell-1}\in\{0,1,\ldots,q_{\ell-1}-1\}\mapsto W(\ptilde(\ell))^{(\ell-1)}_{x_{\ell-1}}$$
			are $I^{(\ell-1)}_{0},\ldots,I^{(\ell-1)}_{\tilde{q}_{\ell-1}}$. Therefore this map is $c_{\ell-1}$-to-$1$.
			\item For every $n< \ell-1$, for every $(x_{n},\ldots,x_{\ell-1})\in\prod_{n\leq i\leq \ell-1}{\{0,1,\ldots,q_{i}-1\}}$, the set
			$$\{[\ptilde(\ell)]_{h_{n}}(x)\mid x\in [0,\ldots,0,x_{n},\ldots,x_{\ell-1}]_{\ell}\}$$
			is a singleton, denoted by $\{W(\ptilde(\ell))^{(n)}_{x_{n},\ldots,x_{\ell-1}}\}$.
			\item For every $n<\ell-1$ and every $(x_{n},\ldots,x_{\ell-2})\in\prod_{n\leq i\leq \ell-2}{\{0,1,\ldots,q_{i}-1\}}$, the preimages of the map
			$$x_{\ell-1}\in\{0,1,\ldots,q_{\ell-1}-1\}\mapsto W(\ptilde(\ell))^{(n)}_{x_n,\ldots,x_{\ell-2},x_{\ell-1}}$$
			are $I^{(\ell-1)}_{0},\ldots,I^{(\ell-1)}_{\tilde{q}_{\ell-1}}$. Therefore this map is $c_{\ell-1}$-to-$1$.
		\end{enumerate}
	\end{lemma}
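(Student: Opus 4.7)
The plan is to exploit the identity $T^i x=\psi_n^{-1}(S^i\psi_n(x))$, which by Proposition~\ref{powerT} is valid for $0\leq i<h_n$ as soon as the $S$-orbit of $\psi_n(x)$ avoids $\xmax$ for $h_n$ consecutive steps. The fixed-point hypothesis $\tau^{(m)}_j(0)=0$ is the key input: it forces
$$\psi_n(0,\ldots,0,x_n,x_{n+1},\ldots)=(0,\ldots,0,\sigma^{(n)}_{x_{n+1}}(x_n),x_{n+1},x_{n+2},\ldots),$$
so the first $n$ coordinates of $\psi_n(x)$ vanish. This both justifies the hypothesis of Proposition~\ref{powerT} and ensures that $S^i\psi_n(x)$, for $0\leq i<h_n$, only modifies the first $n$ coordinates, leaving positions $\geq n$ untouched.

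First I would settle parts~1 and~3 in one sweep. Setting $z\coloneq S^i\psi_n(x)$ and running the recipe~\eqref{psiinv} for $\psi_n^{-1}$, one obtains $w_n=x_n$ and $w_{k-1}=(\sigma^{(k-1)}_{w_k})^{-1}(z_{k-1})$ for $1\leq k\leq n$; a straightforward downward induction then shows that each $w_k$ depends only on $x_n$ and on $i$, while $(T^ix)_k=x_k$ for $k\geq n$. Since $\ptilde(\ell)$ only reads the coordinates at positions $0,\ldots,\ell-2$ together with the segment $I^{(\ell-1)}_{j'}$ containing the $(\ell-1)$-th coordinate, the letter $\ptilde(\ell)(T^ix)$ is a function of $x_n$ alone when $n\geq\ell-1$, and of $(x_n,\ldots,x_{\ell-1})$ when $n<\ell-1$, together with $i$; in particular it never sees the unspecified coordinates. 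This gives the singleton assertions.

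For parts~2 and~4 I would then analyse the dependence of the word on $x_{\ell-1}$. The backwards induction first encounters $x_{\ell-1}$ at the step producing $w_{\ell-2}=(\sigma^{(\ell-2)}_{w_{\ell-1}})^{-1}(z_{\ell-2})$, where $w_{\ell-1}=x_{\ell-1}$ in both relevant cases. By Definition~\ref{defodostack} the permutation $\sigma^{(\ell-2)}_{x_{\ell-1}}$ equals $\tau^{(\ell-2)}_{j'}$ whenever $x_{\ell-1}\in I^{(\ell-1)}_{j'}$, so the prefix $(w_0,\ldots,w_{\ell-2})$, as well as the segment of $w_{\ell-1}$, depends on $x_{\ell-1}$ only through $j'$. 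This shows that the fibres of the word map contain the segments $I^{(\ell-1)}_{j'}$; injectivity on the quotient is read off at $i=0$, since the first letter is the atom $[0,\ldots,0,x_n,\ldots,x_{\ell-2},I^{(\ell-1)}_{j'}]_\ell$, which already separates distinct $j'$.

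The main obstacle is purely bookkeeping: one has to keep careful track of which coordinate of $\psi_n(x)$ equals what, and to remember that $\sigma^{(m)}_i$ depends on $i$ only through the segment $I^{(m+1)}_{j}$ containing $i$. No combinatorial ingredient is needed beyond the formula~\eqref{psiinv} and the fixed-point property $\tau^{(m)}_j(0)=0$.
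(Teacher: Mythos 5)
Your proposal is correct and follows essentially the same route as the paper: compute $T^ix=\psi_n^{-1}S^i\psi_n(x)$ for $0\leq i<h_n$, use the fixed-point property to see that the first $n$ coordinates of $\psi_n(x)$ vanish, run the backwards induction of~\eqref{psiinv} to see that the modified coordinates depend only on $x_n$ and $i$, and observe that the dependence on $x_{\ell-1}$ factors through the segment $I^{(\ell-1)}_{j'}$ containing it. You are in fact slightly more explicit than the paper on the converse direction (distinct segments yield distinct words, read off from the letter at $i=0$); the only cosmetic slip is that in case~4 the backwards induction stops at index $n<\ell-1$ and never "produces $w_{\ell-2}$" — there $x_{\ell-1}$ simply survives untouched and enters only via the last slot of the $\ptilde(\ell)$-atom, which makes that case even easier.
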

	
	\begin{remark}\label{remarklemmaword}\ 
		\begin{itemize}
			\item In the case of multiple permutations with $c_n=1$ for every $n\geq 0$ (so $\tilde{q}_n=q_n$), we get $\ptilde(\ell)=\p(\ell)$ and $I^{(\ell)}_j=\{j\}$ for every $\ell\geq 1$ and every $j\in\{0,\ldots,q_{\ell}-1\}$, so the map
			$$x_{\ell-1}\in\{0,1,\ldots,q_{\ell-1}-1\}\mapsto W(\p(\ell))^{(\ell-1)}_{x_{\ell-1}}$$
			is injective.
			\item The first point of the above lemma remains true if we replace $\ptilde(\ell)$ by any partition $\p$ refined by $\p(\ell)$. Indeed, the result is true for the partition $\p(\ell)$ (it suffices to consider $T$ as an odomutant built with multiple permutations and $c_n=1$). Moreover, the word $[\ptilde]_{h_n}(x)$ is obtained from the word $[\ptilde(\ell)]_{h_n}(x)$ by applying letters by letters the projection which maps $P\in\ptilde(\ell)$ to the atom of $\p$ containing $P$.
		\end{itemize}
	\end{remark}
	
	\begin{proof}[Proof of Lemma~\ref{lemmaword}]
		Let $x\in [0,\ldots,0,x_{n}]_{n+1}$. We can write $x=(\underbrace{0,\ldots,0}_{n\text{ times}},x_{n},x_{n+1},\ldots)$. All the permutations fix $0$, so for every $i\geq n-1$, we have
		$$\psi_i(x)=(\underbrace{0,\ldots,0}_{n\text{ times}},\sigma^{(n)}_{x_{n+1}}(x_{n}),\ldots,\sigma^{(i)}_{x_{i+1}}(x_{i}),x_{i+1},x_{i+2},\ldots).$$
		For $k\in \{0,1,\ldots ,h_n-1\}$, let $(k_{0},k_{1},\ldots ,k_{n-1})$ be the $n$-tuple in $\prod_{0\leq i\leq n-1}{\{0,1,\ldots,q_i-1\}}$ satisfying
		$$k=k_{0}+h_1k_{1}+\ldots+h_{n-1}k_{n-1}.$$
		We then have
		$$S^k\psi_i(x)=(k_{0},k_{1},\ldots,k_{n-1},\sigma^{(n)}_{x_{n+1}}(x_{n}),\ldots,\sigma^{(i)}_{x_{i+1}}(x_{i}),x_{i+1},x_{i+2},\ldots)$$
		so $T^{k}x$ is equal to $(y^{(k)}_0,\ldots,y^{(k)}_{n-1},x_{n},x_{n+1},\ldots)$ where $y^{(k)}_i$ defined by
		\begin{align*}
			&y^{(k)}_{n}=x_{n},\\
			\forall\ 0\leq i\leq n-1,\ &y^{(k)}_{i}=\left (\sigma^{(i)}_{y^{(k)}_{i+1}}\right )^{-1}(k_{i}).
		\end{align*}
		Denote by $j(k,\ell-1)$ the integer in $\{0,1,\ldots,\tilde{q}_{\ell-1}\}$ satisfying $y^{(k)}_{\ell-1}\in I^{(\ell-1)}_{j(k,\ell-1)}$. For every $k\in \{0,1,\ldots ,h_n-1\}$, $(y^{(k)}_0,\ldots,y^{(k)}_{n})$ does not depend on $x_{n+1},x_{n+2},\ldots$ and only depends on $x_{n}$, so does the $h_n$-tuple $([y_0^{(k)},\ldots,y_{\ell-2}^{(k)},I^{(\ell-1)}_{j(k,\ell-1)}]_{\ell})_{0\leq k\leq h_n-1}$ which is equal to $[\ptilde(\ell)]_{h_n}(x)$.\par
		In the case $n=\ell-1$, we have $y^{(k)}_{\ell-1}=x_{n}$, so the value of the word $W(\ptilde(\ell))^{(n)}_{x_n}$ only depends on the interval $I^{(n)}_{j}$ containing $x_n$.\par
		We similarly prove the last two items.
	\end{proof}
	
	\begin{lemma}\label{recword}
		Let $\ell\geq 1$ and $T$ be an odomutant built with uniformly $\bm{c}$-multiple permutations fixing $0$. Let us recall the words $W(\ptilde(\ell))^{(n)}_{x_{n}}$ defined in Lemma~\ref{lemmaword}. Then, for every $n\geq \ell-1$ and $x_{n}\in \{0,1,\ldots,q_{n}-1\}$, we have
		$$W(\ptilde(\ell))^{(n+1)}_{x_{n+1}}=W(\ptilde(\ell))^{(n)}_{0}\cdot W(\ptilde(\ell))^{(n)}_{\left (\sigma^{(n)}_{x_{n+1}}\right )^{-1}(1)}\cdot\ldots\cdot W(\ptilde(\ell))^{(n)}_{\left (\sigma^{(n)}_{x_{n+1}}\right )^{-1}(q_{n}-1)}.$$
	\end{lemma}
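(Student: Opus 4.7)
The plan is to unfold the definition of $W(\ptilde(\ell))^{(n+1)}_{x_{n+1}}$ by computing explicitly the first $n+1$ coordinates of $T^k x$ for a representative point $x \in [0,\ldots,0,x_{n+1}]_{n+2}$, and to recognise in these coordinates the data that defines the words $W(\ptilde(\ell))^{(n)}_{\cdot}$. First, take $x = (0,\ldots,0,x_{n+1},x_{n+2},\ldots)$ with the first $n+1$ entries equal to $0$, so that by Lemma~\ref{lemmaword} we have $W(\ptilde(\ell))^{(n+1)}_{x_{n+1}} = [\ptilde(\ell)]_{h_{n+1}}(x)$. Write any $k \in \{0,\ldots,h_{n+1}-1\}$ uniquely as $k = a h_n + b$ with $0 \le a < q_n$ and $0 \le b < h_n$, and further expand $b = b_0 + h_1 b_1 + \ldots + h_{n-1} b_{n-1}$ in mixed base.

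Next, compute $T^k x = \psi_{n+1}^{-1} S^k \psi_{n+1}(x)$ as provided by Proposition~\ref{powerT}. Since all permutations fix $0$, we get $\psi_{n+1}(x) = (0,\ldots,0,\sigma^{(n+1)}_{x_{n+2}}(x_{n+1}),x_{n+2},\ldots)$ with the first $n+1$ entries equal to $0$, hence $S^k\psi_{n+1}(x) = (b_0,\ldots,b_{n-1},a,\sigma^{(n+1)}_{x_{n+2}}(x_{n+1}),x_{n+2},\ldots)$. Applying $\psi_{n+1}^{-1}$ via formula~\eqref{psiinv}, the $(n+1)$-th coordinate of $T^k x$ is $x_{n+1}$, the $n$-th coordinate is $y_n \coloneq (\sigma^{(n)}_{x_{n+1}})^{-1}(a)$, and the coordinates of indices $i < n$ are obtained by the backwards recursion $y_i \coloneq (\sigma^{(i)}_{y_{i+1}})^{-1}(b_i)$.

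Now compare with the auxiliary point $x'' \coloneq (0,\ldots,0,y_n,0,0,\ldots) \in [0,\ldots,0,y_n]_{n+1}$. Carrying out the same computation as in the proof of Lemma~\ref{lemmaword} for $T^b x''$, the first $n+1$ coordinates of $T^b x''$ are precisely $(y_0,\ldots,y_{n-1},y_n)$, since the recursion determining them only depends on $y_n$ and on $b_0,\ldots,b_{n-1}$. Hence $T^k x$ and $T^b x''$ agree on their first $n+1$ coordinates; since $n+1 \ge \ell$, they lie in the same atom of $\ptilde(\ell)$. Therefore the $k$-th letter of $[\ptilde(\ell)]_{h_{n+1}}(x)$ equals the $b$-th letter of $[\ptilde(\ell)]_{h_n}(x'') = W(\ptilde(\ell))^{(n)}_{y_n}$, which yields the concatenation
\[
W(\ptilde(\ell))^{(n+1)}_{x_{n+1}} = W(\ptilde(\ell))^{(n)}_{(\sigma^{(n)}_{x_{n+1}})^{-1}(0)} \cdot W(\ptilde(\ell))^{(n)}_{(\sigma^{(n)}_{x_{n+1}})^{-1}(1)} \cdot \ldots \cdot W(\ptilde(\ell))^{(n)}_{(\sigma^{(n)}_{x_{n+1}})^{-1}(q_n-1)}.
\]
Since $\sigma^{(n)}_{x_{n+1}}$ fixes $0$, the first factor is $W(\ptilde(\ell))^{(n)}_0$, matching the stated formula.

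No genuine obstacle arises: the argument is essentially a careful bookkeeping of coordinates. The only care needed is to recognise that because $x$ has its first $n+1$ entries equal to $0$ and all permutations fix $0$, the cancellation $\sigma^{(j)}_0(0) = 0$ for $j \le n$ collapses the first $n+1$ coordinates of $\psi_{n+1}(x)$, so that the description of $T^k x$ reduces to precisely the recursion that underlies Lemma~\ref{lemmaword}.
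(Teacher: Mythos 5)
Your proposal is correct and follows essentially the same route as the paper: both decompose the time interval $\{0,\ldots,h_{n+1}-1\}$ into $q_n$ consecutive blocks of length $h_n$, identify the point reached at time $ah_n$ as lying in the cylinder $[0,\ldots,0,(\sigma^{(n)}_{x_{n+1}})^{-1}(a)]_{n+1}$, and invoke Lemma~\ref{lemmaword} to read off each block as $W(\ptilde(\ell))^{(n)}_{(\sigma^{(n)}_{x_{n+1}})^{-1}(a)}$. The only difference is presentational: you re-derive the coordinates of $T^kx$ explicitly, whereas the paper cites the already-computed action of $T^{ih_n}$ on cylinders.
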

	
	\begin{proof}[Proof of Lemma~\ref{recword}]
		Given $n\geq\ell-1$, note that we have
		$$\{0,1,\ldots,h_{n+1}-1\}=\bigsqcup_{0\leq i<q_{n}}{\Big (\{0,1,\ldots,h_{n}-1\}+h_{n}i\Big )}.$$
		Moreover if $i$ is in $\{0,1,\ldots,q_{n}-1\}$, if $x_{n+1}$ is in $\{0,1,\ldots,q_{n+1}-1\}$, we have
		$$T^{ih_{n}}([0,\ldots,0,0,x_{n+1}]_{n+2})=[0,\ldots,0,\left (\sigma^{(n)}_{x_{n+1}}\right )^{-1}(i),x_{n+1}]_{n+2}.$$
		This implies that, for a fixed $x\in [0,\ldots,0,0,x_{n+1}]_{n+2}$, the element $y_i\coloneq T^{ih_{n}}(x)$ is in $[0,\ldots,0,\left (\sigma^{(n)}_{x_{n+1}}\right )^{-1}(i)]_{n+1}$ and we get
		$$[\ptilde(\ell)]_{ih_{n},(i+1)h_{n}-1}(x)=[\ptilde(\ell)]_{h_{n}}(T^{ih_{n}}(x))=[\ptilde(\ell)]_{h_{n}}(y_i)=W(\ptilde(\ell))^{(n)}_{\left (\sigma^{(n)}_{x_{n+1}}\right )^{-1}(i)}$$
		by Lemma~\ref{lemmaword}. Finally the $h_{n+1}$-word on $x$ is the following concatenation:
		\begin{align*}
			W(\ptilde(\ell))^{(n)}_{x_{n+1}}&=[\ptilde(\ell)]_{h_{n+1}}(x)\\
			&=[\ptilde(\ell)]_{0,h_{n+1}-1}(x)\\
			&=[\ptilde(\ell)]_{0,h_{n}-1}(x)\cdot [\ptilde(\ell)]_{h_{n},2h_{n}-1}(x)\cdot\ldots\cdot [\ptilde(\ell)]_{h_{n}(q_{n+1}-1),h_{n+1}-1}(x)\displaystyle\\
			&=\displaystyle W(\ptilde(\ell))^{(n)}_{\left (\sigma^{(n)}_{x_{n+1}}\right )^{-1}(0)}\cdot W(\ptilde(\ell))^{(n)}_{\left (\sigma^{(n)}_{x_{n+1}}\right )^{-1}(1)}\cdot\ldots\cdot W(\ptilde(\ell))^{(n)}_{\left (\sigma^{(n)}_{x_{n+1}}\right )^{-1}(q_{n}-1)}\\
			&=\displaystyle W(\ptilde(\ell))^{(n)}_{0}\cdot W(\ptilde(\ell))^{(n)}_{\left (\sigma^{(n)}_{x_{n+1}}\right )^{-1}(1)}\cdot\ldots\cdot W(\ptilde(\ell))^{(n)}_{\left (\sigma^{(n)}_{x_{n+1}}\right )^{-1}(q_{n}-1)}
		\end{align*}
		and we are done.
	\end{proof}
	
	\begin{lemma}\label{to-one}
		Let $T$ be an odomutant built with uniformly $\bm{c}$-multiple permutations $\tau^{(n)}_j$ fixing $0$. Let $(\kappa_n)_{n\geq 1}$ be a sequence of positive integers and assume that for every $n\geq 0$, the map
		$$j\in\{0,1,\ldots,\tilde{q}_{n+1}-1\}\mapsto (\tau^{(n)}_{j}(I^{(n)}_{0}),\ldots,\tau^{(n)}_{j}(I^{(n)}_{\tilde{q}_n-1}))$$
		is $\kappa_{n+1}$-to-$1$ (in particular, $\kappa_{n+1}$ divides $\tilde{q}_{n+1}$)\footnote{Let us go back to the intuition behind uniformly multiple permutations. Since we consider the partitions $\ptilde(\ell)$ instead of $\p(\ell)$, we cannot distinguish between the copies of a subcolumn that we stack to form each tower. Therefore, given two permutations $\tau^{(n)}_j$ and $\tau^{(n)}_{j'}$, if $(\tau^{(n)}_{j}(I^{(n)}_{0}),\ldots,\tau^{(n)}_{j}(I^{(n)}_{\tilde{q}_n-1}))=(\tau^{(n)}_{j'}(I^{(n)}_{0}),\ldots,\tau^{(n)}_{j'}(I^{(n)}_{\tilde{q}_n-1}))$, then we cannot distinguish between the permutations that they encode, although these permutations are different.}. Then, for all $n\geq\ell\geq 1$, we have
		$$\left|\left\{W(\ptilde(\ell))^{(n)}_{x_n}\mid 0\leq x_n\leq q_n-1\right\}\right|\geq\frac{\tilde{q}_n}{\prod_{k=\ell}^{n}{\kappa_{k}^{h_n/h_{k}}}}.$$
	\end{lemma}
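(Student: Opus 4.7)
The plan is to prove by induction on $n\geq\ell$ the following strengthening of the stated inequality: for every $n\geq\ell$, each equivalence class of the relation ``$W(\ptilde(\ell))^{(n)}_y=W(\ptilde(\ell))^{(n)}_{y'}$'' on $\{0,\ldots,q_n-1\}$ is a union of at most $L_n:=\prod_{k=\ell}^n\kappa_k^{h_n/h_k}$ of the atoms $I^{(n)}_a$. The inequality of the lemma then follows, because each class has size at most $c_n L_n$, so by comparing to the total $q_n=c_n\tilde{q}_n$ one obtains at least $\tilde{q}_n/L_n$ distinct classes (the usual max-vs-average argument). Upgrading from a count of classes to a bound on the maximum class size is the key point: the inductive step genuinely requires control on the \emph{largest} equivalence class, and this is the main technical point of the argument.

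For the base case $n=\ell$, Lemma~\ref{lemmaword} says that the $\tilde{q}_{\ell-1}$ distinct $I^{(\ell-1)}$-classes give distinct words at level $\ell-1$. Applying Lemma~\ref{recword}, two indices $j,j'\in\{0,\ldots,\tilde{q}_\ell-1\}$ produce the same level-$\ell$ word if and only if $(\tau^{(\ell-1)}_j)^{-1}(i)$ and $(\tau^{(\ell-1)}_{j'})^{-1}(i)$ lie in the same $I^{(\ell-1)}$-class for every $i\in\{0,\ldots,q_{\ell-1}-1\}$, equivalently $(\tau^{(\ell-1)}_j(I^{(\ell-1)}_a))_a=(\tau^{(\ell-1)}_{j'}(I^{(\ell-1)}_a))_a$. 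The $\kappa_\ell$-to-$1$ hypothesis then says each equivalence class at level $\ell$ contains exactly $\kappa_\ell=L_\ell$ of the $I^{(\ell)}$-atoms.

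For the inductive step, assume the strengthened claim at level $n$ and write the equivalence classes as $D^{(n)}_1,\ldots,D^{(n)}_{A_n}$, with $|D^{(n)}_m|=r_m c_n$ and $r_m\leq L_n$. By Lemma~\ref{recword}, two indices $j,j'$ produce the same level-$(n+1)$ word if and only if $\tau^{(n)}_j(D^{(n)}_m)=\tau^{(n)}_{j'}(D^{(n)}_m)$ for every $m$. The number of $j$'s sharing a fixed such ``word-class tuple'' $(\tau(D^{(n)}_m))_m$ is at most $\kappa_{n+1}$ times the number of refinements of that tuple to an $I$-tuple $(\tau(I^{(n)}_a))_a$, and this latter number equals $\prod_m\binom{r_m c_n}{c_n,\ldots,c_n}$. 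Using the crude bound $\binom{mr}{r,\ldots,r}\leq m^{mr}$ (obtained by dropping the size constraint when distributing $mr$ labelled balls into $m$ labelled boxes), this product is at most $\prod_m r_m^{r_m c_n}\leq L_n^{\sum_m r_m c_n}=L_n^{q_n}$. Therefore each equivalence class at level $n+1$ contains at most $\kappa_{n+1} L_n^{q_n}=L_{n+1}$ of the $I^{(n+1)}$-atoms, which is the strengthened claim at level $n+1$.
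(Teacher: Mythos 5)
Your proof is correct, but it takes a genuinely different route from the paper's. The paper fixes the word length $h_n$ and controls the loss of information when coarsening the partition: it shows that each projection $\pi_{k+1,k}$ from $\ptilde(k+1)$-words to $\ptilde(k)$-words is at most $\kappa_k^{h_n/h_k}$-to-one, by cutting the length-$h_n$ word into $h_n/h_k$ groups of $q_{k-1}$ subwords of length $h_{k-1}$ and showing that each group admits at most $\kappa_k$ preimages; composing these projections from $k=\ell$ to $k=n$ and starting from the $\tilde{q}_n$ distinct $\ptilde(n+1)$-words gives the bound. You instead fix the partition $\ptilde(\ell)$ and induct on $n$, tracking the maximal number of $I^{(n)}$-atoms in an equivalence class of the relation ``same word''. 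Your key step --- bounding the number of atom-level tuples $\left (\tau(I^{(n)}_a)\right )_a$ refining a given class-level tuple $\left (\tau(D^{(n)}_m)\right )_m$ by $\prod_m\binom{r_mc_n}{c_n,\ldots,c_n}\leq L_n^{q_n}$ --- replaces the paper's dynamical decomposition (its first Claim, about the points $T^{ih_k+jh_{k-1}}x$) by a purely combinatorial count of set-partition refinements, and the identity $L_{n+1}=\kappa_{n+1}L_n^{q_n}$ makes the two estimates telescope to exactly the same product $\prod_{k=\ell}^{n}\kappa_k^{h_n/h_k}$. Your argument is somewhat more elementary and self-contained, using only Lemma~\ref{recword}, the $\kappa_{n+1}$-to-one hypothesis, and the fact that equality of concatenations of equal-length blocks forces blockwise equality; the price is the extra bookkeeping needed to propagate a bound on the \emph{largest} class rather than merely on the number of classes, which you correctly identify as the crux. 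The paper's chain of projections is closer in spirit to the telescoping of Bratteli diagrams that the construction is modelled on, but both approaches yield the same final estimate.
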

	
	\begin{remark}\label{remarklemmato-one}
		In the case of uniform permutations with pairwise different permutations, the lemma implies that
		$$\left|\left\{W(\ptilde(\ell))^{(n)}_{x_n}\mid 0\leq x_n\leq q_n-1\right\}\right|=q_n$$
		so $W(\ptilde(\ell))^{(n)}_{x_n}$ is an injective function of $x_n$. This could also be deduced from Lemma~\ref{recword}. Therefore, odomutants can have more words in their language than odometer, and then their entropy can be positive.
	\end{remark}
	
	\begin{proof}[Proof of Lemma~\ref{to-one}]
		Let $(\ptilde(\ell))_{\ell\geq 1}$ be the sequence of partitions associated to the construction of this odomutant with uniformly $\bm{c}$-multiple permutations. Given $n\geq\ell\geq 1$, we consider the projection $\pi_{n+1,\ell}\colon\ptilde(n+1)\to\ptilde(\ell)$ which maps $P\in\ptilde(n+1)$ to the atom of $\ptilde(\ell)$ containing $P$. This projection induces a map on the set of words with letters in $\ptilde(n+1)$, it consists in projecting each entry on $\ptilde(\ell)$.
		\begin{claim}
			Let $x\in [0,\ldots,0]_n$ and $k\in\{1,\ldots,n\}$. For every $i\in\{0,1,\ldots,\frac{h_n}{h_k}-1\}$ and every $j\in\{0,1,\ldots,q_{k-1}-1\}$, the point $x^{(i,j)}\coloneq T^{ih_k+jh_{k-1}}x$ is in $[0,\ldots,0]_{k-1}$. Moreover, $(x^{(i,j)})_k$ does not depend on $j$ and we have
			$$(x^{(i,j)})_{k}=\sigma_{(x^{(i,j)})_{k+1}}^{(k)}(i_{k})\text{ and }(x^{(i,j)})_{k-1}=\sigma_{(x^{(i,j)})_{k}}^{(k-1)}(j)$$
			where $i_k\coloneq\left\lfloor\frac{i}{q_k}\right\rfloor$.
		\end{claim}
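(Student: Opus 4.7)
The strategy is to reduce the computation to the odometer via Proposition~\ref{powerT}, writing
$T^{ih_k+jh_{k-1}}x = \psi_N^{-1}\circ S^{ih_k+jh_{k-1}}\circ \psi_N(x)$
for every sufficiently large $N$ (any $N\ge n$ works, the hypothesis of Proposition~\ref{powerT} being satisfied since $m:=ih_k+jh_{k-1}<h_n$ and $\psi(x)$ cannot reach $\xmax$ within so few iterates of $S$). The bound $m<h_n$ follows at once from $i\le h_n/h_k-1$ combined with $jh_{k-1}<h_k$, and is the pivotal quantitative input: it guarantees that the subsequent odometer addition alters only coordinates of index strictly less than $n$.

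First I would compute $\psi_N(x)$. Since $x_\ell=0$ for all $\ell\le n-1$ and every permutation $\sigma^{(\ell)}_\cdot$ fixes $0$, a trivial induction shows that the first $n$ coordinates of $\psi_N(x)$ vanish, while the remaining coordinates are given by the defining formula of $\psi_N$. Then I would compute $S^m\psi_N(x)$: it simply replaces the initial block of zeros by the mixed-base digits of $m$, namely $0$ at positions $0,\ldots,k-2$, the value $j$ at position $k-1$, and the mixed-base expansion of $i$ in the bases $q_k,\ldots,q_{n-1}$ at positions $k,\ldots,n-1$.

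Finally I would apply $\psi_N^{-1}$ via the backward induction~\eqref{psiinv}. For $\ell\ge n$ the induction returns $x_\ell$ unchanged, since positions $\ge n$ of the intermediate word $S^m\psi_N(x)$ coincide with those of $\psi_N(x)$. For $\ell\le k-2$ the input digit is $0$ and each $\sigma^{(\ell)}_\cdot$ fixes $0$, so the induction outputs $0$, proving that $x^{(i,j)}\in[0,\ldots,0]_{k-1}$. At positions $k$ and $k-1$ the induction reads off the corresponding digits of $m$ (the one abbreviated $i_k$ in the statement, and $j$ respectively), twisted by the already-computed coordinate one slot higher, and this is exactly what yields the two displayed identities. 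The independence of $(x^{(i,j)})_k$ from $j$ is then automatic, because $jh_{k-1}<h_k$ produces no carry to any position $\ge k$, so the backward induction at positions $\ge k$ depends only on $i$ and on the tail $x_n,x_{n+1},\ldots$.

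The only delicate point is the bookkeeping: in the odometer addition one must verify that all carries stay strictly below position $n$, and in the backward induction one must check that $\psi_N^{-1}$ is applied from an index high enough that the tail is left alone. Both reduce to the single inequality $(i+1)h_k\le h_n$; past that, the argument is purely algebraic and I anticipate no conceptual difficulty.
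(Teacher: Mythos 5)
Your proposal is correct and follows essentially the same route as the paper's own proof: compute $\psi_N(x)$ using that all permutations fix $0$, perform the odometer addition of $m=ih_k+jh_{k-1}$ in mixed radix on the leading block of zeros, and read off the coordinates of $\psi_N^{-1}S^m\psi_N(x)$ via the backward induction~\eqref{psiinv}. The only (welcome) addition is that you explicitly justify the identity $T^m x=\psi_N^{-1}S^m\psi_N(x)$ through Proposition~\ref{powerT} and the bound $m<h_n$, a step the paper leaves implicit.
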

		\begin{cproof}
			Let us write $ih_k=i_kh_k+i_{k+1}h_{k+1}+\ldots +i_{n-1}h_{n-1}$ with $i_m\in\{0,\ldots,q_{m}-1\}$ for every $m\in\{k,\ldots,n-1\}$. Given $j\geq n$, we have
			$$\psi_j(x)=(\underbrace{0,\ldots,0}_{n\text{ times}},\sigma^{(n)}_{x_{n+1}}(x_n),\ldots)$$
			and
			$$S^{ih_k+jh_{k-1}}\psi_j(x)=(\underbrace{0,\ldots,0}_{k-1\text{ times}},j,i_k,\ldots,i_{n-1},\sigma^{(n)}_{x_{n+1}}(x_n),\ldots).$$
			Hence we get $x^{(i,j)}=\psi_j^{-1}S^{ih_k+jh_{k-1}}\psi_j(x)$ for every $j\geq n$, which implies
			\begin{align*}
				&(x^{(i,j)})_n=x_n,\\
				&(x^{(i,j)})_{n-1}=\sigma_{x_n}^{(n-1)}(i_{n-1}),\\
				&(x^{(i,j)})_{n-2}=\sigma_{(x^{(i,j)})_{n-1}}^{(n-2)}(i_{n-2}),\\
				&\vdots\\
				&(x^{(i,j)})_{k}=\sigma_{(x^{(i,j)})_{k+1}}^{(k)}(i_{k}),\\
				&(x^{(i,j)})_{k-1}=\sigma_{(x^{(i,j)})_{k}}^{(k-1)}(j),
			\end{align*}
			so we are done.
		\end{cproof}
		\begin{claim}
			With the hypotheses of the lemma, for every $k\in\{\ell,\ldots,n\}$, the map
			$$\pi_{k+1,k}\colon\left\{W(\ptilde(k+1))^{(n)}_{x_n}\mid 0\leq x_n\leq q_n-1\right\}\to\left\{W(\ptilde(k))^{(n)}_{x_n}\mid 0\leq x_n\leq q_n-1\right\}$$
			is at most $\displaystyle\kappa_{k}^{h_n/h_{k}}$-to-$1$.
		\end{claim}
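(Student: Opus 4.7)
The plan is to construct, for each $W_k := W(\ptilde(k))^{(n)}_{x_n}$ in the image of $\pi_{k+1,k}$, an injection from the fiber $\pi_{k+1,k}^{-1}(\{W_k\})$ into a product space of cardinality $\kappa_k^{h_n/h_k}$. The key tool is the block decomposition used in the first claim: positions $m \in \{0,\ldots,h_n-1\}$ split into $h_n/h_k$ blocks $B_i = \{ih_k,\ldots,(i+1)h_k-1\}$, and within each $B_i$ the orbit of $x \in [0,\ldots,0,x_n]_{n+1}$ has a rigid structure controlled by a single index $j_i \in \{0,\ldots,\tilde{q}_k-1\}$.

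First, the plan is to show that $(T^m x)_k$ is constant over $m \in B_i$, equal to some $y_i \in \{0,\ldots,q_k-1\}$; this follows from the computation of $T^m x = \psi_N^{-1}S^m \psi_N(x)$ in the proof of Lemma~\ref{lemmaword}, since for $x \in [0,\ldots,0,x_n]_{n+1}$ the coordinate $(T^m x)_k$ depends only on $x_n$ and on the digits $m_k,\ldots,m_{n-1}$, which are fixed inside $B_i$. Letting $j_i$ be the unique index with $y_i \in I^{(k)}_{j_i}$, the uniformly $\bm{c}$-multiple assumption $\sigma^{(k-1)}_{y_i} = \tau^{(k-1)}_{j_i}$ then yields $(T^m x)_{k-1} = (\tau^{(k-1)}_{j_i})^{-1}(m_{k-1})$ for $m \in B_i$.

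Second, the plan is to define $\Phi\colon \pi_{k+1,k}^{-1}(\{W_k\}) \to \{0,\ldots,\tilde{q}_k-1\}^{h_n/h_k}$ by reading off $j_i$ from $W_{k+1}$ as the $I^{(k)}$-block of the $k$-th coordinate appearing in any letter of $B_i$, and to check that $\Phi$ is injective. Indeed, the letter of $W_{k+1}$ at position $m \in B_i$ is the cylinder $[y_0^{(m)},\ldots,y_{k-1}^{(m)},I^{(k)}_{j_i}]_{k+1}$, and is determined by $W_k$ together with $j_i$: its first $k-1$ exact entries $y_0^{(m)},\ldots,y_{k-2}^{(m)}$ coincide with those of the $\ptilde(k)$-atom of $T^m x$ and are thus read from $W_k$, the entry $y_{k-1}^{(m)} = (\tau^{(k-1)}_{j_i})^{-1}(m_{k-1})$ is determined by $j_i$ and the position, and the $I^{(k)}$-block is $I^{(k)}_{j_i}$.

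Third, the plan is to show that the image of $\Phi$ has size at most $\kappa_k^{h_n/h_k}$ by extracting from $W_k$ alone, for each block $B_i$, the tuple $T_i := (\tau^{(k-1)}_{j_i}(I^{(k-1)}_0),\ldots,\tau^{(k-1)}_{j_i}(I^{(k-1)}_{\tilde{q}_{k-1}-1}))$. As $m_{k-1}$ runs over $\{0,\ldots,q_{k-1}-1\}$, the $I^{(k-1)}$-block containing $(\tau^{(k-1)}_{j_i})^{-1}(m_{k-1})$ is recorded in the corresponding letter of $W_k$, and tabulating this across $m_{k-1}$ recovers the labeled partition $\{\tau^{(k-1)}_{j_i}(I^{(k-1)}_{j'})\}_{j'}$, hence $T_i$. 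Applying the hypothesis of Lemma~\ref{to-one} at level $k-1$, only $\kappa_k$ values of $j_i$ produce the tuple $T_i$, so the image of $\Phi$ has cardinality at most $\kappa_k^{h_n/h_k}$, which combined with injectivity of $\Phi$ yields the claim. The main delicacy will be to separate cleanly the coordinate information carried by $\ptilde(k)$ versus $\ptilde(k+1)$ and to track the index shifts between \emph{level $k-1$} (where the hypothesis is invoked), \emph{level $k$} (where the block structure lives), and \emph{level $k+1$} (the target partition).
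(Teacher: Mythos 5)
Your proposal is correct and follows essentially the same route as the paper: the same decomposition into $h_n/h_k$ blocks of length $h_k$, the same identification of a single per-block datum (the $I^{(k)}$-block index $j_i$ of the constant $k$-th coordinate, the paper's $X_k$) that together with $W_k$ determines $W_{k+1}$, the same recovery of the tuple $\left(\tau^{(k-1)}_{j_i}(I^{(k-1)}_0),\ldots,\tau^{(k-1)}_{j_i}(I^{(k-1)}_{\tilde{q}_{k-1}-1})\right)$ from $W_k$, and the same appeal to the $\kappa_k$-to-one hypothesis, multiplied over the blocks. The only cosmetic differences are that you package the bound as an injection into a product set rather than as a per-block at-most-$\kappa_k$-to-one map, and that you read the block labels directly off the $\ptilde(k)$-letters rather than via the preimage statement of Lemma~\ref{lemmaword}.
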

		\begin{cproof}
			Let $x\in [0,\ldots,0]_n$. We have $[\ptilde(k+1)]_{h_n}(x)=W(\ptilde(k+1))^{(n)}_{x_n}$ and $[\ptilde(k)]_{h_n}(x)=W(\ptilde(k))^{(n)}_{x_n}$. We first write these words as a concatenation of words of size $h_{k-1}$, namely the words $[\ptilde(k+1)]_{h_{k-1}}(T^{mh_{k-1}}x)$ or $[\ptilde(k)]_{h_{k-1}}(T^{mh_{k-1}}x)$ for $m\in\{0,1,\ldots,\frac{h_n}{h_{k-1}}-1\}$. Given $m\in\{0,1,2,\ldots,\frac{h_n}{h_{k-1}}-1\}$, the point $T^{mh_{k-1}}(x)$ is in $[0,\ldots,0]_{k-1}$ by the last claim, so we have
			$$[\ptilde(k+1)]_{h_{k-1}}(T^{mh_{k-1}}(x))=W(\ptilde(k+1))^{(k-1)}_{(T^{mh_{k-1}}(x))_{k-1},(T^{mh_{k-1}}(x))_{k}}$$
			and
			$$[\ptilde(k)]_{h_{k-1}}(T^{mh_{k-1}}(x))=W(\ptilde(k))^{(k-1)}_{(T^{mh_{k-1}}(x))_{k-1}}.$$
			Secondly, we gather these words of length $h_{k-1}$ in groups $M(k+1)_{x_n,i}$ or $M(k)_{x_n,i}$ of $q_{k-1}$ words:
			\begin{align*}
				M(k+1)_{x_n,i}\coloneq&\ W(\ptilde(k+1))^{(k-1)}_{(x^{(i,0)})_{k-1},(x^{(i,0)})_{k}}\cdot W(\ptilde(k+1))^{(k-1)}_{(x^{(i,1)})_{k-1},(x^{(i,1)})_{k}}\\
				&\cdot\ldots\cdot W(\ptilde(k+1))^{(k-1)}_{(x^{(i,q_{k-1}-1)})_{k-1},(x^{(i,q_{k-1}-1)})_{k}}
			\end{align*}
			and
			$$M(k)_{x_n,i}\coloneq W(\ptilde(k))^{(k-1)}_{(x^{(i,0)})_{k-1}}\cdot W(\ptilde(k))^{(k-1)}_{(x^{(i,1)})_{k-1}}\cdot\ldots\cdot W(\ptilde(k))^{(k-1)}_{(x^{(i,q_{k-1}-1)})_{k-1}}$$
			for all $i\in\{0,1,\ldots,\frac{h_n}{h_{k}}-1\}$
			in such a way that we have
			$$W(\ptilde(k+1))^{(n)}_{x_n}=M(k+1)_{x_n,0}\cdot M(k+1)_{x_n,1}\cdot\ldots\cdot M(k+1)_{x_n,\frac{h_n}{h_k}-1}$$
			and
			$$W(\ptilde(k))^{(n)}_{x_n}=M(k)_{x_n,0}\cdot M(k)_{x_n,1}\cdot\ldots\cdot M(k)_{x_n,\frac{h_n}{h_k}-1}.$$
			To prove the lemma, it now remains to prove that, for every $i\in\{0,1,\ldots,\frac{h_n}{h_k}-1\}$, the map
			$$\pi_{k+1,k}\colon\left\{M(k+1)_{x_n,i}\mid 0\leq x_n\leq q_n-1\right\}\to\left\{M(k)_{x_n,i}\mid 0\leq x_n\leq q_n-1\right\}$$
			is at most $\kappa_{k}$-to-$1$. Let us fix a word $M(k)_{x_n,i}$ with $i\in\{0,1,\ldots,\frac{h_n}{h_k}-1\}$ and $x_n\in\{0,1,\ldots,q_n-1\}$. We write $i_k=\lfloor i/q_k\rfloor$. By the last claim, the quantities $(x^{(i,0)})_k,\ldots,(x^{(i,q_{k-1}-1)})_k$ are equal and their common value is denoted by $X_k$, and we have
			\begin{equation}\label{eqlemmacombi}
				(x^{(iq_{k-1},j)})_{k-1}=\left (\sigma^{(k-1)}_{X_{k}}\right )^{-1}(j)
			\end{equation}
			for every $j\in\{0,1,\ldots,q_{k-1}\}$. This first implies that
			$$(x^{(i,0)})_{k-1},(x^{(i,1)})_{k-1},\ldots,(x^{(i,q_{k-1}-1)})_{k-1}$$
			are $q_{k-1}$ pairwise different elements of $\{0,1,\ldots,q_{k-1}-1\}$. Since we know each subword $W(\ptilde(k))^{(k-1)}_{(x^{(i,j)})_{k-1}}$ of $M(k)_{x_n,i}$, the third item of Lemma~\ref{lemmaword} implies that we completely know the sets $I^{(k-1)}_0,\ldots,I^{(k-1)}_{\tilde{q}_{k-1}-1}$, so
			$$\left (\sigma^{(k-1)}_{X_{k}}(I^{(k-1)}_0),\ldots,\sigma^{(k-1)}_{X_{k}}(I^{(k-1)}_{\tilde{q}_{k-1}-1})\right )$$
			is also completely determined. By assumptions, $X_k$ is in the disjoint union of $\kappa_k$ sets of the form $I^{(k)}_j$.\par
			To conclude, we have proved that, if we have $\pi_{k+1,k}(M(k+1)_{y_n,i})=M(k)_{x_n,i}$ for some $y_n\in\{0,1,\ldots,q_n-1\}$, then $M(k+1)_{y_n,i}$ is of the form
			$$W(\ptilde(k+1))^{(k-1)}_{(x^{(i,0)})_{k-1},X_{k}}\cdot W(\ptilde(k+1))^{(k-1)}_{(x^{(i,1)})_{k-1},X_{k}}\cdot\ldots\cdot W(\ptilde(k+1))^{(k-1)}_{(x^{(i,q_{k-1}-1)})_{k-1},X_{k}}$$
			with $X_k$ in the disjoint union of $\kappa_k$ sets of the form $I^{(k)}_j$, and which completely determines $(x^{(i,0)})_{k-1},(x^{(i,1)})_{k-1},\ldots,(x^{(i,q_{k-1}-1)})_{k-1}$ by Equality~\eqref{eqlemmacombi}. But since two elements $X_k$ and $X'_k$ in the same $I^{(k)}_j$ provide the same word $M(k+1)_{y_n,i}$ (by the last item of Lemma~\ref{lemmaword}), we get that there are at most $\kappa_k$ possible values for the word $M(k+1)_{y_n,i}$.    
		\end{cproof}
		By the last claim, the map
		$$\pi_{n+1,\ell}\colon\left\{W(\ptilde(n+1))^{(n)}_{x_n}\mid 0\leq x_n\leq q_n-1\right\}\to\left\{W(\ptilde(\ell))^{(n)}_{x_n}\mid 0\leq x_n\leq q_n-1\right\}$$
		is at most $\displaystyle\left (\prod_{k=\ell}^{n}{\kappa_{k}^{h_n/h_{k}}}\right )$-to-$1$, so we have
		$$\left|\left\{W(\ptilde(\ell))^{(n)}_{x_n}\mid 0\leq x_n\leq q_n-1\right\}\right|\geq\frac{\left|\left\{W(\ptilde(n+1))^{(n)}_{x_n}\mid 0\leq x_n\leq q_n-1\right\}\right|}{\prod_{k=\ell}^{n}{\kappa_{k}^{h_n/h_{k}}}}$$
		and the result follows from the second item of Lemma~\ref{lemmaword}.
	\end{proof}
	
	\begin{lemma}\label{prelemmaentr}
		Let $(q_n)_{n\geq 0}$ be a sequence of integers greater than or equal to $2$ and satisfying $q_{n+1}\leq (q_n-2)!$. Then there exist permutations $\sigma_{x_{n+1}}^{(n)}$, for $n\geq 0$ and $x_{n+1}\in\{0,\ldots,q_{n+1}-1\}$, satisfying the following properties:
		\begin{enumerate}
			\item for every $n\geq 0$, the maps $\sigma_{0}^{(n)},\sigma_{1}^{(n)}, \ldots, \sigma_{q_{n+1}-1}^{(n)}$ are pairwise different permutations of the set $\{0,\ldots,q_n-1\}$, fixing $0$ and $q_n-1$;
			\item the following bounds hold for every $n\geq\ell\geq 1$:
			$$q_n\leq N(\p(\ell)_0^{h_n-1})\leq h_{n-1}q_n q_{n-1}^2 2^{q_{n-1}}$$        
		\end{enumerate}
	\end{lemma}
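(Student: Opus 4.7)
The plan is to construct the permutations explicitly and then establish the two items separately. For item 1, I observe that the permutations of $\{0,\ldots,q_n-1\}$ fixing both $0$ and $q_n-1$ are in bijection with the permutations of $\{1,\ldots,q_n-2\}$, hence number $(q_n-2)!\geq q_{n+1}$ by hypothesis; one can therefore pick $q_{n+1}$ pairwise distinct such permutations at each level $n\geq 0$.

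For the lower bound $N(\p(\ell)_0^{h_n-1})\geq q_n$, I show that the $q_n$ words $W(\p(\ell))^{(n)}_{x_n}$ for $x_n\in\{0,\ldots,q_n-1\}$ — which by Lemma~\ref{lemmaword} are genuine $h_n$-words produced by $T$ (they arise from points in $[0,\ldots,0,x_n]_{n+1}$) — are pairwise distinct, by induction on $n\geq\ell-1$. Since we are in the single-permutation setting ($c_n=1$, so $\ptilde(\ell)=\p(\ell)$ and $I^{(\ell-1)}_j=\{j\}$), item 2 of Lemma~\ref{lemmaword} gives the base case. For the inductive step, Lemma~\ref{recword} expresses $W(\p(\ell))^{(n)}_{x_n}$ as the concatenation of the $q_{n-1}$ blocks $W(\p(\ell))^{(n-1)}_{(\sigma^{(n-1)}_{x_n})^{-1}(j)}$ of common length $h_{n-1}$ in the order dictated by $\sigma^{(n-1)}_{x_n}$; by the induction hypothesis the blocks are pairwise distinct, so the ordering can be recovered from the concatenation, and since the $\sigma^{(n-1)}_{x_n}$ are pairwise distinct by construction, distinct $x_n$ yield distinct concatenations.

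For the upper bound, I write every starting point as $x=T^ky$ with $y\in[0,\ldots,0,x_n]_{n+1}$ at the bottom of its $h_n$-tower and $k=ah_{n-1}+b$, where $a\in\{0,\ldots,q_{n-1}-1\}$ and $b\in\{0,\ldots,h_{n-1}-1\}$. Then $[\p(\ell)]_{h_n}(x)$ is the length-$(h_n-k)$ suffix of $W(\p(\ell))^{(n)}_{x_n}$ concatenated with the length-$k$ prefix of $W(\p(\ell))^{(n)}_{x_n'}$, where $x_n'$ indexes the next $h_n$-tower. I parameterize the word by $(b,a,x_n,\text{visible next-tower data})$: $h_{n-1}$ choices for $b$, $q_{n-1}$ for $a$, $q_n$ for $x_n$. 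By Lemma~\ref{recword} the length-$k$ prefix reveals only the first $a+1$ block labels of the next tower's permutation, and I bound the number of such partial data crudely by $q_{n-1}\cdot 2^{q_{n-1}}$ (for instance by encoding the support subset of labels in $\{0,\ldots,q_{n-1}-1\}$ together with an extra positional marker). Multiplying these four factors gives the desired bound $h_{n-1}q_n q_{n-1}^2 2^{q_{n-1}}$.

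The main obstacle is the upper bound: the naive parameterization $(a,b,x_n,x_n')$ gives only $h_n\cdot q_n^2$, which contributes a factor $2\log q_n$ to the logarithm and leads to an entropy bound of $2\alpha$ rather than $\alpha$ when used in Lemma~\ref{entr} (indeed $q_n$ can be as large as $(q_{n-1}-2)!$). The refinement relies on exploiting that $k<h_n$ always forces only a partial prefix of the next tower to be visible, so the "next-tower" factor can be replaced by a combinatorial quantity whose logarithm is $o(h_n)$. The specific factor $2^{q_{n-1}}$ is convenient because $q_{n-1}\log 2/h_n=\log 2/h_{n-2}\to 0$, which is amply sufficient to recover $\htop(T,\p(\ell))\leq\lim_n\log q_n/h_n$ in the proof of Lemma~\ref{entr}.
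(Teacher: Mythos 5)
Your item 1 and your lower bound are fine and match the paper. The genuine gap is in the upper bound, and it is twofold. First, the counting: the length-$k$ prefix of the next tower's word is the \emph{ordered} concatenation of the blocks $W(\p(\ell))^{(n-1)}_{(\sigma^{(n-1)}_{z_n})^{-1}(0)},\ldots$, and since these blocks are pairwise distinct words of equal length (this is exactly your lower-bound argument), the prefix determines the ordered $(a{+}1)$-tuple of labels, not merely its support. The number of ordered tuples of distinct elements of $\{0,\ldots,q_{n-1}-1\}$ is $q_{n-1}!/(q_{n-1}-a-1)!$, which for middle values of $a$ vastly exceeds $q_{n-1}2^{q_{n-1}}$; so "encoding the support subset" does not give a valid bound. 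Second, and more seriously, the estimate is not just unproven but false for an arbitrary family as in your item 1: if you pick $q_n\approx\sqrt{(q_{n-1}-2)!}$ permutations of the form $\pi\times\rho$ acting separately on $\{1,\ldots,j\}$ and $\{j+1,\ldots,q_{n-1}-2\}$ with all first components distinct and all second components distinct, then at the cut $j$ there are $q_n$ realizable suffix-data and $q_n$ realizable prefix-data, producing on the order of $q_n^2\approx(q_{n-1}-2)!$ distinct $h_n$-words, which exceeds $h_{n-1}q_nq_{n-1}^22^{q_{n-1}}$. So the lemma's conclusion does not hold for every choice of pairwise distinct permutations fixing $0$ and $q_n-1$; the existential statement requires a more constrained construction.

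This is precisely why the paper's proof chooses $i_n\in\{2,\ldots,q_n-2\}$ with $(i_n-1)!<q_{n+1}\leq i_n!$ and takes the permutations $\sigma^{(n)}_{\bullet}$ to fix not only $0$ and $q_n-1$ but all of $\{i_n+1,\ldots,q_n-1\}$, i.e.\ to be supported on $\{1,\ldots,i_n\}$. With that restriction the number $a^{(n)}_j$ of realizable suffix-data and $b^{(n)}_j$ of realizable prefix-data at cut $j$ are bounded by ratios of factorials of $i_{n-1}$, and the product satisfies $a^{(n)}_jb^{(n)}_j\leq i_{n-1}^2(i_{n-1}-1)!\binom{i_{n-1}-1}{j-1}$; the minimality condition $(i_{n-1}-1)!<q_n$ converts the factorial into a single factor $q_n$, and summing the binomial coefficients yields the factor $2^{q_{n-1}}$. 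Your closing remark correctly identifies \emph{what} is needed (a next-tower factor whose logarithm is $o(h_n)$), but the mechanism that delivers it is the restricted support of the permutations, which your construction omits.
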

	
	\begin{proof}[Proof of Lemma~\ref{prelemmaentr}]
		Let us recall that $N(\p(\ell)_0^{h_n-1})$ is equal to the cardinality of $\{[\p(\ell)]_{h_n}(x)\mid x\in X\}$. If the permutations $\sigma_{0}^{(n)},\sigma_{1}^{(n)}, \ldots, \sigma_{q_{n+1}-1}^{(n)}$ are pairwise different for every $n\geq 0$, then we get $N(\p(\ell)_0^{h_n-1})\geq q_n$ for the underlying odomutant (see Remark~\ref{remarklemmato-one} following Lemma~\ref{to-one}).\par
		Given $n\geq 0$, let $i_n\in\{2,\ldots,q_n-2\}$ be such that $(i_n-1)!<q_{n+1}\leq i_n!$ and let us choose any family $\left (\sigma_{x_{n+1}}^{(n)}\right )_{0\leq x_{n+1}<q_{n+1}}$ of pairwise different permutations of the set $\{0,\ldots,q_n-1\}$ fixing $0,i_n+1,i_n+2,\ldots,q_n-1$. Given an integer $\ell\geq 1$, let us find an upper bound of $N(\p(\ell)_0^{h_n-1})$ for every $n\geq\ell$.
		Let $n\geq\ell$ and $x\in X$. There exists $i\in\{0,1,\ldots,h_n-1\}$ such that $y\coloneq T^{-i}x$ is in $[0,\ldots,0,x_n]_{n+1}$. Let us write $z\coloneq T^{h_n}x$. Thus $[\p(\ell)]_{h_n}(x)$ is the concatenation of a final subword of $W(\p(\ell))_{x_n}^{(n)}$ and an initial subword of $W(\p(\ell))_{z_n}^{(n)}$. Writing $i=jh_{n-1}+r$ with integers $j\in\{0,\ldots,q_{n-1}-1\}$ and $r\in\{0,\ldots,h_{n-1}-1\}$, and using Lemma~\ref{recword}, we have
		\begin{align*}
			[\p(\ell)]_{h_n}(x)=&\ w\cdot W(\p(\ell))^{(n-1)}_{\left (\sigma^{(n-1)}_{x_n}\right )^{-1}(j+1)}\cdot\ldots\cdot W(\p(\ell))^{(n-1)}_{\left (\sigma^{(n-1)}_{x_n}\right )^{-1}(q_{n-1}-2)}\cdot W(\p(\ell))^{(n-1)}_{q_{n-1}-1}\\
			&\cdot W(\p(\ell))^{(n-1)}_{0}\cdot W(\p(\ell))^{(n-1)}_{\left (\sigma^{(n-1)}_{z_n}\right )^{-1}(1)}\cdot\ldots\cdot W(\p(\ell))^{(n-1)}_{\left (\sigma^{(n-1)}_{z_n}\right )^{-1}(j-1)}\cdot w'
		\end{align*}
		where $w$ is a final subword of $W(\p(\ell))^{(n-1)}_{\left (\sigma^{(n-1)}_{x_n}\right )^{-1}(j)}$ of length $h_{n-1}-r$, and $w'$ an initial subword of $W(\p(\ell))^{(n-1)}_{\left (\sigma^{(n-1)}_{z_n}\right )^{-1}(j)}$ of length $r$. Therefore, to every word of the form $[\p(\ell)]_{h_n}(x)$ for the points $x\in X$ sharing the same integers $x_n,z_n,j\text{ and }r$, we can associate the family
		$$\left (\left (\sigma^{(n-1)}_{x_n}\right )^{-1}(j),\ldots,\left (\sigma^{(n-1)}_{x_n}\right )^{-1}(q_{n-1}-2),\left (\sigma^{(n-1)}_{z_n}\right )^{-1}(1),\ldots,\left (\sigma^{(n-1)}_{z_n}\right )^{-1}(j)\right ).$$
		In the particular cases $j=0$ and $j=q_{n-1}-1$, this family is respectively equal to
		$$\left (\left (\sigma^{(n-1)}_{x_n}\right )^{-1}(1),\ldots,\left (\sigma^{(n-1)}_{x_n}\right )^{-1}(q_{n-1}-2)\right )$$
		and
		$$\left (\left (\sigma^{(n-1)}_{z_n}\right )^{-1}(1),\ldots,\left (\sigma^{(n-1)}_{z_n}\right )^{-1}(q_{n-1}-2)\right ).$$
		Moreover, this association is injective, as a consequence of Remark~\ref{remarklemmaword} following Lemma~\ref{lemmaword}. Thus we have
		\begin{align*}
			\displaystyle N(\p(\ell)_0^{h_n-1})&\leq\displaystyle \sum_{r=0}^{h_{n-1}-1}{\left (a^{(n)}_1+b^{(n)}_{q_{n-1}-2}+\sum_{j=1}^{q_{n-1}-2}{a^{(n)}_j\times b^{(n)}_j}\right )}\\
			&\leq \displaystyle h_{n-1}\left (a^{(n)}_1+b^{(n)}_{q_{n-1}-2}+\sum_{j=1}^{q_{n-1}-2}{a^{(n)}_j\times b^{(n)}_j}\right )
		\end{align*}
		where $a^{(n)}_j$ and $b^{(n)}_j$ are respectively the cardinality of
		$$\left\{\left (\left (\sigma^{(n-1)}_{x_n}\right )^{-1}(j),\ldots,\left (\sigma^{(n-1)}_{x_n}\right )^{-1}(q_{n-1}-2)\right )\mid x_n\in\{0,1,\ldots,q_n-1\}\right\}$$
		and
		$$\left\{\left (\left (\sigma^{(n-1)}_{x_n}\right )^{-1}(1),\ldots,\left (\sigma^{(n-1)}_{x_n}\right )^{-1}(j)\right )\mid x_n\in\{0,1,\ldots,q_n-1\}\right\}.$$
		
		We now find upper bounds of the quantities $a^{(n)}_j$ and $b^{(n)}_j$, using the properties of the permutations that we have chosen at the beginning of this proof. We have $a^{(n)}_1=q_n$, $a^{(n)}_j\leq i_{n-1}\times\ldots\times j$ if $1\leq j\leq i_{n-1}$ and $a^{(n)}_j=1$ if $i_{n-1}+1\leq j\leq q_{n-1}-2$. We also have $b^{(n)}_j\leq i_{n-1}\times\ldots\times (i_{n-1}-j+1)$ if $1\leq j\leq i_{n-1}-1$ and $b^{(n)}_j= q_n$ if $i_{n-1}\leq j\leq q_{n-1}-2$. We then get
		\begin{align*}
			&\displaystyle a^{(n)}_1+b^{(n)}_{q_{n-1}-2}+\sum_{j=1}^{q_{n-1}-2}{a^{(n)}_j\times b^{(n)}_j}\\
			&\leq \displaystyle 2q_n +\left (\sum_{j=1}^{i_{n-1}-1}{\frac{i_{n-1}!}{(j-1)!}\frac{i_{n-1}!}{(i_{n-1}-j)!}}\right )+q_ni_{n-1}+\left (\sum_{j=i_{n-1}+1}^{q_{n-1}-2}{q_n}\right )\\
			&\leq \displaystyle q_nq_{n-1}+i_{n-1}^2(i_{n-1}-1)!\sum_{j=1}^{i_{n-1}-1}{\binom{i_{n-1}-1}{j-1}}\\
			&\leq q_nq_{n-1}+q_{n-1}^2q_n 2^{i_{n-1}-1}\\
			&\leq q_n q_{n-1}^2 2^{i_{n-1}}\\
			&\leq q_n q_{n-1}^2 2^{q_{n-1}},
		\end{align*}
		and we are done
	\end{proof}

	\section{Further comments on odomutants: Bratteli diagrams, strong orbit equivalence}\label{secbrat}

	\subsection{Bratteli diagrams, strong orbit equivalence}\label{PrelBratteli}
	
	We introduce the most important definitions and results in the context of strong orbit equivalence. For more details, we refer the reader to~\cite{hermanOrderedBratteliDiagrams1992} and~\cite{giordanoTopologicalOrbitEquivalence1995}.
	
	\subsubsection{Bratteli diagrams}
	
	A \textbf{Bratteli diagram} is a graph $B=(V,E)$ with the set of vertices
	$$V=\bigsqcup_{k\geq 0}{V_k}$$
	and the set of edges
	$$E=\bigsqcup_{k\geq 0}{E_k},$$
	where $V_k$ and $E_k$ are finite, $V_0=\{v^{(0)}\}$ and the edges in $E_k$ connect vertices in $V_{k}$ to vertices in $V_{k+1}$ (multiple edges between two vertices are allowed). If $e_k\in E_k$ connects $v_{k}\in V_{k}$ to $v_{k+1}\in V_{k+1}$, we write $s(e_k)=v_{k}$ and $r(e_k)=v_{k+1}$, this provides maps $s\colon E\to V$ (\textbf{source map}) satisfying $s(E_k)\subset V_{k}$ and $r\colon E\to V$ (\textbf{range map}) satisfying $r(E_k)\subset V_{k+1}$. We assume that
	$$\forall v\in V,\ s^{-1}(v)\not =\emptyset$$
	and
	$$\forall v\in V\setminus V_0,\ r^{-1}(v)\not =\emptyset.$$
	For $k<\ell$, a \textbf{path} from $v_k\in V_k$ to $v_{\ell}\in V_{\ell}$ is a tuple $(e_{k},e_{k+1},\ldots ,e_{\ell-1})$ satisfying $s(e_{k})=v_k$, $r(e_i)=s(e_{i+1})$ for every $i\in\{k,\ldots,\ell-2\}$ and $r(e_{\ell-1})=v_{\ell}$.\par
	An \textbf{ordered} Bratteli diagram is a Bratteli diagram together with a linear order in $r^{-1}(v)$ for every $v\in V\setminus V_0$, namely we consider a bijection
	$$r^{-1}(v)\to \{0,1,\ldots,|r^{-1}(v)|-1\}$$
	for every $v\in V\setminus V_0$. Then we consider $E_k$ as a subset of $V_{k}\times V_{k+1}\times\N$: an edge $e_k\in E_k$ is written as $(v_{k},v_{k+1},\rho_k)$ where $v_{k}=s(e_k)$, $v_{k+1}=r(e_k)$ and $\rho_k\in\{0,\ldots,|r^{-1}(e_k)|-1\}$ is the \textbf{rank} of $e_k$ for the linear order in $r^{-1}(v_{k+1})$, we write $\rho_k=\rk(e_k)$.\par
	Let us set
	$$\xb\coloneq\left\{(e_k)_{k\geq 0}\in\prod_{k\geq 0}{E_k}\mid\forall k\geq 0,\ r(e_k)=s(e_{k+1})\right\},$$
	$$\Xmin\coloneq\left\{(e_k)_{k\geq 0}\in \xb\mid\forall k\geq 0,\ \rk(e_k)=0\right\}$$
	$$\text{and }\Xmax\coloneq\left\{(e_k)_{k\geq 0}\in X\mid\forall k\geq 0,\ \rk(e_k)=|r^{-1}(r(e_k))|-1\right\}.$$
	As a subset of $\prod_{k\geq 0}{E_k}$, $\xb$ is endowed with the induced product topology. $\xb$ is a compact and totally disconnected metric space. By definition, the cylinders\footnote{defined as in Section~\ref{PrelOdo}} of $\xb$ are clopen sets and form a basis of the topology.\par
	A Bratteli diagram is \textbf{simple} if there exists a subsequence $(k_n)$ such that for every pair of vertices in $V_{k_n}\times V_{k_{n+1}}$, there exits a path between them. If an ordered Bratteli diagram is simple, then $\xb$ has no isolated points, so it is a Cantor set.\par
	Given a Bratteli diagram $B=(V,E)$, we can enumerate the vertices of each $V_n$:
	$$V_n=\left\{v_0^{(n)},\ldots,v_{|V_n|-1}^{(n)}\right\};$$
	and define the \textbf{incidence matrices}
	$$M_n\coloneq\left (m^{(n)}_{i,j}\right )_{\substack{0\leq i\leq |V_{n+1}|-1\\0\leq j\leq |V_n|-1}}$$
	where $m^{(n)}_{i,j}$ is the number of edges of $E_{n}$ connecting $v^{(n)}_j$ to $v^{(n+1)}_i$.
	
	\subsubsection{Bratteli-Vershik systems}
	
	Given an ordered Bratteli diagram $B$, we define a map $T_B\colon \xb\setminus\Xmax\to\xb\setminus\Xmin$ in the following way.\par
	Let $x=(e_k)_{k\geq 0}\in\xb\setminus\Xmax$ and
	$$N\coloneq\min\{i\geq 0\mid \rk(e_i)<|r^{-1}(r(e_i))|-1\}.$$
	Let $f_N$ be the edge in $r^{-1}(r(e_N))$ satisfying $\rk(f_N)=\rk(e_N)+1$ and $(f_0,\ldots,f_{N-1})$ the minimal path from $v^{(0)}$ to $s(f_{N})$, namely this is the unique path satisfying $\rk(f_i)=0$ for every $i\in\{0,\ldots,N-1\}$.\footnote{We find this path in an inductive way: $f_{N-1}$ is the unique edge satisfying $r(f_{N-1})=s(f_N)$ and $\rk(f_{N-1})=0$, $f_{N-2}$ is the unique edge satisfying $r(f_{N-2})=s(f_{N-1})$ and $\rk(f_{N-2})=0$, and so on.} Then we define
	$$T_Bx\coloneq(f_1,\ldots,f_{N},e_{N+1},e_{N+2},\ldots).$$
	The map $T_B$ is called the \textbf{Bratteli-Vershik system} associated to the ordered Bratteli diagram $B$.\par
	An ordered and simple Bratteli diagram is \textbf{properly ordered} if $\Xmin$ and $\Xmax$ are singletons. Given a properly ordered Bratteli diagram, we extend $T$ to the whole set $\xb$ by setting
	$$T_B(x_{\mathrm{max}})\coloneq x_{\mathrm{min}}$$
	where $\Xmax\coloneq\{x_{\mathrm{max}}\}$ and $\Xmin=\{x_{\mathrm{min}}\}$. In this case, we can check that $T_B$ is a Cantor minimal homeomorphism.\par
	
	\begin{figure}[ht]
		\centering
		\includegraphics[width=0.4\linewidth]{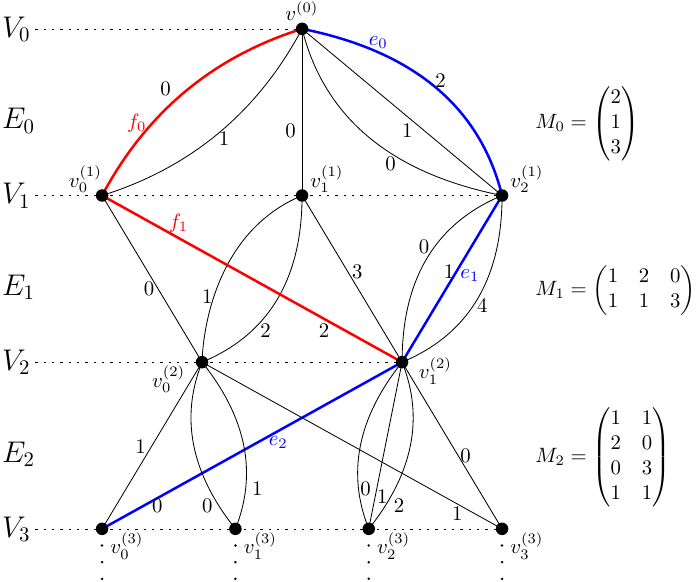}
		\caption{\footnotesize Example of ordered Bratteli diagram $B$. The image of $(e_0,e_1,e_2,\ldots)$ by $T_B$ is $(f_0,f_1,e_2,\ldots)$.
		}
		\label{bratteli diagram}
	\end{figure}
	
	For example, the Bratteli-Vershik system of the diagram
	in Figure~\ref{odometerbrat} is topologically conjugate to the odometer on $X\coloneq\prod_{n\geq 0}{\{0,1,\ldots,q_n-1\}}$, the following map
	$$\Psi\colon (x_n)_{n\geq 0}\in X\mapsto \left (\left (v^{(0)},v^{(1)},x_0\right ),\left (v^{(1)},v^{(2)},x_1\right ),\left (v^{(2)},v^{(3)},x_2\right ),\ldots\right )\in\xb$$
	is a conjugation between them. As we explain in the next part, every Cantor minimal homeomorphism can be described by a Bratteli diagram.
	
	\begin{figure}[ht]
		\centering
		\includegraphics[width=0.5\linewidth]{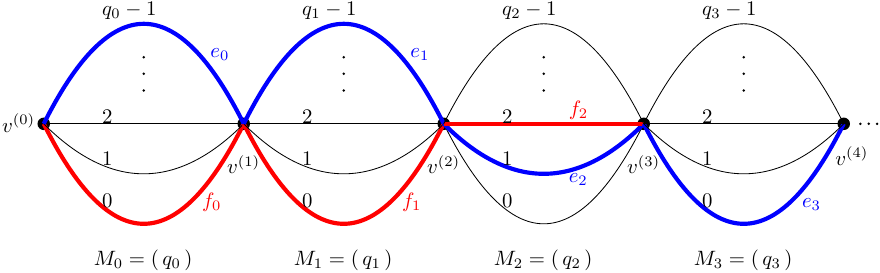}
		\caption{\footnotesize An ordered Bratteli diagram describing the odometer on $\prod_{n\geq 0}{\{0,\ldots,q_n-1\}}$. The image of $(e_0,e_1,e_2,e_3,\ldots)$ by $T_B$ is $(f_0,f_1,f_2,e_3,\ldots)$.
		}
		\label{odometerbrat}
	\end{figure}
	
	\subsubsection{Cantor minimal homeomorphisms}\label{brattelitohom}
	
	The Bratteli-Vershik systems of properly ordered Brattali diagrams describe all the Cantor minimal homeomorphisms.
	
	\begin{theorem*}[Herman, Putnam, Skau~\cite{hermanOrderedBratteliDiagrams1992}]
		If $T$ is a Cantor minimal homeomorphism, then there exists a properly ordered Bratteli diagram $B$ such that the associated Bratteli-Vershik system $T_B$ is topologically conjugate to $T$.
	\end{theorem*}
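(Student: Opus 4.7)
The plan is to construct a nested sequence of Kakutani–Rokhlin (KR) partitions adapted to $T$ and to read the required properly ordered Bratteli diagram off this sequence. Given a Cantor minimal homeomorphism $T\colon X\to X$, fix a point $x_{\mathrm{min}}\in X$; by minimality and compactness of $X$, for any clopen neighborhood $Y$ of $x_{\mathrm{min}}$, the first-return time $r_Y\colon Y\to\mathbb{N}^*$ is continuous and bounded, so the family $\{T^i(Y\cap\{r_Y=k\})\mid k\in r_Y(Y),\,0\le i<k\}$ is a finite clopen partition of $X$ whose atoms are organized into finitely many \emph{towers} with bases in $Y$. This is the KR partition $\mathcal{P}_Y$ associated to $Y$, and it is the basic building block of the construction.

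The heart of the proof is to produce a sequence $Y_0\supset Y_1\supset\cdots$ of clopen neighborhoods of $x_{\mathrm{min}}$ such that, writing $\mathcal{P}_n\coloneq\mathcal{P}_{Y_n}$: (i) $\bigcap_n Y_n=\{x_{\mathrm{min}}\}$; (ii) $\mathcal{P}_{n+1}$ refines $\mathcal{P}_n$, with $Y_{n+1}$ contained in a single level of $\mathcal{P}_n$ (in fact in its base); (iii) the sequence $(\mathcal{P}_n)$ generates the topology, i.e.\ every clopen set is $\mathcal{P}_n$-measurable for some $n$; and (iv) there is a point $x_{\mathrm{max}}\in X$ such that the top levels of all towers in $\mathcal{P}_n$ shrink down to the single point $x_{\mathrm{max}}$. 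Conditions (i)–(iii) are standard and obtained by diagonalizing against a countable clopen basis: at stage $n$, shrink $Y_n$ into a prescribed atom of $\mathcal{P}_{n-1}$ and further ensure that $\mathcal{P}_n$ separates the first $n$ basis elements. Condition (iv) is obtained by a symmetric shrinking, using minimality to choose the nested bases so that the preimages $T^{-1}(Y_n)$ also shrink to a single point, which is then $x_{\mathrm{max}}$.

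With such a sequence in hand, set $V_n\coloneq$ the set of towers of $\mathcal{P}_n$, with $V_0=\{v^{(0)}\}$ corresponding to the trivial one-tower partition, and define $E_n$ by putting one edge from $v\in V_n$ to $w\in V_{n+1}$ for each level of the tower $w$ whose $\mathcal{P}_n$-refinement is a level of the tower $v$; order the edges in $r^{-1}(w)$ by vertical position inside $w$, from bottom to top. This gives a simple ordered Bratteli diagram $B=(V,E)$. Define $\Psi\colon X\to X_B$ by sending $x\in X$ to the sequence $(e_n)_{n\ge 0}$ where $e_n$ records, for each $n$, the tower of $\mathcal{P}_n$ containing $x$, the tower of $\mathcal{P}_{n+1}$ containing it, and the rank of the $\mathcal{P}_n$-level of $x$ inside that $\mathcal{P}_{n+1}$-tower. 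By (ii) the map $\Psi$ is well-defined; by (iii) it is injective; continuity is clear from the definition, and surjectivity follows from (i) together with compactness. Since $X$ and $X_B$ are compact Hausdorff, $\Psi$ is a homeomorphism. The conjugacy $\Psi\circ T=T_B\circ\Psi$ is then immediate from the definitions: applying $T$ moves a point up one level in its $\mathcal{P}_n$-tower (except at the top of the tower, where the carry-over is exactly the Vershik rule), which matches $T_B$ on the edge codes. Finally, (i) forces $X_{B,\mathrm{min}}=\{\Psi(x_{\mathrm{min}})\}$ and (iv) forces $X_{B,\mathrm{max}}=\{\Psi(x_{\mathrm{max}})\}$, so $B$ is properly ordered.

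The main obstacle is point (iv): producing a nested sequence of KR partitions whose top levels contract to a single point while simultaneously securing (i)–(iii). This is where the ``properly ordered'' clause has real content, and it requires interleaving the forward shrinking of $Y_n$ around $x_{\mathrm{min}}$ with a backward control that drives the union of top levels of $\mathcal{P}_n$ down to a single orbit endpoint. Once this balancing is done carefully, simplicity of $B$ is automatic from minimality of $T$ (each tower in $\mathcal{P}_{n+1}$, for $n$ large, visits every tower of $\mathcal{P}_n$), and the remaining verifications reduce to unwinding definitions.
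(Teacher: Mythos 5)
Your construction via nested Kakutani--Rokhlin partitions is exactly the approach the paper sketches in Appendix~B (and which it otherwise defers to Herman--Putnam--Skau and Putnam for the generating-sequence lemma), so the overall route matches. Two points in your write-up need correction, though. First, the edge multiplicities: as written, you put one edge from $v\in V_n$ to $w\in V_{n+1}$ for each level of $w$ whose $\mathcal{P}_n$-atom is a level of $v$, which overcounts by a factor of the height of $v$ and would destroy the conjugacy with the Vershik map. The correct count is the number of traversals of the tower $v$ by the tower $w$, i.e.\ the number of levels of $w$ contained in the \emph{base} of $v$; correspondingly, the rank recorded by $\Psi$ is the position of the $\mathcal{P}_n$-subcolumn (not the $\mathcal{P}_n$-level) of $x$ inside its $\mathcal{P}_{n+1}$-tower. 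Second, you identify condition (iv) --- the top levels contracting to a single point --- as the main obstacle, requiring a separate ``backward control''. It is in fact automatic: a point $y$ lies in the top level of its $\mathcal{P}_n$-tower if and only if $Ty\in Y_n$, so the union of the top levels of $\mathcal{P}_n$ is exactly $T^{-1}(Y_n)$, which decreases to $\{T^{-1}(x_{\mathrm{min}})\}$ as soon as (i) holds; this is also why $X_{B,\mathrm{max}}$ is a singleton. The genuine work is in (iii), keeping the partitions nested while making them generate the topology (and, for simplicity of the diagram, shrinking $Y_{n+1}$ enough that the return times exceed the uniform recurrence constant of the bases of $\mathcal{P}_n$), which is precisely the lemma the paper cites rather than proves.
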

	
	We briefly describe how a Cantor minimal homeomorphism $T\colon X\to X$ is encoded by a properly ordered Bratteli diagram. All we have to find is an increasing sequence $(\p_n)_{n\geq 0}$ of partitions generating the topology, of the form
	$$\p_n\coloneq\{T^j(B_{n,i})\mid 0\leq i\leq k_n-1,\ 0\leq j\leq h^{(n)}_i-1\}$$
	where $k_n, h^{(n)}_1,\ldots ,h^{(n)}_{k_n}$ are positive integers, and the sequence $(B_n)_{n\geq 0}$ defined by
	$$B_n\coloneq\bigsqcup_{0\leq i\leq k_n-1}{B_{n,i}},$$
	is decreasing to a singleton $\{y\}$.\par
	By "increasing sequence of partitions", we mean that $\p_{n+1}$ is finer than $\p_n$, namely the atoms of $\p_n$ are unions of atoms of $\p_{n+1}$. The partition $\p_n$ is composed of $k_n$ towers and given $i\in\{0,\ldots,k_n-1\}$, the tower
	$$\mathcal{T}_{n,i}\coloneq\{B_{n,i},T(B_{n,i}),\ldots,T^{h^{(n)}_i-1}(B_{n,i})\}$$
	has height $h^{(n)}_i$.\par
	Without the assumption that the partitions have to generate the topology, the construction only consists in considering in an inductive way a clopen subset $B_{n+1}$ of $B_n$, that we partition in $B_{n,0},\ldots,B_{n,k_n-1}$ according to the value of the first return time. The underlying sequence of partitions does not necessarily generate the topology. For a generating sequence, we refer the reader to Lemma 4.1 of in~\cite{hermanOrderedBratteliDiagrams1992} and Lemma 3.1 in~\cite{putnamAlgebrasAssociatedMinimal1989} for more details.\par
	The properly ordered Bratteli diagram $B=(V,E)$ is defined as follows. Assume that $\p_0=X$ (so $k_0=h_1=1$) and define
	$$V_n\coloneq\left\{\mathcal{T}_{n,i}\mid 0\leq i\leq k_n-1\right\}.$$
	Given $n\geq 1$ and $i\in\{0,\ldots,k_n-1\}$, the tower $\mathcal{T}_{n,i}$ visits successively the towers
	$$\mathcal{T}_{n-1,\ell^{(n,i)}_1},\mathcal{T}_{n-1,\ell^{(n,i)}_2},\ldots,\mathcal{T}_{n-1,\ell^{(n,i)}_{r_{n,i}}}$$
	with integers $\ell^{(n,i)}_j\in\{0,\ldots,k_{n-1}-1\}$ and $r_{n,i}\geq 1$. Then $E$ is defined so that $r^{-1}(\mathcal{T}_{n,i})$ has cardinality $r_{n,i}$ and
	$$r^{-1}(\mathcal{T}_{n,i})\coloneq\left\{\left (\mathcal{T}_{n-1,\ell^{(n,i)}_{j+1}},\mathcal{T}_{n,i},j\right )\mid 0\leq j\leq r_{n,i}-1\right\}.$$
	The underlying Bratteli diagram is properly ordered and the associated Bratteli-Vershik system $T_B\colon X_B\to X_B$ is topologically conjugate to $T\colon X\to X$. Note that $x_{\mathrm{min}}\in\xb$ corresponds to the point $y\in X$.\par
	To sum up, a Bratteli diagram encodes a cutting-and-stacking process defining a system (see Figure~\ref{cuttingstacking}).
	
	\begin{figure}[ht]
		\centering
		\includegraphics[width=0.9\linewidth]{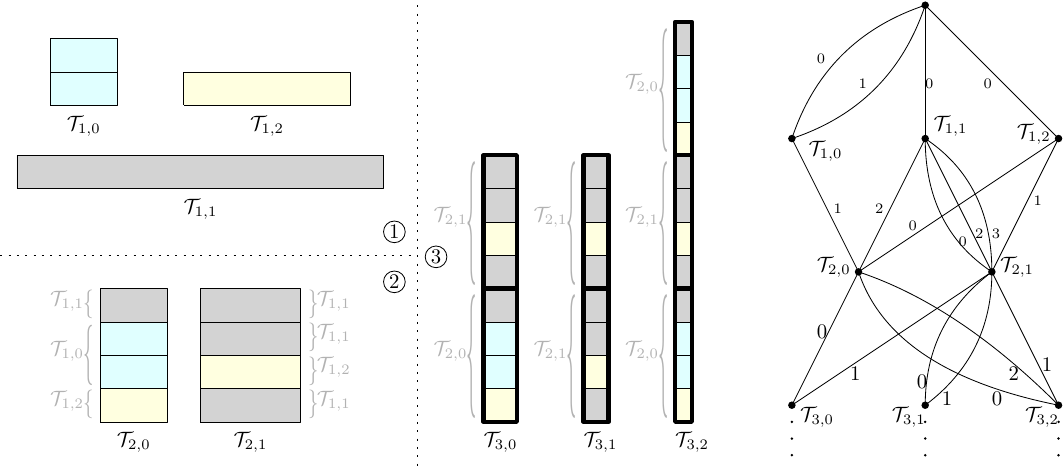}
		\caption{\footnotesize Example of towers $\mathcal{T}_{n,i}$, and the associated Bratteli diagram.
		}
		\label{cuttingstacking}
	\end{figure}
	
	\subsubsection{Classification up to strong orbit equivalence}
	
	Here we present a complete invariant of strong orbit equivalence, due to Giordano, Putnam and Skau.\par
	Recall the incidence matrices
	$$M_n\coloneq\left (m^{(n)}_{i,j}\right )_{\substack{0\leq i\leq |V_{n+1}|-1\\0\leq j\leq |V_n|-1}}$$
	given an enumeration of the vertices of each $V_n$. Then let us define the group $G(B)$ as the following inductive limit
	$$G(B)\coloneq\lim{\Z^{|V_0|}\overset{M_0}{\longrightarrow}\Z^{|V_1|}\overset{M_1}{\longrightarrow}\Z^{|V_2|}\overset{M_2}{\longrightarrow}\ldots}.$$
	With the usual ordering on each $\Z^{|V_n|}$, $G(B)$ has a structure of ordered group, the unit order is chosen as the image of $1$ in $\Z=\Z^{|V_0|}$. The ordered group $G(B)$ is called the \textbf{dimension group} of $B$. We refer the reader to~\cite{giordanoTopologicalOrbitEquivalence1995} for more details.\par
	For instance, for the dyadic odometer, the incidence matrices are all $(1\times 1)$-matrices equal to $(2)$, and the dimension group is $\Z\left[1/2\right ]$.
	
	\begin{theorem}[Giordano, Putnam, Skau~\cite{giordanoTopologicalOrbitEquivalence1995}]
		Let $S$ and $T$ be two Cantor minimal homeomorphisms. The following assertions are equivalent:
		\begin{enumerate}
			\item $S$ and $T$ are strongly orbit equivalent;
			\item If $B$ (resp. $B'$) denotes a Bratteli diagram associated to $S$ (resp. $T$), then the dimension groups $G(B)$ and $G(B')$ with distinguished order unit are order isomorphic.
		\end{enumerate}
	\end{theorem}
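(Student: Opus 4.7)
The plan is to prove the two implications separately, relying on an intrinsic reformulation of the dimension group. For a Cantor minimal homeomorphism $T \colon X \to X$, define the group of coinvariants
$$K^{0}(X,T) := C(X,\mathbb{Z})\Big/\{f - f\circ T \mid f \in C(X,\mathbb{Z})\},$$
equipped with the positive cone $\{[f] : f \geq 0\}$ and distinguished order unit $[\mathbf{1}_{X}]$. The first preliminary step is to show that $K^{0}(X_{B},T_{B})$, as an ordered group with unit, is canonically isomorphic to $G(B)$. This follows by observing that the indicator functions of the level-$n$ cylinders of $X_{B}$ generate a copy of $\mathbb{Z}^{|V_{n}|}$ inside $C(X_{B},\mathbb{Z})$, that the coboundary relation identifies the generators at consecutive levels via the incidence matrix $M_{n}$ (because the Vershik map sends the top of each column at level $n+1$ to the base of the next one), and that every continuous $\mathbb{Z}$-valued function is constant on sufficiently deep cylinders by compactness. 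Thus $K^{0}(X_{B},T_{B})$ is exactly the inductive limit defining $G(B)$, and the statement reduces to showing that strong orbit equivalence corresponds to unit-preserving order isomorphism of $K^{0}$.

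For (1)$\Rightarrow$(2), let $\Psi \colon X \to Y$ be a strong orbit equivalence between $(X,S)$ and $(Y,T)$, so that $\Psi\circ S = T^{c_{S}}\circ \Psi$ and the integer-valued cocycle $c_{S}$ is continuous off a single point. Composition with $\Psi$ sends $C(Y,\mathbb{Z})$ onto $C(X,\mathbb{Z})$, and the plan is to show it descends to an isomorphism of coinvariants. A coboundary $g - g\circ T$ pulls back to a function which, using that $c_{S}$ is continuous away from one point, differs from a genuine $S$-coboundary by a function supported on finitely many $S$-orbits of the exceptional point; such defects are themselves coboundaries inside $C(X,\mathbb{Z})$. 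The induced map is clearly positive and preserves $[\mathbf{1}]$, and applying the same argument to the continuous cocycle $c_{T}$ of $\Psi^{-1}$ produces an inverse, yielding the desired order isomorphism.

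The harder direction is (2)$\Rightarrow$(1). After realizing both systems as Bratteli-Vershik systems with diagrams $B, B'$ and $G(B)\cong G(B')$, the goal is to construct an explicit homeomorphism $\Psi \colon X_{B}\to X_{B'}$ matching orbits with cocycles continuous except at one point. The plan is a Bratteli-Elliott-type argument: a unit-preserving order isomorphism between simple dimension groups is realized as a limit of compatible positive maps between the finite stages, which by Effros-Handelman-Shen-type intertwining can be arranged, after telescoping, into an isomorphism of telescoped diagrams $\widetilde{B} \cong \widetilde{B'}$ at the level of the incidence data. Transferred back to the spaces of infinite paths, this gives a natural homeomorphism $X_{B}\cong X_{B'}$ that automatically respects $S$-orbits. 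The last step is to reorder edges on each level so that the image of $x_{\min}$ (resp. $x_{\max}$) in $B$ is sent to $x_{\min}$ (resp. $x_{\max}$) in $B'$; away from these two distinguished paths, the orderings can be matched consistently, which guarantees continuity of the cocycles everywhere except possibly at these two points.

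The main obstacle is precisely this last step: producing the isomorphism of telescoped diagrams is combinatorial and standard, but simultaneously making the orderings compatible is delicate, because a single swap at a high level propagates into many discontinuities of the cocycle. One must inductively pre-arrange the orderings so that only the minimal and maximal paths are affected, which is where the strength of "strong" in strong orbit equivalence is used and is the technical heart of Giordano-Putnam-Skau's argument.
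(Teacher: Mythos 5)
The paper does not prove this statement: it is quoted as a known result of Giordano--Putnam--Skau and used as a black box (in Appendix~\ref{secbrat} and in the discussion of Boyle and Handelman's theorem), so there is no internal proof to compare yours against. Judged on its own terms, your outline does follow the architecture of the actual GPS argument: identify $G(B)$ with the ordered group of coinvariants $C(X,\Z)/\{f-f\circ T\mid f\in C(X,\Z)\}$ with order unit $[\mathds{1}_X]$, show that a strong orbit equivalence induces a unit-preserving order isomorphism of coinvariants, and for the converse run a Bratteli--Elliott intertwining to identify telescoped diagrams and then produce the orbit equivalence on path spaces.

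However, two of your key steps are not correct as stated. For (1)$\Rightarrow$(2) you claim the pullback of a $T$-coboundary differs from an $S$-coboundary by ``a function supported on finitely many $S$-orbits,'' and that such defects are themselves coboundaries. In a Cantor minimal system a nonzero continuous $\Z$-valued function has nonempty open, hence uncountable, support, so it cannot be supported on finitely many orbits; the defect you describe is either zero or fails to lie in $C(X,\Z)$ at all. Showing that $\Psi^*$ carries the coboundary subgroup onto the coboundary subgroup is the genuinely delicate part of this implication and is exactly where the single point of discontinuity of the cocycle is used; without that hypothesis one only obtains an isomorphism of $K^0$ modulo infinitesimals, which is the invariant of plain topological orbit equivalence. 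For (2)$\Rightarrow$(1), your final step --- reordering the edges so that ``the orderings can be matched consistently'' away from the minimal and maximal paths --- is not how the argument can go: if the orderings could essentially be matched one would obtain continuous cocycles and hence topological flip-conjugacy by Boyle's theorem, which is far stronger than what is true. What GPS actually prove is that for any two proper orderings of the same telescoped simple diagram, the identity map of the path space already intertwines the orbit partitions (cofinality classes coincide with orbits except along the orbits of $x_{\mathrm{max}}$ and $x_{\mathrm{min}}$), and the resulting cocycles are determined by finitely many coordinates at each point off those exceptional paths, hence continuous there. So the theorem is correct and your roadmap points at the right references, but both implications still need their real arguments.
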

	
	\subsection{Bratteli diagrams of odomutants}\label{odoBratteli}
	
	Let $X\coloneq\prod_{n\geq 0}{\{0,\ldots,q_n-1\}}$. Denoting by $\p(n)$ the partition whose atoms are the $n$-cylinders, with $\p(0)=(X)$, note that the sequence $\left (\p(n)\right )_{n\geq 0}$ generates the infinite product topology on $X$ and $\p(n+1)$ is composed of $q_n$ towers of height $h_n$, denoted by
	$$\mathcal{T}_{n+1,i}\coloneq\left \{B_{n+1,i},T(B_{n+1,i}),\ldots,T^{h_n-1}(B_{n+1,i})\right \}$$
	where $B_{n+1,i}\coloneq[0,\ldots,0,i]_{n+1}$, for every $i\in\{0,\ldots,q_n-1\}$ (see Figure~\ref{odomutant}). The atoms of $\mathcal{T}_{n+1,i}$ are the cylinders of the form $[x_0,\ldots,x_{n-1},i]_{n+1}$ with $x_k\in\{0,\ldots,q_k-1\}$ for every $k\in\{0,\ldots,n-1\}$.\par
	Given $n\geq 1$ and $i\in\{0,\ldots,q_n-1\}$, the tower $\mathcal{T}_{n+1,i}$ visits the $n$-th towers with the following order:
	$$\mathcal{T}_{n,\left (\sigma_{i}^{(n-1)}\right )^{-1}(0)},\mathcal{T}_{n,\left (\sigma_{i}^{(n-1)}\right )^{-1}(1)},\ldots,\mathcal{T}_{n,\left (\sigma_{i}^{(n-1)}\right )^{-1}(q_n-1)}.$$
	According to Section~\ref{brattelitohom}, we get the Bratteli diagram $B$ of $T$ illustrated in Figure~\ref{odomutantbrat}.
	
	\begin{figure}[ht]
		\centering
		\includegraphics[width=0.9\linewidth]{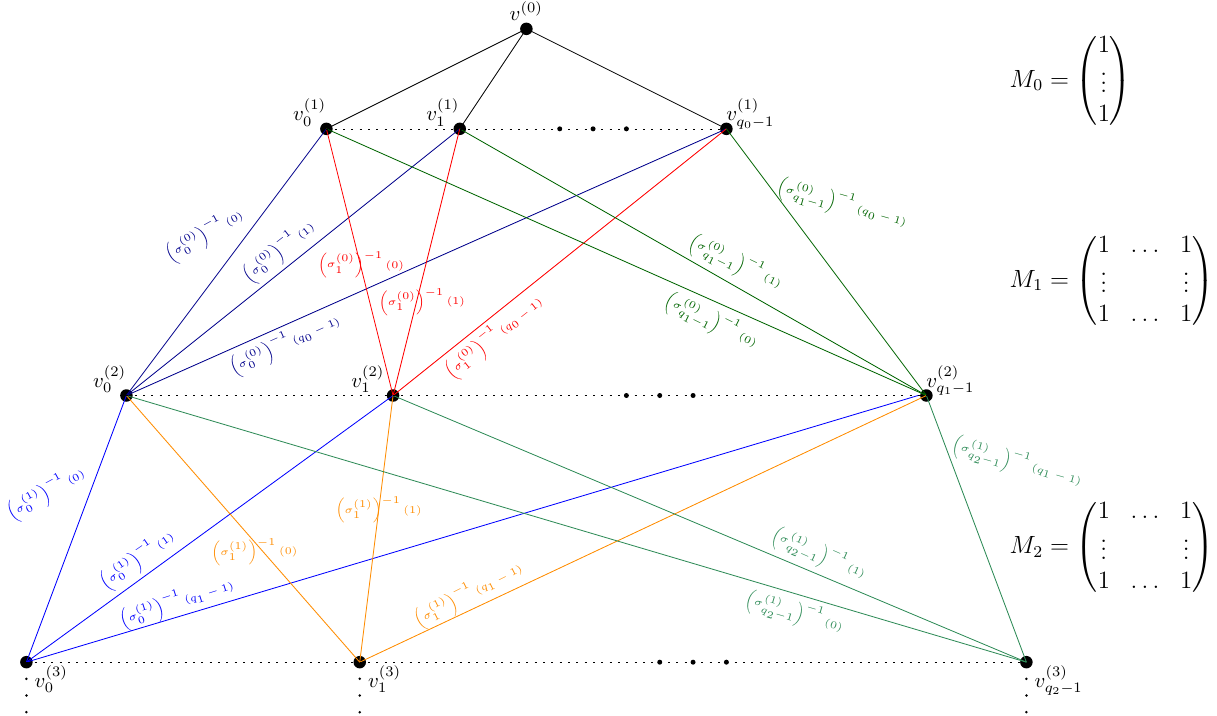}
		\caption{\footnotesize An ordered Bratteli diagram describing the odomutant built from the odometer on $\prod_{n\geq 0}{\{0,\ldots,q_n-1\}}$ and families of permutations $\left (\sigma_{i}^{(n)}\right )_{0\leq i<q_{n+1}}$ for $n\geq 0$.
		}
		\label{odomutantbrat}
	\end{figure}
	
	The following map
	$$\Psi\colon (x_n)_{n\geq 0}\in X\mapsto \left (\left (v^{(0)},v^{(1)}_{x_0},0\right ),\left (v^{(1)}_{x_0},v^{(2)}_{x_1},\sigma^{(0)}_{x_1}(x_0)\right ),\left (v^{(2)}_{x_1},v^{(3)}_{x_2},\sigma^{(1)}_{x_2}(x_1)\right ),\ldots\right )\in\xb$$
	is a conjugation between $T$ and the Bratteli-Vershik system $T_B$, it satisfies
	$$\Psi(\psi^{-1}(\xmaxinfty))=\xb\setminus\Xmax$$
	$$\text{and }\Psi(\psi^{-1}(\xmininfty))=\xb\setminus\Xmin.$$
	In the case the permutations satisfy $\sigma^{(n)}_i(0)=0,\ \sigma^{(n)}_i(q_n-1)=q_n-1$ for every $n\geq 0$, the Bratteli diagram is proprely ordered and we have
	$$\Xmax=\left\{\Psi(\xmax)\right\}$$
	$$\text{and }\Xmin=\left\{\Psi(\xmin)\right\}.$$
	
	\subsection{Comparisons between Boyle and Handelman's system and our odomutants.}
	
	As mentioned in the introduction, Boyle and Handelman have shown the following result.
	
	\begin{theorem*}[Boyle, Handelman~\cite{boyleEntropyOrbitEquivalence1994}]
		Let $S$ be the dyadic odomete. Let $\alpha$ be either a positive real number or $+\infty$. Then there exists a Cantor minimal homeomorphism $T$ such that:
		\begin{enumerate}
			\item $S$ and $T$ are strongly orbit equivalent;
			\item $\htop(T)=\alpha$.
		\end{enumerate}
	\end{theorem*}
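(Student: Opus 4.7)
The plan is to derive the statement as a direct consequence of Theorem~\ref{thBbis} applied to the dyadic odometer $S$, whose associated supernatural number $2^\infty$ satisfies the required hypothesis with $p_\star = 2$ and $k_{p_\star} = +\infty$. Theorem~\ref{thBbis} then yields a Cantor minimal homeomorphism $T$ with $\htop(T) = \alpha$ that is strongly orbit equivalent to $S$, together with the (much stronger than needed) property that the cocycles are $\varphi_m$-integrable for every $m \geq 0$. Discarding this quantitative information yields exactly Boyle and Handelman's statement.

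Since the purpose of this section is to compare our approach with theirs, I would then make the dictionary between the two constructions explicit. Using the description of odomutants as Bratteli-Vershik systems from Section~\ref{odoBratteli}, I would identify Boyle and Handelman's parameters $(l_k)$, $(m_k)$, $(n_k)$, $(j_k)$, $(\overline{m}_k)$ in~\cite{boyleEntropyOrbitEquivalence1994} with our $(h_n)$, $(\tilde{q}_n)$, $(c_n)$, $(\kappa_n)$, $(\chi_n)$ respectively. The fact that the number of distinct sequences of the form $(\tau_j^{(n)}(I_0^{(n)}), \ldots, \tau_j^{(n)}(I_{\tilde{q}_n-1}^{(n)}))$ is $\chi_{n+1}$ translates directly into the shape of the incidence matrices of their Bratteli diagram, and the linear order at each vertex matches our enumeration of subcolumns via the permutations $\tau_j^{(n)}$. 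In the case $\alpha = +\infty$, this exhibits our construction as essentially a rewriting of theirs in the language of odomutants built with uniformly $\mathbf{c}$-multiple permutations.

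The main point of interest, rather than an obstacle, is the finite entropy case, where the two strategies genuinely diverge: our construction uses $c_n = 1$ together with pairwise different permutations $\sigma_0^{(n)}, \ldots, \sigma_{q_{n+1}-1}^{(n)}$ (Lemma~\ref{entr}), and derives strong orbit equivalence concretely from Proposition~\ref{exthom}, while Boyle and Handelman rely on positive multiplicities $c_n \geq 2$ and deduce strong orbit equivalence abstractly through the dimension group. Checking that the two recipes produce systems with the same prescribed topological entropy $\alpha < +\infty$ reduces to comparing the asymptotics of $N(\mathcal{P}(\ell)_0^{h_n - 1})$ in both cases, which is controlled by Lemma~\ref{prelemmaentr}. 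I would end the section by highlighting that this more flexible recipe is what delivers, at no extra cost, the quantitative $\varphi_m$-integrable control on the cocycles which is absent from~\cite{boyleEntropyOrbitEquivalence1994}.
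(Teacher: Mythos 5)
Your proposal is correct: the dyadic odometer's supernatural number is $2^{\infty}$, so the hypothesis of Theorem~\ref{thBbis} holds with $p_{\star}=2$, and forgetting the $\varphi_m$-integrability of the cocycles yields exactly the stated theorem; the derivation is non-circular since the proof of Theorem~\ref{thBbis} (Lemmas~\ref{entr},~\ref{choiceqn},~\ref{to-one},~\ref{powerK}, Proposition~\ref{exthom} and Theorem~\ref{oeq}) nowhere invokes Boyle and Handelman's result. Note, however, that this is not how the paper treats the statement: the paper simply cites the original proof of~\cite{boyleEntropyOrbitEquivalence1994}, which constructs a Bratteli diagram with prescribed entropy and establishes strong orbit equivalence abstractly, by computing that the dimension group is $\Z[1/2]$ and invoking the Giordano--Putnam--Skau classification~\cite{giordanoTopologicalOrbitEquivalence1995}. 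Your route buys an explicit orbit equivalence (the identity map on $X$, via Proposition~\ref{exthom}), a direct verification that the cocycles have at most one discontinuity point, and the quantitative integrability for free; what it gives up is generality in the base system only in appearance, since Theorem~\ref{thBbis} in fact covers more odometers than the dyadic one. Your dictionary between the parameters $(h_n),(\tilde{q}_n),(c_n),(\kappa_n),(\chi_n)$ and Boyle--Handelman's $(l_k),(m_k),(n_k),(j_k),(\overline{m}_k)$ matches the one given in Section~\ref{appendixComparison}, and your observation that the two strategies diverge in the finite-entropy case (pairwise different permutations with $c_n=1$ and the exact count of Lemma~\ref{prelemmaentr}, versus multiplicities and the lower bound of Lemma~\ref{to-one}) is accurate.
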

	
	In their proof, they build a Bratteli diagram $B_{\mathrm{BH}}$ (see Figure~\ref{odomutantboylehandelman}) similar to the diagram in Figure~\ref{odomutantbrat}, the only difference is that for every $k\geq 1$, for every $v_i^{(k)}\in V_k$, $v_j^{(k+1)}\in V_{k+1}$, with $0\leq i<q_{k-1}$ and $0\leq j\leq q_k-1$, there are $n_k$ edges connecting these vertices. Then the ideas remain almost the same. Every vertex $v_j^{(k+1)}\in V_{k+1}$ provides a permutation $\sigma^{(k-2)}_j$ on the $n_kq_{k-1}$ edges of range $v_j^{(k+1)}$, satisfying
	$$\sigma^{(k-2)}_j(0)=0$$
	$$\text{and }\sigma^{(k-2)}_j(n_kq_{k-1}-1)=n_kq_{k-1}-1,$$
	so that the diagram is properly ordered and the associated Bratteli-Vershik system $T\coloneq T_{B_{\mathrm{BH}}}$ can be extended to a homeomorphism on the Cantor set. The permutations are chosen in order to get $\htop(T)=\alpha$ (we refer the reader to their proof for more details, note that their proof in the case $\alpha=+\infty$ has been here entirely reformulated in our formalism).
	
	\begin{figure}[ht]
		\centering
		\includegraphics[width=0.9\linewidth]{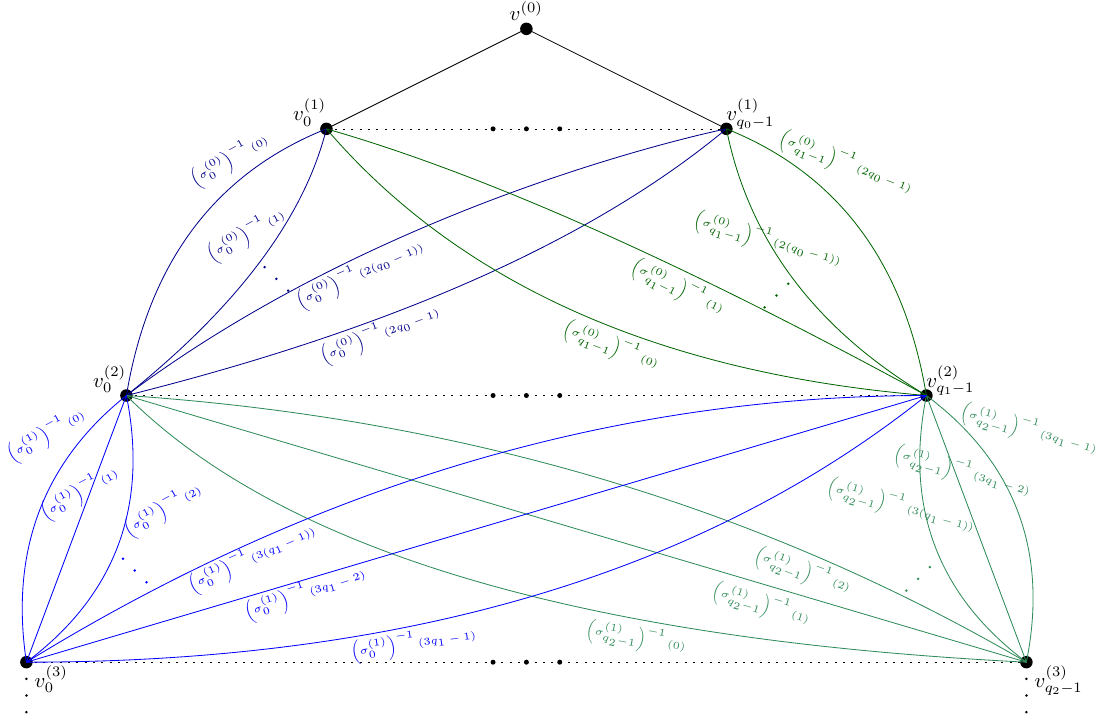}
		\caption{\footnotesize Bratteli diagram built by Boyle and Handelman in the proof of their Theorem 2.8~\cite{boyleEntropyOrbitEquivalence1994}, with $n_1=2$, $n_2=3$.
		}
		\label{odomutantboylehandelman}
	\end{figure}
	
	It turns out that their Bratteli diagram is a diagram for an odomutant. Let us recall the following facts.
	
	\begin{itemize}
		\item In a Bratteli diagram, for some fixed vertex $v^{(k+1)}_j\in V_{k+1}$, with $0\leq j\leq q_k-1$, the set of edges $r^{-1}(v^{(k+1)}_j)$, with its linear ordering, encodes the stacking of subtowers of $\mathcal{T}_{k,0},\ldots,\mathcal{T}_{k,q_{k-1}-1}$ to build the tower $\mathcal{T}_{k+1,j}$ (see Figure~\ref{cuttingstacking}).
		\item In the cutting-and-stacking construction of an odomutant described in Figure~\ref{odomutant}, every tower $\mathcal{T}_{k,i}$, with $0\leq i<q_{k-1}$, is uniformly cut in $q_{k+1}$ subtowers $\left (\mathcal{T}_{k,i}(\ell)\right )_{0\leq \ell <q_{k+1}}$ and we build every tower $\mathcal{T}_{k+1,j}$, with $0\leq j<q_{k}$, by choosing only one subtower in each $\mathcal{T}_{k,0},\ldots,\mathcal{T}_{k,q_{k-1}-1}$ and stacking them.
	\end{itemize}
	
	The Bratteli diagram $B_{\mathrm{BH}}$ of Boyle and Handelman describes the following cutting-and-stacking construction: every tower $\mathcal{T}_{k,i}$ is uniformly cut in $n_kq_{k+1}$ subtowers $\left (\mathcal{T}_{k,i}(\ell)\right )_{0\leq \ell\leq n_kq_{k+1}-1}$ and we build every tower $\mathcal{T}_{k+1,j}$ by choosing exactly $n_k$ subtowers in each $\mathcal{T}_{k,0},\ldots,\mathcal{T}_{k,q_{k-1}-1}$ and stacking them.
	
	As explained in Section~\ref{odocutsta}, to understand why this is equivalent to the construction of an odomutant, it suffices to cut each $k$-th tower $\mathcal{T}_{k,i}$ in $n_k$ (sub)towers $\mathcal{T}_{k,(i,0)},\ldots,\mathcal{T}_{k,(i,n_k-1)}$, in such a manner that for every $(k+1)$-th tower $\mathcal{T}_{k+1,j}$, each tower $\mathcal{T}_{k,(i,m)}$ contains only one of the $n_k$ subtowers $\mathcal{T}_{k,i}(\ell)$ which form $\mathcal{T}_{k+1,j}$. We then replace the former $k$-th towers $\mathcal{T}_{k,i}$, with $0\leq i<q_{k-1}$, by the new ones $\mathcal{T}_{k,(i,m)}$, with $0\leq i<q_{k-1}$ and $0\leq m\leq n_k-1$, and we recover the cutting-and-stacking process of an odomutant described above (with new integers $n_k$ equal to $1$). In other words, each vertex in $V_k$ is split in $n_k$ copies, and we get the Bratteli diagram $B'_{\mathrm{BH}}$ illustrated in Figure~\ref{newodomutantboylehandelman}.
	
	\begin{figure}[ht!]
		\centering
		\includegraphics[width=1\linewidth]{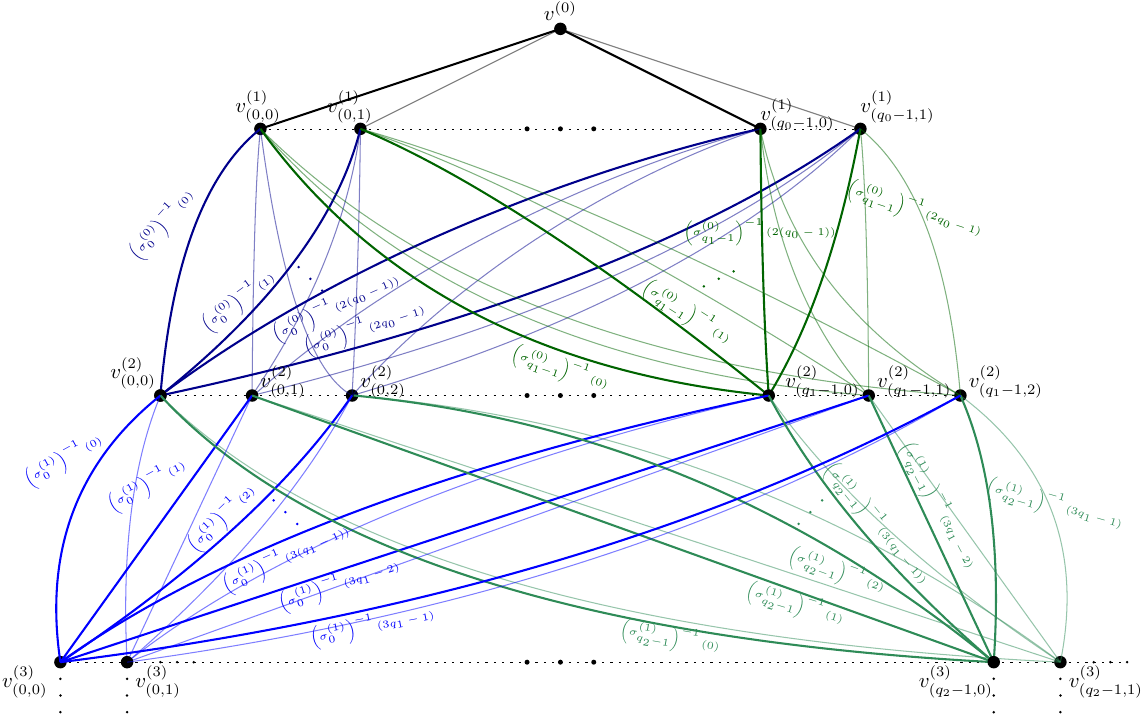}
		\caption{\footnotesize To get $B'_{\mathrm{BH}}=(V',E')$ from the Bratteli diagram $B_{\mathrm{BH}}$ of Boyle and Handelman (see Figure~\ref{odomutantboylehandelman}), we successively duplicate the vertices and the edges.\\
			In $B_{\mathrm{BH}}$, the integer $n_1$ is equal to $2$ (from each vertex in $V_1$ to each one in $V_{2}$, there are two edges), so each vertex $v^{(1)}_i$ (with $0\leq i<q_0$) is split in two new vertices $v^{(1)}_{(i,0)}$ and $v^{(1)}_{(i,1)}$, each one being associated to one of the two edges in $r^{-1}(v^{(2)}_j)$ (for every $0\leq j<q_1$). For every $0\leq m_1\leq n_1-1$, there is only one edge between $v^{(0)}$ to $v^{(1)}_{i,m_1}$ (this edge can be considered as a copy of the edge from $v^{(0)}$ to the former vertex $v^{(1)}_{i}$).\\
			The integer $n_2$ is equal to $3$ (from each vertex in $V_2$ to each one in $V_{3}$, there are three edges), so each vertex $v^{(2)}_j$ (with $0\leq j<q_1$) is split in three new vertices $v^{(2)}_{(j,0)}$, $v^{(2)}_{(j,1)}$ and $v^{(2)}_{(j,2)}$, each one being associated to one of the three edges in $r^{-1}(v^{(3)}_k)$ (for every $0\leq k<q_2$). For every $0\leq m_2\leq n_2-1$, we define the edges of range $v^{(2)}_{j,m_2}$ as copies of the edges of range the former vertex $v^{(2)}_{j}$. A thicker edge corresponds to one copy. We do not indicate the rank of the other edges (the thinner ones) for clarity.\\
			Then we apply the same algorithm to define the new vertices and edges in $E'_2, V'_3, E'_3, V'_4,\ldots$.}
		\label{newodomutantboylehandelman}
	\end{figure}
	
	An infinite path of $X_{B_{\mathrm{BH}}}$ can be uniquely written as
	$$\left ((v^{(k)}_{i_k},v^{(k+1)}_{i_{k+1}},\left (\sigma^{(k-1)}_{i_{k+1}}\right )^{-1}(n_ki_k+m_k))\right )_{k\geq 1}$$
	with $0\leq i_k<q_{k-1}$ and $0\leq m_k\leq n_k-1$ (we omit the first edge $(v^{(0)},v^{(1)}_{i_1},0)$). With the notations of Figure~\ref{newodomutantboylehandelman}, the map
	\begin{align*}
		&\left ((v^{(k)}_{i_k},v^{(k+1)}_{i_{k+1}},\left (\sigma^{(k-1)}_{i_{k+1}}\right )^{-1}(n_ki_k+m_k))\right )_{k\geq 1}\in X_{B_{\mathrm{BH}}}\\
		&\mapsto \left ((v^{(k)}_{(i_k,m_k)},v^{(k+1)}_{(i_{k+1},m_{k+1})},\left (\sigma^{(k-1)}_{i_{k+1}}\right )^{-1}(n_ki_k+m_k))\right )_{k\geq 1}\in X_{B'_{\mathrm{BH}}}
	\end{align*}
	is a conjugation between the Bratteli-Vershik systems $T_{B_{\mathrm{BH}}}$ and $T_{B'_{\mathrm{BH}}}$.
	
	\subsection{Comparisons between Boyle and Handelman's proof and our techniques.}\label{appendixComparison}
	
	Unlike Boyle and Handelman, we prove the case $\alpha<+\infty$ of Theorem~\ref{thB} with a cutting-and-stacking process where all new towers contain only one copy of each former tower. This is naturally the construction encoded by an odomutant endowed with the sequence $(\p(\ell))_{\ell\geq 1}$ of partitions in $\ell$-cylinders (see Figure~\ref{odomutant}). In order to get the case $\alpha=+\infty$, the main trick is to understand that a less restrictive cutting-and-stacking process, namely where every former tower may appear many times in the new ones, is encoded by an odomutant equipped with another sequence of partitions. Here the partitions are the ones associated to a description of this odomutant by multiple permutations, namely the partitions $\ptilde(\ell)$ (see Definition~\ref{defodostack}). A first way to understand why this is relevant is to notice that with these partitions, we cannot distinguish between towers of the same step, as if they were the copies of the same former tower which appear in a new one (see Figure~\ref{odomutant2}). Another remark is that Boyle and Handelman use the partitions in cylinders in the Cantor space $X_{B_{\mathrm{BH}}}$ on which their system $T_{B_{\mathrm{BH}}}$ is defined. Therefore, if we want to reformulate their proof in our formalism and with the odomutant conjugate to $T_{B_{\mathrm{BH}}}$, we have to consider the image of these partitions by the conjugation that we explicit above. It turns out that we get the partitions $\ptilde(\ell)$.\par
	To prove that the system $T_{B_{\mathrm{BH}}}$ is strongly orbit equivalent to the dyadic odometer $S$, Boyle and Handelman use the Giordano-Putnam-Skau theorem and the fact that the dimension group of $T_{B_{\mathrm{BH}}}$ is $\Z[1/2]$. In our proof of Theorem~\ref{thB}, the orbit equivalence is explicit and this enables us to directly show that the cocycles have at most one point of discontinuity. This also enables us to quantify the integrability of this orbit equivalence.

	\section{Equivalence between definitions of loose Bernoullicity in the zero-entropy case}\label{secappLB}
	
	To our knowledge, justifications for the equivalence between two definitions of loose Bernoullicity in the zero-entropy case (see Theorem~\ref{thEquivalenceLB}) is missing is the literature. Here we provide a proof. Let us first recall these definitions, that we already wrote in Section~\ref{PrelKak}.
	
	\begin{definition}
		Let $T\in\aut$ and $\p$ be a partition of $X$.
		\begin{itemize}
			\item $(T,\p)$ is loosely Bernoulli, and we write $T$ is \textbf{LB}, if for every $\varepsilon>0$, for every sufficiently large integer $N$ and for each $M>0$, there exists a collection $\mathcal{G}$ of "good" atoms in $\p_{-M}^{0}$ whose union has measure greater than or equal to $1-\varepsilon$, and so that for each pair $A,B$ of atoms in $\mathcal{G}$, the following holds: there is a probability measure $n_{A,B}$ on $\p^N\times\p^N$ satisfying
			\begin{enumerate}[label=(\Roman*)]
				\item\label{item1LB} $n_{A,B}(\{w\}\times \p^N)=\mu_A(\{[\mathcal{P}]_{1,N}(.)=w\})$ for every $w\in\p^N$;
				\item\label{item2LB} $n_{A,B}(\p^N\times\{w'\})=\mu_B(\{[\mathcal{P}]_{1,N}(.)=w'\})$ for every $w'\in\p^N$;
				\item\label{item3LB} $n_{A,B}(\{(w,w')\in\p^N\times\p^N\mid f_N(w,w')>\varepsilon\})<\varepsilon$.
			\end{enumerate}
			\item We say that $(T,\p)$ is \textbf{LB}\bm{$_0$} if for every $\varepsilon>0$ and for every sufficiently large integer $N$, there exists a collection $\mathcal{H}$ of "good" atoms in $\p_{1}^{N}$ whose union has measure greater than or equal to $1-\varepsilon$ and so that we have $f_N(w,w')\leq\varepsilon$ for every $w,w'\in [\p]_{1,N}(\mathcal{H})$.
		\end{itemize}
	\end{definition}
	
	\begin{theorem}\label{appthEquivalenceLB}
		Let $T\in\aut$ and $\p$ be a partition of $X$. If $\hmu(T,\p)=0$, then $(T,\p)$ is LB if and only if it is LB$_0$.
	\end{theorem}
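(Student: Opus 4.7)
The two definitions differ only in the order of quantifiers: LB asks that for every past resolution $\p_{-M}^0$, a ``good'' collection of pasts can be pairwise coupled in the future with small $f_N$, while LB$_0$ simply asks that most $N$-futures lie in a common $f_N$-cluster. The direction LB$_0\Rightarrow$ LB does not use zero entropy, whereas the converse LB $\Rightarrow$ LB$_0$ will use it crucially, via the fact that a zero-entropy process is essentially past-determined.

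For LB$_0\Rightarrow$ LB: given $\varepsilon>0$, I would apply LB$_0$ with the smaller parameter $\eta:=\varepsilon^2/5$ to obtain $N$ large and $\mathcal{H}\subset\p_1^N$ whose union has $\mu$-measure $\geq 1-\eta$ and whose word-set $W_\mathcal{H}:=[\p]_{1,N}(\mathcal{H})$ has $f_N$-diameter at most $\eta$. For an arbitrary $M>0$, writing $\nu_A$ for the law of $[\p]_{1,N}$ under $\mu_A$, Markov's inequality applied to $A\mapsto 1-\nu_A(W_\mathcal{H})$ yields a collection $\mathcal{G}:=\{A\in\p_{-M}^0 : \nu_A(W_\mathcal{H})\geq 1-\sqrt{\eta}\}$ of measure $\geq 1-\sqrt{\eta}$. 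For $A,B\in\mathcal{G}$, I would define $n_{A,B}$ as the product coupling $\nu_A\otimes\nu_B$: conditions \ref{item1LB} and \ref{item2LB} are immediate, and $n_{A,B}(W_\mathcal{H}\times W_\mathcal{H})\geq(1-\sqrt{\eta})^2\geq 1-2\sqrt{\eta}$ together with the $f_N$-diameter bound gives $n_{A,B}(\{f_N>\varepsilon\})\leq 2\sqrt{\eta}<\varepsilon$, i.e., \ref{item3LB}.

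For LB $\Rightarrow$ LB$_0$, zero entropy enters through $\Hmu(\p\mid\p_{-K}^{-1})\to 0$ as $K\to\infty$. Combining this with the chain rule and $T$-invariance of $\mu$ yields $\Hmu(\p_1^N\mid\p_{-M}^0)=\sum_{i=1}^N\Hmu(\p\mid\p_{-M-i}^{-1})\leq N\cdot\Hmu(\p\mid\p_{-M-1}^{-1})$, so for any fixed $N$ and any $\tau>0$ this can be made $<\tau$ by choosing $M$ large in terms of $N$. The quantifier order of LB ($\varepsilon$, then $N$, then $M$) is precisely what permits this dependency. The plan is: given $\varepsilon>0$, fix small $\eta\leq\varepsilon$ and apply LB with parameter $\eta$ to obtain a threshold $N_0$; for any $N\geq N_0$, pick $\tau$ very small and $M$ large so that $\Hmu(\p_1^N\mid\p_{-M}^0)<\tau$, apply LB at this $M$ to obtain $\mathcal{G}\subset\p_{-M}^0$, and intersect it with the set of atoms $A$ satisfying $H(\nu_A)\leq\sqrt{\tau}$ (which has measure $\geq 1-\sqrt{\tau}$ by Markov, since $\sum_A\mu(A)H(\nu_A)=\Hmu(\p_1^N\mid\p_{-M}^0)<\tau$). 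On this intersection $\mathcal{G}'$, the standard Shannon-entropy-concentration estimate $H(\nu)\leq\alpha\Rightarrow\max_w\nu(w)\geq 1-O(\alpha/\log(1/\alpha))$ produces, for each $A\in\mathcal{G}'$, a word $w_A\in\p^N$ with $\nu_A(w_A)\geq 1-\kappa$, where $\kappa$ can be made arbitrarily small by shrinking $\tau$.

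To finish, for $A,B\in\mathcal{G}'$ the marginal conditions \ref{item1LB}, \ref{item2LB} force $n_{A,B}(\{(w_A,w_B)\})\geq\nu_A(w_A)+\nu_B(w_B)-1\geq 1-2\kappa$; if $f_N(w_A,w_B)>\eta$, then \ref{item3LB} would give $n_{A,B}(\{(w_A,w_B)\})<\eta$, absurd once $\kappa$ is small enough to ensure $1-2\kappa>\eta$. Hence the words $\{w_A : A\in\mathcal{G}'\}$ are pairwise at $f_N$-distance $\leq\eta\leq\varepsilon$, and taking $\mathcal{H}:=\{[\p]_{1,N}^{-1}(\{w_A\}) : A\in\mathcal{G}'\}$, a direct computation gives $\mu(\bigcup\mathcal{H})\geq(1-\kappa)\mu(\bigcup\mathcal{G}')\geq 1-O(\varepsilon)$, establishing LB$_0$. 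The main subtlety is juggling the parameters $(\varepsilon,\eta,\kappa,\tau,N,M)$ consistently with the quantifier order of LB; all the individual estimates (Markov, the chain rule, the entropy-concentration lemma) are routine.
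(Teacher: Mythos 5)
Your proof is correct, but it routes the zero-entropy hypothesis through a different mechanism than the paper, and the comparison is worth recording. The paper's proof rests on a single key lemma built from the qualitative fact that $\hmu(T,\p)=0$ forces $\p_1^N$ to be $\p_{-\infty}^0$-measurable: for each \emph{future} atom $C\in\p_1^N$ one extracts a cluster $\mathcal{Q}_C$ of \emph{past} atoms $A$ with $\mu_A(C)\geq 1-\sqrt{\alpha}$, and this one lemma is then used in both directions (with product couplings for LB$_0\Rightarrow$LB). You instead go past-to-future: the quantitative statement $\Hmu(\p_1^N\mid\p_{-M}^0)\leq N\,\Hmu(\p\mid\p_{-M-1}^{-1})\to 0$ plus Markov and the elementary concentration bound $\max_w\nu(w)\geq e^{-H(\nu)}$ attach to each good past atom $A$ a single dominant future word $w_A$, and the marginal conditions~\ref{item1LB}--\ref{item2LB} then force any admissible coupling to charge $(w_A,w_B)$ with mass $\geq 1-2\kappa$, so~\ref{item3LB} pins all the $w_A$ into one $f_N$-cluster. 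Both arguments are correct instantiations of ``zero entropy means the past determines the future''; yours has two small advantages. First, your LB$_0\Rightarrow$LB direction uses only Markov's inequality applied to $A\mapsto\nu_A(W_{\mathcal{H}})$ and the product coupling, so it needs no entropy hypothesis at all and works for \emph{every} $M>0$ directly, whereas the paper's version of that direction invokes the key lemma (hence only reaches $M\geq M_0$) and must then appeal to~\cite[Corollary~2]{feldmanNewKautomorphismsProblem1976} to recover small $M$. Second, your use of the quantifier order of LB (choosing $M$ large depending on $N$ so that $\Hmu(\p_1^N\mid\p_{-M}^0)<\tau$) makes explicit where the definition's ``for each $M$'' is actually exploited. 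The price is the heavier parameter bookkeeping $(\varepsilon,\eta,\kappa,\tau,N,M)$, which you have sequenced consistently; all the individual estimates you defer to (the chain rule identity $\sum_A\mu(A)H(\nu_A)=\Hmu(\p_1^N\mid\p_{-M}^0)$, the inclusion--exclusion bound $n_{A,B}(\{(w_A,w_B)\})\geq\nu_A(w_A)+\nu_B(w_B)-1$, and the concentration lemma) are indeed routine and correct.
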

	
	This theorem relies on the following key lemma which crucially uses the assumption on the entropy. Note that, when considering a set $\mathcal{Q}$ of subsets of $X$, for instance a set of atoms of a partition, $\mu(\mathcal{Q})$ will abusively denote the measure of $\bigcup_{A\in\mathcal{Q}}{A}$.
	
	\begin{lemma}\label{keylemmaEquivalenceLB}
		Let $T\in\aut$ and $\p$ be a partition of $X$, such that $\hmu(T,\p)=0$. Let $\alpha>0$ and $N$ be a positive integer. Then there exists an integer $M_0\geq 0$ such that the following holds for every $M\geq M_0$: there exists a collection $(\mathcal{Q}_{C})_{C\in\p_1^N}$ of disjoints subsets of $\p_{-M}^0$ such that
		\begin{itemize}
			\item for every $C\in\p_1^N$, for every $A\in\mathcal{Q}_C$, we have $\mu_A(C)\geq 1-\sqrt{\alpha}$;
			\item for every $C\in\p_1^N$, we have $\mu\left (\mathcal{Q}_C\right )\geq (1-2\sqrt{\alpha})\mu(C)$.
		\end{itemize}
	\end{lemma}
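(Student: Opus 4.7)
The plan is to exploit $\hmu(T,\p)=0$ through the conditional entropy $\eta_M\coloneq H(\p_1^N\mid \p_{-M}^0)$. By the chain rule and $T$-invariance, $\eta_M$ is a sum of $N$ terms of the form $H(\p_0\mid \p_{-K}^{-1})$ with $K\geq M$, each of which decreases to $H(\p_0\mid \p_{-\infty}^{-1})=\hmu(T,\p)=0$ as $M\to+\infty$; hence $\eta_M\to 0$. I will translate this entropy bound into an atom-by-atom concentration statement for the conditional probabilities $\mu_A(C)$ near $0$ or $1$, and use it to produce the desired family $\mathcal{Q}_C$. Throughout I assume $\p$ finite (the case of interest for loose Bernoullicity), and without loss of generality $\alpha<1/4$, since otherwise $1-2\sqrt{\alpha}\leq 0$ and the conclusion is vacuous.

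The natural candidate is $\mathcal{Q}_C\coloneq\{A\in \p_{-M}^0 : \mu_A(C)\geq 1-\sqrt{\alpha}\}$. The first bullet then holds by construction, and disjointness is automatic: for distinct $C,C'\in\p_1^N$, the inequalities $\mu_A(C),\mu_A(C')\geq 1-\sqrt{\alpha}>1/2$ would force $\mu_A(C)+\mu_A(C')>1$, contradicting $C\cap C'=\emptyset$. Set $Z_C\coloneq \mathrm{E}[\mathds{1}_C\mid \p_{-M}^0]$, which equals $\mu_A(C)$ on each atom $A$. Since the set $\{Z_C<1-\sqrt{\alpha}\}$ is $\p_{-M}^0$-measurable, the tower property yields
\[
\mu(C\setminus\mathcal{Q}_C)=\mathrm{E}\bigl[\mathds{1}_C\,\mathds{1}_{Z_C<1-\sqrt{\alpha}}\bigr]=\mathrm{E}\bigl[Z_C\,\mathds{1}_{Z_C<1-\sqrt{\alpha}}\bigr],
\]
and on the event of integration $1-Z_C>\sqrt{\alpha}$, so $Z_C\leq Z_C(1-Z_C)/\sqrt{\alpha}$, giving
\[
\mu(C\setminus\mathcal{Q}_C)\leq \tfrac{1}{\sqrt{\alpha}}\,\mathrm{E}[Z_C(1-Z_C)].
\]

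The crux is the elementary inequality $-t\log t\geq t(1-t)$ on $[0,1]$, immediate from $-\log t\geq 1-t$. Applied atomwise, this reads $H(\p_1^N\mid A)\geq \sum_C \mu_A(C)(1-\mu_A(C))$; integrating over $A$ produces the \emph{summed} bound $\sum_{C\in\p_1^N} \mathrm{E}[Z_C(1-Z_C)]\leq \eta_M$. The main obstacle is that this estimate is summed over $C$ rather than per $C$: on its own it yields only $\mu(C\setminus\mathcal{Q}_C)\leq \eta_M/\sqrt{\alpha}$ for each $C$, which is not comparable to $\mu(C)$ when $\mu(C)$ is small. This is where the finiteness of $\p_1^N$ becomes essential: set $m\coloneq\min\{\mu(C):C\in\p_1^N,\,\mu(C)>0\}>0$ and choose $M_0$ with $\eta_{M_0}\leq 2\alpha m$. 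Then for every $M\geq M_0$ and every $C\in\p_1^N$ with $\mu(C)>0$ one has $\mu(C\setminus\mathcal{Q}_C)\leq 2\sqrt{\alpha}\,m\leq 2\sqrt{\alpha}\,\mu(C)$, so that
\[
\mu(\mathcal{Q}_C)\geq \mu(\mathcal{Q}_C\cap C)=\mu(C)-\mu(C\setminus\mathcal{Q}_C)\geq (1-2\sqrt{\alpha})\,\mu(C);
\]
for $C$ with $\mu(C)=0$ one simply sets $\mathcal{Q}_C=\emptyset$ and the inequality is trivial.
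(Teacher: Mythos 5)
Your proof is correct, but it takes a genuinely different route from the paper's. The paper invokes the fact that $\hmu(T,\p)=0$ forces $\p_1^N$ to be measurable with respect to the past $\sigma$-algebra $\sigma(\p_{-M}^0, M\geq 0)$ (citing Downarowicz, Fact 2.3.12); it then approximates each atom $C$ by a $\p_{-M_0}^0$-measurable set $B_C$ with $\mu(C\Delta B_C)\leq\alpha\mu(C)$, defines $\mathcal{Q}_C$ as the atoms \emph{contained in $B_C$} with $\mu_A(C)>\min(1-\sqrt{\alpha},1/2)$, and extracts the measure lower bound from the set-theoretic approximation. You instead work directly with the conditional entropy $\Hmu(\p_1^N\mid\p_{-M}^0)$, show it tends to $0$ via the chain rule and the identity $\hmu(T,\p)=\Hmu(\p_0\mid\p_{-\infty}^{-1})$, and convert entropy smallness into concentration of the conditional probabilities through $-t\log t\geq t(1-t)$ plus a Chebyshev-type truncation; your $\mathcal{Q}_C$ is simply all atoms with $\mu_A(C)\geq 1-\sqrt{\alpha}$, with no auxiliary set $B_C$. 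The two arguments are close in spirit (both quantify ``the future is determined by the past''), and both need finiteness of $\p_1^N$ to get a single $M_0$ uniform over $C$ --- the paper to take a maximum over the finitely many approximations $B_C$, you to compare $\Hmu(\p_1^N\mid\p_{-M}^0)$ with $\min_C\mu(C)$. Your version is self-contained modulo two standard facts (monotone convergence of conditional entropy and the past-conditional formula for $\hmu$), whereas the paper's is shorter but leans on an external measurability statement; your reduction to $\alpha<1/4$ and the treatment of null atoms are both handled correctly.
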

	
	\noindent Note that the second item implies $\mu\left (\bigcup_{C\in\p_1^N}\mathcal{Q}_C\right )\geq 1-2\sqrt{\alpha}$.
	
	\begin{proof}[Proof of Lemma~\ref{keylemmaEquivalenceLB}]
		By~\cite[Fact 2.3.12]{downarowiczEntropyDynamicalSystems2011}, the assumption $\hmu(T,\p)=0$ implies that $\p_1^N$ is $\p_{-\infty}^{0}$-mesurable, where
		$$\p_{-\infty}^{0}\coloneq\sigma\left (\p_{-M}^{0},M\geq 0\right),$$
		namely $\p_{-\infty}^{0}$ is the $\sigma$-algebra generated by the increasing sequence of algebras $\left (\sigma(\p_{-M}^{0})\right)_{M\geq 0}$. Then the following holds for every $C\in\p_{1}^N$: for every $\eta>0$, there exists $B_C\in\bigcup_{M\geq 0}{\sigma(\p_{-M}^{0})}$ such that $\mu(C\Delta B_C)\leq\eta$. Applied to $\eta=\alpha\mu(C)$, this fact provides an integer $M_0\geq 0$ such that every atom $C\in\p_{1}^N$ is closed to some $B_C\in\sigma(\p_{-M_0}^{0})$, namely $\mu(C\Delta B_C)\leq\alpha\mu(C)$. Let us fix an integer $M\geq M_0$, and notice that $B_C$ is also in $\sigma(\p_{-M}^0)$.\par
		For every $C\in\p_1^N$, let us set
		$$\mathcal{Q}_C\coloneq\left\{A\in\p_{-M}^0\mid A\subset B_C,\ \mu_A(C)>\min{(1-\sqrt{\alpha},1/2)}\right\}.$$
		Given two distinct atoms $C,C'\in\p_1^N$, the sets $\mathcal{Q}_C$ and $\mathcal{Q}_{C'}$ are disjoint, otherwise we would have an atom $A\in\p_{-M}^0$ lying in $\mathcal{Q}_C$ and $\mathcal{Q}_{C'}$ and such that the following occurs:
		$$\mu(A)\geq\mu(A\cap C)+\mu(A\cap C')=\left (\mu_A(C)+\mu_A(C')\right )\mu(A)>\mu(A),$$
		a contradiction.\par
		Given $C\in\p_1^N$, it remains to prove $\mu(\mathcal{Q}_C)\geq (1-2\sqrt{\alpha})\mu(C)$. Let us write
		$$\mathcal{Q}_C^-\coloneq\{A\in\p_{-M}^0\mid A\subset B_C\}\setminus\mathcal{Q}_C.$$
		On the one hand, we have
		\begin{align*}
			\mu(B_C\cap C)&=\sum_{A\in\mathcal{Q}_C}{\mu(A\cap C)}+\sum_{A\in\mathcal{Q}_C^-}{\mu(A\cap C)}\\
			&\leq\mu(\mathcal{Q}_C)+(1-\sqrt{\alpha})\mu(\mathcal{Q}_C^-)\\
			&=\mu(\mathcal{Q}_C)+(1-\sqrt{\alpha})(\mu(B_C)-\mu(\mathcal{Q}_C))\\
			&=(1-\sqrt{\alpha})\mu(B_C)+\sqrt{\alpha}\mu(\mathcal{Q}_C)\\
			&\leq (1-\sqrt{\alpha})(1+\alpha)\mu(C)+\sqrt{\alpha}\mu(\mathcal{Q}_C).
		\end{align*}
		where the last inequality comes from
		$$\mu(B_C)\leq \mu(B_C\Delta C)+\mu(C)\leq (1+\alpha)\mu(C)$$
		On the other hand, we have
		$$\mu(B_C\cap C)\geq\mu(C)-\mu(B_C\Delta C)\geq (1-\alpha)\mu(C).$$
		Combining all these inequalities, we get
		$$\mu(\mathcal{Q}_C)\geq \frac{1}{\sqrt{\alpha}}\left (1-\alpha-(1-\sqrt{\alpha})(1+\alpha)\right )\mu(C)=(1-\sqrt{\alpha})^2\mu(C)\geq (1-2\sqrt{\alpha})\mu(C),$$
		as wanted.
	\end{proof}
	
	\begin{proof}[Proof of Theorem~\ref{appthEquivalenceLB}]
		Assume that $(T,\p)$ is LB. Let us fix $\varepsilon\in ]0,1[$ and a sufficiently large integer $N$ as in the definition of LB. With $\alpha>0$ small enough so that
		$$(1-\sqrt{\alpha})(1-\sqrt{\alpha}-\varepsilon)\geq 1-2\varepsilon$$
		$$\text{and }1-2\sqrt{\alpha}\geq\varepsilon,$$
		we apply Lemma~\ref{keylemmaEquivalenceLB} to get $M$ and $(\mathcal{Q}_C)_{C\in\p_1^N}$ as described in the statement. By definition of LB associated to the quantities $\varepsilon$, $N$ and $M$, we get $\mathcal{G}\subset\p_{-M}^0$ covering at least $1-\varepsilon$ of the space, and a family $(n_{A,B})_{A,B\in\mathcal{G}}$ of probabilities on $\p^N\times\p^N$ satisfying items~\ref{item1LB},~\ref{item2LB} and~\ref{item3LB}. Let us define
		$$\mathcal{H}\coloneq\left\{C\in\p_1^N\mid\mathcal{G}\cap\mathcal{Q}_C\not=\emptyset\right\}.$$
		We first have
		\begin{align*}
			\mu(\mathcal{H})=\sum_{C\in\mathcal{H}}{\mu(C)}\geq\sum_{C\in\mathcal{H}}{\sum_{A\in\mathcal{Q}_C\cap\mathcal{G}}{\mu(C\cap A)}}&\geq (1-\sqrt{\alpha})\sum_{C\in\mathcal{H}}{\sum_{A\in\mathcal{Q}_C\cap\mathcal{G}}{\mu(A)}}\\
			&=(1-\sqrt{\alpha})\mu\left (\mathcal{G}\cap\bigcup_{C\in\p_1^N}{\mathcal{Q}_C}\right )\\
			&\geq (1-\sqrt{\alpha})(1-\sqrt{\alpha}-\varepsilon)\\
			&\geq 1-2\varepsilon.
		\end{align*}
		Secondly, let us consider $C,C'\in\mathcal{H}$ and let us prove that $w\coloneq[\p]_{1,N}(C)$ and $w'\coloneq[\p]_{1,N}(C')$ are $f_N$-close. By definition, we can pick $A\in \mathcal{G}\cap\mathcal{Q}_C$ and $B\in \mathcal{G}\cap\mathcal{Q}_{C'}$, and using items~\ref{item1LB} and~\ref{item2LB} we have
		$$n_{A,B}(\{w\}\times\p^N)\geq\mu_A(C)\geq 1-\sqrt{\alpha}$$
		$$\text{and }n_{A,B}(\p^N\times\{w'\})\geq\mu_A(C')\geq 1-\sqrt{\alpha}.$$
		This implies
		$$n_{A,B}(\{(w,w')\})\geq 1-2\sqrt{\alpha}\geq\varepsilon,$$
		so $f_N(w,w')\leq\varepsilon$ by item~\ref{item3LB}. We have proved that $(T,\p)$ satisfies LB$_0$ for $2\varepsilon$.\par
		Let us now assume that $(T,\p)$ is LB$_0$, we fix $\varepsilon>0$, a sufficiently large integer $N>0$ and an associated $\mathcal{H}\subset\p_1^N$ as in the definition of LB$_0$. With $\alpha>0$ small enough so that
		$$(1-\sqrt{\alpha})^2\geq 1-\varepsilon$$
		$$\text{and }(1-2\sqrt{\alpha})(1-\varepsilon)\geq 1-2\varepsilon,$$
		we apply Lemma~\ref{keylemmaEquivalenceLB} to get $M_0$ and for every $M\geq M_0$, an associated collection $(\mathcal{Q}_C)_{C\in\p_1^N}$ as described in the statement. Let us fix $M\geq M_0$ and let us consider
		$$\mathcal{G}\coloneq\bigcup_{C\in\mathcal{H}}{\mathcal{Q}_C}$$
		and for every $A,B\in\mathcal{G}$, the probability $n_{A,B}$ on $\p^N\times\p^N$ defined by
		$$n_{A,B}(\{(w,w')\})=\mu_A(\{[\mathcal{P}]_{1,N}(.)=w\})\mu_B(\{[\mathcal{P}]_{1,N}(.)=w'\}),$$
		they automatically satisfy items~\ref{item1LB} and~\ref{item2LB}. Given $C,C'\in\mathcal{H}$, $A\in\mathcal{Q}_C$ and $B\in\mathcal{Q}_{C'}$, and $w\coloneq[\p]_{1,N}(C)$ and $w'\coloneq[\p]_{1,N}(C')$, we have
		$$n_{A,B}(\{(w,w')\})\geq\mu_A(C)\mu_B(C')\geq (1-\sqrt{\alpha})^2\geq 1-\varepsilon,$$
		and since $f_N(w,w')\leq\varepsilon$, we get item~\ref{item3LB}. Finally, we have
		$$\mu(\mathcal{G})=\sum_{C\in\mathcal{H}}{\mu(\mathcal{Q}_{C})}\geq (1-2\sqrt{\alpha})\sum_{C\in\mathcal{H}}{\mu(C)}=(1-2\sqrt{\alpha})\mu(\mathcal{H})\geq (1-2\sqrt{\alpha})(1-\varepsilon)\geq 1-2\varepsilon.$$
		We have proved that $(T,\mathcal{P})$ satisfies LB for $2\varepsilon$, $N$ large enough and $M\geq M_0$. By~\cite[Corollary~2]{feldmanNewKautomorphismsProblem1976}, we can replace "for each $M>0$" by "for every sufficiently large $M>0$" in the definition of LB, so we are done.
	\end{proof}
	
	\bibliographystyle{alphaurl}
	\bibliography{biblio}
	
	{\bigskip
		\footnotesize
		
		\noindent C.~Correia, \textsc{Université Paris Cité, Institut de Mathématiques de Jussieu-Paris Rive Gauche, 75013 Paris, France}\par\nopagebreak\noindent
		\textit{E-mail address: }\texttt{corentin.correia@imj-prg.fr}}
	
\end{document}